\definecolor{mygray}{gray}{0.75}
\newtheorem{tvrz}{Proposition}[section]
\newtheorem{lemma}[tvrz]{Lemma}
\newtheorem{theorem}[tvrz]{Theorem}
\newtheorem{cor}[tvrz]{Corollary}
\theoremstyle{definition}
\newtheorem{definice}[tvrz]{Definition}
\theoremstyle{remark}
\newtheorem{rem}[tvrz]{Remark}
\theoremstyle{definition}
\newtheorem{mdexample}[tvrz]{Example}
\newenvironment{example}%
{\begin{mdframed}[topline=false, rightline=false, bottomline=false, linewidth=0.2em, linecolor=mygray, innerleftmargin=0.5em, innerrightmargin=0]\begin{mdexample}}%
{\end{mdexample}\end{mdframed}}
\def\^{\wedge}
\def\<{\langle}
\def\>{\rangle}
\def\X{\mathfrak{X}}
\def\g{\mathfrak{g}}
\def\q{\mathfrak{q}}
\def\dr{\mathrm{d}}
\def\h{\mathfrak{h}}
\def\R{\mathbb{R}}
\def\D{\mathrm{D}}
\def\F{\mathcal{F}}
\def\U{\mathcal{U}}
\def\Z{\mathbb{Z}}
\def\gTM{\mathbb{T}M}
\def\fA{\mathbf{A}}
\def\ol{\overline}
\def\fPsi{\mathbf{\Psi}}
\def\btr{\blacktriangleright}
\def\fg{\mathbf{g}}
\def\T{\mathcal{T}}
\def\frR{\mathfrak{R}}
\def\frk{\mathfrak{k}}
\def\fk{\mathbf{k}}
\def\fK{\mathbf{K}}
\def\cD{\nabla}
\def\fP{\mathbf{P}}
\def\f1{\mathbf{1}}
\def\tI{\text{I}}
\def\gm{\mathbf{G}}
\def\~{\widetilde}
\newcommand{\Li}[1]{\mathcal{L}_{#1}}
\def\dra{\dashrightarrow}
\def\rat{\rightarrowtail}
\def\cCA{\text{\textbf{CAlg}}}
\def\cLA{\text{\textbf{LAlg}}}
\def\cMan{\text{\textbf{Man}}}
\def\dbl{[\![}
\def\dbr{]\!]}
\DeclareMathOperator{\Lie}{Lie}
\DeclareMathOperator{\Df}{Df}
\DeclareMathOperator{\im}{im}
\DeclareMathOperator{\gSO}{SO}
\DeclareMathOperator{\Ric}{Ric}
\DeclareMathOperator{\End}{End}
\DeclareMathOperator{\Ad}{Ad}
\DeclareMathOperator{\ad}{ad}
\DeclareMathOperator{\Div}{div}
\DeclareMathOperator{\an}{an}
\DeclareMathOperator{\In}{in}
\DeclareMathOperator{\gr}{gr}
\DeclareMathOperator{\rk}{rk}
\DeclareMathOperator{\supp}{supp}
\begin{document}
\begin{flushright}
\today
\end{flushright}
\vspace{0.7cm}
\begin{center}

\baselineskip=13pt {\Large \bf{Hitchhiker's Guide to Courant Algebroid Relations}\\}
 \vskip0.5cm
 {\it Dedicated to all people carrying a towel.}  
 \vskip0.7cm
 {\large{ Jan Vysoký$^{1}$}}\\
 \vskip0.6cm
$^{1}$\textit{Faculty of Nuclear Sciences and Physical Engineering, Czech Technical University in Prague\\ Břehová 7, Prague 115 71, Czech Republic, jan.vysoky@fjfi.cvut.cz}\\
\vskip0.3cm
\end{center}

\begin{abstract}
Courant algebroids provide a useful mathematical tool (not only) in string theory. It is thus important to define and examine their morphisms. To some extent, this was done before using an analogue of canonical relations known from symplectic geometry. However, it turns out that applications in physics require a more general notion. We aim to provide a self-contained and detailed treatment of Courant algebroid relations and morphisms. A particular emphasis is placed on providing enough motivating examples. In particular, we show how Poisson--Lie T-duality and Kaluza--Klein reduction of supergravity can be interpreted as Courant algebroid relations compatible with generalized metrics (generalized isometries). 
\end{abstract}

{\textit{Keywords}: Courant algebroids, involutive subbundles, canonical relations, symplectic category, reduction of Courant algebroids, Poisson--Lie T-duality}.

\section{Introduction}
Recently, generalized geometry and Courant algebroids came to prominence as a very useful tool to understand certain aspects of string theory. Their relevance was recognized already in the founding paper \cite{Hitchin:2004ut} and the famous letters \cite{Severa:2017oew}. Courant algebroids can be used to describe current algebras of $\sigma$-models \cite{Alekseev:2004np}, various aspects of T-duality \cite{Baraglia:2013wua, Cavalcanti:2011wu, Garcia-Fernandez:2016ofz, Grana:2008yw}, geometrical approach to (exceptional, heterotic) supergravity \cite{Coimbra:2011nw, Coimbra:2012af, 2013arXiv1304.4294G, Strickland-Constable:2019qti} and Poisson--Lie T-duality \cite{Severa:2015hta,Severa:2016prq, Severa:2017kcs, Severa:2018pag}. See also recent attempts to describe global geometry of double field theory using para-Hermitian manifolds \cite{Freidel:2017yuv, Freidel:2018tkj,Svoboda:2018rci,Vaisman:2012px}, and our contributions to the subject\footnote{Here should be a pile of self-citations. Instead, we refer to relevant works in the body of the paper.}. Of course, this is by no means a comprehensive list of references. However, it should testify to the importance of proper understanding of mathematical apparatus of  Courant algebroids. 

The first axiomatic definition of Courant algebroids appeared in \cite{liu1997manin}, generalizing an example introduced in \cite{courant}. We use a more recent formulation which was introduced in \cite{Severa:2017oew} and proved to be equivalent to the original one in \cite{1999math.....10078R}. For a complete history of Courant algebroids, we refer to the nice review paper \cite{Kosmann_Schwarzbach_2013}.

In principle, Courant algebroids are just vector bundles equipped with some additional structures naturally generalizing the notion of a quadratic Lie algebra (i.e. equipped with a non-degenerate symmetric bilinear and $\ad$-invariant form). One thus expects that their morphisms can be defined as vector bundle maps preserving these additional structures in some ``obvious way''. This can be easily done only for vector bundle maps over diffeomorphisms. Such an assumption poses some serious limitations, e.g. it works only for Courant algebroids over diffeomorphic base manifolds. Can one generalize this to vector bundle maps over arbitrary smooth maps? Note that the answer is not so straightforward even for a more known case of Lie algebroids. 
 
In fact, finding the correct notion of Courant algebroid morphism proved to be quite an elusive objective. First, none of the founding mathematical papers even raise the question. To my knowledge, the only explicit definition for arbitrary base map appeared in a relatively unknown\footnote{Thanks to B. Jurčo for making me aware of it.} paper \cite{popescu1999generalized} as an example of a morphism of so called \textit{generalized algebroids}. This is what we call a \textit{classical Courant algebroid morphism}, see Subsection \ref{subsec_graph}. Note that they use the original skew-symmetric version of Courant algebroid introduced in \cite{liu1997manin}.

Now, recall that for a pair of symplectic manifolds $(M_{1},\omega_{1})$ and $(M_{2},\omega_{2})$, one says that a smooth map $\varphi: M_{1} \rightarrow M_{2}$ is symplectic, iff $\omega_{1} = \varphi^{\ast}(\omega_{2})$. To ensure that the induced pullback map $\varphi^{\ast}: C^{\infty}(M_{2}) \rightarrow C^{\infty}(M_{1})$ intertwines the respective Poisson brackets, one considers only diffeomorphisms. However, the resulting \textit{symplectic category} has too few arrows. Therefore, it was suggested in \cite{weinstein1982symplectic} to consider a larger set of morphisms consisting of Lagrangian submanifolds\footnote{Often, they are also called \textit{canonical relations} or \text{Lagrangian correspondences}.} of $M_{1} \times \ol{M}_{2}$, where $\ol{M}_{2}$ usually denotes the symplectic manifold $(M_{2},-\omega_{2})$. On the set level, there is a straightforward composition rule of such morphisms. However, the resulting set may fail to be a submanifold, hence not all arrows can be composed. Nevertheless, it is still very useful to work with this generalized definition, called \textit{symplectic ``category''} by A. Weinstein, see also \cite{bates1997lectures}. 

Motivated by this idea and the fact that Courant algebroids \textit{are} symplectic (super)manifolds \cite{roytenberg2002structure}, Courant algebroid morphisms were defined in the unpublished manuscript \cite{alekseevxu} as Dirac structures in the product Courant algebroid $E_{1} \times \ol{E}_{2}$ supported on a graph of a smooth base manifold map. Note that only Courant algebroids equipped with a fiber-wise metric of a split signature are considered. In \cite{bursztyn2008courant}, they dropped this assumption and considered Courant algebroid morphisms to be maximally isotropic involutive subbundles. However, as we show in Example \ref{ex_RHcomQRe0}, such morphisms do not compose even on the level of linear algebra. More generally, one can consider Dirac structures in $E_{1} \times \ol{E}_{2}$ supported on an arbitrary submanifold, which were called \textit{Courant algebroid relations} in \cite{li2014dirac}. This particular paper contains a more thorough discussion of conditions on two Courant algebroid relations to compose to a Courant algebroid relation and we have used it as our main reference. See also \cite{li2009courant, meinrenken2017dirac}. Note that in all these papers, they consider only Courant algebroids with a fiber-wise metric of a split signature. 

Let us now summarize the main reasons leading us to write this paper. 

(i) People mostly considered Courant algebroids with a fiber-wise metric of a split signature. This contains some prominent examples, e.g. exact Courant algebroids. However, there are cases relevant for applications in physics, where the signature is more general, e.g. heterotic Courant algebroids. At first glance, one may solve this by replacing Lagrangian subbundles by maximally isotropic ones. However, the composed relation may fail to maximally isotropic. Fortunately, it turns out that there is no real argument for keeping the maximality requirement and it seems more natural to work with arbitrary isotropic involutive subbundles.  

(ii) Except for \cite{li2014dirac}, most of the papers do not elaborate on detailed conditions for two Courant algebroid relations to compose to a new Courant algebroid relation. Our intention is to fill all the gaps, providing a self-contained careful treatment of this topic. In particular, we take the liberty to examine in detail the rather intriguing nature of involutive subbundles of Courant algebroids supported on an arbitrary submanifold of the base. In fact, some of their aspects are very different to the seemingly similar case of Lie algebroids.

(iii) We put an emphasis on examples. In particular, one can show that our Courant algebroid relations contain the classical Courant algebroid morphisms of \cite{popescu1999generalized}. In Example \ref{ex_compfails}, we explicitly demonstrate how a composition of two Courant algebroid relations fails to be a Courant algebroid relation. We show that several situations relevant for applications in physics naturally fit into the geometrical framework introduced in this article. 

We are aware of the fact that Courant algebroids can be interpreted as degree $2$ symplectic NQ manifolds. There should be a correspondence of Courant algebroid relations discussed here and some generalization of Weinstein's symplectic ``category'' suitable for differential graded manifolds. However, we keep this discussion for a future endeavor.

\textbf{The paper is organized as follows:}

In Section \ref{sec_involutive}, we build our way to the definition of (almost) involutive structures in Courant algebroids. For a given Courant algebroid, these are subbundles supported on some submanifold of its base, compatible in some sense with all the additional structures i.e. the fiber-wise metric, the anchor and the  bracket. We discuss consequences of all requirements in detail. The pinnacles of this section are Definition \ref{def_involutive} and Proposition \ref{tvrz_invongen}.

Section \ref{sec_relations} deals with the pivotal notion of this paper, Courant algebroid relations. After giving the definition and some basic examples, we focus on the intriguing problem of their composition. One has to discuss geometrical conditions ensuring that the composed relation is a well-behaved subbundle, resulting in the notion of clean composition of relations. We prove the theorem claiming that under these assumptions, the composition is again a Courant algebroid relation. By its nature, this section can be sometimes quite technical, hence tedious. Essential notions can be thus found in Definition \ref{def_rel}, Definition \ref{def_composition}, Proposition \ref{tvrz_RcircR} and Theorem \ref{thm_composition}

Section \ref{sec_examples} is fully devoted to bring enough interesting examples of Courant algebroid relations. In Subsection \ref{subsec_graph}, we consider the relation obtained as a graph of a vector bundle morphism $\F$ over an arbitrary base map. Examining the conditions imposed on $\F$, we recover the definition which appeared in \cite{popescu1999generalized}, calling it a \textit{classical Courant algebroid morphism} in Definition \ref{def_classicalCA}. We provide an example where the subbundle $\gr(\F)$ is not maximally isotropic and an example of two Courant algebroid relations which \textit{cannot} be composed.

In Subsection \ref{subsec_Dorfman}, a natural functor from the ``category'' of Lie algebroids (and their relations) to the ``category'' of Courant algebroids is constructed. In Theorem \ref{thm_Df}, we call it the \textit{Dorfman functor} since for the tangent bundle $TM$, one obtains the standard Dorfman bracket on the \textit{generalized tangent bundle} $\gTM := TM \oplus T^{\ast}M$. In Example \ref{ex_paraHermitian}, it is shown how the Dorfman functor appears naturally in para-Hermitian geometry. 

Subsection \ref{subsec_reduction} was one of our main motivations to deal with Courant algebroid relations. We show that the (somewhat simplified) reduction procedure \cite{Bursztyn2007726} of equivariant Courant algebroids can be naturally interpreted as a Courant algebroid morphism. In Proposition \ref{tvrz_invreduction}, one can use this to obtain conditions on reductions of involutive structures (in particular Dirac structures). 

Finally, in Subsection \ref{subsec_tworeduced}, one can show that there is a canonical Courant algebroid morphism between two Courant algebroids obtained via the reduction procedure by a Lie group action and its restriction to any closed subgroup. In particular, we show how Poisson--Lie T-duality can be interpreted as a Courant algebroid relation between two reduced Courant algebroids. 

A concept of generalized metric and its interplay with Courant algebroids proved to be useful for applications in physics. Naturally, one would like to impose some its compatibility with relations of Courant algebroids. This is done in Section \ref{sec_geniso}, resulting in the notion of generalized isometry. We examine this definition for concrete examples of previous sections. In particular, we show that Poisson--Lie T-duality can be viewed as a generalized isometry of the $\sigma$-model backgrounds encoded using the generalized metrics.

In final Section \ref{sec_connections}, we recall Courant algebroid connections and show how their compatibility with Courant algebroid relations can be imposed. It is then shown what happens with induced torsion and curvature tensors. Interestingly, generalized Riemann tensors are related only for torsion-free connections, as we demonstrate on a simple counter-example.

Finally, some of the necessary linear algebra statements were moved to Appendix \ref{sec_supplement}. 
\section{Involutive structures on Courant algebroids} \label{sec_involutive}
First, let us very briefly recall the (modern) definition of Courant algebroids as it was given by Roytenberg in his thesis \cite{1999math.....10078R} and by Ševera and Weinstein in \cite{Severa:2017oew,Severa:2001qm}. 
\begin{definice} \label{def_courant}
A \textbf{Courant algebroid} is a $4$-tuple $(E,\rho,\<\cdot,\cdot\>,[\cdot,\cdot])$, where
\begin{enumerate}[(i)]
\item $q: E \rightarrow M$ is a real vector bundle of a finite rank.
\item $\rho: E \rightarrow TM$ is a vector bundle map over the identity called the \textbf{anchor};
\item $\<\cdot,\cdot\>$ is a fiber-wise metric on $E$, sometimes also denoted as $g_{E}$;
\item $[\cdot,\cdot]$ is an $\R$-bilinear bracket subject to the axioms:
\begin{enumerate}[C1)]
\item $[\psi,f \psi'] = f [\psi,\psi'] + \Li{\rho(\psi)}(f) \psi'$; 
\item $[\psi,[\psi',\psi'']] = [[\psi,\psi'],\psi''] + [\psi',[\psi,\psi'']]$;
\item $\Li{\rho(\psi)}\<\psi',\psi''\> = \< [\psi,\psi'],\psi''\> + \<\psi',[\psi,\psi'']\>$;
\item $\< [\psi,\psi], \psi'\> = \frac{1}{2} \Li{\rho(\psi')} \<\psi,\psi\>$;
\end{enumerate}
for all sections $\psi,\psi',\psi'' \in \Gamma(E)$ and $f \in C^{\infty}(M)$.
\end{enumerate}
\end{definice}
Note that one can show that C1) and C2) together imply that $\rho([\psi,\psi']) = [\rho(\psi),\rho(\psi')]$ for all $\psi,\psi' \in \Gamma(E)$. Some authors add this as an additional axiom. There is an induced bundle map $\rho^{\ast}: T^{\ast}M \rightarrow E$ determined uniquely by the equation 
\begin{equation} \label{eq_rhoastdef} \< \rho^{\ast}(\alpha), \psi \> = \< \alpha, \rho(\psi)\>, \end{equation}
for all $\alpha \in \Omega^{1}(M)$ and $\psi \in \Gamma(E)$. It automatically satisfies $\rho \circ \rho^{\ast} = 0$ and induces a canonical $\R$-linear map $\D: C^{\infty}(M) \rightarrow \Gamma(E)$ given by composition $\D = \rho^{\ast} \circ \dr$. We usually abuse the notation and use $\<\cdot,\cdot\>$ also for the pairing of vector fields and $1$-forms. Note that we have no intentions to work with the skew-symmetric version of Courant algebroids \cite{liu1997manin}.

Let us henceforth assume that $E$ is a vector bundle over $M$ carrying a Courant algebroid structure. When we start talking about a vector bundle equipped with a fiber-wise metric, the first question should be ``what is its signature?''. This inquiry has a good meaning. Indeed, the signature of any fiber-wise metric has to be locally constant, see \cite{2013arXiv1307.2171D}, hence constant on any connected component of the base $M$. 

Without the loss of (too much) generality, we may thus \textit{assume that the signature of $\<\cdot,\cdot\>$ is constant} and denote it as $(p,q)$. In general, there are no restrictions on its possible values for Courant algebroids and we keep it \textit{completely arbitrary} throughout this entire paper. 

\begin{definice}
We say that a subset $L \subseteq E$ is a \textbf{subbundle of $E$ supported on a submanifold $S \subseteq M$}, if $L$ is a vector subbundle of the restricted vector bundle $E_{S}$.
\end{definice}
\begin{rem}
We always consider only embedded submanifolds $S \subseteq M$, though not necessarily closed ones. One can be more adventurous and allow also for immersed submanifolds. However, this brings some unnecessary technical difficulties, e.g. sections of $E_{S}$ which cannot be extended to (not even local) sections of $E$. Since we do not need this generality for any of our examples, we gladly avoid this treachery.
\end{rem} 
The restricted vector bundle $E_{S}$ comes equipped with a fiber-wise metric $\<\cdot,\cdot\>$ naturally induced from $E$ and denoted by the same symbol. For any subbundle $L \subseteq E$ supported on a submanifold $S \subseteq M$, one may then construct an \textbf{orthogonal complement} $L^{\perp}$ with respect to the induced metric, hence defining a new subbundle of $E$ supported on $S$. Recall that $\rk(L^{\perp}) = \rk(E) - \rk(L)$ and there is a canonical identification $(L^{\perp})^{\perp} = L$. 

\begin{definice}
Let $L$ be a subbundle of $E$ supported on $S$. We say that 
\begin{enumerate}[(i)]
\item $L$ is \textbf{isotropic} with respect to $\<\cdot,\cdot\>$, if $L \subseteq L^{\perp}$; 
\item $L$ is \textbf{coisotropic} with respect to $\<\cdot,\cdot\>$, if $L^{\perp} \subseteq L$.
\end{enumerate}
\end{definice}
Note that these conditions are fiber-wise, that is $L$ is (co)isotropic, iff its fiber $L_{s}$ is (co)isotropic in the quadratic vector space $(E_{s}, \<\cdot,\cdot\>)$ for each $s \in S$. For any subbundle $L \subseteq E$ over $S$, let $\Gamma^{0}(L)$ denote the set of its sections isotropic with respect to the fiber-wise metric $\<\cdot,\cdot\>$ on $E_{S}$, that is 
\begin{equation}
\Gamma^{0}(L) = \{ \sigma \in \Gamma(L) \; | \; \< \sigma,\sigma\> = 0\}.
\end{equation}
Note that for general $L$, this is not a vector subspace of $\Gamma(L)$. We then have the following characterization of isotropic subbundles:
\begin{lemma} \label{lem_isosections}
A subbundle $L$ of $E$ supported on $S$ is isotropic, iff $\Gamma(L) = \Gamma^{0}(L)$. 
\end{lemma}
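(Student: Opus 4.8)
The plan is to prove both implications directly, with the forward direction following at once from the fibre-wise nature of isotropy, and the converse resting on a polarization argument fed by a bump-function extension of fibre vectors to global sections.

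First I would dispatch the forward implication. Assuming $L$ is isotropic, fix any $\sigma \in \Gamma(L)$ and any $s \in S$. Since isotropy amounts to the fibre-wise condition $L_{s} \subseteq L_{s}^{\perp}$, the vector $\sigma(s) \in L_{s}$ pairs trivially with itself, so $\<\sigma,\sigma\>(s) = 0$. As $s$ was arbitrary, this shows $\sigma \in \Gamma^{0}(L)$, giving $\Gamma(L) \subseteq \Gamma^{0}(L)$; the reverse inclusion holds by definition, so $\Gamma(L) = \Gamma^{0}(L)$.

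For the converse I would start from the hypothesis that every global section of $L$ has vanishing self-pairing and aim to verify $\<u,v\> = 0$ for all $u,v \in L_{s}$ and all $s \in S$. The key tool is the polarization identity $\<u,v\> = \tfrac{1}{2}\big( \<u+v,u+v\> - \<u,u\> - \<v,v\> \big)$, which reduces the bilinear statement to three self-pairings. To invoke the hypothesis I would realize $u$ and $v$ as values at $s$ of \emph{global} sections: choose local sections $\sigma,\tau$ of $L$ on a neighbourhood $U$ of $s$ with $\sigma(s) = u$ and $\tau(s) = v$ (possible since $L$ is a vector bundle and thus admits local frames), pick a bump function $\chi \in C^{\infty}(M)$ supported in $U$ with $\chi(s) = 1$, and extend $\chi\sigma$ and $\chi\tau$ by zero to global sections of $L$. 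Their sum $\chi(\sigma+\tau)$ is again a global section of $L$.

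By hypothesis all three of $\chi\sigma$, $\chi\tau$, $\chi(\sigma+\tau)$ lie in $\Gamma^{0}(L)$, so evaluating their self-pairings at $s$, where $\chi = 1$, yields $\<u,u\> = \<v,v\> = \<u+v,u+v\> = 0$. Polarization then forces $\<u,v\> = 0$, which is exactly isotropy of the fibre $L_{s}$; since $s$ was arbitrary, $L \subseteq L^{\perp}$. The only genuinely delicate point, and the one I expect to be the main obstacle, is precisely this passage from the quadratic datum $\<\sigma,\sigma\> = 0$ to the full bilinear vanishing: one must apply the hypothesis to the single combined section $\sigma+\tau$, not merely to $\sigma$ and $\tau$ separately, and one must note that the bump-function truncation leaves the values at $s$ intact because $\chi(s) = 1$. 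Everything else is a routine bundle-theoretic extension argument.
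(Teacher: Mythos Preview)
Your proof is correct; the paper states this lemma without proof, treating it as elementary. One minor notational slip: since $L$ is a vector bundle over $S$ and $\Gamma(L)$ denotes its sections over $S$, the bump function $\chi$ should be taken in $C^{\infty}(S)$ rather than $C^{\infty}(M)$ (or else take $\chi \in C^{\infty}(M)$ supported in an open set $V \subseteq M$ with $V \cap S = U$ and then restrict to $S$); this does not affect the validity of the argument.
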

\begin{rem}
We often use the following extension property: Let $E$ be any vector bundle over $M$ and and let $S \subseteq M$ be any embedded submanifold. Let $\sigma \in \Gamma(E_{S})$ be a smooth section of the restricted vector bundle $E_{S}$. Then there exists an open neighborhood $U$ of $S$ in $M$ and a local section $\psi \in \Gamma_{U}(E)$, such that $\psi|_{S} = \sigma$. If $S$ is closed, one can choose $U = M$. See Exercise 10-9. in \cite{lee2012introduction}. This statement is \textit{not true} for immersed submanifolds.
\end{rem}
Now, recall that a subspace of a quadratic vector space is called maximally isotropic, if it is isotropic and not properly contained in any isotropic subspace. \begin{definice}
A subbundle $L$ of $E$ supported on $S$ is called \textbf{maximally isotropic}, if its fiber $L_{s}$ is maximally isotropic in $(E_{s},\<\cdot,\cdot\>)$ for every $s \in S$. 
\end{definice}
\begin{lemma} \label{lem_maxiso}
Let $L$ be an isotropic subbundle of $E$ supported on $S$. Then the following three statements are equivalent:
\begin{enumerate}[(i)]
\item $L$ is maximally isotropic;
\item $\rk(L) = \min \{ p,q\}$; 
\item $\Gamma(L) = \Gamma^{0}(L^{\perp})$; 
\end{enumerate}
\end{lemma}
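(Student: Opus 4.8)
The plan is to treat the equivalence of (i) and (ii) as pure fiber-wise linear algebra, and then to bridge to (iii) by a section-theoretic argument, closing the loop as (i)$\,\Leftrightarrow\,$(ii), (i)$\,\Rightarrow\,$(iii), (iii)$\,\Rightarrow\,$(ii). Before anything else I would observe that (iii) is really a one-sided inclusion: since $L$ is isotropic we have $L \subseteq L^{\perp}$, so by Lemma \ref{lem_isosections} one always has $\Gamma(L) = \Gamma^{0}(L) \subseteq \Gamma^{0}(L^{\perp})$. Thus (iii) is equivalent to $\Gamma^{0}(L^{\perp}) \subseteq \Gamma(L)$, i.e. to the statement that every isotropic section of $L^{\perp}$ is already a section of $L$.

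For (i)$\,\Leftrightarrow\,$(ii), note that both conditions are pointwise: $L$ is a subbundle so $\rk(L)$ is locally constant, and the signature $(p,q)$ of $\langle\cdot,\cdot\rangle$ is constant by assumption. It therefore suffices to invoke the standard linear-algebra fact that an isotropic subspace $U$ of a quadratic vector space of signature $(p,q)$ is maximally isotropic if and only if $\dim U = \min\{p,q\}$. The clean way to see this is that $U^{\perp}/U$ inherits a non-degenerate form of signature $(p-\dim U,\, q-\dim U)$, and $U$ admits a strictly larger isotropic extension precisely when this quotient contains a nonzero isotropic vector, i.e. precisely when it is indefinite. I would state this purely linear statement in Appendix \ref{sec_supplement} and simply quote it here.

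For (i)$\,\Rightarrow\,$(iii): if $L$ is maximally isotropic, the fiber-wise form induced on $W := L^{\perp}/L$ is definite (by the fact above), so no fiber $W_{s}$ has a nonzero isotropic vector. Given $\sigma \in \Gamma^{0}(L^{\perp})$, the class $[\sigma(s)] \in W_{s}$ is isotropic, since the induced form on $W$ equals $\langle\cdot,\cdot\rangle$ evaluated on representatives in $L^{\perp}$; hence $[\sigma(s)] = 0$, so $\sigma(s) \in L_{s}$ for all $s$ and $\sigma \in \Gamma(L)$. For the remaining implication I would prove the contrapositive of (iii)$\,\Rightarrow\,$(ii). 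If (ii) fails then $k := \rk(L) < \min\{p,q\}$, so $W$ carries a non-degenerate form of indefinite signature $(p-k,\, q-k)$ with both entries strictly positive. Near a chosen point $s_{0}$, an auxiliary positive-definite fiber metric splits $W$ locally into a positive- and a negative-definite part, out of which one assembles a nonvanishing local isotropic section $\tau$ of $W$ (e.g. the sum of a positive and a negative unit frame section). Lifting $\tau$ through the surjection $L^{\perp} \to W$ with kernel $L$ gives a local section $\hat\tau$ of $L^{\perp}$ with $\langle\hat\tau,\hat\tau\rangle = 0$ and $\hat\tau(s_{0}) \notin L_{s_{0}}$; multiplying by a bump function $\chi$ supported near $s_{0}$ with $\chi(s_{0})=1$ and extending by zero produces a global section $\chi\hat\tau \in \Gamma^{0}(L^{\perp})$ that is not in $\Gamma(L)$, contradicting (iii).

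The main obstacle is exactly this last construction. The equivalences (i)$\,\Leftrightarrow\,$(ii) are fiber-wise, but (iii) is a statement about \emph{global} sections over $S$, so to refute it from a pointwise defect one must upgrade an isotropic vector in a single fiber to a genuine isotropic section of $L^{\perp}$. The bump-function trick succeeds because the isotropy condition $\langle\sigma,\sigma\rangle=0$ is homogeneous of degree two and hence is preserved under multiplication by smooth functions; the only real care is to pick the local model section of the indefinite bundle $W$ to be nonvanishing at $s_{0}$, so that the resulting global section genuinely escapes $L$ there.
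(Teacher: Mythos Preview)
Your proof is correct. The paper's own ``proof'' is just the one-line remark that the lemma follows from standard linear algebra facts, with a reference to Lam; you have supplied exactly the argument this points to, plus the additional section-theoretic step needed to pass between the fiber-wise condition and the global statement~(iii). In particular, your bump-function construction for the contrapositive (iii)$\Rightarrow$(ii) is a genuine detail the paper suppresses: (i)$\Leftrightarrow$(ii) and (i)$\Rightarrow$(iii) are purely fiber-wise, but to refute (iii) from a single bad fiber one must globalize an isotropic vector to an isotropic \emph{section}, and you do this correctly using homogeneity of the isotropy condition. One small simplification: rather than invoking an auxiliary positive-definite metric to split $W = L^{\perp}/L$, you can simply take a local orthonormal frame for the indefinite form on $W$ (which exists since the signature is constant), picking one positive and one negative frame section; their sum is the desired nonvanishing isotropic section. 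The signature computation for $W$ is also available directly from Proposition~\ref{tvrz_coiso} in the appendix.
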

This statement follows easily from the similar linear algebra statements, see e.g. \cite{lam2005introduction}. Note that any isotropic subbundle $L$ satisfies the inequality $\rk(L) \leq \min\{p,q\}$ and thus $\rk(L^{\perp}) \geq \max \{p,q\}$. This shows that for $p \neq q$, one \textit{cannot impose}\footnote{Technically, one can. However, there are no such subbundles.} the condition $L = L^{\perp}$. For $p=q$, this requirement is equivalent to the maximal isotropy, and such subbundles are called \textbf{Lagrangian}. We will strictly use this term just for split signatures.

Previous paragraphs in a sense establish the compatibility of subbundles with the metric $\<\cdot,\cdot\>$. Let us now focus on their interrelation with the anchor map. By $\Gamma(E;L)$, we shall denote the submodule of sections which take values in the subbundle $L$ when restricted to $S$, that is 
\begin{equation} \label{eq_gammaEL}
\Gamma(E;L) = \{ \psi \in \Gamma(E) \; | \; \psi|_{S} \in \Gamma(L) \}.
\end{equation}
If $U \subseteq M$ is an open subset, by $\Gamma_{U}(E;L)$ we denote a subset of $\Gamma_{U}(E)$ of local sections taking values in $L$ when restricted to $U \cap S$. 
\begin{definice}
Let $L$ be a subbundle of $E$ supported on $S$. We say that $L$ is \textbf{compatible with the anchor}, if $\rho(L) \subseteq TS$. 
\end{definice}
\begin{lemma}
Let $L$ be a subbundle of $E$ supported on $S$ and compatible with the anchor. Then for any $\psi \in \Gamma(E;L)$ and any $f \in C^{\infty}(M)$, one has 
\begin{equation} \label{eq_rhocomp1}
\Li{\rho^{!}(\psi|_{S})}(f|_{S}) = \Li{\rho(\psi)}(f)|_{S},
\end{equation}
where $\rho^{!}: L \rightarrow TS$ is the vector bundle map over $1_{S}$ induced by $\rho$. More generally, for any $\sigma \in \Gamma(L)$ and any $f \in C^{\infty}(M)$, one has the relation
\begin{equation} \label{eq_rhocomp2}
\Li{\rho^{!}(\sigma)}(f|_{S}) = \< \D{f}|_{S}, \sigma\>. 
\end{equation}
\end{lemma}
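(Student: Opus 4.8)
The plan is to verify both identities directly by unwinding the definitions of the induced anchor $\rho^{!}$ and the operator $\D$, reducing everything to the pointwise behaviour of $\rho$ along $S$. The key observation is that for $\psi \in \Gamma(E;L)$ the restriction $\psi|_{S}$ is a genuine section of $L$, so $\rho^{!}(\psi|_{S})$ is defined and, by construction of $\rho^{!}$ as the corestriction of $\rho$, one has $\rho^{!}(\psi|_{S}) = \rho(\psi)|_{S}$ as a section of $TS \subseteq (TM)|_{S}$ (this uses precisely the anchor-compatibility $\rho(L) \subseteq TS$, which guarantees the corestriction lands in $TS$).

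First I would establish \eqref{eq_rhocomp1}. Since the Lie derivative along a vector field acting on a function is simply that function's directional derivative, the value $(\Li{\rho(\psi)}(f))|_{S}$ at a point $s \in S$ depends only on $\rho(\psi)(s) = \rho^{!}(\psi|_{S})(s)$ and on the differential of $f$ at $s$. Because $\rho^{!}(\psi|_{S})(s)$ is tangent to $S$, evaluating $\dr f$ on it equals evaluating $\dr(f|_{S})$ on it, i.e. the derivative of $f$ in a direction tangent to $S$ only sees the restricted function $f|_{S}$. This yields $\Li{\rho^{!}(\psi|_{S})}(f|_{S})(s) = \Li{\rho(\psi)}(f)(s)$ for every $s \in S$, which is \eqref{eq_rhocomp1}.

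Next I would prove \eqref{eq_rhocomp2}. Here $\sigma \in \Gamma(L)$ need not extend globally, but by the extension property recalled in the preceding remark there is, at least locally around each point of $S$, a section $\psi \in \Gamma_{U}(E;L)$ with $\psi|_{S} = \sigma$; since \eqref{eq_rhocomp2} is pointwise, working on such a neighbourhood suffices. Using $\Df = \rho^{\ast} \circ \dr$ together with the defining equation \eqref{eq_rhoastdef} for $\rho^{\ast}$, I compute
\begin{equation*}
\< \D f, \psi \> = \< \rho^{\ast}(\dr f), \psi \> = \< \dr f, \rho(\psi) \> = \Li{\rho(\psi)}(f).
\end{equation*}
Restricting to $S$ and applying \eqref{eq_rhocomp1} gives $\< \D f|_{S}, \sigma \> = \Li{\rho(\psi)}(f)|_{S} = \Li{\rho^{!}(\sigma)}(f|_{S})$, which is the claim.

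The only genuine subtlety I anticipate is the passage in \eqref{eq_rhocomp2} from an arbitrary $\sigma \in \Gamma(L)$ to a section $\psi$ of $E$ extending it: one must invoke the extension property and check that the resulting identity, being manifestly pointwise in $s \in S$, is independent of the chosen extension. Everything else is a routine consequence of the definition of $\rho^{!}$ and \eqref{eq_rhoastdef}, so no deeper obstacle is expected.
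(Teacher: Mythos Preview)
Your argument is correct, but it runs in the opposite direction from the paper's. You establish \eqref{eq_rhocomp1} first and then use a local extension of $\sigma$ to a section $\psi \in \Gamma_{U}(E;L)$ in order to deduce \eqref{eq_rhocomp2}. The paper instead proves \eqref{eq_rhocomp2} directly: it rewrites $\<\D f|_{S},\sigma\> = \<(\dr f)|_{S},\rho^{!}(\sigma)\>$ using \eqref{eq_rhoastdef}, then observes that because $\rho^{!}(\sigma) \in \X(S)$ one may replace $(\dr f)|_{S}$ by the pullback $i^{\ast}(\dr f) = \dr(f|_{S})$, giving the result immediately; \eqref{eq_rhocomp1} is then just the special case $\sigma = \psi|_{S}$. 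The practical difference is that the paper's route never needs to extend $\sigma$ off $S$, so the ``genuine subtlety'' you flag simply does not arise there. Your route is equally valid and perhaps more pedagogically natural (one first understands the case of a global section, then localises), at the cost of invoking the extension property and checking independence of the extension.
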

\begin{proof}
It suffices to prove (\ref{eq_rhocomp2}) as (\ref{eq_rhocomp1}) is obtained by setting $\sigma = \psi|_{S}$. Now, the right-hand side of (\ref{eq_rhocomp2}) can be rewritten using (\ref{eq_rhoastdef}) and the definition of $\D$ as 
\begin{equation}
\< \D{f}|_{S},\sigma \> = \< (\dr{f})|_{S}, \rho^{!}(\sigma) \>,
\end{equation}
where $\rho^{!}: E_{S} \rightarrow (TM)_{S}$ is the vector bundle map induced by $\rho$. But by assumption, we have $\rho^{!}(\sigma) \in \X(S)$. We may thus replace the restriction $(\dr{f})|_{S}$ by the pullback $i^{\ast}(\dr{f}) = \dr(f|_{S})$, where $i: S \rightarrow M$ denotes the embedding of $S$ into $M$. We can then write
\begin{equation}
\< (\dr{f})|_{S}, \rho^{!}(\sigma) \> = \<\dr(f|_{S}), \rho^{!}(\sigma) \> = \Li{\rho^{!}(\sigma)}(f|_{S}).
\end{equation}
This finishes the proof.
\end{proof}
There is a useful equivalent reformulation of the compatibility condition.
\begin{tvrz} \label{tvrz_rhocompequivalent}
Let $L$ be a subbundle of $E$ supported on $S$. Then $L$ is compatible with the anchor, iff for every $f \in C^{\infty}(M)$ such that $f|_{S} = 0$, one has $\D{f}|_{S} \in \Gamma^{0}(L^{\perp})$. 
\end{tvrz}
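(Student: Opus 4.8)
The plan is to reduce the statement to a pointwise, essentially linear-algebraic assertion, using the fact that the image of $\rho^{\ast}$ is automatically isotropic. Since $\rho \circ \rho^{\ast} = 0$, for any $f,g \in C^{\infty}(M)$ one has $\langle \D f, \D g\rangle = \langle \dr f, \rho(\rho^{\ast}(\dr g))\rangle = 0$ by (\ref{eq_rhoastdef}); in particular $\langle \D f|_{S}, \D f|_{S}\rangle = 0$ holds for free, so the condition ``$\D f|_{S} \in \Gamma^{0}(L^{\perp})$'' is equivalent to the plain membership ``$\D f|_{S} \in \Gamma(L^{\perp})$''. Thus I only need to show that $L$ is compatible with the anchor iff $\D f|_{S} \in \Gamma(L^{\perp})$ for every $f$ vanishing on $S$. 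To unwind the latter I would pair $\D f|_{S}$ with an arbitrary $\sigma \in \Gamma(L)$: exactly as in the computation proving (\ref{eq_rhocomp2}), but now \emph{without} assuming compatibility, the defining property (\ref{eq_rhoastdef}) of $\rho^{\ast}$ gives, pointwise on $S$,
\begin{equation}
\langle \D f|_{S}, \sigma \rangle = \langle (\dr f)|_{S}, \rho^{!}(\sigma)\rangle,
\end{equation}
where $\rho^{!}: E_{S} \to (TM)_{S}$ is the restriction of $\rho$. Hence $\D f|_{S} \in \Gamma(L^{\perp})$ iff $\langle (\dr f)|_{S}, \rho^{!}(\sigma)\rangle = 0$ for all $\sigma \in \Gamma(L)$.

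For the forward implication I would assume $\rho(L) \subseteq TS$, so that $\rho^{!}(\sigma) \in \X(S)$ for every $\sigma \in \Gamma(L)$. If $f|_{S} = 0$ then $i^{\ast}(\dr f) = \dr(f|_{S}) = 0$, where $i: S \hookrightarrow M$ is the inclusion; that is, $(\dr f)|_{S}$ annihilates $TS$. Pairing it with the tangent-to-$S$ field $\rho^{!}(\sigma)$ therefore yields zero, and the displayed identity gives $\D f|_{S} \in \Gamma(L^{\perp})$, as required.

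For the converse I would argue by contraposition and work at a single point. If $L$ is not compatible with the anchor, there is $s \in S$ and $v \in L_{s}$ with $\rho_{s}(v) \notin T_{s}S$. Since $T_{s}S$ is a linear subspace of $T_{s}M$, I can pick a conormal covector $\alpha \in T_{s}^{\ast}M$ annihilating $T_{s}S$ with $\langle \alpha, \rho_{s}(v)\rangle \neq 0$, and realize it as $\alpha = (\dr f)_{s}$ for some $f \in C^{\infty}(M)$ with $f|_{S} = 0$ (in a slice chart the conormal space is spanned by the differentials of the transverse coordinates, which are globalized by a bump function). Extending $v$ to a section $\sigma \in \Gamma(L)$ with $\sigma(s) = v$ and evaluating the displayed identity at $s$ gives $\langle \D f|_{S}, \sigma\rangle(s) = \langle \alpha, \rho_{s}(v)\rangle \neq 0$, so $\D f|_{S} \notin \Gamma(L^{\perp})$, contradicting the hypothesis. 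The only genuinely delicate step is this converse: one must detect the failure $\rho_{s}(v) \notin T_{s}S$ by a function vanishing on \emph{all} of $S$ (not merely near $s$) whose differential at $s$ is a prescribed conormal covector, which is precisely where the embedded-submanifold (slice chart / conormal bundle) structure enters.
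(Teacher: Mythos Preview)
Your proof is correct and follows essentially the same approach as the paper. The only cosmetic difference is that the paper proves the converse directly rather than by contraposition---showing that for any $e \in L_{s}$ and any $f$ vanishing on $S$ one has $(\Li{\rho(\psi)}(f))(s) = \langle \D f|_{S}, \sigma\rangle(s) = 0$, which is the standard characterization of $\rho(e) \in T_{s}S$---whereas you construct a specific $f$ witnessing the failure; the underlying logic is identical.
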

\begin{proof}
The only if part follows immediately from (\ref{eq_rhocomp2}), together with the fact that $\D{f} \in \Gamma^{0}(E)$ for any $f \in C^{\infty}(M)$. For the if part, let $s \in S$ be an arbitrary point and let $e \in L_{s}$. We have to prove that $\rho(e) \in T_{s}S$. We can extend $e$ to a section $\sigma \in \Gamma(L)$ and then to a section $\psi \in \Gamma_{U}(E;L)$ on some neighborhood $U$ of $S$ satisfying $\psi|_{S} = \sigma$. To prove the claim, it suffices to show that $(\Li{\rho(\psi)}(f))(s) = 0$ for any function $f \in C^{\infty}(M)$ such that $f|_{S} = 0$. But $(\Li{\rho(\psi)}(f))(s) = \< \D{f}|_{S},\sigma \>(s) = 0$, where we have used the assumption in the last step.
\end{proof}
The Leibniz rule C1) of Definition \ref{def_courant} suggests that the compatibility of $L$ with the anchor may have some implications for the bracket. This is indeed so, as shows the following proposition:
\begin{tvrz} \label{tvrz_companchcons}
Let $L$ be a subbundle of $E$ supported on $S$ and compatible with the anchor. Let $\psi \in \Gamma(E;L)$ and $\psi' \in \Gamma(E)$ be a section vanishing on $S$, that is $\psi'|_{S} = 0$. Then also $[\psi,\psi']|_{S} = 0$. Consequently, one has $[\psi',\psi]|_{S} \in \Gamma^{0}(L^{\perp})$. 
\end{tvrz}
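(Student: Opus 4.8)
The plan is to establish the two assertions in order, treating the first claim $[\psi,\psi']|_{S}=0$ as a local statement and then deducing the second from the symmetric part of the Dorfman bracket. Since vanishing on $S$ is a pointwise condition and the Courant bracket is a local operation in both arguments (locality in the second slot is immediate from C1, while locality in the first slot follows from the symmetry identity $[\psi,\psi'] + [\psi',\psi] = \D\<\psi,\psi'\>$ recalled below), it suffices to work in a neighborhood $U$ of an arbitrary point $s \in S$ carrying a local frame $(e_{\alpha})$ of $E$. I would write $\psi'|_{U} = \sum_{\alpha} g^{\alpha} e_{\alpha}$ with $g^{\alpha} \in C^{\infty}(U)$; the hypothesis $\psi'|_{S}=0$ forces $g^{\alpha}|_{U \cap S} = 0$.

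First I would expand the bracket using the Leibniz rule C1,
\begin{equation}
[\psi,\psi']|_{U} = \sum_{\alpha} \big( g^{\alpha}\, [\psi,e_{\alpha}] + \Li{\rho(\psi)}(g^{\alpha})\, e_{\alpha} \big),
\end{equation}
and then restrict to $U \cap S$. The first summand vanishes there because $g^{\alpha}|_{U \cap S} = 0$. For the second summand I would invoke anchor compatibility: since $\psi \in \Gamma(E;L)$ we have $\psi|_{S} \in \Gamma(L)$, so $\rho(\psi)|_{S} = \rho^{!}(\psi|_{S})$ is tangent to $S$ by $\rho(L) \subseteq TS$. Consequently $\Li{\rho(\psi)}(g^{\alpha})|_{S} = 0$, by the elementary fact that a vector tangent to $S$ annihilates any function vanishing on $S$ (equivalently, by (\ref{eq_rhocomp1})). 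Hence every term vanishes on $U \cap S$, and as $s$ was arbitrary, $[\psi,\psi']|_{S}=0$.

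For the consequence I would first record the symmetric identity obtained by polarizing axiom C4: replacing $\psi$ by $\psi_{1}+\psi_{2}$ and subtracting the two diagonal instances yields $\<[\psi_{1},\psi_{2}]+[\psi_{2},\psi_{1}],\psi'\> = \Li{\rho(\psi')}\<\psi_{1},\psi_{2}\> = \<\D\<\psi_{1},\psi_{2}\>,\psi'\>$ for all $\psi'$, where the last equality uses (\ref{eq_rhoastdef}) and the definition $\D = \rho^{\ast}\circ\dr$. Non-degeneracy of $\<\cdot,\cdot\>$ then gives $[\psi,\psi'] + [\psi',\psi] = \D\<\psi,\psi'\>$. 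Restricting to $S$ and using the first part removes $[\psi,\psi']|_{S}$, leaving $[\psi',\psi]|_{S} = \D\<\psi,\psi'\>|_{S}$. Since $\psi'|_{S}=0$, the function $f := \<\psi,\psi'\>$ satisfies $f|_{S}=0$, so Proposition \ref{tvrz_rhocompequivalent} applies and gives $\D f|_{S} \in \Gamma^{0}(L^{\perp})$; thus $[\psi',\psi]|_{S} \in \Gamma^{0}(L^{\perp})$, as claimed.

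The main obstacle is the passage from the local frame computation to the global statement, i.e. justifying that the bracket may be evaluated from local data. This rests on the locality of the Courant bracket in both arguments; locality in $\psi'$ is immediate from C1, while locality in $\psi$ is cleanest to read off from the symmetry identity above, which expresses the first slot through the second slot plus the differential operator $\D\<\psi,\cdot\>$. One can alternatively sidestep this entirely by extending the frame $(e_{\alpha})$ and the coefficients $g^{\alpha}$ to global sections and functions via a bump function supported near $s$, chosen to agree with $\psi'$ on a neighborhood of $s$; the remainder is then a formal manipulation of the axioms, so no serious difficulty beyond bookkeeping is expected.
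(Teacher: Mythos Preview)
Your proof is correct and follows essentially the same route as the paper: a local-frame expansion via C1 plus anchor compatibility for the first claim, then the polarized C4 identity combined with Proposition~\ref{tvrz_rhocompequivalent} for the second. Your extra care about locality in the first slot (via the symmetry identity or a bump-function argument) is more explicit than the paper's treatment but not a genuinely different approach.
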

\begin{proof}
Let $s \in S$. Pick a local frame $(\psi_{\mu})_{\mu=1}^{\rk(E)}$ for $E$ over some neighborhood $U$ of $s$. On $U$, we may write $\psi' = f^{\mu} \psi_{\mu}$. By assumption $f^{\mu}|_{S'} = 0$, where $S' = U \cap S$. Leibniz rule C1) and the consequence of the anchor compatibility (\ref{eq_rhocomp1}) gives the equation
\begin{equation}
[\psi,\psi']|_{S'} = f^{\mu}|_{S'} [\psi,\psi_{\mu}]|_{S'} + \Li{\rho^{!}(\psi|_{S'})}(f^{\mu}|_{S'}) \psi_{\mu}|_{S'} = 0.
\end{equation}
We have thus proved that $[\psi,\psi']|_{U \cap S} = 0$. As $s \in S$ was arbitrary, this proves that $[\psi,\psi']|_{S} = 0$. To prove the second claim, note that the already proved statement together with C4) imply
\begin{equation}
[\psi',\psi]|_{S} = -[\psi,\psi']|_{S} + \D\<\psi,\psi'\>|_{S} = \D \<\psi,\psi'\>|_{S}.
\end{equation}
But we have $\<\psi,\psi'\>|_{S} = \<\psi|_{S},\psi'|_{S}\> = 0$ as $\psi'|_{S} = 0$. Hence by Proposition \ref{tvrz_rhocompequivalent}, we obtain $[\psi',\psi]|_{S} \in \Gamma^{0}(L^{\perp})$. This finishes the proof.
\end{proof}
This sorts out the relation of $L$ with the anchor $\rho$. Finally, we may examine the compatibility of $L$ with the Courant algebroid bracket $[\cdot,\cdot]$. 
\begin{definice}
Let $L$ be a subbundle of $E$ supported on $S$. We say that $L$ is \textbf{involutive}, if for any $\psi,\psi' \in \Gamma(E;L)$, one has $[\psi,\psi'] \in \Gamma(E;L)$.

We say that $L$ is \textbf{locally involutive on an open subset $U \subseteq M$}, if for any $\psi,\psi' \in \Gamma_{U}(E;L)$, one has $[\psi,\psi'] \in \Gamma_{U}(E;L)$.
\end{definice}
Due to the local nature of the bracket $[\cdot,\cdot]$, one expects the global and local involutivity of $L$ to be closely related. More importantly, it suffices to verify the involutivity locally. 
\begin{lemma} \label{lem_invlocinv}
Let $L$ be a subbundle of $E$ supported on $S$. Then $L$ is involutive, if and only if it is locally involutive on every open subset $U \subseteq M$. 

Moreover, if $\{ U_{\alpha} \}_{\alpha \in I}$ is any open cover of $S$ and $L$ is locally involutive on $U_{\alpha}$ for every $\alpha \in I$, then $L$ is involutive.
\end{lemma}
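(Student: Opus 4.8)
The plan is to reduce both assertions to a single tool, the \emph{locality} of the Courant bracket $[\cdot,\cdot]$ in each of its two arguments, and then run a routine bump-function argument. So the first thing I would do is record locality. Locality in the second slot is immediate from the Leibniz rule C1): if $\psi'$ vanishes on an open set $V$, then near any $s \in V$ I write $\psi' = (1-\chi)\psi'$ with $\chi$ a bump function supported in $V$ and $\chi(s)=1$, and C1) gives $[\psi,\psi'](s) = (1-\chi)(s)\,[\psi,\psi'](s) + \Li{\rho(\psi)}(1-\chi)(s)\,\psi'(s) = 0$. Locality in the first slot does \emph{not} follow from skew-symmetry, which is unavailable; here I would instead invoke the symmetric-part identity $[\psi,\psi'] + [\psi',\psi] = \D\<\psi,\psi'\>$ (a consequence of C4), already used in the proof of Proposition \ref{tvrz_companchcons}. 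If $\psi|_V = 0$, then $[\psi',\psi]|_V = 0$ by the second-slot locality just shown, and $\D\<\psi,\psi'\>|_V = 0$ because $\<\psi,\psi'\>$ is fiber-wise and $\rho^{\ast}$, $\dr$ are local; hence $[\psi,\psi']|_V = 0$. The consequence I will use repeatedly is that for every open $V \subseteq M$ the bracket of restrictions equals the restriction of the bracket.

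With locality established, the equivalence is almost formal. The implication ``locally involutive on every open $U$ $\Rightarrow$ involutive'' is obtained by taking $U = M$, since $\Gamma_{M}(E;L) = \Gamma(E;L)$ and local involutivity on $M$ is literally the definition of involutivity. For the converse, I would assume $L$ involutive, fix an open $U$, sections $\psi,\psi' \in \Gamma_{U}(E;L)$, and a point $s \in U \cap S$, and choose $\chi \in C^{\infty}(M)$ with $\chi \equiv 1$ near $s$ and $\supp\chi \subseteq U$. Then $\chi\psi$ and $\chi\psi'$, extended by zero, are \emph{global} sections, and a one-line check shows $\chi\psi,\chi\psi' \in \Gamma(E;L)$: on $U \cap S$ they are scalar multiples of the $L$-valued sections $\psi|_{U\cap S},\psi'|_{U\cap S}$, and they vanish off $\supp\chi$. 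Global involutivity then gives $[\chi\psi,\chi\psi'](s) \in L_{s}$, while locality and $\chi \equiv 1$ near $s$ give $[\psi,\psi'](s) = [\chi\psi,\chi\psi'](s)$. As $s \in U \cap S$ is arbitrary, $[\psi,\psi'] \in \Gamma_{U}(E;L)$.

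For the ``Moreover'' I would let $\{U_{\alpha}\}_{\alpha\in I}$ be an open cover of $S$ with $L$ locally involutive on each $U_{\alpha}$, take $\psi,\psi' \in \Gamma(E;L)$, fix $s \in S$, and pick $\alpha$ with $s \in U_{\alpha}$. The restrictions $\psi|_{U_{\alpha}},\psi'|_{U_{\alpha}}$ lie in $\Gamma_{U_{\alpha}}(E;L)$, since $\psi|_{S},\psi'|_{S} \in \Gamma(L)$ restrict to $L$-valued sections on $U_{\alpha}\cap S$. Local involutivity on $U_{\alpha}$ yields $[\psi|_{U_{\alpha}},\psi'|_{U_{\alpha}}] \in \Gamma_{U_{\alpha}}(E;L)$, and locality identifies this with $[\psi,\psi']|_{U_{\alpha}}$; evaluating at $s$ gives $[\psi,\psi'](s) \in L_{s}$. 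Since $s \in S$ was arbitrary, $[\psi,\psi'] \in \Gamma(E;L)$, so $L$ is involutive.

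The only step that is not pure bookkeeping is the locality of the bracket, and within it the first-slot locality, where the lack of skew-symmetry forces the detour through C4); I expect this to be the main (though modest) obstacle. Everything afterwards is the standard interplay between a local bilinear operation, bump-function cutoffs, and the observation that being $L$-valued on $S$ is stable under multiplication by smooth functions and under restriction to open subsets.
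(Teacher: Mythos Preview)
Your proof is correct and follows essentially the same route as the paper's: extend local sections to global ones via bump functions, invoke global involutivity, and use locality of the bracket to descend back. The paper simply asserts that $\phi|_{V} = \psi|_{V}$ implies $[\psi,\psi'](s) = [\phi,\phi'](s)$ without further comment, whereas you take the extra care to justify first-slot locality through the symmetric-part identity from C4); this is a genuine point (skew-symmetry is unavailable), and making it explicit is a nice touch, but the overall architecture is the same.
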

\begin{proof}
If $L$ is locally involutive on every open subset $U \subseteq M$, it is involutive. Conversely, let $U \subseteq M$ be a given open set. Let $\psi,\psi' \in \Gamma_{U}(E;L)$. We must prove that $[\psi,\psi'] \in \Gamma_{U}(E;L)$. Pick an arbitrary $s \in U \cap S$ and its precompact neighborhood $V \subseteq M$ such that $\ol{V} \subseteq U$. Let $\eta \in C^{\infty}(M)$ be a bump function satisfying $\supp(\eta) \subseteq U$ and $\eta|_{\ol{V}} = 1$. Define global sections $\phi = \eta \psi$ and $\phi' = \eta \psi'$. It follows that $\phi,\phi' \in \Gamma(E;L)$. Hence by assumption, $[\phi,\phi'] \in \Gamma(E;L)$. Since $\phi|_{V} = \psi|_{V}$ and $\phi'|_{V} = \phi'|_{V}$, one has $[\psi,\psi'](s) = [\phi,\phi'](s) \in L_{s}$. As $s \in U \cap S$ was arbitrary, we have just proved that $[\psi,\psi'] \in \Gamma_{U}(E;L)$. 

To prove the second claim, suppose $L$ is locally involutive on every set $U_{\alpha}$ of an open cover $\{U_{\alpha} \}_{\alpha \in I}$ of $S$. We shall prove that $L$ is involutive. Let $\psi,\psi' \in \Gamma(E;L)$. Pick any $s \in S$. Hence $s \in U_{\alpha}$ for some $\alpha \in I$. By assumption, we have $[\psi,\psi']|_{U_{\alpha}} = [\psi|_{U_{\alpha}}, \psi'|_{U_{\alpha}}] \in \Gamma_{U_{\alpha}}(E;L)$. In particular, we have $[\psi,\psi'](s) \in L_{s}$. As $s \in S$ was arbitrary, this proves that $[\psi,\psi'] \in \Gamma(E;L)$. \end{proof}
\begin{example} \label{ex_involutive}
At this point, let us come with some trivial and not very interesting examples. 

First, for every submanifold $S \subseteq M$, the restricted vector bundle $E_{S}$ forms a subbundle of $E$ supported on $S$. One has $E^{\perp}_{S} = 0_{S}$, where $0_{S}$ denotes the image of the zero section of $E_{S}$. Hence $E_{S}$ is coisotropic. In general, it is not compatible with the anchor and obviously, it is involutive. 

On the other hand, the zero section $0_{S}$ is an isotropic subbundle of $E$ supported on $S$. One has $0_{S}^{\perp} = E_{S}$. Trivially, it is compatible with the anchor. It follows immediately from Proposition \ref{tvrz_companchcons} that $0_{S}$ is involutive. 
\end{example}
We have thus given an example of an involutive subbundle which is not compatible with the anchor. Similarly, the second example is an involutive subbundle, whose orthogonal complement is not compatible with the anchor. Interestingly, those are the only such cases. This observation appeared in \cite{li2014dirac}. We have the following statement:
\begin{tvrz} \label{tvrz_invcons}
Let $L$ be an involutive subbundle of $E$ supported on $S$.
\begin{enumerate}[(i)]
\item Suppose $L \neq E_{S}$. Then $L$ is compatible with the anchor. 
\item Suppose $L \neq 0_{S}$. Then $L^{\perp}$ is compatible with the anchor.
\item Let $\psi \in \Gamma(E;L)$ and $\phi \in \Gamma(E;L^{\perp})$. Then $[\psi,\phi] \in \Gamma(E;L^{\perp})$.
\end{enumerate}
\end{tvrz}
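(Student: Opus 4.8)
The plan is to establish (i) directly from involutivity, then deduce (iii) from (i), and finally obtain (ii) from (i) and (iii); I therefore reorder the three items. The single mechanism behind all of them is the observation that for \emph{any} $\psi'\in\Gamma(E)$ and any $g\in C^{\infty}(M)$ with $g|_{S}=0$, the product $g\psi'$ lies in $\Gamma(E;L)$, since $(g\psi')|_{S}=0\in\Gamma(L)$. This lets me feed arbitrary sections, multiplied by an $S$-vanishing function, into the involutivity hypothesis, so that the Leibniz term of C1) is isolated on $S$.

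For (i), fix $s\in S$ and $e_{0}\in L_{s}$, and extend $e_{0}$ to $\psi_{0}\in\Gamma(E;L)$. For arbitrary $\psi'\in\Gamma(E)$ and $g$ with $g|_{S}=0$, involutivity gives $[\psi_{0},g\psi']\in\Gamma(E;L)$, whereas C1) together with $g|_{S}=0$ yields $[\psi_{0},g\psi']|_{S}=\Li{\rho(\psi_{0})}(g)|_{S}\,\psi'|_{S}$. Evaluating at $s$ gives $\Li{\rho(e_{0})}(g)(s)\cdot\psi'(s)\in L_{s}$. If $\rho(e_{0})\notin T_{s}S$, I may pick $g$ with $g|_{S}=0$ and $\Li{\rho(e_{0})}(g)(s)=\<\dr g(s),\rho(e_{0})\>\neq0$; since $\psi'(s)$ ranges over all of $E_{s}$, this forces $E_{s}\subseteq L_{s}$, i.e.\ $L_{s}=E_{s}$. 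Hence at every point either $\rho(L_{s})\subseteq T_{s}S$ or $L_{s}=E_{s}$. As $L$ has (component-wise) constant rank, the locus $\{L_{s}=E_{s}\}$ is open and closed, so the assumption $L\neq E_{S}$ excludes it and leaves $\rho(L)\subseteq TS$, which is the anchor compatibility.

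For (iii) I assume $L\neq E_{S}$, so that (i) supplies anchor compatibility of $L$ (this is the only case in which (iii) is invoked below). I check $\<[\psi,\phi]|_{S},\sigma\>=0$ for every $\sigma\in\Gamma(L)$, extended to some $\psi'\in\Gamma(E;L)$. Axiom C3) gives $\<[\psi,\phi],\psi'\>=\Li{\rho(\psi)}\<\phi,\psi'\>-\<\phi,[\psi,\psi']\>$. On $S$ the function $\<\phi,\psi'\>$ vanishes, because $\phi|_{S}\in\Gamma(L^{\perp})$ and $\psi'|_{S}\in\Gamma(L)$; its derivative along $\rho(\psi)$ then vanishes on $S$ by (\ref{eq_rhocomp1}), since $L$ is anchor compatible. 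The term $\<\phi,[\psi,\psi']\>|_{S}$ vanishes as well, because $[\psi,\psi']|_{S}\in\Gamma(L)$ by involutivity while $\phi|_{S}\in\Gamma(L^{\perp})$. Thus $\<[\psi,\phi]|_{S},\sigma\>=0$, giving $[\psi,\phi]\in\Gamma(E;L^{\perp})$.

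For (ii) I may assume $L\neq E_{S}$ (otherwise $L^{\perp}=0_{S}$, which is anchor compatible by Example \ref{ex_involutive}), so that (i) and (iii) are available. Mirroring the argument for (i), fix $\phi_{0}\in L^{\perp}_{s}$, extend it to $\phi_{0}\in\Gamma(E;L^{\perp})$, and take arbitrary $\psi'\in\Gamma(E)$ and $g$ with $g|_{S}=0$. Then $g\psi'\in\Gamma(E;L)$, so (iii) gives $[g\psi',\phi_{0}]\in\Gamma(E;L^{\perp})$; expanding with the symmetric identity $[g\psi',\phi_{0}]=\D\<g\psi',\phi_{0}\>-[\phi_{0},g\psi']$ (a consequence of C4), cf.\ the proof of Proposition \ref{tvrz_companchcons}) and C1), and restricting to $S$, I obtain $[g\psi',\phi_{0}]|_{S}=\<\psi',\phi_{0}\>|_{S}\,\D g|_{S}-\Li{\rho(\phi_{0})}(g)|_{S}\,\psi'|_{S}$. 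By (i) and Proposition \ref{tvrz_rhocompequivalent}, $\D g|_{S}\in\Gamma^{0}(L^{\perp})\subseteq\Gamma(L^{\perp})$, so the first term already lies in $L^{\perp}$; subtracting, $\Li{\rho(\phi_{0})}(g)|_{S}\,\psi'|_{S}\in\Gamma(L^{\perp})$. Evaluating at $s$ and arguing exactly as in (i): if $\rho(\phi_{0})\notin T_{s}S$, a suitable $g$ makes the scalar nonzero, forcing $E_{s}\subseteq L^{\perp}_{s}$, i.e.\ $L_{s}=0$; so pointwise either $\rho(L^{\perp}_{s})\subseteq T_{s}S$ or $L_{s}=0$, and constant rank together with $L\neq0_{S}$ gives $\rho(L^{\perp})\subseteq TS$. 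The main obstacle—and the real content—is the dichotomy in (i): recognizing that multiplying an arbitrary section by an $S$-vanishing function lands it in $\Gamma(E;L)$, so the Leibniz term survives on $S$ and, whenever the anchor leaves $TS$, sweeps out all of $E_{s}$; everything else is bookkeeping with C1), C3), and the symmetric identity, the only delicate point being the passage from the pointwise dichotomy to the global statements via constant rank of $L$ on the components of $S$.
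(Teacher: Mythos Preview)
Your proof is correct and follows essentially the same route as the paper: establish (i) first, then (iii) using the anchor compatibility from (i) together with axiom C3), and finally (ii) by mimicking (i) with statement (iii) replacing raw involutivity. The only cosmetic difference is that where the paper picks a single $\psi'(s)$ in a complement $L'_{s}$ of $L_{s}$ (resp.\ $L^{\perp}_{s}$) and reads off $(\Li{\rho(\psi)}f)(s)=0$ directly from $L_{s}\cap L'_{s}=0$, you let $\psi'(s)$ range over all of $E_{s}$ and phrase the conclusion as a pointwise dichotomy plus a constant-rank argument; these are equivalent. Your explicit handling of the border cases (assuming $L\neq E_{S}$ for (iii), and disposing of $L=E_{S}$ at the start of (ii)) is a bit more careful than the paper, which uses (\ref{eq_rhocomp1}) in its proof of (iii) without flagging that this already presupposes (i).
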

\begin{proof}$(i)$ Let $s \in S$ be an arbitrary point and let $e \in L_{s}$. We have to prove that $\rho(e) \in T_{s}S$. One can extend $e$ to a section $\sigma \in \Gamma(L)$ and then to a section $\psi \in \Gamma_{U}(E;L)$ on some neighborhood $U$ of $S$ satisfying $\psi|_{S} = \sigma$. To prove the claim, it suffices to show that $(\Li{\rho(\psi)}f)(s) = 0$ for every $f \in C^{\infty}(M)$ satisfying $f|_{S} = 0$. As $L \neq E_{S}$, there exists a non-zero subspace $L'_{s} \subseteq E_{s}$ such that $E_{s} = L_{s} \oplus L'_{s}$. Choose a local section $\psi' \in \Gamma_{U}(E)$ such that $\psi'(s) \in L'_{s}$ and $\psi'(s) \neq 0$. Note that $(f\psi')|_{S} = 0$, hence $f\psi' \in \Gamma_{U}(E;L)$. As $L$ is locally involutive on $U$ by Lemma \ref{lem_invlocinv}, we have $[\psi,f\psi'] \in\Gamma_{U}(E;L)$. Using the Leibniz rule C1), we obtain
\begin{equation}
[\psi,f\psi'](s) = f(s) [\psi,\psi'](s) + (\Li{\rho(\psi)}f)(s) \psi'(s) = (\Li{\rho(\psi)}f)(s) \psi'(s).
\end{equation}
The left-hand side is $L_{s}$, whereas the right-hand side is in $L'_{s}$. This implies $(\Li{\rho(\psi)}f)(s) = 0$. 

$(iii)$ Let $\psi \in \Gamma(E;L)$ and $\phi \in \Gamma(E;L^{\perp})$ be arbitrary. Let $\sigma \in \Gamma(L)$ be arbitrary. We have to show that $\< [\psi,\phi]|_{S}, \sigma \> = 0$. There is a neighborhood $U$ of $S$ and $\psi' \in \Gamma_{U}(E;L)$ satisfying $\psi'|_{S} = \sigma$. Using C3) and (\ref{eq_rhocomp1}), we obtain the equation
\begin{equation}
\Li{\rho^{!}(\psi|_{S})}( \<\phi,\psi'\>|_{S}) = \< [\psi,\phi]|_{S}, \sigma \> + \< \phi|_{S}, [\psi,\psi']|_{S} \>.
\end{equation}
But by assumption, we have $\<\phi,\psi'\>|_{S} = 0$ and $[\psi,\psi'] \in \Gamma_{U}(E;L)$, hence $\<\phi|_{S}, [\psi,\psi']|_{S}\> = 0$. 

$(ii)$ The proof is the same as in $(i)$, except that we choose $L'_{s}$ to be a non-zero complement of $L_{s}^{\perp}$ and instead of the involutivity of $L$ we use the already proved statement $(iii)$.
\end{proof}
So far, we have considered general involutive subbundles. It turns out that there are good reasons to consider only isotropic ones. 
\begin{tvrz}
Suppose $L$ is an involutive subbundle of $E$ supported on a submanifold $S$, which is not isotropic. Then for any $f \in C^{\infty}(M)$, one has $\D{f}|_{V} \in \Gamma_{V}(L)$, where $V \subseteq S$ is the open subset $V = \{ s \in S \; | \; L_{s} \text{ is not isotropic in } (E_{s},\<\cdot,\cdot\>) \}$. In other words, the restriction $L_{V}$ must contain the image $\rho^{\ast}( (T^{\ast}M)_{V})$. 
\end{tvrz}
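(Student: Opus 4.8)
The plan is to establish the membership $\D f(s) \in L_{s}$ pointwise, for each fixed $s \in V$ and each $f \in C^{\infty}(M)$. The engine will be the \emph{symmetric part} of the Courant bracket. First I would extract from axiom C4) the identity $[\psi,\psi] = \tfrac{1}{2}\D\<\psi,\psi\>$: rewriting $\tfrac{1}{2}\Li{\rho(\psi')}\<\psi,\psi\> = \tfrac{1}{2}\<\dr\<\psi,\psi\>,\rho(\psi')\> = \tfrac{1}{2}\<\D\<\psi,\psi\>,\psi'\>$ via (\ref{eq_rhoastdef}) and $\D = \rho^{\ast}\circ\dr$, the equality $\<[\psi,\psi],\psi'\> = \tfrac{1}{2}\<\D\<\psi,\psi\>,\psi'\>$ holds for every test section $\psi'$, so non-degeneracy of $\<\cdot,\cdot\>$ gives the claim; polarizing (replacing $\psi$ by $\psi + \psi'$) then yields the bilinear identity $[\psi,\psi'] + [\psi',\psi] = \D\<\psi,\psi'\>$. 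Restricting this to $S$ and using that $L$ is involutive, hence locally involutive on any open $U$ by Lemma \ref{lem_invlocinv}, I obtain the key local conclusion: for all $\psi,\psi' \in \Gamma_{U}(E;L)$ the section $\D\<\psi,\psi'\>|_{U\cap S}$ takes values in $L$, since both $[\psi,\psi']|_{U\cap S}$ and $[\psi',\psi]|_{U\cap S}$ do.

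Next I would exploit the failure of isotropy to ``divide by a nonzero norm''. Fix $s \in V$. Since $L_{s}$ is not isotropic and we work over $\R$, polarization of the quadratic form forces the existence of some $e \in L_{s}$ with $\<e,e\> \neq 0$ (otherwise the whole bilinear form on $L_{s}$ would vanish). Extend $e$ to a local section $\psi \in \Gamma_{U}(E;L)$ on a neighborhood $U$ of $s$ with $\psi(s) = e$, and set $h := \<\psi,\psi\> \in C^{\infty}(U)$, so $h(s) \neq 0$. For an arbitrary $f \in C^{\infty}(M)$ the section $f\psi$ again lies in $\Gamma_{U}(E;L)$, and $\<\psi,f\psi\> = fh$, so the conclusion of the previous paragraph applied to the pair $(\psi,f\psi)$ gives $\D(fh)(s) \in L_{s}$; taking $f = 1$ gives $\D h(s) \in L_{s}$. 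The Leibniz identity $\D(fh) = f\,\D h + h\,\D f$, immediate from $\D = \rho^{\ast}\circ\dr$, evaluated at $s$ then yields $h(s)\,\D f(s) = \D(fh)(s) - f(s)\,\D h(s) \in L_{s}$, and dividing by $h(s) \neq 0$ produces $\D f(s) \in L_{s}$.

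Finally I would assemble the pointwise statements and address openness. That $V$ is open in $S$ follows by the same extension trick: near any $s_{0} \in V$ there is a local section $\sigma$ of $L$ with $\<\sigma,\sigma\>(s_{0}) \neq 0$, and continuity of $\<\sigma,\sigma\>$ keeps it nonzero nearby. Since $\D f$ is a smooth global section of $E$ whose value at every $s \in V$ lies in $L_{s}$, its restriction $\D f|_{V}$ is a smooth section of $L_{V}$; as $\dr f(s)$ ranges over all of $T^{\ast}_{s}M$ as $f$ varies, this is exactly the containment $\rho^{\ast}((T^{\ast}M)_{V}) \subseteq L_{V}$. The step I expect to be most delicate is the bridge from C4), stated only on the diagonal $[\psi,\psi]$, to the polarized identity $[\psi,\psi']+[\psi',\psi] = \D\<\psi,\psi'\>$ and the careful rewriting through $\rho^{\ast}$; once that is in place, the ``division'' argument is short, its only genuine input being the elementary linear-algebra fact that a non-isotropic real quadratic space carries a vector of nonzero norm.
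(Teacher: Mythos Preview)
Your proof is correct and follows essentially the same approach as the paper: pick $\psi \in \Gamma_{U}(E;L)$ with $\<\psi,\psi\>(s) \neq 0$, exploit involutivity on the pair $(\psi, f\psi)$, and use the symmetric part of the bracket (equivalently, axioms C1) and C4)) to isolate the term $\<\psi,\psi\>(s)\,\D f(s)$, which must then lie in $L_{s}$. The paper carries this out via the single computation $[f\psi,\psi](s) = f(s)[\psi,\psi](s) - (\Li{\rho(\psi)}f)(s)\,\psi(s) + \<e,e\>\,\D f(s)$, whereas you first pass through the polarized identity $[\psi,\psi']+[\psi',\psi] = \D\<\psi,\psi'\>$ and then the Leibniz rule for $\D$; the content is the same.
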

\begin{proof}
It is easy to see that $V$ is an open subset of $S$. Let $s \in V$ be arbitrary. There is thus an element $e \in L_{s}$ with $\<e,e\> \neq 0$. Extend $e$ to a section $\sigma \in \Gamma(L)$ and find a local section $\psi \in \Gamma_{U}(E;L)$ on some neighborhood $U$ of $S$, such that $\psi|_{S} = \sigma$. Let $f \in C^{\infty}(M)$ be an arbitrary function. Then also $f\psi \in \Gamma_{U}(E;L)$. A combination of C1) and C4) yields
\begin{equation}
[f\psi,\psi](s) = f(s) [\psi,\psi](s) - (\Li{\rho(\psi)}(f)) \psi(s) + \<e,e\> \D{f}(s).
\end{equation}
As $L$ is involutive, all terms except for the last one live in $L_{s}$. As $\<e,e\> \neq 0$, we have also $\D{f}(s) \in L_{s}$. Since $s \in V$ was arbitrary, we have proved that $\D{f}|_{V} \in \Gamma_{V}(L)$. 
\end{proof}
Observations made in previous paragraphs lead us to the main definition of this section.
\begin{definice} \label{def_involutive}
Let $(E,\rho,\<\cdot,\cdot\>,[\cdot,\cdot])$ be a Courant algebroid. A subbundle $L$ of $E$ supported on $S$ is called an \textbf{almost involutive structure supported on $S$}, if
\begin{enumerate}[(i)]
\item $L$ is isotropic;
\item $L^{\perp}$ is compatible with the anchor $\rho$.
\end{enumerate}
One deletes the adjective almost when $L$ is involutive. An (almost) involutive structure $L$ is called an \textbf{(almost) Dirac structure supported on $S$}, if $L$ is maximally isotropic.
\end{definice}
\begin{rem} \label{rem_maxisoanchcomp}
Note that for maximally isotropic $L$, the assumption of the compatibility of $L^{\perp}$ with the anchor follows from a weaker assumption. Indeed, if $L$ is compatible with the anchor, every $f \in C^{\infty}(M)$ with $f|_{S} = 0$ satisfies $\D{f}|_{S} \in \Gamma^{0}(L^{\perp})$ by Proposition \ref{tvrz_rhocompequivalent}. By Lemma \ref{lem_maxiso} $(iii)$, we have $\D{f}|_{S} \in \Gamma^{0}(L)$. Proposition \ref{tvrz_rhocompequivalent} then gives the compatibility of $L^{\perp}$ with the anchor.
\end{rem}
\begin{rem}
Compare this definition to the one of a \textbf{Lie subalgebroid}. See e.g. Definition 4.3.14 of \cite{Mackenzie}. There is one very significant difference. In general, there is no bracket $[\cdot,\cdot]_{L}$ induced naturally on $\Gamma(L)$. Indeed, the obvious idea would be to define $[\sigma,\sigma']_{L} := [\psi,\psi']|_{S}$, where $\psi,\psi' \in \Gamma_{U}(E;L)$ are some local sections defined on a neighborhood $U$ of $S$ and satisfying $\psi|_{S} = \sigma$ and $\psi'|_{S} =\sigma'$. However, the second statement of Proposition \ref{tvrz_companchcons} shows that this construction can (and usually will) depend on the extension of $\sigma$.
\end{rem}
\begin{rem}
In the literature \cite{bursztyn2008courant,li2009courant,li2014dirac}, people usually considered just (almost) Dirac structures, most of the times only with the split signature (i.e. Lagrangian). As we shall demonstrate in the following, there is no real reason to consider just maximally isotropic subbundles. Note that recently, in \cite{Ikeda:2018rwe,Severa:2019ddq}, involutive structures of non-maximal dimension (supported on the entire base) were considered for generalized tangent bundle and called \textbf{small Dirac structures}.
\end{rem}

Having an almost involutive structure, it would be useful if one could verify its (local) involutivity only on some subset of the module $\Gamma_{U}(E;L)$, which can be in general quite big. This is ensured by the compatibility of $L^{\perp}$ with the anchor.
\begin{lemma} \label{lem_doesnotdepend}
Let $L$ be an almost involutive structure over $S$. Let $\psi,\psi' \in \Gamma(E;L)$ and $\phi,\phi' \in \Gamma(E;L)$ be sections such that $\psi|_{S} = \phi|_{S}$ and $\psi'|_{S} = \phi'|_{S}$. 

Then $[\psi,\psi']|_{S} \in \Gamma(L)$, iff $[\phi,\phi']|_{S} \in \Gamma(L)$.
\end{lemma}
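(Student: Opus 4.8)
The plan is to reduce the statement to the behaviour of the bracket on sections vanishing along $S$, using $\R$-bilinearity together with the consequences of anchor compatibility already established. First I would record a preliminary observation: $L$ itself is compatible with the anchor. Indeed, since $L$ is isotropic we have $L \subseteq L^{\perp}$, and since $L^{\perp}$ is compatible with the anchor, $\rho(L) \subseteq \rho(L^{\perp}) \subseteq TS$. This lets me invoke Proposition \ref{tvrz_companchcons} with $L$ playing the role of the anchor-compatible subbundle.

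Next I would set $a := \phi - \psi$ and $a' := \phi' - \psi'$. By hypothesis $a|_{S} = a'|_{S} = 0$, and since the zero section takes values in $L$, these vanishing sections automatically belong to $\Gamma(E;L)$. Expanding by $\R$-bilinearity of the bracket,
\[
[\phi,\phi']|_{S} = [\psi,\psi']|_{S} + [\psi,a']|_{S} + [a,\psi']|_{S} + [a,a']|_{S},
\]
so it suffices to prove that the three correction terms restrict to sections of $L$ over $S$; then $[\phi,\phi']|_{S}$ and $[\psi,\psi']|_{S}$ differ by an element of $\Gamma(L)$, which yields the asserted equivalence (the statement being symmetric in the two pairs, a single inclusion settles both directions).

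For the first and third correction terms, $[\psi,a']|_{S}$ and $[a,a']|_{S}$, the first entry lies in $\Gamma(E;L)$ while the second vanishes on $S$; the first assertion of Proposition \ref{tvrz_companchcons} shows that both vanish identically on $S$. The genuinely delicate term is $[a,\psi']|_{S}$, where the vanishing section now occupies the first slot. Here the crude output of Proposition \ref{tvrz_companchcons} places it only in $\Gamma^{0}(L^{\perp})$, which is not what we want; I expect this to be the main obstacle. I would resolve it using the symmetric bracket identity $[a,\psi'] + [\psi',a] = \D\langle a,\psi'\rangle$ (a consequence of C4), combined with $[\psi',a]|_{S} = 0$ and the fact that $\langle a,\psi'\rangle|_{S} = \langle a|_{S},\psi'|_{S}\rangle = 0$. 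This gives $[a,\psi']|_{S} = \D\langle a,\psi'\rangle|_{S}$, where $\langle a,\psi'\rangle$ is a function vanishing along $S$.

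The key move is then to apply Proposition \ref{tvrz_rhocompequivalent} not to $L$ but to $L^{\perp}$: since $L^{\perp}$ is anchor-compatible, every $f \in C^{\infty}(M)$ with $f|_{S} = 0$ satisfies $\D f|_{S} \in \Gamma^{0}\big((L^{\perp})^{\perp}\big) = \Gamma^{0}(L)$. This lands $[a,\psi']|_{S}$ in $\Gamma^{0}(L) \subseteq \Gamma(L)$ and closes the argument. It is precisely at this step that the hypothesis on $L^{\perp}$ (rather than merely on $L$) enters essentially, which is what makes the full strength of the almost involutive structure necessary.
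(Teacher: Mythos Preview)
Your proof is correct and follows essentially the same approach as the paper: both argue that $L$ and $L^{\perp}$ are anchor-compatible and then invoke Proposition~\ref{tvrz_companchcons}. The only cosmetic difference is that for the term $[a,\psi']|_{S}$ you unpack the argument via C4) and Proposition~\ref{tvrz_rhocompequivalent}, which is exactly how the second assertion of Proposition~\ref{tvrz_companchcons} is proved; the paper simply cites that assertion directly (applied with $L^{\perp}$ in place of $L$, using $\psi' \in \Gamma(E;L) \subseteq \Gamma(E;L^{\perp})$).
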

\begin{proof}
By definition, both $L$ and $L^{\perp}$ are compatible with the anchor. The result then follows immediately from Proposition \ref{tvrz_companchcons}.
\end{proof}
\begin{tvrz} \label{tvrz_invongen}
Let $L$ be an almost involutive structure supported on $S$. 

Suppose that for every $s \in S$, there is a local frame $(\sigma_{\mu})_{\mu=1}^{\rk(L)}$ for $L$ over its neighborhood $V \subseteq S$ together with a collection $\{ \psi_{\mu} \}_{\mu=1}^{\rk(L)} \subseteq \Gamma_{U}(E)$, where $V \subseteq U$ and $\psi_{\mu}|_{V} = \sigma_{\mu}$. Moreover, assume that $[\psi_{\mu},\psi_{\nu}] \in \Gamma_{U}(E;L)$ for all $\mu,\nu \in \{1,\dots,\rk(L)\}$. 

Then $L$ is an involutive structure supported on $S$. 
\end{tvrz}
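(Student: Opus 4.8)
The plan is to reduce the claim to a local, frame-level verification and then exploit the isotropy of $L$. By the second part of Lemma~\ref{lem_invlocinv}, it suffices to exhibit an open cover $\{U_{s}\}_{s\in S}$ of $S$ on which $L$ is locally involutive. For each $s\in S$ I take the neighborhood $U$, the frame $(\sigma_{\mu})$ over $V\subseteq U$, and the extensions $\psi_{\mu}\in\Gamma_{U}(E)$ provided by the hypothesis. Shrinking $U$, I may arrange that $V=U\cap S$ is a closed embedded submanifold of $U$ (every embedded submanifold is locally closed, and $V$ is open in $S$), so that $(\sigma_{\mu})$ trivialises $L$ precisely over $U\cap S$ while $[\psi_{\mu},\psi_{\nu}]\in\Gamma_{U}(E;L)$ still holds after restriction. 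These $U$ cover $S$, so it remains to prove local involutivity on a single such $U$.

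Fix $U,V,(\sigma_{\mu}),(\psi_{\mu})$ as above and take arbitrary $\psi,\psi'\in\Gamma_{U}(E;L)$; I must show $[\psi,\psi']|_{V}\in\Gamma(L)$. Since $\psi|_{V},\psi'|_{V}\in\Gamma_{V}(L)$ and $(\sigma_{\mu})$ is a frame for $L$ over $V$, I write $\psi|_{V}=a^{\mu}\sigma_{\mu}$ and $\psi'|_{V}=b^{\mu}\sigma_{\mu}$ with $a^{\mu},b^{\mu}\in C^{\infty}(V)$, extend them to $\tilde a^{\mu},\tilde b^{\mu}\in C^{\infty}(U)$ via the extension property, and set $\tilde\psi=\tilde a^{\mu}\psi_{\mu}$, $\tilde\psi'=\tilde b^{\mu}\psi_{\mu}\in\Gamma_{U}(E;L)$. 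Then $\tilde\psi|_{V}=\psi|_{V}$ and $\tilde\psi'|_{V}=\psi'|_{V}$, so by Lemma~\ref{lem_doesnotdepend} (whose proof rests only on the local Proposition~\ref{tvrz_companchcons} and applies verbatim on $U$) the membership $[\psi,\psi']|_{V}\in\Gamma(L)$ is equivalent to $[\tilde\psi,\tilde\psi']|_{V}\in\Gamma(L)$. This is the step that lets me discard the unknown behaviour of $\psi,\psi'$ off $V$ and work solely with the given frame.

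It remains to expand $[\tilde\psi,\tilde\psi']$. Expanding the second slot by C1) and the first slot by the derived identity $[f\psi,\psi']=f[\psi,\psi']-(\Li{\rho(\psi')}f)\psi+\<\psi,\psi'\>\D f$ (a consequence of C1) and the polarisation $[\psi,\psi']+[\psi',\psi]=\D\<\psi,\psi'\>$ of C4)), the bracket $[\tilde a^{\mu}\psi_{\mu},\tilde b^{\nu}\psi_{\nu}]$ splits into four kinds of terms: a main term $\tilde a^{\mu}\tilde b^{\nu}[\psi_{\mu},\psi_{\nu}]$, which lies in $\Gamma(L)$ over $V$ by the standing hypothesis; two anchor/derivative terms proportional to $\psi_{\mu}$ or $\psi_{\nu}$, hence to $\sigma_{\mu}$ or $\sigma_{\nu}$ over $V$, so again in $\Gamma(L)$; and the anomalous term $\tilde b^{\nu}\<\psi_{\mu},\psi_{\nu}\>\D\tilde a^{\mu}$ coming from the first-slot identity. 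Restricting to $V$, the last term equals $\tilde b^{\nu}\<\sigma_{\mu},\sigma_{\nu}\>\D\tilde a^{\mu}|_{V}$, which vanishes because $L$ is isotropic. Hence $[\tilde\psi,\tilde\psi']|_{V}\in\Gamma(L)$, giving local involutivity on $U$ and, by Lemma~\ref{lem_invlocinv}, involutivity of $L$.

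I expect the main obstacle to be precisely the anomalous $\D f$ term: the first-slot Leibniz rule is not tensorial, and it is only the isotropy of $L$ (forcing $\<\sigma_{\mu},\sigma_{\nu}\>=0$) that removes it, which is exactly where condition (i) of Definition~\ref{def_involutive} is indispensable. The secondary care lies in the reduction to frame-adapted extensions through Lemma~\ref{lem_doesnotdepend}, which itself silently uses that both $L$ and $L^{\perp}$ are anchor-compatible.
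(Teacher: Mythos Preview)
Your proof is correct and follows essentially the same route as the paper's own argument: reduce to frame-adapted extensions via Lemma~\ref{lem_doesnotdepend}, expand the bracket using C1) and C4), and kill the anomalous $\D f$ term by isotropy of $L$. The only cosmetic difference is that you invoke Lemma~\ref{lem_invlocinv} to reduce to local involutivity on each $U$, whereas the paper verifies the global condition pointwise by passing to an auxiliary open $W$ with $W\cap S=V$; both bookkeeping devices serve the same purpose.
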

\begin{proof}
Let $\psi,\psi' \in \Gamma(E;L)$. Pick an arbitrary point $s \in S$. Let $(\sigma_{\mu})_{\mu=1}^{\rk(L)}$ be a local frame over an open set $V \subseteq S$ given by the assumption. Let $\sigma = \psi|_{V}$ and $\sigma' = \psi'|_{V}$ be the respective restrictions in $\Gamma_{V}(L)$. Then $\sigma = f^{\mu} \sigma_{\mu}$ and $\sigma' = g^{\nu} \sigma_{\nu}$ for unique functions $f^{\mu},g^{\nu} \in C^{\infty}(V)$. There exists a set $W$ open in $M$, such that $V \subseteq W \subseteq U$, together with smooth extensions $\hat{f}^{\mu}, \hat{g}^{\nu} \in C^{\infty}(W)$ of $f^{\mu}, g^{\nu}$. One may choose $W$ so that $W \cap S = V$. 

Now, define $\phi,\phi' \in \Gamma_{W}(E;L)$ by $\phi = \hat{f}^{\mu}\psi_{\mu}$ and $\phi' = \hat{g}^{\nu} \psi_{\nu}$. By construction, we have $\phi|_{V} = \sigma$ and $\phi'|_{V} = \sigma'$. Moreover, using C1) and C4), one has 
\begin{equation}
[\phi,\phi'] = \hat{f}^{\mu} \hat{g}^{\nu} [\psi_{\mu},\psi_{\nu}] + \Li{\rho(\phi)}(\hat{g}^{\nu}) \psi_{\nu} - \Li{\rho(\phi')}(\hat{f}^{\mu}) \psi_{\mu} + \<\psi_{\mu},\phi'\> \D{\hat{f}^{\mu}}.
\end{equation}
Restricting both sides to $V$, the first term on the right is in $\Gamma_{V}(L)$ by assumption, the next two due to $\psi_{\mu}|_{V} = \sigma_{\mu}$ and the last one vanishes as $\<\psi_{\mu},\phi'\>|_{V} = \<\sigma_{\mu}, \sigma' \> = 0$, since $L$ is isotropic. Thus $[\phi,\phi'] \in \Gamma_{W}(E;L)$. But $\psi|_{V} = \phi|_{V} = \sigma$ and $\psi'|_{V} = \phi'|_{V} = \sigma'$. By Lemma \ref{lem_doesnotdepend} applied to the almost involutive structure $L_{V}$  in the Courant algebroid $E_{W}$, we get $[\psi,\psi'] \in \Gamma_{W}(E;L)$. In particular, we have $[\psi,\psi'](s) \in L_{s}$. As $s \in S$ was arbitrary, this proves that $L$ is involutive. 
\end{proof}
\begin{example} \label{ex_LADorfman}
Let $(A,a,[\cdot,\cdot]_{A})$ be a Lie algebroid over $M$. Let $\dr^{A}: \Omega^{\bullet}(A) \rightarrow \Omega^{\bullet + 1}(A)$ be the corresponding coboundary operator and $\Li{}^{A}: \Omega^{\bullet}(A) \rightarrow \Omega^{\bullet}(A)$ be the Lie derivative. See e.g. \cite{mackenzie1994}. There is a canonical Courant algebroid structure on the double $E = A \oplus A^{\ast}$, where $\rho(X,\xi) = a(X)$, the fiber-wise metric $\<\cdot,\cdot\>$ is the canonical pairing of $A$ and $A^{\ast}$, and $[\cdot,\cdot]$ is the \textbf{Dorfman bracket} defined by the formula
\begin{equation}
[(X,\xi),(Y,\eta)] = ( [X,Y]_{A}, \Li{X}^{A}\eta - i_{Y}(\dr^{A}\xi)),
\end{equation}
for all $(X,\xi),(Y,\eta) \in \Gamma(A \oplus A^{\ast})$. Let $K$ be a \textbf{Lie subalgebroid} of $A$ over $S \subseteq M$, that is
\begin{enumerate}[(i)]
\item $K$ is a subbundle of $A$ supported on $S$;
\item $a(K) \subseteq TS$;
\item $[X,Y]_{A} \in \Gamma(A;K)$ for all $X,Y \in \Gamma(A;K)$.
\end{enumerate}
One can form the annihilator subbundle $\an(K) \subseteq A^{\ast}_{S}$ and let $L := K \oplus \an(K)$. We claim that $L$ is a Dirac structure in $E$ supported on $S$. 

It follows from the definition of $\an(K)$ that $L$ is isotropic. Note that $\<\cdot,\cdot\>$ has a split signature $(\rk(A),\rk(A))$. In fact, as $\rk(L) = \rk(K) + \rk(\an(K)) = \rk(A)$, we see that $L$ is maximally isotropic. Next, one has $\rho(L) = a(K) \subseteq TS$. Using Remark \ref{rem_maxisoanchcomp}, one finds that also $\rho(L^{\perp}) \subseteq TS$. We conclude that $L$ is an almost Dirac structure. It remains to prove the involutivity. The only non-trivial part is to argue that for $X,Y \in \Gamma(A;K)$ and $\xi,\eta \in \Gamma(A^{\ast};\an(K))$, one has 
\begin{equation} \Li{X}^{A}\eta - i_{Y}(\dr^{A}\xi) \in \Gamma(A^{\ast};\an(K)).
\end{equation}
For every section $Z \in \Gamma(A;K)$, one has 
\begin{equation} 
\begin{split}
\<\Li{X}^{A}\eta, Z \>|_{S} = & \ \Li{a(X)}\<\eta,Z\>|_{S} - \< \eta|_{S}, [X,Z]_{A}|_{S}\> \\
= & \ \Li{a^{!}(X|_{S})}( \<\eta,Z\>|_{S}) - \< \eta|_{S}, [X,Z]_{A}|_{S} \> = 0,
\end{split}
\end{equation}
where we have used the analogue of (\ref{eq_rhocomp1}) for the first term and the involutivity of $K$. This proves that $\Li{X}^{A}\eta \in \Gamma(A^{\ast};\an(K))$. The calculation for the second term is analogous. We conclude that $K \oplus \an(K)$ is a Dirac structure in $A \oplus A^{\ast}$.
\end{example}
We finish this part with an observation useful in the following section.
\begin{tvrz} \label{tvrz_product}
Let $(E_{1},\rho_{1},\<\cdot,\cdot\>_{1},[\cdot,\cdot]_{1})$ and $(E_{2},\rho_{2},\<\cdot,\cdot\>_{2},[\cdot,\cdot]_{2})$ be a pair of Courant algebroids. Let $L_{1} \subseteq E_{1}$ and $L_{2} \subseteq E_{2}$ be a pair of involutive structures supported on $S_{1}$ and $S_{2}$, respectively. Then $L_{1} \times L_{2}$ is an involutive structure in the product Courant algebroid $E_{1} \times E_{2}$ supported on the submanifold $S_{1} \times S_{2}$.
\end{tvrz}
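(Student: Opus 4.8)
The plan is to verify directly the two defining conditions of an almost involutive structure from Definition \ref{def_involutive} for $L := L_1 \times L_2$ inside $E := E_1 \times E_2$, and then to establish involutivity by invoking the frame criterion of Proposition \ref{tvrz_invongen}. Recall that the product carries the orthogonal-sum metric $\<(e_1,e_2),(e_1',e_2')\> = \<e_1,e_1'\>_1 + \<e_2,e_2'\>_2$, the anchor $\rho(e_1,e_2) = (\rho_1(e_1),\rho_2(e_2))$, and the bracket whose only non-trivial values on pullbacks are $[\mathrm{pr}_1^{\ast}\psi_1, \mathrm{pr}_1^{\ast}\psi_1'] = \mathrm{pr}_1^{\ast}[\psi_1,\psi_1']_1$, the analogous formula in the second slot, and $[\mathrm{pr}_1^{\ast}\psi_1, \mathrm{pr}_2^{\ast}\psi_2] = 0$.

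First I would dispatch the metric conditions. For the orthogonal-sum metric one has $\<(\ell_1,\ell_2),(\ell_1',\ell_2')\> = \<\ell_1,\ell_1'\>_1 + \<\ell_2,\ell_2'\>_2 = 0$ whenever the components lie in $L_1, L_2$, so $L$ is isotropic; and the same splitting gives the pointwise identity $L^{\perp} = L_1^{\perp} \times L_2^{\perp}$ (an element is orthogonal to all of $L$ iff each component is orthogonal to the corresponding factor). Hence $\rho(L^{\perp}) \subseteq \rho_1(L_1^{\perp}) \times \rho_2(L_2^{\perp}) \subseteq TS_1 \times TS_2 = T(S_1 \times S_2)$, using that $L_1^{\perp}$ and $L_2^{\perp}$ are compatible with $\rho_1$ and $\rho_2$. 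Thus $L$ is an almost involutive structure supported on $S_1 \times S_2$.

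For involutivity I would build a product frame fit for Proposition \ref{tvrz_invongen}. Fix $(s_1,s_2) \in S_1 \times S_2$ and choose local frames $(\sigma_\mu^1)$ of $L_1$ over $V_1 \ni s_1$ and $(\sigma_\nu^2)$ of $L_2$ over $V_2 \ni s_2$. Shrinking if necessary, pick open $U_i \subseteq M_i$ with $U_i \cap S_i = V_i$ and, by the extension property, sections $\psi_\mu^1 \in \Gamma_{U_1}(E_1)$, $\psi_\nu^2 \in \Gamma_{U_2}(E_2)$ restricting to the frames on $V_i$; since $U_i \cap S_i = V_i$ these automatically lie in $\Gamma_{U_i}(E_i; L_i)$, so the local involutivity of each $L_i$ (Lemma \ref{lem_invlocinv}) yields $[\psi_\mu^1,\psi_{\mu'}^1]_1 \in \Gamma_{U_1}(E_1;L_1)$ and $[\psi_\nu^2,\psi_{\nu'}^2]_2 \in \Gamma_{U_2}(E_2;L_2)$. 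The pulled-back sections $\mathrm{pr}_1^{\ast}\psi_\mu^1$ and $\mathrm{pr}_2^{\ast}\psi_\nu^2$ then restrict on $V_1 \times V_2$ to a frame of $L$, because $(L_1 \times L_2)_{(s_1,s_2)} = (L_1)_{s_1} \oplus (L_2)_{s_2}$ and the ranks add.

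Finally I would evaluate their pairwise brackets through the product bracket rule: $[\mathrm{pr}_1^{\ast}\psi_\mu^1, \mathrm{pr}_1^{\ast}\psi_{\mu'}^1] = \mathrm{pr}_1^{\ast}[\psi_\mu^1,\psi_{\mu'}^1]_1$, whose restriction to $(U_1\times U_2)\cap(S_1\times S_2) = V_1\times V_2$ takes values in $L_1 \times \{0\} \subseteq L$, symmetrically for the second factor, and the mixed brackets vanish; all three therefore lie in $\Gamma_{U_1 \times U_2}(E; L)$. Proposition \ref{tvrz_invongen} now applies with $U = U_1 \times U_2$, giving involutivity of $L$. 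The one genuinely delicate point is this last step: a generic section of $E_1 \times E_2$ is a $C^{\infty}(M_1 \times M_2)$-combination of pullbacks and not a pullback itself, so involutivity cannot be read off by bilinearity from the factor brackets. Routing the argument through Proposition \ref{tvrz_invongen} is precisely what localizes the verification to the product frame and lets the proposition absorb the Leibniz correction terms, whose dangerous piece is killed by the isotropy of $L$ already established.
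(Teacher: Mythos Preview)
Your proof is correct and follows essentially the same route as the paper: verify the almost involutive conditions directly via the product splitting, then invoke Proposition~\ref{tvrz_invongen} to reduce involutivity to pullback sections where the bracket decouples. The only cosmetic difference is that you take the frame to consist of pure pullbacks $(\psi_\mu^1,0)$ and $(0,\psi_\nu^2)$, whereas the paper phrases it in terms of combined pullbacks $(\psi_1,\psi_2)$; both choices work for the same reason.
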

\begin{proof}
Let $\psi_{1} \in \Gamma(E_{1})$ and $\psi_{2} \in \Gamma(E_{2})$. By $(\psi_{1},\psi_{2}) \in \Gamma(E_{1} \times E_{2})$ we denote the corresponding pullback section. Let $\pi_{i}: M_{1} \times M_{2} \rightarrow M_{i}$ be the projections, $i \in \{1,2\}$. The Courant algebroid structures $\rho$ and $\<\cdot,\cdot\>$ are defined on pullback sections
\begin{equation}
\rho(\psi_{1},\psi_{2}) := (\rho_{1}(\psi_{1}), \rho_{2}(\psi_{2})), \;\; \< (\psi_{1},\psi_{2}), (\psi'_{1},\psi'_{2}) \> := \<\psi_{1}, \psi'_{1}\>_{1} \circ \pi_{1} + \< \psi_{2}, \psi'_{2} \>_{2} \circ \pi_{2},
\end{equation}
for all $\psi_{1},\psi'_{1} \in \Gamma(E_{1})$ and $\psi_{2},\psi'_{2} \in \Gamma(E_{2})$. The bracket $[\cdot,\cdot]$ is given by 
\begin{equation} \label{eq_prodbracket}
[(\psi_{1},\psi_{2}), (\psi'_{1},\psi'_{2}) ] := ( [\psi_{1},\psi'_{1}]_{1}, [\psi_{2},\psi'_{2}]_{2}).
\end{equation}
On general sections, all operations are defined by $C^{\infty}$-linearity and axioms C1) and C4). It is straightforward to prove that $L_{1} \times L_{2}$ is an almost involutive structure. Consequently, we may employ Proposition \ref{tvrz_invongen} and prove the involutivity only on the sections of the form $(\psi_{1},\psi_{2})$ and $(\psi'_{1},\psi'_{2})$, where $\psi_{1},\psi'_{1} \in \Gamma(E_{1};L_{1})$ and $\psi_{2},\psi'_{2} \in \Gamma(E_{2};L_{2})$. Plugging into (\ref{eq_prodbracket}) and using the involutivity of $L_{1}$ and $L_{2}$, we obtain that $[(\psi_{1},\psi_{2}),(\psi'_{1},\psi'_{2})] \in \Gamma(E_{1} \times E_{2} ; L_{1} \times L_{2})$. Hence $L_{1} \times L_{2}$ is an involutive structure in $E_{1} \times E_{2}$ supported on $S_{1} \times S_{2}$. 
\end{proof}
\begin{rem} \label{rem_prodnotworking}
For general signatures of $E_{1}$ and $E_{2}$, this statement \textbf{does not hold} if we replace the word ``involutive'' by ``Dirac''. This was one of the biggest motivations to drop the requirement of maximal isotropy. To see the issue, let $(p_{1},q_{1})$ and $(p_{2},q_{2})$ be the signatures of $\<\cdot,\cdot\>_{1}$ and $\<\cdot,\cdot\>_{2}$, respectively. If both $L_{1}$ and $L_{2}$ are Dirac structures, we get $\rk(L_{1} \times L_{2}) = \min\{p_{1},q_{1}\} + \min\{p_{2},q_{2}\}$. The signature of $\<\cdot,\cdot\>$ is $(p_{1}+p_{2},q_{1}+q_{2})$. This shows that $L_{1} \times L_{2}$ is a Dirac structure, iff 
\begin{equation}
\min\{p_{1},q_{1}\} + \min\{p_{2},q_{2}\} = \min\{p_{1}+p_{2},q_{1}+q_{2}\}.
\end{equation}
But this is simply not true in general. Note that for split signatures, everything works fine. 
\end{rem}
\begin{rem}
Note that our definition of involutive structure in general \textit{does not} include the case $L = 0_{S}$, as $0_{S}^{\perp} = E_{S}$ is usually not compatible with the anchor. This is on purpose, to avoid some unnecessary issues. For example, Proposition \ref{tvrz_product} would not stand. Indeed, if $L_{1} \neq 0_{S_{1}}$ and $L_{2} = 0_{S_{2}}$, then $(L_{1} \times 0_{S_{2}})^{\perp} = L_{1}^{\perp} \times E_{S_{2}}$ is in general not compatible with the anchor, hence in view or Proposition \ref{tvrz_invcons}, it cannot be involutive.
\end{rem}
\section{Relations and their compositions} \label{sec_relations}
Now, let us turn our attention to the main subject of this paper. It is supposed to generalize the concept of Courant algebroid relations introduced in \cite{li2014dirac}, in particular to work well for Courant algebroids of arbitrary signatures. In view of Remark \ref{rem_prodnotworking}, we drop the condition of maximality to ensure that the Courant algebroid relations can be composed at least on the level of linear algebra. 

First, recall the following usual convention. If $(E,\rho,\<\cdot,\cdot\>,[\cdot,\cdot])$ is a Courant algebroid, by $\ol{E}$ one denotes the Courant algebroid $(E,\rho,-\<\cdot,\cdot\>,[\cdot,\cdot])$. It allows one to make the following definition.
\begin{definice} \label{def_rel}
Let $(E_{1},\rho_{1},\<\cdot,\cdot\>_{1},[\cdot,\cdot]_{1})$ and $(E_{2},\rho_{2},\<\cdot,\cdot\>_{2},[\cdot,\cdot]_{2})$ be a pair of Courant algebroids over base manifolds $M_{1}$ and $M_{2}$, respectively. 

By a \textbf{Courant algebroid relation from $E_{1}$ to $E_{2}$} we mean an involutive structure $R \subseteq E_{1} \times \ol{E}_{2}$ supported on a submanifold $S \subseteq M_{1} \times M_{2}$. We will use the notation $R: E_{1} \dra E_{2}$. 

If $S$ happens to be a graph of a smooth map $\varphi: M_{1} \rightarrow M_{2}$, $S = \gr(\varphi)$, we say that $R$ is a \textbf{Courant algebroid morphism from $E_{1}$ to $E_{2}$ over $\varphi$} and write $R: E_{1} \rat E_{2}$. 
\end{definice}
\begin{rem}
There are some remarks in order. Compared to \cite{li2014dirac}, we do not require $R$ to be maximally isotropic (or as in their case, Lagrangian). We still use the same name for the structure, though. We could have decorated it with some adjectives like ``generalized'' or ``weak'' but no real confusion should arise. 
\end{rem}

\begin{lemma}
Let $R: E_{1} \dra E_{2}$ be a Courant algebroid relation. By a \textbf{transpose relation} $R^{T}: E_{2} \dra E_{1}$, we mean $R$ viewed as a subset of $E_{2} \times \ol{E}_{1}$. 

Then $R^{T}$ is also a Courant algebroid relation. The transpose of a Courant algebroid morphism over $\varphi$ is a Courant algebroid morphism, iff $\varphi$ is a diffeomorphism. 
\end{lemma}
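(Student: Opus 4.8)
The plan is to exhibit $R^{T}$ as the image of $R$ under the canonical interchange map and to check that none of the three conditions defining an involutive structure (Definition \ref{def_involutive} together with involutivity) are disturbed by this transport. Let $\tau \colon M_{1} \times M_{2} \to M_{2} \times M_{1}$ be the swap diffeomorphism $(m_{1},m_{2}) \mapsto (m_{2},m_{1})$, and let $\tau_{E} \colon E_{1} \times \ol{E}_{2} \to E_{2} \times \ol{E}_{1}$ be its obvious vector bundle cover $(e_{1},e_{2}) \mapsto (e_{2},e_{1})$. By definition $R^{T} = \tau_{E}(R)$, and since $\tau$ is a diffeomorphism and $S$ is embedded, $R^{T}$ is supported on the embedded submanifold $S^{T} := \tau(S) \subseteq M_{2} \times M_{1}$.

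The first point to record is that $\tau_{E}$ is an \emph{anti}-isometry: the product metric of $E_{2} \times \ol{E}_{1}$ pulled back along $\tau_{E}$ equals minus the product metric of $E_{1} \times \ol{E}_{2}$. The defining data of an involutive structure are, however, insensitive to an overall sign of the metric. Indeed, isotropy is the vanishing of the metric on $R$, which is unchanged under negation; and orthogonal complements satisfy $L^{\perp_{g}} = L^{\perp_{-g}}$, so $(R^{T})^{\perp} = \tau_{E}(R^{\perp})$. First I would use this to transport isotropy of $R$ to isotropy of $R^{T}$. Next, since the anchor on $E_{2} \times \ol{E}_{1}$ is $(\rho_{2},\rho_{1})$ and $T\tau$ interchanges the two tangent factors, the inclusion $\rho(R^{\perp}) \subseteq TS$ is carried precisely to $\rho((R^{T})^{\perp}) \subseteq TS^{T}$, so $(R^{T})^{\perp}$ is compatible with the anchor. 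Finally, the product bracket \eqref{eq_prodbracket} is manifestly symmetric under interchange of the two factors, so $\tau_{E}$ intertwines the brackets of $E_{1} \times \ol{E}_{2}$ and $E_{2} \times \ol{E}_{1}$; involutivity of $R$ then transports to involutivity of $R^{T}$. This shows $R^{T}$ is an involutive structure in $E_{2} \times \ol{E}_{1}$, i.e. a Courant algebroid relation $R^{T} \colon E_{2} \dra E_{1}$.

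For the second claim, suppose $R$ is a morphism over $\varphi$, so $S = \gr(\varphi)$ and $S^{T} = \{ (\varphi(m_{1}),m_{1}) \mid m_{1} \in M_{1}\}$. By Definition \ref{def_rel}, $R^{T}$ is a morphism exactly when $S^{T}$ is the graph of a globally defined smooth map $\psi \colon M_{2} \to M_{1}$. If $\varphi$ is a diffeomorphism then $S^{T} = \gr(\varphi^{-1})$ with $\varphi^{-1}$ smooth, so $R^{T}$ is a morphism. Conversely, if $S^{T} = \gr(\psi)$, then each point of $M_{2}$ occurs exactly once as the first coordinate of a point of $S^{T}$; surjectivity onto $M_{2}$ forces $\varphi(M_{1}) = M_{2}$, and the single-valuedness of a graph forces $\varphi$ to be injective, whence $\varphi$ is a bijection with $\psi = \varphi^{-1}$. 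Smoothness of $\psi$ is precisely smoothness of $\varphi^{-1}$, so $\varphi$ is a diffeomorphism.

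The computations are routine; the only genuinely delicate step is the metric-sign bookkeeping of the second paragraph. The point worth emphasizing is that passing to $\ol{E}$ on the swapped factor is exactly what makes $\tau_{E}$ relate the two product Courant algebroids up to the harmless global sign, and that isotropy together with the orthogonal complement --- hence the whole notion of involutive structure --- is blind to this sign. Had we worked with the raw product $E_{1} \times E_{2}$ this tidy symmetry would be lost.
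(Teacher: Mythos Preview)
Your argument is correct and is exactly the approach taken in the paper, only spelled out in more detail: the paper compresses your second paragraph into the single observation that $E_{2}\times\ol{E}_{1}$ is canonically identified with $\ol{E}$ where $E=E_{1}\times\ol{E}_{2}$, and then remarks that an involutive structure in $E$ is automatically one in $\ol{E}$ (which is precisely your sign-insensitivity observation). Your treatment of the second claim is also more explicit than the paper's one-line ``clear'', but the content is the same.
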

\begin{proof}
Let $E = E_{1} \times \ol{E}_{2}$. The product Courant algebroid $E_{2} \times \ol{E}_{1}$ can be identified with the Courant algebroid $\ol{E}$. Then $R^{T}$ is an involutive structure in $E_{2} \times \ol{E}_{1}$, iff $R$ is an involutive structure in $\ol{E}$. But that is obvious. The second statement is clear. 
\end{proof}

\begin{example} \label{ex_CArel}
Let us give some trivial examples. The interesting ones have their dedicated section. See also \cite{li2009courant} for some canonical examples of (Lagrangian) Courant algebroid relations.

(i) Let $E$ be any Courant algebroid over $M$. Let $\Delta(E) \subseteq E \times \ol{E}$ be the diagonal embedding of $E$ into the Cartesian product. It is a vector subbundle of $E \times \ol{E}$ supported on $\Delta(M)$. It is easily seen to be maximally isotropic and $\rho(\Delta(E)) = \Delta(\rho(E)) \subseteq \Delta(TM) = T(\Delta(M))$. The orthogonal complement $\Delta(E)^{\perp}$ is automatically compatible with the anchor using Remark \ref{rem_maxisoanchcomp}. Hence $\Delta(E)$ is an almost Dirac structure. 

Proposition \ref{tvrz_invongen} says that it is sufficient to verify the involutivity on sections of the form $(\psi,\psi), (\psi',\psi') $, where $\psi,\psi' \in \Gamma(E)$. But $[(\psi,\psi),(\psi',\psi')] = ([\psi,\psi']_{E},[\psi,\psi']_{E})$ is a section which takes values in $\Delta(E)$ when restricted to $\Delta(M)$. Thus $\Delta(E)$ is a Courant algebroid relation. In fact, we have $\Delta(M) = \gr(1_{M})$, hence $\Delta(E): E \rat E$ is a Courant algebroid morphism over $1_{M}$. 

(ii) Let $E$ be a Courant algebroid over $M$. Every involutive structure $L \subseteq E$ supported on $S \subseteq M$ defines a pair of Courant algebroid relations $L \times \{0\}: E \dra \{0\}$ and $\{0\} \times L: \{0\} \dra E$ supported on $S \times \{\ast\}$ and $\{\ast \} \times S$, respectively. Here $\{0\} \rightarrow \{\ast\}$ is a trivial vector bundle over the singleton equipped with the trivial Courant algebroid structure. Clearly, both these correspondences are one-to-one. This observation will allow one to define pullbacks and pushforwards of involutive structures along Courant algebroid relations. 
\end{example}
\begin{definice}
Let $R: E_{1} \dra E_{2}$ be a Courant algebroid relation supported on $S$. Let $\psi_{1} \in \Gamma(E_{1})$ and $\psi_{2} \in \Gamma(E_{2})$. We say that $\psi_{1}$ and $\psi_{2}$ are \textbf{$R$-related} and write $\psi_{1} \sim_{R} \psi_{2}$, if $(\psi_{1},\psi_{2}) \in \Gamma(E_{1} \times \ol{E}_{2}; R)$. For any $f_{1} \in C^{\infty}(M_{1})$ and $f_{2} \in C^{\infty}(M_{2})$, we write $f_{1} \sim_{S} f_{2}$, if $f_{1}(s_{1}) = f_{2}(s_{2})$ for all $(s_{1},s_{2}) \in S$. 
\end{definice}
\begin{lemma} \label{lem_relatedcons}
Let $R: E_{1} \dra E_{2}$ be a Courant algebroid relation over $S$. Let $\psi_{1},\phi_{1} \in \Gamma(E_{1})$ and $\psi_{2},\phi_{2} \in \Gamma(E_{2})$ be sections satisfying $\psi_{1} \sim_{R} \psi_{2}$ and $\phi_{1} \sim_{R} \phi_{2}$. Then
\begin{enumerate}[(i)]
\item $[\psi_{1},\phi_{1}]_{1} \sim_{R} [\psi_{2},\phi_{2}]_{2}$; 
\item $\<\psi_{1},\phi_{1}\>_{1} \sim_{S} \<\psi_{2},\phi_{2}\>_{2}$.
\end{enumerate}
\end{lemma}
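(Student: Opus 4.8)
The plan is to deduce both statements directly from the two defining features of an involutive structure in the product Courant algebroid $E_{1} \times \ol{E}_{2}$, namely its involutivity and its isotropy, using only the explicit formulas for the product bracket and the product metric recorded in Proposition \ref{tvrz_product}.

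For $(i)$, I would first recall that the bracket on $E_{1} \times \ol{E}_{2}$ acts on pullback sections by $[(\psi_{1},\psi_{2}),(\phi_{1},\phi_{2})] = ([\psi_{1},\phi_{1}]_{1},[\psi_{2},\phi_{2}]_{2})$, noting that passing from $E_{2}$ to $\ol{E}_{2}$ leaves the bracket unchanged. By definition, the hypotheses $\psi_{1} \sim_{R} \psi_{2}$ and $\phi_{1} \sim_{R} \phi_{2}$ say precisely that $(\psi_{1},\psi_{2})$ and $(\phi_{1},\phi_{2})$ belong to $\Gamma(E_{1} \times \ol{E}_{2};R)$. Since $R$ is involutive, their bracket lies in $\Gamma(E_{1} \times \ol{E}_{2};R)$ as well, and reading off its two components gives $([\psi_{1},\phi_{1}]_{1},[\psi_{2},\phi_{2}]_{2}) \in \Gamma(E_{1} \times \ol{E}_{2};R)$, which is exactly the assertion $[\psi_{1},\phi_{1}]_{1} \sim_{R} [\psi_{2},\phi_{2}]_{2}$.

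For $(ii)$, I would exploit that an involutive structure is in particular isotropic. The product metric evaluates on pullback sections as $\<(\psi_{1},\psi_{2}),(\phi_{1},\phi_{2})\> = \<\psi_{1},\phi_{1}\>_{1} \circ \pi_{1} - \<\psi_{2},\phi_{2}\>_{2} \circ \pi_{2}$, the minus sign being the only effect of replacing $E_{2}$ by $\ol{E}_{2}$. Both restrictions $(\psi_{1},\psi_{2})|_{S}$ and $(\phi_{1},\phi_{2})|_{S}$ lie in $\Gamma(R)$, and since $R$ is isotropic the self-pairing of any section of $R$ vanishes. Polarizing this identity (legitimate over $\R$) shows that the mixed pairing of any two sections of $R$ vanishes too. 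Evaluating at an arbitrary point $(s_{1},s_{2}) \in S$ then yields $\<\psi_{1}(s_{1}),\phi_{1}(s_{1})\>_{1} - \<\psi_{2}(s_{2}),\phi_{2}(s_{2})\>_{2} = 0$, which is precisely $\<\psi_{1},\phi_{1}\>_{1} \sim_{S} \<\psi_{2},\phi_{2}\>_{2}$.

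Neither part presents a genuine obstacle; the only points requiring care are the bookkeeping of the sign introduced by $\ol{E}_{2}$ in both the bracket and the metric, and the observation that isotropy, although phrased via self-pairings in Lemma \ref{lem_isosections}, polarizes to the vanishing of all mixed pairings. Both statements are thus essentially immediate consequences of the fact that $R$ is an isotropic involutive subbundle.
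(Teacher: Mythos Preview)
Your proof is correct and follows the same approach as the paper, which simply states that both claims follow immediately from definitions; you have merely spelled out the details. One minor remark: for (ii) you need not invoke polarization, since isotropy $R \subseteq R^{\perp}$ already says directly that the mixed pairing of any two sections of $R$ vanishes.
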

\begin{proof}
Both statements follow immediately from definitions.
\end{proof}
For our future needs, let us define a concept of $R$-related covariant tensors. Note that by covariant tensors on a vector bundle $E$, we mean $C^{\infty}$-multilinear maps from $\Gamma(E)$ to $C^{\infty}(M)$, not ordinary tensor fields on the total space $E$. 
\begin{definice} \label{def_tensRrelated}
Let $t_{1} \in \T_{k}(E_{1})$ and $t_{2} \in \T_{k}(E_{2})$ be two covariant $k$-tensors on $E_{1}$ and $E_{2}$, respectively. We say that $t_{1}$ and $t_{2}$ are \textbf{$R$-related} and and write $t_{1} \sim_{R} t_{2}$, if for all $(s_{1},s_{2}) \in S$ and all $k$-tuples $(e_{1}^{(i)},e_{2}^{(i)}) \in R_{(s_{1},s_{2})}$, $i \in \{1,\dots,k\}$, one has 
\begin{equation}
(t_{1})_{s_{1}}(e_{1}^{(1)}, \dots, e_{1}^{(k)}) = (t_{2})_{s_{2}}(e_{2}^{(1)}, \dots, e_{2}^{(k)})
\end{equation}
\end{definice}
This point-wise definition can be slightly reformulated directly in terms of the product structure $E_{1} \times \ol{E}_{2}$. We leave its proof to the reader.
\begin{lemma} \label{lem_tensRrelated}
Let $t_{1} \in \T_{k}(E_{1})$ and $t_{2} \in \T_{k}(E_{2})$. Define $t \in \T_{k}(E_{1} \times \ol{E}_{2})$ to be a difference of pullbacks $t = p_{1}^{\ast}(t_{1}) - p_{2}^{\ast}(t_{2})$, where $p_{i}: E_{1} \times \ol{E}_{2} \rightarrow E_{i}$ are the projections, $i \in \{1,2\}$. 

More precisely, let $(\psi_{1}^{(1)},\dots,\psi_{1}^{(k)})$ and $(\psi_{2}^{(1)},\dots,\psi_{2}^{(k)})$ be $k$-tuples of sections of $E_{1}$ and $E_{2}$, respectively. Then $t$ is defined by the formula
\begin{equation}
t((\psi_{1}^{(1)},\psi_{2}^{(1)}), \dots, (\psi_{1}^{(k)},\psi_{2}^{(k)})) := t_{1}(\psi_{1}^{(1)},\dots,\psi_{1}^{(k)}) \circ \pi_{1} - t_{2}(\psi_{2}^{(1)},\dots,\psi_{2}^{(k)}) \circ \pi_{2},
\end{equation}
and extended by $C^{\infty}$-linearity. $\pi_{i}: M_{1} \times M_{2} \rightarrow M_{i}$ are the projections, $i \in \{1,2\}$. 

Then $t_{1} \sim_{R} t_{2}$, iff $t(\psi^{(1)},\dots,\psi^{(k)})|_{S} = 0$ for all $\psi^{(1)},\dots,\psi^{(k)} \in \Gamma(E_{1} \times \ol{E}_{2};R)$. 
\end{lemma}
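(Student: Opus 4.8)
The plan is to prove the biconditional in Lemma \ref{lem_tensRrelated} by unwinding both sides into pointwise statements over the support $S$ and showing they coincide. The statement relates an abstract pointwise condition ($t_1 \sim_R t_2$ from Definition \ref{def_tensRrelated}) to a concrete sectionwise condition on the difference tensor $t = p_1^\ast(t_1) - p_2^\ast(t_2)$. The natural strategy is to reduce everything to a single point $(s_1,s_2) \in S$ and a single $k$-tuple of elements of the fiber $R_{(s_1,s_2)}$, since both conditions are genuinely fiberwise once we can produce and evaluate the relevant sections.

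First I would fix a point $(s_1,s_2) \in S$ and unpack the definition of $t$ on pullback sections. For sections $(\psi_1^{(i)},\psi_2^{(i)}) \in \Gamma(E_1 \times \ol{E}_2)$, the defining formula gives
\begin{equation}
t\big((\psi_1^{(1)},\psi_2^{(1)}), \dots, (\psi_1^{(k)},\psi_2^{(k)})\big)(s_1,s_2) = (t_1)_{s_1}(\psi_1^{(1)}(s_1),\dots,\psi_1^{(k)}(s_1)) - (t_2)_{s_2}(\psi_2^{(1)}(s_2),\dots,\psi_2^{(k)}(s_2)),
\end{equation}
so that the condition $t(\psi^{(1)},\dots,\psi^{(k)})|_S = 0$ for $R$-valued sections is exactly the requirement that $(t_1)_{s_1}$ and $(t_2)_{s_2}$ agree on those fiber values $(\psi_1^{(i)}(s_1),\psi_2^{(i)}(s_2)) \in R_{(s_1,s_2)}$ that arise from restrictions of sections in $\Gamma(E_1 \times \ol{E}_2; R)$. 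This already shows that $t_1 \sim_R t_2$ implies the sectionwise vanishing, since by Definition \ref{def_tensRrelated} the difference of the two tensor evaluations vanishes on \emph{every} $k$-tuple from $R_{(s_1,s_2)}$, in particular on those coming from sections.

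For the converse I would need the \textbf{extension property} recorded in the remark after Lemma \ref{lem_isosections}: given any $e = (e_1,e_2) \in R_{(s_1,s_2)}$, one can extend it to a section $\sigma \in \Gamma(R)$ of the restricted bundle, and then to a local section $\psi \in \Gamma_U(E_1 \times \ol{E}_2; R)$ with $\psi|_S = \sigma$, defined on some neighborhood $U$ of $S$; multiplying by a bump function globalizes it if needed. Applying this to each of the $k$ chosen elements of $R_{(s_1,s_2)}$ produces sections in $\Gamma(E_1 \times \ol{E}_2; R)$ whose values at $(s_1,s_2)$ are precisely the prescribed $(e_1^{(i)},e_2^{(i)})$. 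The sectionwise hypothesis then forces the difference of tensor evaluations to vanish at that point for that $k$-tuple, and since $(s_1,s_2)$ and the $k$-tuple were arbitrary, this gives exactly $t_1 \sim_R t_2$.

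The main (though modest) obstacle is simply ensuring that \emph{every} element of the fiber $R_{(s_1,s_2)}$ is realized as the value of some $R$-valued section, so that the sectionwise condition is genuinely equivalent to the pointwise one and not merely implied by it; this is precisely what the extension property guarantees for embedded submanifolds, and it is why the remark after Lemma \ref{lem_isosections} explicitly flags that the analogous statement fails for immersed submanifolds. Beyond invoking that property, the argument is a routine bookkeeping of $C^\infty$-multilinearity and pointwise evaluation, so I expect no further difficulty.
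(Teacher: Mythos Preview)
Your proof is correct and is precisely the natural argument the paper has in mind; indeed, the paper does not supply a proof of this lemma at all, stating only ``We leave its proof to the reader.'' Your reduction to pointwise evaluation via $C^{\infty}$-multilinearity of $t$ and your use of the extension property (the remark after Lemma~\ref{lem_isosections}) to realize arbitrary fiber elements of $R_{(s_{1},s_{2})}$ as values of sections in $\Gamma(E_{1}\times\ol{E}_{2};R)$ is exactly what is required, and you have correctly identified that this step is where the embeddedness of $S$ is used.
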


\begin{rem} \label{rem_contravariant}
There is a suitable definition also for contravariant tensors. Indeed, one just replaces $R \subseteq E$ with a subbundle $R^{\dagger} \subseteq E^{\ast}$ defined as $R^{\dagger} = \mathcal{C}(\an(R))$, where $\mathcal{C}(\xi_{1},\xi_{2}) = (\xi_{1},-\xi_{2})$ for all $(\xi_{1},\xi_{2}) \in E^{\ast}$. Note that for Courant algebroids, one can be tempted to \textit{not} distinguish among covariant and contravariant tensors, since we can identify them using the fiber-wise metric. However, for a general Courant algebroid relation $R$, $t_{1} \sim_{R} t_{2}$ \textit{does not} imply $t_{1}^{\sharp} \sim_{R} t_{2}^{\sharp}$ (where $\sharp$ indicates the ``raising of all indices''). Observe that if $R$ is Lagrangian, $R = R^{\perp}$, this issue does not arise. 
\end{rem}
Naturally, the most important question is whether the Courant algebroid relations can be composed. This all is motivated by a brilliant idea \cite{weinstein1982symplectic}. See also \cite{bates1997lectures} or \cite{guillemin2013semi}. For Courant algebroids, everything what follows is just a slight modification of \cite{li2014dirac}. We add some technical details and point out differences here and there. Some of the statements are based on the personal communication with E. Meinrenken. Note that an uninterested reader may skim through definitions and jump directly to Theorem \ref{thm_composition} which states the main result of this section. 

\begin{definice} \label{def_composition}
Let $R: E_{1} \dra E_{2}$ and $R': E_{2} \dra E_{3}$ be a pair of Courant algebroid relations, where $(E_{i},\rho_{i},\<\cdot,\cdot\>_{i},[\cdot,\cdot]_{i})$ is a Courant algebroid over $M_{i}$, $i \in \{1,2,3\}$. The \textbf{composition $R' \circ R$ of $R$ and $R'$} is a subset of $E_{1} \times \ol{E}_{3}$ defined as
\begin{equation}
R' \circ R = \{ (e_{1},e_{3}) \in E_{1} \times \ol{E}_{3} \; | \; (e_{1},e_{2}) \in R \text{ and } (e_{2},e_{3}) \in R' \text{ for some $e_{2} \in E_{2}$} \}.
\end{equation}
\end{definice}
It is a well-known issue that this is not in general a smooth subbundle of $E_{1} \times \ol{E}_{3}$. In the next section, we will find explicit examples where the composition fails to be a submanifold. In fact, this is the only obstruction for it to be a vector bundle supported on a composition $S' \circ S$ (defined by the same formula). Indeed, this follows immediately from the astounding theorem in \cite{grabowski2009higher}:
\begin{theorem}[\textbf{Grabowski-Rotkiewicz}]\label{thm_GR} Let $q: E \rightarrow M$ be a vector bundle. Let $L \subseteq E$ be a subset closed under the operation $e \mapsto \lambda e$ for all $\lambda \in \R$. Then $L$ is a subbundle supported on a submanifold $S \subseteq M$, iff $L$ is a submanifold. 
\end{theorem}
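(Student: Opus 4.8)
The plan is to prove only the non-trivial implication, since the converse is immediate: if $L$ is a subbundle supported on a submanifold $S \subseteq M$, then $L$ is the total space of a vector bundle sitting inside $E$, hence an embedded submanifold. So assume $L \subseteq E$ is an embedded submanifold closed under all homotheties $h_{\lambda}: E \rightarrow E$, $h_{\lambda}(e) = \lambda e$. The family $(h_{\lambda})_{\lambda \in \R}$ is a smooth action of the multiplicative monoid $(\R,\cdot)$: $h_{1} = \mathrm{id}$, $h_{\lambda} \circ h_{\mu} = h_{\lambda\mu}$, and $h_{0}$ is the fibre-wise projection onto the zero section $0_{M} \cong M$. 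Since $L$ is invariant under each $h_{\lambda}$ and is embedded, every $h_{\lambda}$ restricts to a smooth self-map of $L$; in particular $h_{0}|_{L}: L \rightarrow L$ is a smooth idempotent ($h_{0} \circ h_{0} = h_{0}$) whose image is exactly $S := L \cap 0_{M}$.

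First I would pin down the base. Using the standard fact that a smooth idempotent map has constant rank in a neighborhood of its fixed-point set, the constant-rank theorem shows that $S = h_{0}(L)$ is an embedded submanifold of $L$, hence of $E$, and, lying inside $0_{M} \cong M$, an embedded submanifold of $M$; moreover $h_{0}|_{L}: L \rightarrow S$ is a surjective submersion. Because $L$ is $h_{0}$-invariant, any $e \in L$ has $h_{0}(e) \in L \cap 0_{M}$, so $L$ is supported on $S$, and the fibre over $s \in S$ is $L_{s} = L \cap E_{s}$, a subset of the vector space $E_{s}$ closed under scalar multiplication.

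The crux is fibre-wise linearity, and here lies the main obstacle: a priori the intersection $L_{s} = L \cap E_{s}$ need not be clean or transverse, so its smoothness is not automatic. This is resolved by the submersion property just obtained: since $h_{0}|_{L}: L \rightarrow S$ is a submersion, each fibre $L_{s}$ is an embedded submanifold of $L$, hence of $E$, and therefore of the vector space $E_{s}$. Thus $L_{s}$ is a smooth embedded submanifold which is a cone, i.e. invariant under $v \mapsto \lambda v$ for all $\lambda \in \R$ and containing $0$. I would then invoke the elementary but decisive lemma that a smooth embedded submanifold $C \subseteq \R^{k}$ invariant under all real dilations is a linear subspace (indeed, near $0$ one writes $C$ as a graph over $V = T_{0}C$; dilation invariance forces the graphing map to be homogeneous of degree one, hence linear, and $\mathrm{D}f_{0} = 0$ forces it to vanish, so $C = V$ near $0$ and then globally by scaling). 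Consequently every $L_{s}$ is a linear subspace of $E_{s}$, and, $h_{0}|_{L}$ being a submersion, its dimension $\rk(L) - \dim S$ is locally constant.

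It remains to upgrade ``each fibre is a linear subspace of locally constant dimension'' to ``vector subbundle''. I would restrict the fibre-wise operations of $E$ to $L$: the fibred product $L \times_{S} L$ is a submanifold (the transverse fibred product of the submersion $h_{0}|_{L}$ with itself), and since each $L_{s}$ is closed under addition the restricted addition $L \times_{S} L \rightarrow E$ lands in the embedded submanifold $L$, hence is smooth; likewise for scalar multiplication. Finally, choosing a basis of $L_{s_{0}}$ and extending it to local smooth sections of the submersion $h_{0}|_{L}: L \rightarrow S$, the constancy of the rank together with continuity yields a local frame for $L$ near $s_{0}$, i.e. a local trivialisation compatible with one of $E$. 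This exhibits $L$ as a vector subbundle of $E_{S}$ supported on $S$, completing the proof.
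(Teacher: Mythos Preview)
The paper does not prove this theorem; it is quoted from \cite{grabowski2009higher} and used as a black box, so there is nothing in the paper to compare against. Your argument is essentially correct and is close in spirit to the original: both exploit the smooth monoid action of $(\R,\cdot)$ on $E$ by homotheties, and in particular the limiting projection $h_{0}$ onto the zero section.

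One point you should make explicit. The retraction lemma you invoke (a smooth idempotent has constant rank near its fixed set, hence its image is an embedded submanifold) a priori only yields that $h_{0}|_{L}$ is a submersion in a neighbourhood of $S$, whereas you use the submersion property at \emph{every} $e \in L$ to conclude that each full fibre $L_{s} = (h_{0}|_{L})^{-1}(s)$ is a submanifold. The fix is immediate from the extra structure you already have at hand: for $t \neq 0$ the map $h_{t}|_{L}$ is a diffeomorphism of $L$ (with inverse $h_{1/t}|_{L}$), and $h_{0}|_{L} = (h_{0}|_{L}) \circ (h_{t}|_{L})$, so $\rk\, T_{e}(h_{0}|_{L}) = \rk\, T_{h_{t}(e)}(h_{0}|_{L})$. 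Choosing $t$ small brings $h_{t}(e)$ into the neighbourhood of $S$ where the rank is already known to equal $\dim S$. With that line added, the proof is complete.
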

$R' \circ R$ is easily seen to be closed under the scalar multiplication, hence the observation follows. However, even if $R' \circ R$ is a smooth submanifold, it is still not enough to prove that it is a Courant algebroid relation.  In the remainder of this section, let us use the shorthand notation
\begin{align}
E & = E_{1} \times \ol{E}_{2} \times E_{2} \times \ol{E}_{3}, \; \; E' = E_{1} \times \ol{E}_{3}, \\
M & = M_{1} \times M_{2} \times M_{2} \times M_{3}, \; \; M' = M_{1} \times M_{3}.
\end{align}

We will now establish the conditions under which one can prove that $R' \circ R$ is an involutive structure in $E_{1} \times \ol{E}_{3}$. First, observe that one can write $R' \circ R = p(R' \diamond R)$, where 
\begin{equation} \label{eq_RdiamondR}
R' \diamond R = (R \times R') \cap (E_{1} \times \Delta(E_{2}) \times \ol{E}_{3}) 
\end{equation}
and $p: E \rightarrow E'$ is the projection (on the first and the fourth factor of the Cartesian product). This is a subset closed under scalar multiplication, hence possibly a vector bundle supported on a subset $S' \diamond S$ defined as the intersection of $S \times S'$ and $M_{1} \times \Delta(M_{2}) \times M_{3}$. Note that $S' \circ S = \pi(S' \diamond S)$, where $\pi: M \rightarrow M'$ is the projection. We will need the following notion to proceed:
\begin{definice}
Let $S,S' \subseteq M$ be a pair of submanifolds. One says that $S$ and $S'$ \textbf{intersect cleanly in $M$}, if $S \cap S'$ is a submanifold of $M$ and for each $m \in S \cap S'$, one has
\begin{equation} \label{eq_cleancondition}
T_{m}(S \cap S') = T_{m}S \cap T_{m}S'
\end{equation}
Note that the inclusion $\subseteq$ follows automatically. 
\end{definice}
\begin{rem} \label{rem_clean}
The importance of the condition (\ref{eq_cleancondition}) lies in the following useful property. One can prove, see e.g. Proposition C.3.1 in \cite{hormander2007analysis}, that if $S$ and $S'$ intersect cleanly in $M$, then for each $m \in S \cap S'$, there is a coordinate chart $(U,\phi)$ around $m$, such that $\phi(U \cap S) = \phi(U) \cap V$ and $\phi(U \cap S') = \phi(U) \cap V'$ for a pair of linear subspaces $V,V' \subseteq \R^{n}$. In particular, one has $\phi(U \cap (S \cap S')) = \phi(U) \cap (V \cap V')$, that is $S$ and $S'$ look locally as a pair of intersecting vector subspaces. Note that if $S$ and $S'$ are transverse submanifolds, that is $T_{m}S + T_{m}S' = T_{m}M$ for all $m \in S \cap S'$, they automatically intersect cleanly in $M$.
\end{rem}
Now, one can impose two equivalent conditions on the relations $R$ and $R'$ to ensure that $R' \diamond R$ is a vector subbundle of $E$ supported on $S' \diamond S$. 
\begin{tvrz} \label{tvrz_RdiamondR}
Let $R: E_{1} \dra E_{2}$ and $R': E_{2} \dra E_{3}$ be a pair of Courant algebroid relations over $S$ and $S'$, respectively. Then the following two conditions are equivalent:
\begin{enumerate}[(i)]
\item $R \times R'$ and $E_{1} \times \Delta(E_{2}) \times \ol{E}_{3}$ intersect cleanly in $E$;
\item $S \times S'$ and $M_{1} \times \Delta(M_{2}) \times M_{3}$ intersect cleanly in $M$ and the dimension of the vector subspace $(R' \diamond R)_{m}$ is the same for all $m \in S' \diamond S$.
\end{enumerate}
Both these conditions ensure that $R' \diamond R$ is a subbundle of $E$ over a submanifold $S' \diamond S$. 
\end{tvrz}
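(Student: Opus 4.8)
The plan is to read the statement as an assertion about two genuine vector subbundles of $E$, namely $A := R \times R'$ (supported on $S \times S'$) and $B := E_1 \times \Delta(E_2) \times \ol{E}_3$ (supported on $M_1 \times \Delta(M_2) \times M_3$), whose intersection is precisely $C := A \cap B = R' \diamond R$, supported on $N := S' \diamond S$. Since $C$ is closed under the scalar multiplication $e \mapsto \lambda e$, the Grabowski--Rotkiewicz Theorem \ref{thm_GR} lets me freely exchange ``$C$ is a submanifold'' with ``$C$ is a subbundle over a submanifold''. This already settles the last claim: as soon as either (i) or (ii) forces $C$ to be a submanifold, Theorem \ref{thm_GR} upgrades it to a subbundle supported on $N = S'\diamond S$.

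The engine of the proof is a tangent-space count at a point $e \in C$ over $m \in N$. Writing $q\colon E \to M$ for the projection, the differential $\dr q$ produces short exact sequences $0 \to A_m \to T_e A \to T_m(S\times S') \to 0$ and $0 \to B_m \to T_e B \to T_m(M_1 \times \Delta M_2 \times M_3) \to 0$, the kernels being the vertical parts. Intersecting gives $(T_eA \cap T_eB)\cap \ker \dr q = A_m \cap B_m = C_m$, and setting $P := \dr q(T_eA \cap T_eB)$ yields $\dim(T_eA\cap T_eB) = \dim C_m + \dim P$. Abbreviating $W := T_m(S\times S') \cap T_m(M_1\times\Delta M_2\times M_3)$, one always has the squeeze $T_mN \subseteq P \subseteq W$, the left inclusion coming from $\dr q(T_eC) = T_mN$ once $C$ is a subbundle. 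Since $T_eC \subseteq T_eA\cap T_eB$ is automatic, the clean-intersection identity $T_eC = T_eA\cap T_eB$ is equivalent to $P = T_mN$, while base-cleanness is $W = T_mN$. The implication (ii)$\Rightarrow$(i) then falls out: given $W = T_mN$, the squeeze forces $P = T_mN$ at every $e$, hence the tangent identity, and the constant-rank hypothesis makes $C$ a subbundle (see below), so $C$ is a submanifold.

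The delicate direction is (i)$\Rightarrow$(ii). Clean intersection makes $C$ a submanifold, hence by Theorem \ref{thm_GR} a subbundle over $N$ (so $N$ is a submanifold and $\dim C_m$ is constant), and it gives $P = T_mN$ at every $e$; but I still owe $W = T_mN$, i.e. $W\subseteq P$. This is where one must use the full force of ``for \emph{every} $e \in C$'', because at a generic $e$ the obstruction map $W \to (\ker\dr q)/(A_m+B_m)$, $w \mapsto [\tilde w_A - \tilde w_B]$ measuring the failure of $P=W$ need not vanish: the fibers of $A$ move in directions transverse to $A_m+B_m$. The fix, which I expect to be the main point to phrase carefully, is to evaluate at the zero vector $e = 0_m$. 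There a velocity $w \in W$ lifts into both $T_{0_m}A$ and $T_{0_m}B$ by the \emph{same} vector, obtained by running the zero section over any base curve through $m$ with velocity $w$; this common lift has no fibre component, so $P = W$ holds identically along the zero section. Combined with $P = T_mN$ this gives $W = T_mN$, which is exactly base-cleanness.

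It remains to record the constant-rank $\Rightarrow$ subbundle step invoked above. Over the submanifold $N$ the composite $A|_N \hookrightarrow E|_N \to (E/B)|_N$ is a vector-bundle morphism whose kernel is $C$; constancy of $\dim C_m = \dim(R'\diamond R)_m$ is constancy of its rank, so the kernel is a subbundle supported on $N$, in particular a submanifold. Assembling the two directions yields (i)$\Leftrightarrow$(ii), and Theorem \ref{thm_GR} then certifies that $R'\diamond R$ is a subbundle of $E$ supported on $S'\diamond S$.
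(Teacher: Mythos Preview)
Your proof is correct and follows essentially the same route as the paper. Both directions rest on the same three ingredients: the Grabowski--Rotkiewicz theorem to pass between ``submanifold'' and ``subbundle'', the zero-section trick at $e=0_m$ to extract base-cleanness from (i), and the constant-rank argument (intersection of two subbundles over the common base $N$) to build $C$ as a subbundle from (ii). Your exact-sequence bookkeeping $\dim(T_eA\cap T_eB)=\dim C_m+\dim P$ makes the tangent-space count more explicit than the paper's phrasing, but the content is identical. One cosmetic point: in your (ii)$\Rightarrow$(i) paragraph the logical order should be ``constant rank $\Rightarrow$ $C$ is a subbundle $\Rightarrow$ the squeeze applies'', since the left inclusion $T_mN\subseteq P$ presupposes that $C$ is already a submanifold; you do flag this with ``(see below)'', but stating it in the correct order would read more cleanly.
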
 
\begin{proof}
First, the condition $(i)$ ensures that $R' \diamond R$ is a submanifold of $E$. It is closed under the scalar multiplication, hence by Theorem \ref{thm_GR}, it is a subbundle of $E$ supported on $S' \diamond S$ (which is automatically a submanifold of $M$). Let us show that it also implies $(ii)$. Clearly, the dimension of $(R' \diamond R)_{m}$ is the same for all $m \in S' \diamond S$ and the intersection of $S \times S'$ and $M_{1} \times \Delta(M_{2}) \times M_{3}$ is a submanifold. It thus remains to prove that the condition (\ref{eq_cleancondition}) holds. One only has to prove the inclusion $\supseteq$. If $x \in T_{m}(S \times S') \cap T_{m}(M_{1} \times \Delta(M_{2}) \times M_{3})$ for $m \in S' \diamond S$, it can be viewed as a vector tangent both to $R \times R'$ and $E_{1} \times \Delta(E_{2}) \times \ol{E}_{3}$ at $0_{m}$ (if we identify the base $M$ with the image of the zero section). Hence by assumption, it is tangent to $R' \diamond R$ and consequently to $S' \diamond S$. This shows that $S \times S'$ and $M_{1} \times \Delta(M_{2}) \times M_{3}$ intersect cleanly. 

Conversely, suppose that $(ii)$ holds. In particular, $S' \diamond S$ is a submanifold of $M$. We can write $R' \diamond R = L \cap L' \subseteq E_{S' \diamond S}$, where $L = (R \times R')_{S' \diamond S}$ and $L' = (E_{1} \times \Delta(E_{2}) \times \ol{E}_{3})_{S' \diamond S}$ are the respective restricted vector bundles over $S' \diamond S$.  Now, $L$ and $L'$ are subbundles over the same base and it is a well-known fact that their intersection is a subbundle, iff the dimension of $(L \cap L')_{m}$ is constant for all $m \in S' \diamond S$. Hence $R' \diamond R$ is a subbundle of $E$ supported on $S' \diamond S$. In particular, it is a submanifold of $E$. To prove $(i)$, we only have to show that (\ref{eq_cleancondition}) holds.

It is not difficult to show that $L$ and $L'$ intersect cleanly in $E_{S' \circ S}$. One can now argue that as $S \times S'$ and $M_{1} \times \Delta(M_{2}) \times M_{3}$ intersect cleanly in $M$, there holds the equality
\begin{equation}
T_{e}(L) \cap T_{e}(L') = T_{e}(R \times R') \cap T_{e}(E_{1} \times \Delta(E_{2}) \times \ol{E}_{3}),
\end{equation}
for all $e \in R' \diamond R$. This shows that $R \times R'$ and $E_{1} \times \Delta(E_{2}) \times \ol{E}_{3}$ intersect cleanly in $E$.
\end{proof}
By imposing one of the equivalent conditions $(i)$ and $(ii)$ of Proposition \ref{tvrz_RdiamondR}, we ensure that $R' \circ R$ is the image of the vector subbundle $R' \diamond R$ under the vector bundle map $p: E \rightarrow E'$ over $\pi: M \rightarrow M'$. Similarly to the previous proposition, we will now impose two equivalent conditions making it into a subbundle of $E'$ supported on $S' \circ S$. 
\begin{tvrz} \label{tvrz_RcircR}
Let $R: E_{1} \dra E_{2}$ and $R': E_{2} \dra E_{3}$ be a pair of Courant algebroid relations over $S$ and $S'$, respectively. Suppose they satisfy the conditions in Proposition \ref{tvrz_RdiamondR}. Then the following conditions are equivalent:
\begin{enumerate}[(i)]
\item $R' \circ R$ is a submanifold of $E'$, such that the induced map $p: R' \diamond R \rightarrow R' \circ R$ becomes a smooth surjective submersion;
\item $S' \circ S$ is a submanifold of $M'$, such that the induced map $\pi: S' \diamond S \rightarrow S' \circ S$ becomes a smooth surjective submersion.  The rank of the linear map $p_{m}: (R' \diamond R)_{m} \rightarrow (R' \circ R)_{\pi(m)}$ is the same for all $m \in S' \diamond S$. 
\end{enumerate}
Both these conditions ensure that $R' \circ R$ is a subbundle of $E'$ supported on $S' \circ S$ and $p: R' \diamond R \rightarrow R' \circ R$ becomes a fiber-wise surjective vector bundle map over $\pi: S' \diamond S \rightarrow S' \circ S$. If any of the two equivalent conditions occurs, we say that \textbf{$R$ and $R'$ compose cleanly}.
\end{tvrz}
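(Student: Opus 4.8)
\emph{The plan is to} reduce all smoothness questions to submanifold questions via the Grabowski--Rotkiewicz theorem (Theorem \ref{thm_GR}) and then to study the single vector bundle map $p$ through its behaviour along the zero section. Recall that $p\colon E\to E'$ is the linear projection onto the first and fourth factors; it is a globally defined surjective submersion and a vector bundle morphism over the projection $\pi\colon M\to M'$. By Proposition \ref{tvrz_RdiamondR} we already know that $R'\diamond R$ is a subbundle of $E$ supported on the submanifold $S'\diamond S$, and $R'\circ R=p(R'\diamond R)$, $S'\circ S=\pi(S'\diamond S)$ are closed under scalar multiplication. Hence, by Theorem \ref{thm_GR}, $R'\circ R$ is a subbundle supported on a submanifold if and only if it is a submanifold of $E'$, and likewise for $S'\circ S$ in $M'$; the whole statement is therefore about when the restricted map $p|_{R'\diamond R}$ and its base map $\pi|_{S'\diamond S}$ have submanifold images onto which they submerse. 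The key device is that, $p$ being a vector bundle morphism, it carries the zero section $0_{S'\diamond S}$ onto $0_{S'\circ S}$ and realises $\pi|_{S'\diamond S}$ there; choosing a vector bundle splitting identifies
\begin{equation}
T_{0_{m}}(R'\diamond R)\cong T_{m}(S'\diamond S)\oplus (R'\diamond R)_{m},\qquad T_{0_{\pi(m)}}(R'\circ R)\cong T_{\pi(m)}(S'\circ S)\oplus (R'\circ R)_{\pi(m)},
\end{equation}
and under these identifications $(\dr p)_{0_{m}}$ is block diagonal, equal to $(\dr\pi)_{m}\oplus p_{m}$.

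\emph{First I would prove} $(i)\Rightarrow(ii)$. Assuming $R'\circ R$ is a submanifold, Theorem \ref{thm_GR} makes it a subbundle whose base, being the image of its zero section, is exactly $S'\circ S$; thus $S'\circ S$ is a submanifold and $\rk(R'\circ R)$ is well defined and constant. Restricting the surjective submersion $p$ to the zero sections and using the block diagonal form of $(\dr p)_{0_{m}}$, surjectivity forces both $(\dr\pi)_{m}$ to be onto $T_{\pi(m)}(S'\circ S)$ and $p_{m}$ to be onto $(R'\circ R)_{\pi(m)}$. The former gives that $\pi\colon S'\diamond S\to S'\circ S$ is a surjective submersion, the latter gives $\im(p_{m})=(R'\circ R)_{\pi(m)}$, so that $\rk(p_{m})=\rk(R'\circ R)$ is the same for every $m$; this is precisely $(ii)$, and it also records the fibre-wise surjectivity claimed in the conclusion.

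\emph{Next I would prove} $(ii)\Rightarrow(i)$. Constancy of $\rk(p_{m})$ makes the images $\im(p_{m})$ into a subbundle of the pullback bundle $\pi^{\ast}E'$ over $S'\diamond S$. The idea is to descend this family along the surjective submersion $\pi$ to a genuine subbundle of $E'$ supported on $S'\circ S$; once this is done, $R'\circ R$ is a subbundle, hence a submanifold by Theorem \ref{thm_GR}, and the block diagonal computation above shows $(\dr p)_{0_{m}}$ is onto. Finally, homogeneity under scalar multiplication---$R'\diamond R$ and $R'\circ R$ are invariant under $e\mapsto\lambda e$ and $p(\lambda e)=\lambda\,p(e)$---propagates the submersion property from the zero section to all of $R'\diamond R$, completing $(i)$.

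\emph{The hard part} is the descent in $(ii)\Rightarrow(i)$: a priori the subspaces $\im(p_{m})$ are indexed by $m\in S'\diamond S$ and need not be constant along the fibres of $\pi$, so one must show that they are $\pi$-basic, i.e. depend only on $\pi(m)$, before they can assemble into a subbundle over $S'\circ S$ of the correct rank. This is exactly the point at which cleanness of the composition is used, and the main technical tool is again scalar-multiplication homogeneity, which localises the control of $\rk\big((\dr p)_{e}\big)$ at an arbitrary $e\in R'\diamond R$ to its limit $0_{m}$ on the zero section, where the differential block diagonalises and everything reduces to the already understood maps $(\dr\pi)_{m}$ and $p_{m}$.
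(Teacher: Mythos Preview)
Your argument for $(i)\Rightarrow(ii)$ is correct and essentially matches the paper's, with the added clarity of explicitly using the block-diagonal decomposition of $(\dr p)_{0_m}$ along the zero section.

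For $(ii)\Rightarrow(i)$, however, there is a genuine gap. You correctly identify the crux: one must show that the subspaces $\im(p_m)\subseteq E'_{\pi(m)}$ depend only on $\pi(m)$ before they can assemble into a subbundle over $S'\circ S$. But your proposed resolution does not address this. Invoking ``cleanness of the composition'' is circular, since clean composition is precisely what the proposition is \emph{defining}. And the homogeneity argument you sketch concerns $\rk\big((\dr p)_{e}\big)$ versus $\rk\big((\dr p)_{0_m}\big)$; that is relevant for propagating the \emph{submersion} property once $R'\circ R$ is already known to be a submanifold, but it says nothing about whether $\im(p_m)$ is constant along the fibres of $\pi$, which is what you need in order to show that $R'\circ R$ is a submanifold in the first place. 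You have thus conflated two distinct issues: the $\pi$-basicness of the fibrewise images (needed to produce the subbundle) and the constancy of the differential rank along scaling orbits (needed afterwards for the submersion claim).

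The paper takes a different and more direct route for $(ii)\Rightarrow(i)$: rather than framing it as a descent problem, it regards $p|_{R'\diamond R}\colon R'\diamond R\to E'_{S'\circ S}$ as a constant-rank vector bundle map over the surjective submersion $\pi\colon S'\diamond S\to S'\circ S$, uses the local section property of $\pi$ to reduce to a bundle map over the identity, and then invokes the standard constant-rank theorem for such maps (Theorem 10.34 in Lee) to conclude that the image is a subbundle. Once that is in hand, $p\colon R'\diamond R\to R'\circ R$ is a fibre-wise surjective vector bundle map over a surjective submersion, and such a map is automatically a surjective submersion --- no separate homogeneity argument is needed.
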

\begin{proof}
First, assume that $(i)$ holds. As $R' \circ R$ is closed under the scalar multiplication, by Theorem \ref{thm_GR} it is a vector bundle supported on a submanifold $S' \circ S$. The map $p: R' \diamond R \rightarrow R' \circ R$ is easily seen to be a fiber-wise surjective vector bundle map over $\pi: S' \diamond S \rightarrow S' \circ S$. This already implies that the base map is a submersion and the induced linear map $p_{m}: (R' \diamond R) \rightarrow (R' \circ R)_{\pi(m)}$ has the same rank for all $m \in S' \diamond S$. Hence $(ii)$ follows.

Conversely, if $(ii)$ holds, we have the induced smooth map $p: R' \diamond R \rightarrow E'_{S' \circ S}$ which can be viewed as a vector bundle map over the surjective submersion $\pi: S' \diamond S \rightarrow S' \circ S$. By assumption, it has a constant rank. Using the local section property of $\pi$ and the  standard statement for vector bundle maps over the identity (see e.g. Theorem 10.34 of \cite{lee2012introduction}), it follows that $R' \circ R = p(R' \diamond R)$ is a subbundle of $E'$ supported on $S' \circ S$, and $p: R' \diamond R \rightarrow R' \circ R$ becomes a fiber-wise surjective vector bundle map over a surjective submersion $\pi$. Such a map is always a surjective submersion and the condition $(i)$ follows. This finishes the proof. 
\end{proof}
\begin{rem}
Note that in \cite{li2014dirac}, their cleanly composing relations satisfy slightly weaker conditions. However, it will bring one to the realm of non-injectively immersed non-Hausdorff submanifolds where the night is long and full of (t)errors. We do not intend to go there.
\end{rem}
\begin{example} \label{ex_CAmorphcomp} Let $R: E_{1} \rat E_{2}$ and $R': E_{2} \rat E_{3}$ be a pair of Courant algebroid morphisms over $\varphi$ and $\varphi'$, respectively. It is easy to see that 
\begin{equation}
S' \diamond S = \{ (m_{1}, \varphi(m_{1}), \varphi(m_{1}), (\varphi' \circ \varphi)(m_{1})) \; | \; m_{1} \in M_{1} \}.
\end{equation}
This is a submanifold diffeomorphic to $M_{1}$ and one can quickly argue that (\ref{eq_cleancondition}) holds, hence $S \times S'$ and $M_{1} \times \Delta(M_{2}) \times M_{3}$ intersect cleanly in $M$. Moreover, one has $S' \circ S = \gr(\varphi' \circ \varphi)$ and $\pi: S' \diamond S \rightarrow S' \circ S$ is a diffeomorphism. This shows that Courant algebroid morphisms always meet the conditions on $S$ and $S'$ in Proposition \ref{tvrz_RdiamondR} $(ii)$ and Proposition \ref{tvrz_RcircR} $(ii)$. Consequently, to verify that $R$ and $R'$ compose cleanly, one only has to verify the constant rank requirements of both propositions.
\end{example}
One can now formulate the main theorem regarding the compositions of relations. We will then need some technical lemmas in order to prove it. 
\begin{theorem} \label{thm_composition}
Let $R: E_{1} \dra E_{2}$ and $R': E_{2} \dra E_{3}$ be a pair of relations over $S$ and $S'$, respectively. Suppose $R$ and $R'$ compose cleanly.

Then $R' \circ R$ is an involutive structure supported on $S' \circ S$, hence defines a Courant algebroid relation $R' \circ R: E_{1} \dra E_{3}$. 
\end{theorem}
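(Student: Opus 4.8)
The plan is to verify the three defining conditions of an involutive structure (Definition \ref{def_involutive}) for $R' \circ R$ over $S' \circ S$. By the clean composition hypothesis and Proposition \ref{tvrz_RcircR} we already know that $R' \circ R$ is a genuine subbundle of $E_1 \times \ol E_3$ supported on the submanifold $S' \circ S$, so only isotropy, anchor compatibility of the orthogonal complement, and involutivity remain. Isotropy is pure linear algebra, checked pointwise: given $(e_1, e_3), (\tilde e_1, \tilde e_3) \in (R' \circ R)_{(s_1,s_3)}$ with witnesses $e_2, \tilde e_2 \in (E_2)_{s_2}$, the isotropy of $R$ and of $R'$ yield $\< e_1, \tilde e_1\>_1 = \< e_2, \tilde e_2\>_2 = \< e_3, \tilde e_3\>_3$, so the two contributions to the product pairing on $E_1 \times \ol E_3$ cancel and $R' \circ R$ is isotropic.

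For the anchor compatibility of $(R' \circ R)^\perp$ I would first record the linear-algebraic identity $(R' \circ R)^\perp = (R')^\perp \circ R^\perp$. The inclusion $\supseteq$ is immediate: if $(e_1, e_2) \in R^\perp$ and $(e_2, e_3) \in (R')^\perp$ while $(f_1, f_2) \in R$, $(f_2, f_3) \in R'$, then $\< e_1, f_1\>_1 = \< e_2, f_2\>_2 = \< e_3, f_3\>_3$, so $(e_1, e_3) \perp (f_1, f_3)$; the reverse inclusion follows under clean composition from a dimension count of the kind collected in Appendix \ref{sec_supplement}. Granting this, compatibility is painless: for $(e_1, e_3) \in (R' \circ R)^\perp$ with witness $e_2$, the compatibility of $R^\perp$ and of $(R')^\perp$ (built into Definition \ref{def_involutive}) gives $(\rho_1(e_1), \rho_2(e_2)) \in TS$ and $(\rho_2(e_2), \rho_3(e_3)) \in TS'$; hence $(\rho_1(e_1), \rho_2(e_2), \rho_2(e_2), \rho_3(e_3))$ lies in $T(S \times S') \cap T(M_1 \times \Delta(M_2) \times M_3)$, which equals $T(S' \diamond S)$ by the clean intersection hypothesis, and projecting along $\pi$ yields $\rho'(e_1, e_3) = (\rho_1(e_1), \rho_3(e_3)) \in T(S' \circ S)$.

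The heart of the proof is involutivity. Here I would invoke Proposition \ref{tvrz_invongen}, which reduces the task to exhibiting, near each point of $S' \circ S$, a local frame of $R' \circ R$ that extends to sections of $E_1 \times \ol E_3$ whose mutual brackets land back in $\Gamma(E_1 \times \ol E_3; R' \circ R)$. The mechanism producing such sections is transitivity of the relation: if $\psi_1 \sim_R \psi_2$ and $\psi_2 \sim_{R'} \psi_3$ for a \emph{common} middle section $\psi_2 \in \Gamma(E_2)$, then $(\psi_1, \psi_2, \psi_2, \psi_3)$ takes values in $R' \diamond R$ along $S' \diamond S$ and projects to $\psi_1 \sim_{R' \circ R} \psi_3$. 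For two such triples, Lemma \ref{lem_relatedcons} gives $[\psi_1, \psi_1']_1 \sim_R [\psi_2, \psi_2']_2 \sim_{R'} [\psi_3, \psi_3']_3$, whence $([\psi_1,\psi_1']_1, [\psi_3,\psi_3']_3) = [(\psi_1,\psi_3),(\psi_1',\psi_3')]'$ is $R' \circ R$-related, exactly as required by Proposition \ref{tvrz_invongen}. Moreover, once isotropy and anchor compatibility are in hand, Lemma \ref{lem_doesnotdepend} guarantees that this bracket condition is independent of the chosen extensions, so it suffices to produce the frame out of such triples.

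The main obstacle is precisely this last step: showing that the projections $(\psi_1, \psi_3)$ of common-middle triples actually span $R' \circ R$ and assemble into a smooth local frame. This is where the full force of clean composition enters. Using the surjective submersion $\pi \colon S' \diamond S \to S' \circ S$, I would pick a local section $\chi$ through a chosen preimage $(s_1, s_2, s_2, s_3)$; the fiber-wise surjectivity of $p \colon R' \diamond R \to R' \circ R$ then lets one lift a frame of $(R' \circ R)_{(s_1,s_3)}$ to elements of $(R' \diamond R)_{m} \subseteq R_{(s_1,s_2)} \times R'_{(s_2,s_3)}$, and the clean intersection normal form of Remark \ref{rem_clean} lets one extend these, along $S \times S'$ and along $M_1 \times \Delta(M_2) \times M_3$ simultaneously, to honest $R$- and $R'$-related sections sharing the middle $\psi_2$. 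The delicate point, which is the crux of the argument and the reason the preliminary technical lemmas are required, is that the two relations must be realized by \emph{product} sections with a matching intermediate section; clean composition is exactly the hypothesis that makes this simultaneous lift possible and of locally constant rank, so that a genuine frame results. Feeding this frame into Proposition \ref{tvrz_invongen} then completes the proof that $R' \circ R$ is involutive, and hence a Courant algebroid relation $R' \circ R \colon E_1 \dra E_3$.
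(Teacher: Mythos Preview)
Your isotropy and anchor-compatibility arguments are correct and essentially the same as the paper's; in particular the identification $(R'\circ R)^{\perp}=R'^{\perp}\circ R^{\perp}$ is the content of Lemma \ref{lem_RcircROG}, and your tangent-space argument for anchor compatibility is exactly the one given in the proof.

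The gap is in the involutivity step. Your argument needs, for each point of $S'\circ S$, a local frame of $R'\circ R$ realized by \emph{product} sections $(\psi_{1},\psi_{3})$ arising from individual sections $\psi_{1}\in\Gamma(E_{1})$, $\psi_{2}\in\Gamma(E_{2})$, $\psi_{3}\in\Gamma(E_{3})$ with $\psi_{1}\sim_{R}\psi_{2}$ and $\psi_{2}\sim_{R'}\psi_{3}$. These relation conditions are imposed on \emph{all} of (an open piece of) $S$ and of $S'$, not only on $S'\diamond S$, and the simultaneous extension you invoke (the clean-intersection normal form of Remark \ref{rem_clean}, i.e.\ Lemma \ref{lem_cleantwosec}) does not produce sections of product type: it yields a single section of the big bundle $E=E_{1}\times\ol{E}_{2}\times E_{2}\times\ol{E}_{3}$ extending prescribed data along $S\times S'$ and along $M_{1}\times\Delta(M_{2})\times M_{3}$. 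So the transitivity argument via Lemma \ref{lem_relatedcons} cannot be applied as stated, because you never establish that common-middle triples exist in sufficient supply.

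This is precisely the issue the paper resolves with the auxiliary relation $Q:E\dra E'$ of Lemma \ref{lem_Qrelation} and the lifting Lemma \ref{lem_lift}. Given $\psi'\in\Gamma(E';R'\circ R)$, Lemma \ref{lem_lift} produces a (generally non-product) section $\psi\in\Gamma_{U}(E;R\times R')$ with $\psi\sim_{Q}\psi'$. Involutivity of $R\times R'$ (Proposition \ref{tvrz_product}) then gives $[\psi,\phi]\in\Gamma_{U}(E;R\times R')$, and the $Q$-relation pushes this down to $[\psi',\phi']\in\Gamma(E';R'\circ R)$. Your sketch essentially rediscovers the construction behind Lemma \ref{lem_lift}, but the conclusion you draw from it (product-form triples) is stronger than what the construction delivers; the $Q$-relation is the device that makes the weaker output suffice.
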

\begin{lemma} \label{lem_cleantwosec}
Let $q: E \rightarrow M$ be a vector bundle, and let $S$ and $S'$ be two submanifolds which intersect cleanly in $M$. Suppose we have a pair of sections $\sigma \in \Gamma(E_{S})$ and $\sigma' \in \Gamma(E_{S'})$ which coincide on the intersection, i.e. $\sigma|_{S \cap S'} = \sigma'|_{S \cap S'}$.

Then there exists an open set $U \subseteq M$ containing $S \cap S'$, together with a local section $\psi \in \Gamma_{U}(E)$ which extends both $\sigma$ and $\sigma'$, that is $\psi|_{U \cap S} = \sigma|_{U \cap S}$ and $\psi'|_{U \cap S'} = \sigma'|_{U \cap S'}$. 

If both $S$ and $S'$ are closed, one may choose $U = M$. 
\end{lemma}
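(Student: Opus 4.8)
The plan is to reduce the statement to a local construction on the clean normal form of $S$ and $S'$ and then to globalize via a partition of unity. The one observation that drives the whole argument is that the property of extending $\sigma$ and $\sigma'$ is preserved under convex combinations: if $\{\psi_{\alpha}\}$ is a family of local sections each satisfying $\psi_{\alpha}|_{S} = \sigma$ and $\psi_{\alpha}|_{S'} = \sigma'$ (where defined), and $\{\rho_{\alpha}\}$ is a partition of unity, then at a point $p \in S$ every term with $\rho_{\alpha}(p) \neq 0$ already equals $\sigma(p)$, so $\sum_{\alpha} \rho_{\alpha} \psi_{\alpha}$ again restricts to $\sigma$ on $S$, and likewise to $\sigma'$ on $S'$. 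Thus it suffices to produce local extensions and then glue.

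First I would treat the local model. Fix $m \in S \cap S'$ and invoke the clean normal form of Remark \ref{rem_clean}: there is a chart in which $S$ and $S'$ become linear subspaces $V, V' \subseteq \R^{n}$ with $V \cap V'$ modelling $S \cap S'$. Shrinking the chart so that $E$ trivializes over it and choosing its image to be a product box, I would pass to adapted coordinates from a splitting $\R^{n} = A \oplus B \oplus C \oplus D$ with $A = V \cap V'$, $B$ a complement of $A$ in $V$, $C$ a complement of $A$ in $V'$, and $D$ a complement of $V + V'$, so that in coordinates $(x,y,z,w)$ one has $V = \{z=0, w=0\}$ and $V' = \{y = 0, w = 0\}$. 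Writing $\sigma$ as a function of $(x,y)$ and $\sigma'$ as a function of $(x,z)$, the hypothesis $\sigma|_{S \cap S'} = \sigma'|_{S \cap S'}$ becomes $\sigma(x,0) = \sigma'(x,0)$, and the explicit formula
\begin{equation}
\psi(x,y,z,w) := \sigma(x,y) + \sigma'(x,z) - \sigma(x,0)
\end{equation}
defines a smooth local section restricting to $\sigma$ on $V$ and to $\sigma'$ on $V'$. This yields, for every $m \in S \cap S'$, a neighborhood $U_{m}$ and a local extension $\psi_{m} \in \Gamma_{U_{m}}(E)$.

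To obtain the general case I would set $U := \bigcup_{m} U_{m}$, an open set containing $S \cap S'$, take a partition of unity $\{\rho_{m}\}$ subordinate to $\{U_{m}\}$, and define $\psi := \sum_{m} \rho_{m} \psi_{m} \in \Gamma_{U}(E)$. By the observation of the first paragraph, $\psi|_{U \cap S} = \sigma|_{U \cap S}$ and $\psi|_{U \cap S'} = \sigma'|_{U \cap S'}$, which proves the first assertion.

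Finally, for the closed case I would upgrade $U$ to all of $M$. Let $\psi \in \Gamma_{U}(E)$ be the section just built near $S \cap S'$. Since $S$ and $S'$ are closed, $S \setminus S'$ is a submanifold that is closed in the open set $M \setminus S'$, so the extension property (Exercise 10-9 of \cite{lee2012introduction}) extends $\sigma|_{S \setminus S'}$ to some $\psi_{2} \in \Gamma_{M \setminus S'}(E)$; symmetrically one extends $\sigma'|_{S' \setminus S}$ to $\psi_{3} \in \Gamma_{M \setminus S}(E)$. The three open sets $U$, $M \setminus S'$, $M \setminus S$ cover $M$, because any point outside $S \cap S'$ lies outside $S$ or outside $S'$. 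Choosing a subordinate partition of unity $\{\rho_{0}, \rho_{1}, \rho_{2}\}$ and setting $\widehat{\psi} := \rho_{0} \psi + \rho_{1} \psi_{2} + \rho_{2} \psi_{3}$, a short check using $\rho_{0} + \rho_{1} + \rho_{2} = 1$ together with $\supp(\rho_{2}) \subseteq M \setminus S$ shows $\widehat{\psi}|_{S} = \sigma$, and symmetrically $\widehat{\psi}|_{S'} = \sigma'$. I expect this last step to be the main obstacle: it is where the clean-intersection extension valid near $S \cap S'$ must be matched with the ordinary extension property valid away from it, and one must verify that a single cover is simultaneously compatible with $S$ and $S'$.
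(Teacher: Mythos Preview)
Your proof is correct and follows essentially the same global architecture as the paper: a local extension in the clean normal form, glued by a partition of unity, then for the closed case supplemented by two additional charts $M\setminus S$ and $M\setminus S'$ carrying ordinary extensions of $\sigma$ and $\sigma'$.

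The one point where your argument differs from the paper's is the local step, and there your version is cleaner. The paper asserts that $\hat\sigma$ and $\hat\sigma'$ together define a smooth map on $\phi(U_{(m)})\cap(V\cup V')$, calls this a ``closed submanifold'', and extends. Strictly speaking $V\cup V'$ is not a submanifold at points of $V\cap V'$, so that sentence is appealing implicitly to an extension fact that is not quite the one stated. Your explicit formula $\psi(x,y,z,w)=\sigma(x,y)+\sigma'(x,z)-\sigma(x,0)$ bypasses this entirely and makes the local extension completely transparent. The rest of the two proofs---the partition-of-unity gluing and the three-set cover $U,\ M\setminus S',\ M\setminus S$ in the closed case---are the same up to relabelling.
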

\begin{proof}
For each $m \in S \cap S'$, let $\mu: W \times \R^{k} \rightarrow q^{-1}(W)$ be a local trivialization chart for $E$, where $W$ a neighborhood of $m$ and $k = \rk(E)$. Manifolds $W \cap S$ and $W \cap S'$ intersect cleanly in $W$, so we may find an open neighborhood $U_{(m)} \subseteq W$ of $m$ together with a chart $\phi: U_{(m)} \rightarrow \R^{n}$ described in Remark \ref{rem_clean}, where $n = \dim(M)$. Hence $\phi(U_{(m)} \cap S) = \phi(U_{(m)}) \cap V$ and $\phi(U_{(m)} \cap S') = \phi(U_{(m)}) \cap V'$ for a pair of subspaces $V,V' \subseteq \R^{n}$. The sections $\sigma$ and $\sigma'$ then define a pair of smooth maps $\hat{\sigma}: \phi(U_{(m)}) \cap V \rightarrow \R^{k}$ and $\hat{\sigma}': \phi(U_{(m)}) \cap V' \rightarrow \R^{k}$, such that $\sigma(x) = \mu(x, \hat{\sigma}(\phi(x)))$ for all $x \in U_{(m)} \cap S$. The map $\hat{\sigma}'$ is defined similarly. 	

These two maps define a single smooth map $\hat{\sigma} \cup \hat{\sigma}': \phi(U_{(m)}) \cap (V \cup V') \rightarrow \R^{k}$ as they coincide on the intersection $\phi(U_{(m)}) \cap (V \cap V')$. But $\phi(U_{(m)}) \cap (V \cup V')$ is a closed submanifold of $\phi(U_{(m)})$ and there is thus a smooth extension of $\hat{\sigma} \cup \hat{\sigma}'$ to a smooth map $\hat{\chi}: \phi(U_{(m)}) \rightarrow \R^{k}$. It can be used to define a smooth local section $\psi_{(m)} \in \Gamma_{U_{(m)}}(E)$ via the formula $\psi_{(m)}(x) = \mu(x, \hat{\chi}(\phi(x)))$ for all $x \in U_{(m)}$. By construction, $\psi_{(m)}$ coincides with $\sigma$ on $U_{(m)} \cap S$ and with $\sigma'$ on $U_{(m)} \cap S'$. 

Set $U = \bigcup_{m \in S \cap S'} U_{(m)}$ and let $\{ \rho_{(m)} \}_{m \in S \cap S'}$ be a partition of unity subordinate to the open cover $\U = \{ U_{(m)} \}_{m \in S \cap S'}$ of $U$. Define $\psi \in \Gamma_{U}(E)$ by the usual formula $\psi = \sum_{m \in S \cap S'} \rho_{(m)} \psi_{(m)}$. Then $\psi|_{U \cap S} = \sigma|_{U \cap S}$ and $\psi'|_{U \cap S'} = \sigma'|_{U \cap S'}$. This finishes the main part of the proof. 

Now, if $S$ and $S'$ are closed, we may add two more open sets to $\U$, namely $U_{1} = M - S$ and $U_{2} = M - S'$ and consider two more local sections $\psi_{1} \in \Gamma_{U_{1}}(E)$ and $\psi_{2} \in \Gamma_{U_{2}}(E)$, constructed as follows. The set $U_{1} \cap S'$ is a closed embedded submanifold of $U_{1}$, and we have a local section $\sigma'|_{U_{1} \cap S'} \in \Gamma(E_{U_{1} \cap S'})$ Let $\psi_{1} \in \Gamma_{U_{1}}(E)$ be its extension to $U_{1}$. Similarly, $\psi_{2}$ is an extension of $\sigma|_{U_{2} \cap S}$ to $U_{2}$. Let $\{ \rho_{(m)} \}_{m \in S \cap S'} \cup \{ \rho_{1}, \rho_{2} \}$ be a partition of unity subordinate to the open cover $\U \cup \{ U_{1},U_{2} \}$ of $M$ and set $\psi = \sum_{m \in S \cap S'} \rho_{(m)} \rho_{(m)} + \rho_{1} \psi_{1} + \rho_{2}\psi_{2}$. This is a global section of $E$ which restricts to $\sigma$ on $S$ and to $\sigma'$ on $S'$. 
\end{proof}
\begin{lemma} \label{lem_Qrelation}
Consider the subset $Q \subseteq E \times \ol{E}'$ defined as 
\begin{equation}
Q = \{ (e, p(e)) \; | \; e \in E_{1} \times \Delta(E_{2}) \times \ol{E}_{3} \}.
\end{equation}
Then $Q$ defines a Courant algebroid relation $Q: E \dra E'$ supported on a submanifold $P$, where 
\begin{equation}
P = \{ (m,\pi(m)) \; | \; m \in M_{1} \times \Delta(M_{2}) \times M_{3} \}.
\end{equation}
Moreover, for any $\psi \in \Gamma(E)$ and $\psi' \in \Gamma(E')$, one has $\psi \sim_{Q} \psi'$, if and only if $\psi - p^{\ast}(\psi') \in \Gamma(E; 0_{M_{1}} \times \Delta(E_{2}) \times 0_{M_{3}})$, where $p^{\ast}(\psi')$ is the obvious pullback of $\psi'$ to a section of $E$. 
\end{lemma}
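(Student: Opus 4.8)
The plan is to recognize $Q$ as the graph of a vector bundle morphism and then verify the three defining properties of a Courant algebroid relation (Definition \ref{def_involutive}) in turn, finishing with the bracket characterization. Throughout I identify $\ol{E}' = \ol{E}_1 \times E_3$, so that the ambient Courant algebroid is
\[ E \times \ol{E}' = E_1 \times \ol{E}_2 \times E_2 \times \ol{E}_3 \times \ol{E}_1 \times E_3, \]
and I write $A := E_1 \times \Delta(E_2) \times \ol{E}_3 \subseteq E$ for the subbundle occurring in the definition of $Q$, where $\Delta(E_2) \subseteq \ol{E}_2 \times E_2$ is the diagonal. Since $Q = \{(a,p(a)) \mid a \in A\}$ is exactly the graph of the vector bundle morphism $p|_A \colon A \to E'$, and $P = \gr(\pi|_{M_1 \times \Delta(M_2) \times M_3})$ is an embedded submanifold diffeomorphic to $M_1 \times M_2 \times M_3$, the subset $Q$ is automatically a subbundle supported on $P$; alternatively one invokes Theorem \ref{thm_GR}, as $Q$ is closed under scalar multiplication.

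Next I would check that $Q$ is an almost involutive structure. For isotropy I compute the ambient pairing of two elements $(a,p(a)), (\tilde a, p(\tilde a))$ of $Q$ with $a = (e_1,e_2,e_2,e_3)$: inside $\langle a,\tilde a\rangle_E$ the $\ol{E}_2$- and $E_2$-terms cancel thanks to the diagonal, and the surviving $E_1$- and $E_3$-terms are cancelled by $-\langle p(a),p(\tilde a)\rangle_{E'}$, so $Q$ is isotropic. A signature count shows $E \times \ol{E}'$ is split of type $(N,N)$ with $N = \rk(E_1)+\rk(E_2)+\rk(E_3)$, and since $\rk(Q) = N = \min\{N,N\}$, Lemma \ref{lem_maxiso} shows $Q$ is maximally isotropic. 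The product anchor sends $(a,p(a))$ to $(\rho_1(e_1),\rho_2(e_2),\rho_2(e_2),\rho_3(e_3),\rho_1(e_1),\rho_3(e_3))$, a tangent vector to $P$, so $\rho(Q) \subseteq TP$; Remark \ref{rem_maxisoanchcomp} then yields the compatibility of $Q^{\perp}$ with the anchor.

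For involutivity I would apply Proposition \ref{tvrz_invongen}, so it suffices to test the bracket on a convenient frame extended to sections of $E \times \ol{E}'$. Given $\psi_i \in \Gamma(E_i)$ for $i \in \{1,2,3\}$, I form the pullback section $\Psi$ whose six components are $\psi_1,\psi_2,\psi_2,\psi_3,\psi_1,\psi_3$; its restriction to $P$ lands in $Q$, and letting the $\psi_i$ run over local frames of the $E_i$ produces a frame of $Q$. Because the bracket on the six-fold product is computed component-wise on pullback sections (by iterating Proposition \ref{tvrz_product}), the bracket of two such sections is again of the same shape, built from $[\psi_1,\tilde\psi_1]_1$, $[\psi_2,\tilde\psi_2]_2$ and $[\psi_3,\tilde\psi_3]_3$; hence it restricts into $Q$ on $P$, and Proposition \ref{tvrz_invongen} gives involutivity.

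Finally, for the characterization of $\sim_Q$, I would argue pointwise. Writing $\psi(m) = (a_1,a_2,b_2,a_3)$ and $\psi'(\pi(m)) = (c_1,c_3)$ at $m \in M_1 \times \Delta(M_2) \times M_3$, the condition $(\psi(m),\psi'(\pi(m))) \in Q$ unwinds to $a_2 = b_2$ together with $a_1 = c_1$ and $a_3 = c_3$, i.e. $\psi(m) \in A_m$ and $p(\psi(m)) = \psi'(\pi(m))$. Since the obvious pullback is $p^{\ast}(\psi')(m) = (c_1,0,0,c_3)$, these equalities say precisely that $(\psi - p^{\ast}\psi')(m)$ lies in $(0_{M_1} \times \Delta(E_2) \times 0_{M_3})_m = \ker(p|_A)_m$, which is the claim. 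The only real care needed anywhere is the bookkeeping of the six tensor factors and the signs hidden in the bars; once the identification $\ol{E}' = \ol{E}_1 \times E_3$ and the component-wise nature of the product structure are fixed, each step is a short direct check, so I expect no genuine obstacle beyond this organizational overhead.
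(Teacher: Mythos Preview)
Your argument is correct and complete. The only difference from the paper is one of packaging: the paper observes in one line that after permuting the six factors of $E \times \ol{E}'$ into the order $E_{1} \times \ol{E}_{1} \times \ol{E}_{2} \times E_{2} \times \ol{E}_{3} \times E_{3}$, the subbundle $Q$ becomes literally the diagonal $\Delta(E_{1} \times E_{2} \times E_{3})$, and then invokes Example~\ref{ex_CArel}~(i) to conclude immediately that it is a Dirac structure. Your direct verification of isotropy, the signature count, anchor compatibility, and involutivity via Proposition~\ref{tvrz_invongen} is effectively re-deriving that example in this particular instance; it is longer but entirely self-contained, whereas the paper's route is a one-line reduction at the cost of asking the reader to see the rearrangement. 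Your treatment of the $\sim_{Q}$ characterization is the same as the paper's (``follows immediately from the definition''), just written out explicitly.
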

\begin{proof}
Examining $Q$ a little bit closer, one notices that it is a subbundle $\Delta(E_{1} \times E_{2} \times E_{3})$ in disguise. Hence by Example \ref{ex_CArel} (i), it is a Dirac structure supported on $P$. This proves that it defines a Courant algebroid relation $Q: E \dra E'$. The statement about $Q$-related sections follows immediately from the definition.
\end{proof}
Now comes the crucial part of the proof. Here one uses the assumption that $S \times S'$ and $M_{1} \times \Delta(M_{2}) \times M_{3}$ intersect cleanly. To simplify the notation, let us for the purpose of next statements write $C = E_{1} \times \Delta(E_{2}) \times \ol{E}_{3}$, $C^{\perp} = 0_{M_{1}} \times \Delta(M_{2}) \times 0_{M_{3}}$ and $A = M_{1} \times \Delta(M_{2}) \times M_{3}$. Let us emphasize once more that the next lemma is a more detailed explanation of the statement used without proof in \cite{li2014dirac} and its proof is based on e-mails exchanged with E. Meinrenken.
\begin{lemma} \label{lem_lift}
Let $R: E_{1} \dra E_{2}$ and $R': E_{2} \dra E_{3}$ be a pair of Courant algebroid relations over $S$ and $S'$, respectively. Assume that $R$ and $R'$ compose cleanly. 

Let $\psi' \in \Gamma(E', R' \circ R)$. Then there is an open set $U \subseteq M$, such that $S' \diamond S \subseteq U$, together with a local section $\psi \in \Gamma_{U}(E; R \times R')$ satisfying $\psi \sim_{Q} \psi'$. 
\end{lemma}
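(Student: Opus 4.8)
The plan is to produce $\psi$ in three stages: first lift $\psi'$ through the projection $p$ over the small submanifold $S' \diamond S$, then spread this datum onto the two cleanly intersecting submanifolds $S \times S'$ and $A = M_{1} \times \Delta(M_{2}) \times M_{3}$, and finally glue the two pieces with Lemma \ref{lem_cleantwosec}.

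For the first stage, recall that clean composition (Proposition \ref{tvrz_RcircR}) makes $p : R' \diamond R \rightarrow R' \circ R$ a fiber-wise surjective vector bundle map of constant rank over the surjective submersion $\pi : S' \diamond S \rightarrow S' \circ S$. Hence $\ker p$ is a subbundle of $R' \diamond R$, and choosing a complementary subbundle $W$ with $R' \diamond R = \ker p \oplus W$ makes the induced map $W \rightarrow \pi^{\ast}(R' \circ R)$ a vector bundle isomorphism over $S' \diamond S$. Inverting it and applying it to the pullback $\pi^{\ast}\psi'$ of $\psi'|_{S' \circ S}$ along $\pi$ yields a smooth section $\sigma \in \Gamma(R' \diamond R)$ over $S' \diamond S$ with $p \circ \sigma = \psi' \circ \pi$. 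Since $R' \diamond R = (R \times R') \cap C$, the section $\sigma$ is valued both in $R \times R'$ and in $C = E_{1} \times \Delta(E_{2}) \times \ol{E}_{3}$; in particular its two $E_{2}$-components agree and, by $p \circ \sigma = \psi' \circ \pi$, its $E_{1}$- and $E_{3}$-components coincide with those of $p^{\ast}(\psi')$ on $S' \diamond S$. Thus $\sigma$ already satisfies the $Q$-relation of Lemma \ref{lem_Qrelation} along $S' \diamond S$.

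In the second stage I extend $\sigma$ in two compatible ways. Because $A$ is closed in $M$ (as $\Delta(M_{2})$ is closed in $M_{2} \times M_{2}$), the set $S' \diamond S = (S \times S') \cap A$ is closed in both $S \times S'$ and $A$, so the extension property applies globally. Extending $\sigma$ as a section of the subbundle $R \times R'$ over all of $S \times S'$ gives $\sigma_{1} \in \Gamma((R \times R')|_{S \times S'})$. Over $A$ I define $\sigma_{2} \in \Gamma(E_{A})$ by prescribing its $E_{1}$- and $E_{3}$-components to be those of $p^{\ast}(\psi')$ and both $E_{2}$-components to equal a single section $\tau \in \Gamma((E_{2})_{A})$ extending the common $E_{2}$-value of $\sigma$; by construction $\sigma_{2} - p^{\ast}(\psi')|_{A}$ is valued in $C^{\perp} = 0_{M_{1}} \times \Delta(E_{2}) \times 0_{M_{3}}$ on all of $A$. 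Both $\sigma_{1}$ and $\sigma_{2}$ restrict to $\sigma$ on $S' \diamond S$, hence agree there.

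Finally, the clean intersection of $S \times S'$ and $A$ (Proposition \ref{tvrz_RdiamondR}) lets me invoke Lemma \ref{lem_cleantwosec}: there is an open $U \supseteq S' \diamond S$ and $\psi \in \Gamma_{U}(E)$ with $\psi|_{U \cap (S \times S')} = \sigma_{1}$ and $\psi|_{U \cap A} = \sigma_{2}$. The first identity shows $\psi \in \Gamma_{U}(E; R \times R')$, and the second, together with Lemma \ref{lem_Qrelation}, shows $\psi \sim_{Q} \psi'$. The delicate point of the argument is the first stage: producing a genuinely smooth lift $\sigma$ through the fiber-wise surjection $p$, which is exactly where the constant-rank content of clean composition is used; everything afterwards is extension and gluing of prescribed boundary data, the latter being precisely the role of the clean-intersection hypothesis via Lemma \ref{lem_cleantwosec}.
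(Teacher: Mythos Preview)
Your approach is essentially identical to the paper's: lift through $p$ using the fiber-wise surjectivity, extend separately over $S \times S'$ (into $R \times R'$) and over $A$ (preserving the $Q$-relation), then glue via Lemma \ref{lem_cleantwosec}. Your first stage is a bit more explicit than the paper's, which is fine.

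There is one genuine slip. You claim that because $A$ is closed in $M$, the intersection $S' \diamond S = (S \times S') \cap A$ is closed in \emph{both} $S \times S'$ and $A$. Closedness of $A$ gives you closedness in $S \times S'$, but closedness in $A$ would require $S \times S'$ to be closed in $M$, i.e.\ $S$ and $S'$ to be closed submanifolds, which is not assumed. Consequently your global extension $\tau$ of the $E_{2}$-component over all of $A$ need not exist; you only get it on an open neighbourhood of $S' \diamond S$ in $A$. The fix is exactly what the paper does: restrict everything to a common open $V \supseteq S' \diamond S$ in $M$, extend over $V \cap A$ and $V \cap (S \times S')$, and then apply Lemma \ref{lem_cleantwosec} inside $V$. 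After this adjustment your argument goes through.
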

\begin{proof}
First, consider the restriction $\psi'|_{S' \circ S} \in \Gamma(R' \circ R)$. As $p: R' \diamond R \rightarrow R' \circ R$ is a fiber-wise surjective vector bundle map, there is a section $\sigma \in \Gamma(R' \diamond R)$, such that $p(\sigma(m)) = \psi'(\pi(m))$ for all $m \in S' \diamond S$. In particular, it follows that $\sigma - p^{\ast}(\psi')|_{S' \diamond S}$ takes values in the subbundle $C^{\perp}$. 

As $S' \diamond S$ is a submanifold of $A$, there is an open set $V \subseteq M$ containing $S' \diamond S$ and a local section $\sigma' \in \Gamma_{V \cap A}(C^{\perp})$ such that $\sigma'|_{S' \diamond S} = \sigma - p^{\ast}(\psi')|_{S' \diamond S}$. Next, consider the section
\begin{equation}
\sigma'' := \sigma' + p^{\ast}(\psi')|_{V \cap A}.
\end{equation}
It follows that $\sigma'' \in \Gamma_{V \cap A}(C)$ is a local section, such that $\sigma'' - p^{\ast}(\psi')|_{V \cap A} \in \Gamma_{V \cap A}(C^{\perp})$. Moreover, by construction, $\sigma''|_{S' \diamond S} = \sigma$.

On the other hand, there is an open set $W \subseteq M$ containing $S \diamond S'$, together with a local section $\tau \in \Gamma_{W \cap (S \times S')}(R \times R')$ which satisfies $\tau|_{S' \diamond S} = \sigma$. We may choose $W = V$ by taking their intersection if necessary.

We thus have a pair of sections $\sigma'' \in \Gamma_{V \cap A}(E_{V})$ and $\tau \in \Gamma_{V \cap (S \times S')}(E_{V})$, such that $\sigma''|_{S' \diamond S} = \tau|_{S' \diamond S}$. By assumption $V \cap A$ and $V \cap (S \times S')$ intersect cleanly in $V$. Hence by Lemma \ref{lem_cleantwosec}, there is an open subset $U \subseteq V$ containing $S' \diamond S$, together with a local section $\psi \in \Gamma_{U}(E)$ extending both $\sigma''$ and $\tau$. It follows that $\psi$ is the section we were looking for.
\end{proof}
\begin{lemma} \label{lem_RcircROG}
Let $R: E_{1} \dra E_{2}$ and $R': E_{2} \dra E_{3}$ be a pair of Courant algebroid relations over $S$ and $S'$, respectively. Assume that $R$ and $R'$ compose cleanly.

Then $R'^{\perp} \diamond R^{\perp} := (R^{\perp} \times R'^{\perp}) \cap C$ is a subbundle of $E$ over $S' \diamond S$ and $p(R'^{\perp} \diamond R^{\perp}) = (R' \circ R)^{\perp}$. 
\end{lemma}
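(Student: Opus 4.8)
The plan is to reduce the whole statement to a single fibre-wise fact about coisotropic reduction and then glue the fibres using a constant-rank count. First I would record the structural properties of $C$. Since the middle two factors of $E = E_{1} \times \ol{E}_2 \times E_2 \times \ol{E}_3$ carry the metric $-\<\cdot,\cdot\>_2 + \<\cdot,\cdot\>_2$, the diagonal $\Delta(E_2)$ is Lagrangian there, so $C = E_1 \times \Delta(E_2) \times \ol{E}_3$ is coisotropic with $C^{\perp} = 0_{M_1} \times \Delta(E_2) \times 0_{M_3} \subseteq C$. Moreover, for $e \in C$ one has $\<e,e\>_{E} = \<p(e),p(e)\>_{E'}$, so the fibre-wise surjection $p|_{C}: C \to E'$ has kernel exactly $C^{\perp}$ and descends to a fibre-wise isometry $\ol{p}: C/C^{\perp} \to E'$. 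Writing $D = R \times R'$, the orthogonal-sum structure of the metric on $E$ gives $D^{\perp} = R^{\perp} \times R'^{\perp}$, so that $R'^{\perp} \diamond R^{\perp} = D^{\perp} \cap C$ while $R' \diamond R = D \cap C$, both sitting over the submanifold $S' \diamond S = (S \times S') \cap A$.

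Next I would establish the fibre-wise identity at the heart of the lemma. Fix $m \in S' \diamond S$ and work in the quadratic vector space $E_{m}$. The isometry $\ol{p}$ together with the elementary facts $(U \cap C)^{\perp} = U^{\perp} + C^{\perp}$ and $(U^{\perp} + C^{\perp}) \cap C = (U^{\perp} \cap C) + C^{\perp}$, which I would record in Appendix \ref{sec_supplement}, yields for any subspace $U \subseteq E_{m}$ the relation $\big( p(U \cap C) \big)^{\perp} = p(U^{\perp} \cap C)$, the complement on the left being taken in $E'_{\pi(m)}$. Applying this to $U = D_{m}$ gives $p\big( (D^{\perp} \cap C)_{m} \big) = \big( p( (D \cap C)_{m} ) \big)^{\perp}$. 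Since $R$ and $R'$ compose cleanly, Proposition \ref{tvrz_RcircR} makes $p: R' \diamond R \to R' \circ R$ fibre-wise surjective over $\pi$, so $p( (D \cap C)_{m} ) = (R' \circ R)_{\pi(m)}$. Hence $p\big( (R'^{\perp} \diamond R^{\perp})_{m} \big) = \big( (R' \circ R)_{\pi(m)} \big)^{\perp}$ for all $m \in S' \diamond S$.

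With the fibre-wise picture in hand I would show that $R'^{\perp} \diamond R^{\perp}$ is a subbundle by verifying that its fibre dimension is locally constant and then invoking the intersection criterion used in the proof of Proposition \ref{tvrz_RdiamondR}. Because $p|_{C}$ has kernel $C^{\perp}$, I can split $\dim (D^{\perp} \cap C)_{m} = \dim p\big( (D^{\perp} \cap C)_{m} \big) + \dim (D^{\perp} \cap C^{\perp})_{m}$. The first summand equals $\rk(E') - \rk(R' \circ R)$ by the previous paragraph, hence is locally constant since $R' \circ R$ is a subbundle by clean composition. For the second, I note $D^{\perp} \cap C^{\perp} = (D + C)^{\perp}$, and $\dim (D + C)_{m} = \rk(D) + \rk(C) - \dim (D \cap C)_{m}$ is constant because $\dim (D \cap C)_{m} = \rk(R' \diamond R)$ is constant by the clean-composition hypothesis. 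Thus $\dim (D^{\perp} \cap C)_{m}$ is locally constant, and as $R'^{\perp} \diamond R^{\perp} = D^{\perp} \cap C$ is the intersection of the two subbundles $D^{\perp}|_{S' \diamond S}$ and $C|_{S' \diamond S}$ over the common base $S' \diamond S$, it is a subbundle there.

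Finally I would assemble the global equality. The set $p(R'^{\perp} \diamond R^{\perp})$ is the union over $m \in S' \diamond S$ of the fibres $p\big( (R'^{\perp} \diamond R^{\perp})_{m} \big) = \big( (R' \circ R)_{\pi(m)} \big)^{\perp}$; since $\pi: S' \diamond S \to S' \circ S$ is surjective (again by clean composition), this union is exactly $(R' \circ R)^{\perp}$, giving the claim. The main obstacle is the middle step: isolating the correct coisotropic-reduction identity and checking that it meshes with the several constant-rank facts supplied by clean composition, so that the fibre-wise equalities genuinely glue into subbundles. Once that linear-algebra core is relegated to the appendix, the rest is bookkeeping.
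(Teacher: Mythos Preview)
Your proposal is correct and follows essentially the same approach as the paper: invoke coisotropic reduction (the paper's Proposition \ref{tvrz_coiso} and Example \ref{ex_coisored}) for the fibre-wise equality $p(D^{\perp} \cap C) = (p(D \cap C))^{\perp}$, then verify constant fibre dimension of $D^{\perp} \cap C$ via the same nullity-rank splitting into the image term $\rk((R' \circ R)^{\perp})$ and the kernel term $\dim(D^{\perp} \cap C^{\perp})_{m} = \rk(E) - \dim(D + C)_{m}$. Your write-up is somewhat more explicit about the role of the surjectivity of $\pi$ in assembling the global equality, but the argument is the same.
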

\begin{proof}
The fact that $p(R'^{\perp} \diamond R^{\perp}) = (R' \circ R)^{\perp}$ follows from linear algebra, see Proposition \ref{tvrz_coiso} and Example \ref{ex_coisored}. To prove that $R'^{\perp} \diamond R^{\perp}$ is a subbundle over $S' \diamond S$, see the proof of Proposition \ref{tvrz_RdiamondR}, it suffices to prove that the dimension of $(R'^{\perp} \diamond R^{\perp})_{m}$ is the same for all $m \in S' \diamond S$. We have
\begin{equation}
\dim((R'^{\perp} \diamond R^{\perp})_{m}) =  \rk((R' \circ R)^{\perp}) + \dim( (R^{\perp} \times R'^{\perp})_{m} \cap C^{\perp}_{m}), 
\end{equation}
using the nullity-rank theorem and the fact that $(R' \circ R)^{\perp}$ is a subbundle. Next, one has 
\begin{equation}
\begin{split}
\dim((R^{\perp} \times R'^{\perp})_{m} \cap C^{\perp}_{m}) = & \ \rk(E) - \dim((R \times R')_{m} + C_{m}) \\
= & \ \rk(E) - \rk(R \times R') - \rk(C) + \dim((R \times R')_{m} \cap C_{m}) \\
= & \ \rk(E) - \rk(R \times R') - \rk(C) + \rk(R' \diamond R).
\end{split}
\end{equation}
The right-hand does not depend on $m$ and the proof is finished. 
\end{proof}
\begin{rem}
We see that if $R$ and $R'$ compose cleanly, then so do $R^{\perp}$ and $R'^{\perp}$ (although, strictly speaking, they are not involutive structures). In particular, one has $R'^{\perp} \circ R^{\perp} = (R' \circ R)^{\perp}$. 
\end{rem}
\begin{proof}[\textbf{Proof of Theorem \ref{thm_composition}}] It is easy to see that $R' \circ R$ is an isotropic subbundle.

Next, let us prove the involutivity of $(R' \circ R)^{\perp}$ with the anchor using Lemma \ref{lem_RcircROG}. Let $(e_{1},e_{3}) \in (R' \circ R)^{\perp}$. There is thus some $e_{2} \in E_{2}$, such that $e := (e_{1},e_{2},e_{2},e_{3}) \in (R^{\perp} \times R'^{\perp}) \cap C$. As $R$ and $R'$ are involutive structures, we have $\rho(R^{\perp} \times R'^{\perp}) \subseteq T(S \times S')$. The subbundle $C$ is also compatible with the anchor, $\rho(C) \subseteq TA$. As $S \times S'$ and $A$ intersect cleanly in $M$, we have $\rho(e) \in T(S' \diamond S)$. Since $S' \circ S = \pi(S' \diamond S)$, we have $T(\pi)(T(S' \diamond S)) \subseteq T(S' \circ S)$. Finally, one gets $\rho'(e_{1},e_{3}) = T(\pi)( \rho(e)) \in T(S' \circ S)$. This proves that $(R' \circ R)^{\perp}$ is compatible with the anchor.

Now, let us prove the involutivity. Let $\psi',\phi' \in \Gamma(E';R' \circ R)$. We have to show that $[\psi',\phi'] \in \Gamma(E'; R' \circ R)$. Let $\psi,\phi \in \Gamma_{U}(E;R \times R')$ be the corresponding sections provided by Lemma \ref{lem_lift}. It follows from Proposition \ref{tvrz_product} that the subbundle $R \times R'$ is involutive. Hence $[\psi,\phi] \in \Gamma_{U}(E;R \times R')$. As $Q: E \dra E'$ is a Courant algebroid relation by Lemma \ref{lem_Qrelation}, we get $[\psi,\phi] \sim_{Q} [\psi',\phi']$ by Lemma \ref{lem_relatedcons}. Note that this automatically implies that $[\psi,\phi] \in \Gamma_{U}(E;C)$. Consequently, $[\psi,\phi] \in \Gamma_{U}(E;R' \diamond R)$. Finally, this implies that $[\psi',\phi'] \in \Gamma(E; R' \circ R)$. Hence $R' \circ R$ is involutive and we conclude that it defines a Courant algebroid relation $R' \circ R: E_{1} \dra E_{3}$ supported on $S' \circ S$. 
\end{proof}
\begin{rem} \label{rem_compDiracfail}
Note that if $R$ and $R'$ are Dirac structures, the composition $R' \circ R$ may fail to be maximally isotropic. This is because the product $R \times R'$ is usually not maximally isotropic, see Remark \ref{rem_prodnotworking}. See also Appendix \ref{sec_supplement} for a more detailed discussion. 
\end{rem}
\begin{tvrz} \label{tvrz_CArelCAT}
The composition of relations is associative. Suppose $R: E_{1} \dra E_{2}$, $R': E_{2} \dra E_{3}$ and $R'': E_{3} \dra E_{4}$ is a triple of Courant algebroid relations. Then 
\begin{equation} \label{eq_ass}
R'' \circ (R' \circ R) = (R''\circ R') \circ R.
\end{equation}
This works already on the set level.

Moreover, let $\Delta(E_{1}): E_{1} \dra E_{1}$ and $\Delta(E_{2}): E_{2} \dra E_{2}$ be the relations from Example \ref{ex_CArel} (i). Then both pairs ($\Delta(E_{1})$,$R)$ and $(R,\Delta(E_{2}))$ compose cleanly and 
\begin{equation} \label{eq_identity}
R \circ \Delta(E_{1}) = \Delta(E_{2}) \circ R = R. 
\end{equation}
\end{tvrz}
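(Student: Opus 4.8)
The plan is to treat the two assertions separately, handling the purely set-theoretic content first and the clean-composition bookkeeping second. For associativity I would simply unwind Definition \ref{def_composition} on both sides. Expanding $R'' \circ (R' \circ R)$ introduces an intermediate $e_3 \in E_3$ with $(e_1,e_3) \in R' \circ R$ and $(e_3,e_4) \in R''$, and expanding the inner composition introduces a further $e_2 \in E_2$; the result is the set of all $(e_1,e_4)$ admitting $e_2,e_3$ with $(e_1,e_2)\in R$, $(e_2,e_3)\in R'$ and $(e_3,e_4)\in R''$. Expanding $(R''\circ R')\circ R$ yields verbatim the same condition, since the existential quantifiers over $e_2$ and $e_3$ commute. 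This proves (\ref{eq_ass}) on the set level, with no smoothness input required.

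For the identity laws the set-level statement is equally immediate: in $R \circ \Delta(E_1)$ the defining condition $(e_1,e_1') \in \Delta(E_1)$ forces $e_1' = e_1$, so the composition collapses to $\{(e_1,e_2) : (e_1,e_2)\in R\} = R$, and symmetrically for $\Delta(E_2)\circ R$. The substantive work is to check that these compositions are \emph{clean}, i.e. that the hypotheses of Propositions \ref{tvrz_RdiamondR} and \ref{tvrz_RcircR} hold, and I would verify this by a direct computation in the spirit of Example \ref{ex_CAmorphcomp}. Taking $\Delta(E_2) \circ R$ (the case $R \circ \Delta(E_1)$ being mirror-symmetric, with the doubling on the $M_1$-factor), one finds $S' \diamond S = \{(s_1,s_2,s_2,s_2) : (s_1,s_2) \in S\}$, diffeomorphic to $S$, and $S' \circ S = S$ with $\pi \colon S' \diamond S \to S$ the evident diffeomorphism, hence a submersion. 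The clean intersection of $S \times \Delta(M_2)$ with $M_1 \times \Delta(M_2) \times M_2$ is checked at the level of tangent spaces: a vector lies in both precisely when it has the form $(\dot s_1, \dot s_2, \dot s_2, \dot s_2)$ with $(\dot s_1,\dot s_2) \in T S$, which is exactly $T(S' \diamond S)$, giving (\ref{eq_cleancondition}). Likewise the fibre $(\Delta(E_2) \diamond R)_m = \{(e_1,e_2,e_2,e_2) : (e_1,e_2) \in R_{(s_1,s_2)}\}$ is canonically isomorphic to $R_{(s_1,s_2)}$, so it has constant rank $\rk(R)$, and $p$ restricts on it to an isomorphism onto $R$, in particular a constant-rank fibre-wise surjection. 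This verifies condition $(ii)$ of both propositions.

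The only point requiring any care — and the natural candidate for the ``main obstacle'' — is the clean-intersection computation, since that is where the precise form of (\ref{eq_cleancondition}) and the constant-rank requirements enter. The obstacle is, however, mild: because $\Delta(E_2)$ is supported on the graph of $1_{M_2}$, the diamond is literally a copy of $R$ embedded by doubling the $M_2$-coordinate, so the transversality and constant-rank conditions are automatic. Once cleanness is established, Theorem \ref{thm_composition} guarantees that $\Delta(E_2) \circ R$ is a Courant algebroid relation, and combined with the set-level equality it equals $R$ as an involutive structure; the same argument applied to $R \circ \Delta(E_1)$ establishes (\ref{eq_identity}) and completes the proof.
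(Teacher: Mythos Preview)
Your proposal is correct and follows essentially the same approach as the paper: both treat associativity and the set-level identity laws as immediate from Definition~\ref{def_composition}, and both verify cleanness by computing the diamond explicitly and recognising it as a copy of $R$ with one coordinate doubled. The only cosmetic differences are that the paper checks condition $(i)$ of Propositions~\ref{tvrz_RdiamondR} and~\ref{tvrz_RcircR} (identifying $R \diamond \Delta(E_{1})$ directly as the graph of a smooth map, hence a submanifold, with $p$ a diffeomorphism) whereas you check the equivalent condition $(ii)$ (working on the base and verifying constant fibre dimension), and that the paper spells out the $R \circ \Delta(E_{1})$ case while you spell out $\Delta(E_{2}) \circ R$.
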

\begin{proof}
Equations (\ref{eq_ass}) and (\ref{eq_identity}) can be proved easily from the definition of $\circ$. Let us prove that $\Delta(E_{1})$ and $R$ compose cleanly, the other statement is analogous. First, we have
\begin{equation}
R \diamond \Delta(E_{1}) = \{ (e_{1},e_{1},e_{1},e_{2}) \; | \; (e_{1},e_{2}) \in R \}.
\end{equation}
This set can be identified with a graph of a smooth map $(e_{1},e_{2}) \mapsto (e_{1},e_{1})$ from $R$ to $E_{1} \times \ol{E}_{1}$, which is an embedded submanifold of $(E_{1} \times \ol{E}_{1}) \times R$, hence of $E_{1} \times \ol{E}_{1} \times E_{1} \times \ol{E}_{2}$. It remains to prove the equation (\ref{eq_cleancondition}) which is straightforward. This establishes the condition $(i)$ of Proposition \ref{tvrz_RdiamondR}. Now, the restriction $p: R \diamond \Delta(E_{1}) \rightarrow R \circ \Delta(E_{1}) \equiv R$ maps $(e_{1},e_{1},e_{1},e_{2})$ to $(e_{1},e_{2})$, hence in fact, it defines a diffeomorphism. This proves the condition $(i)$ of  Proposition \ref{tvrz_RcircR}. Hence $\Delta(E_{1})$ and $R$ compose cleanly. This finishes the proof.
\end{proof}
\begin{rem}
This proposition shows that Courant algebroids together with their relations form a ``category'' where not all morphisms may be composed. Let us denote it as $\ol{\cCA}$. Note that we will find an actual category of Courant algebroids in the following section.
\end{rem}
\begin{rem}
Compositions of relations are fully compatible with Definition \ref{def_tensRrelated}. Indeed, if $t_{1} \sim_{R} t_{2}$ and $t_{2} \sim_{R'} t_{3}$ for $t_{i} \in \T_{k}(E_{i})$, $i \in \{1,2,3\}$, it is easy to see that $t_{1} \sim_{R' \circ R} t_{3}$. 

For contravariant tensors (see Remark \ref{rem_contravariant}), this requires one to prove that whenever $R$ and $R'$ compose cleanly, then so do $R^{\dagger}$ and $R'^{\dagger}$ and $(R' \circ R)^{\dagger} = R'^{\dagger} \circ R^{\dagger}$. The rest is then analogous to the covariant case. 
\end{rem}
To conclude this section, let us elaborate a little bit on pullbacks and pushforwards of involutive structures using the viewpoint of Example \ref{ex_CArel} (ii).
\begin{definice} \label{def_pushpull}
Let $R: E_{1} \dra E_{2}$ be a Courant algebroid relation supported on $S$. 
\begin{enumerate}[(i)]
\item Let $L_{2} \subseteq E_{2}$ be an involutive structure supported on $S_{2} \subseteq M_{2}$. Suppose that the relations $R: E_{1} \dra E_{2}$ and $L_{2} \times \{0\}: E_{2} \dra \{0\}$ compose cleanly. Then the \textbf{pullback involutive structure} $R^{\ast}(L_{2}) \subseteq E_{1}$ is uniquely determined by the formula
\begin{equation}
R^{\ast}(L_{2}) \times \{0\} = (L_{2} \times \{0\}) \circ R.
\end{equation}
\item Let $L_{1} \subseteq E_{1}$ be an involutive structure supported on $S_{1} \subseteq M_{1}$. Suppose that the relations $\{0\} \times L_{1}: \{0\} \dra E_{1}$ and $R: E_{1} \dra E_{2}$ compose cleanly. Then the \textbf{pushforward involutive structure} $R_{\ast}(L_{1}) \subseteq E_{2}$ is uniquely determined by the formula
\begin{equation}
\{0\} \times R_{\ast}(L_{1}) = R \circ (\{0\} \times L_{1}).
\end{equation}
\end{enumerate}
\end{definice}
Note that the resulting subbundles define involutive structures due to Theorem \ref{thm_composition}. 
\section{Examples and applications} \label{sec_examples}
In this section, we come up with a couple of prominent examples. We discuss when they fail to be Courant algebroid relations in the sense of \cite{li2014dirac}. See also \cite{li2009courant} for additional cases obtained via certain canonical constructions of new Courant algebroids.
\subsection{Graph of a bundle map} \label{subsec_graph}
Let $(E_{1},\rho_{1},\<\cdot,\cdot\>_{1},[\cdot,\cdot]_{1})$ and $(E_{2},\rho_{2},\<\cdot,\cdot\>_{2},[\cdot,\cdot]_{2})$ be a pair of Courant algebroids over $M_{1}$ and $M_{2}$, respectively. Let $\F: E_{1} \rightarrow E_{2}$ be a vector bundle map over $\varphi: M_{1} \rightarrow M_{2}$. Then its graph $\gr(\F) \subseteq E_{1} \times \ol{E}_{2}$ is a subbundle supported on $\gr(\varphi) \subseteq M_{1} \times M_{2}$. Let us now analyze the conditions making $\gr(\F): E_{1} \rat E_{2}$ into a Courant algebroid morphism over $\varphi$. 

First, note that for $\psi_{1} \in \Gamma(E_{1})$ and $\psi_{2} \in \Gamma(E_{2})$, we have $\psi_{1} \sim_{\gr(\F)} \psi_{2}$, if for all $m_{1} \in M_{1}$, one has
$\F(\psi_{1}(m_{1})) = \psi_{2}(\varphi(m_{1}))$. Such sections are usually called $\F$-related and one writes $\psi_{1} \sim_{\F} \psi_{2}$. One can also write this as a commutativity of the diagram
\begin{equation}
\begin{tikzcd}
E_{1} \arrow{r}{\F} & E_{2} \\
M_{1} \arrow{u}{\psi_{1}} \arrow{r}{\varphi} & M_{2} \arrow{u}{\psi_{2}}
\end{tikzcd}.
\end{equation}
For $f_{1} \in C^{\infty}(M_{1})$ and $f_{2} \in C^{\infty}(M_{2})$, we have $f_{1} \sim_{\gr(\varphi)} f_{2}$, iff $f_{1} = f_{2} \circ \varphi$. Now, let us turn our attention to the isotropy condition. 
\begin{lemma} \label{lem_grF1}
Let $\F: E_{1} \rightarrow E_{2}$ be a vector bundle map over $\varphi: M_{1} \rightarrow M_{2}$. Then the subbundle $\gr(\F)$ is isotropic, iff for all $m_{1} \in M_{1}$ and all $e_{1},e'_{1} \in (E_{1})_{m_{1}}$, one has 
\begin{equation} \label{eq_Fisotropic}
\< \F(e_{1}), \F(e'_{1}) \>_{2} = \<e_{1},e'_{1}\>_{1}. 
\end{equation}
In particular, $\F$ has to be fiber-wise injective. Let $(p_{1},q_{1})$ and $(p_{2},q_{2})$ be the signature of $\<\cdot,\cdot\>_{1}$ and $\<\cdot,\cdot\>_{2}$, respectively. Then $\gr(\F)$ is maximally isotropic, iff either $p_{1} = p_{2}$ or $q_{1} = q_{2}$. 
\end{lemma}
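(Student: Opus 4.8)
The plan is to reduce everything to a fibre-wise computation, since by the remark following the definition of isotropy, $\gr(\F)$ is isotropic precisely when each of its fibres is isotropic in the corresponding fibre of $E_1 \times \ol{E}_2$. Over a point $(m_1,\varphi(m_1)) \in \gr(\varphi)$, the fibre of $\gr(\F)$ is $\{(e_1,\F(e_1)) \mid e_1 \in (E_1)_{m_1}\}$, and the product metric on $E_1 \times \ol{E}_2$ evaluates on two such elements to $\<e_1,e_1'\>_1 - \<\F(e_1),\F(e_1')\>_2$, the minus sign coming from the definition of $\ol{E}_2$. First I would observe that this expression vanishes for all $e_1,e_1'$ exactly when (\ref{eq_Fisotropic}) holds, which gives the first equivalence immediately.

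Next, for the injectivity claim I would use non-degeneracy of $\<\cdot,\cdot\>_1$: if $\F(e_1) = 0$, then (\ref{eq_Fisotropic}) gives $\<e_1,e_1'\>_1 = \<\F(e_1),\F(e_1')\>_2 = 0$ for every $e_1' \in (E_1)_{m_1}$, forcing $e_1 = 0$. Thus $\F$ is fibre-wise injective and $\rk(\gr(\F)) = \rk(E_1) = p_1 + q_1$. In passing, (\ref{eq_Fisotropic}) says $\F$ restricts to a fibre-wise linear isometry onto its image, so by Sylvester's law of inertia $p_1 \le p_2$ and $q_1 \le q_2$.

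For the maximal isotropy statement I would appeal to Lemma \ref{lem_maxiso}, according to which an isotropic subbundle is maximally isotropic iff its rank equals $\min\{p,q\}$, where $(p,q)$ is the signature of the ambient metric. Here the ambient metric on $E_1 \times \ol{E}_2$ is $\<\cdot,\cdot\>_1$ combined with $-\<\cdot,\cdot\>_2$; since negating a metric of signature $(p_2,q_2)$ produces one of signature $(q_2,p_2)$, the product has signature $(p_1 + q_2,\, q_1 + p_2)$. Hence $\gr(\F)$ is maximally isotropic iff
\begin{equation*}
p_1 + q_1 = \min\{ p_1 + q_2, \; q_1 + p_2 \}.
\end{equation*}
Isotropy already guarantees $p_1 + q_1 \le \min\{p_1 + q_2,\, q_1 + p_2\}$ (equivalently $p_1 \le p_2$ and $q_1 \le q_2$, recovering the inequalities above), so this equality holds iff one of the two terms attaining the minimum equals $p_1 + q_1$, that is iff $q_1 = q_2$ or $p_1 = p_2$. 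This yields the final equivalence.

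The computations are all elementary; the only step requiring genuine care is the signature bookkeeping — correctly accounting for the sign flip in $\ol{E}_2$ and then disentangling the minimum — so I expect that to be the part most prone to slips rather than the part that is conceptually hard.
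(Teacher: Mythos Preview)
Your proof is correct and follows essentially the same route as the paper: compute the product metric on $\gr(\F)$ to get the isotropy condition, deduce fibre-wise injectivity from non-degeneracy of $\<\cdot,\cdot\>_{1}$, compute the signature of $E_{1}\times\ol{E}_{2}$ as $(p_{1}+q_{2},\,q_{1}+p_{2})$, and then unpack the condition $p_{1}+q_{1}=\min\{p_{1}+q_{2},\,q_{1}+p_{2}\}$. Your additional remark via Sylvester's law that $p_{1}\le p_{2}$ and $q_{1}\le q_{2}$ makes the final ``iff'' a bit more explicit than the paper's terse version, but the argument is otherwise the same.
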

\begin{proof}
It is easy easy to see that (\ref{eq_Fisotropic}) is equivalent to the isotropy of $\gr(\F)$ in $E_{1} \times \ol{E}_{2}$. Note that this is the main motivation for the sign flip of $\ol{E}_{2}$. Any $\F$ satisfying (\ref{eq_Fisotropic}) must be fiber-wise injective. Indeed, if $\F(e_{1}) = 0$, we find $\<e_{1},e'_{1}\>_{1} = 0$ for all $e'_{1} \in (E_{1})_{m_{1}}$. As $\<\cdot,\cdot\>_{1}$ is non-degenerate, this forces $e_{1} = 0$. The signature of the fiber-wise metric on $E_{1} \times \ol{E}_{2}$ is $(p_{1}+q_{2}, q_{1} + p_{2})$. It follows that $\gr(\F)$ is maximally isotropic, iff
\begin{equation}
p_{1} + q_{1} = \min \{p_{1} + q_{2}, q_{1} + p_{2} \}.
\end{equation}
This equation holds, iff either $p_{1} = p_{2}$ or $q_{1} = q_{2}$. 
\end{proof}
\begin{lemma} \label{lem_grF2}
Let $\F: E_{1} \rightarrow E_{2}$ be a vector bundle map over $\varphi: M_{1} \rightarrow M_{2}$. Then the subbundle $\gr(\F)$ is compatible with the anchor, iff the following diagram commutes:
\begin{equation} \label{eq_Fanchcomp}
\begin{tikzcd}
E_{1} \arrow{r}{\F} \arrow{d}{\rho_{1}} & E_{2} \arrow{d}{\rho_{2}} \\
TM_{1} \arrow{r}{T(\varphi)} & TM_{2}
\end{tikzcd}.
\end{equation}
The subbundle $\gr(\F)^{\perp}$ is compatible with the anchor, iff for all $f_{2} \in C^{\infty}(M_{2})$, the sections $\D_{1}(f_{2} \circ \varphi)$ and $\D_{2}f_{2}$ are $\F$-related, that is for all $m_{1} \in M_{1}$, one has 
\begin{equation} \label{eq_Fanchcomp2}
\F((\D_{1}(f_{2} \circ \varphi))(m_{1})) = (\D_{2}f_{2})(\varphi(m_{1})).
\end{equation}
\end{lemma}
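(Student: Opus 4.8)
For the first claim I would argue directly from the definition of anchor compatibility, namely $\rho(\gr(\F))\subseteq T(\gr(\varphi))$. At a point $s=(m_{1},\varphi(m_{1}))$ the fibre of $\gr(\F)$ is $\{(e_{1},\F(e_{1}))\mid e_{1}\in(E_{1})_{m_{1}}\}$, the product anchor sends it to $(\rho_{1}(e_{1}),\rho_{2}(\F(e_{1})))$, while $T_{s}(\gr(\varphi))=\{(v,T(\varphi)(v))\mid v\in T_{m_{1}}M_{1}\}$. Hence membership in $T_{s}(\gr(\varphi))$ for every $e_{1}$ (and every $m_{1}$) is exactly the identity $\rho_{2}\circ\F=T(\varphi)\circ\rho_{1}$, i.e. commutativity of the diagram. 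This step is immediate bookkeeping.

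For the second claim I would invoke Proposition \ref{tvrz_rhocompequivalent} applied to $L=\gr(\F)^{\perp}$, using $(\gr(\F)^{\perp})^{\perp}=\gr(\F)$: the subbundle $\gr(\F)^{\perp}$ is anchor-compatible iff $\D f|_{S}\in\Gamma^{0}(\gr(\F))$ for every $f\in C^{\infty}(M_{1}\times M_{2})$ with $f|_{S}=0$. Since $\D f$ is isotropic in $E_{1}\times\ol{E}_{2}$ for any $f$ (because $\rho\circ\rho^{\ast}=0$), the isotropy of $\D f|_{S}$ is automatic and the condition collapses to ``$\D f|_{S}$ takes values in $\gr(\F)$''. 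Because this is fibre-wise and $\D f|_{s}=\rho^{\ast}_{s}(\dr f|_{s})$, and because the differentials $\dr f|_{s}$ of functions vanishing on $S$ exhaust the conormal space $N^{\ast}_{s}S$, the requirement is equivalent to $\rho^{\ast}(N^{\ast}_{s}S)\subseteq\gr(\F)_{s}$ for all $s\in S$.

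It then remains to make this inclusion explicit. Computing the conormal of a graph gives $N^{\ast}_{s}S=\{(-\varphi^{\ast}\alpha_{2},\alpha_{2})\mid\alpha_{2}\in T^{\ast}_{\varphi(m_{1})}M_{2}\}$, and the induced map on the product reads $\rho^{\ast}(\beta_{1},\beta_{2})=(\rho_{1}^{\ast}(\beta_{1}),-\rho_{2}^{\ast}(\beta_{2}))$, where the second sign traces back to the flip in $\ol{E}_{2}$. Feeding the conormal covectors in, membership of $\rho^{\ast}(-\varphi^{\ast}\alpha_{2},\alpha_{2})$ in $\gr(\F)_{s}$ becomes $\rho_{2}^{\ast}(\alpha_{2})=\F(\rho_{1}^{\ast}(\varphi^{\ast}\alpha_{2}))$. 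Writing $\alpha_{2}=\dr f_{2}$ and using $\varphi^{\ast}\dr f_{2}=\dr(f_{2}\circ\varphi)$ together with $\rho_{i}^{\ast}\circ\dr=\D_{i}$ turns this into (\ref{eq_Fanchcomp2}); conversely every $\alpha_{2}$ is such a differential, so the two conditions are equivalent. The only genuinely delicate points I anticipate are the sign bookkeeping in $\rho^{\ast}$ inherited from $\ol{E}_{2}$, and justifying that the differentials of functions vanishing on $S$ fill out $N^{\ast}_{s}S$ (a standard consequence of $S$ being embedded); the Courant-algebroid content is carried entirely by Proposition \ref{tvrz_rhocompequivalent}.
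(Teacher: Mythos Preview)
Your argument is correct. The first claim is handled exactly as in the paper. For the second claim you take a genuinely different route: the paper computes $\gr(\F)^{\perp}$ explicitly as the ``backwards graph'' $\{(\F^{\ast}_{m_{1}}(e_{2}),e_{2})\}$ of the fibre-wise adjoint $\F^{\ast}_{m_{1}}=g_{1}^{-1}\circ(\F_{m_{1}})^{T}\circ g_{2}$, then checks $\rho(\gr(\F)^{\perp})\subseteq T(\gr(\varphi))$ directly and dualizes the resulting condition $\rho_{2}(e_{2})=(T_{m_{1}}\varphi)(\rho_{1}(\F^{\ast}_{m_{1}}(e_{2})))$ via non-degeneracy of $\<\cdot,\cdot\>_{2}$ to reach $\rho_{2}^{\ast}(\alpha_{2})=\F(\rho_{1}^{\ast}(\varphi^{\ast}\alpha_{2}))$. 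You bypass the explicit description of the orthogonal complement altogether by invoking Proposition~\ref{tvrz_rhocompequivalent} and working with the conormal bundle of $\gr(\varphi)$, arriving at the same intermediate identity. Your approach is arguably more economical (it recycles an already-proved characterization and avoids introducing $\F^{\ast}$), while the paper's has the minor side benefit of exhibiting $\gr(\F)^{\perp}$ concretely. Both reduce the final step to the same observation that exact $1$-forms span each cotangent fibre.
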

\begin{proof}
Let $(e_{1}, \F(e_{1})) \in \gr(\F)$. We have $\rho(e_{1},\F(e_{1})) = (\rho_{1}(e_{1}), \rho_{2}(\F(e_{1})))$. We have $T(\gr(\varphi)) = \gr(T(\varphi))$. The compatibility with the anchor is thus equivalent to $\rho_{2}(\F(e_{1})) = T(\varphi)(\rho_{1}(e_{1}))$ for all $e_{1} \in E_{1}$. But this is precisely the commutativity of (\ref{eq_Fanchcomp}). 

Next, let us identify the subbundle $\gr(\F)^{\perp}$. Let $g_{1}: E_{1} \rightarrow E_{1}^{\ast}$ and $g_{2}: E_{2} \rightarrow E_{2}^{\ast}$ denote the vector bundle isomorphisms induced by the respective metrics. For each $m_{1} \in M_{1}$, one may construct a linear map $\F^{\ast}_{m_{1}}: (E_{2})_{\varphi(m_{1})} \rightarrow (E_{1})_{m_{1}}$ defined as $\F_{m_{1}}^{\ast} = g_{1}^{-1} \circ (\F_{m_{1}})^{T} \circ g_{2}$. Then
\begin{equation}
\gr(\F)^{\perp}_{(m_{1},\varphi(m_{1}))} = \gr( \F^{\ast}_{m_{1}}) = \{ (\F^{\ast}_{m_{1}}(e_{2}),e_{2}) \; | \; e_{2} \in (E_{2})_{\varphi(m)} \}.
\end{equation}
Now, the subbundle $\gr(\F)^{\perp}$ is compatible with the anchor, iff every $m_{1} \in M_{1}$ and every $e_{2} \in (E_{2})_{\varphi(m_{1})}$ satisfies the condition $\rho_{2}(e_{2}) = (T_{m_{1}} \varphi)( \rho_{1}( \F^{\ast}_{m_{1}}(e_{2})))$. Using the non-degeneracy of $\<\cdot,\cdot\>_{2}$, this can be equivalently rewritten as an equation
\begin{equation} \label{eq_Fanchcomp3}
\rho_{2}^{\ast}(\alpha_{2}) = \F_{m_{1}}( \rho_{1}^{\ast}( \varphi^{\ast}(\alpha_{2}))),
\end{equation}
for all $\alpha_{2} \in T^{\ast}_{\varphi(m_{1})}M_{2}$. Now, plugging $\alpha_{2} = (df_{2})_{\varphi(m_{1})}$ for $f_{2} \in C^{\infty}(M_{2})$ gives (\ref{eq_Fanchcomp2}). Conversely, using (\ref{eq_Fanchcomp2}) on local coordinate functions of $M_{2}$ around $\varphi(m_{1})$ proves (\ref{eq_Fanchcomp3}) for the basis, hence for every $\alpha_{2}$ by linearity. This finishes the proof.
\end{proof}
\begin{example} Let $\varphi: M_{1} \rightarrow M_{2}$ be any smooth map. Let $E_{1} = 0_{M_{1}}$ be a trivial vector bundle over $M_{1}$. Then there is the unique vector bundle map $0: 0_{M_{1}} \rightarrow E_{2}$ over $\varphi$. We have $\gr(0) = 0_{\gr(\varphi)}$. As we have already noted, this is not necessarily an involutive structure in $0_{M_{1}} \times \ol{E}_{2}$. To see it explicitly, (\ref{eq_Fanchcomp3}) forces the condition $\rho_{2}^{\ast}(\alpha_{2}) = 0$ for all $\alpha_{2} \in T^{\ast}_{\varphi(m_{1})} M_{2}$. This in turn makes the anchor $\rho_{2}$ to vanish at all points of $\varphi(M_{1})$. On the other hand, this proves that the trivial subbundle may define an involutive structure. Consider e.g. a constant map $\varphi$, whose image is a point where $\rho_{2}$ vanishes. 
\end{example}
Let us turn our attention to the involutivity. This is always the tricky one. Before the actual formulation of the main statement, note that any vector bundle map $\F: E_{1} \rightarrow E_{2}$ over $\varphi$ induces a unique vector bundle map $\F^{!}: E_{1} \rightarrow \varphi^{!}(E_{2})$ over the identity, where $\varphi^{!}(E_{2}) \rightarrow M_{1}$ denotes the pullback of $E_{2}$ along $\varphi$. Every local section $\psi_{2} \in \Gamma_{U}(E_{2})$ induces the pullback section $\psi_{2}^{!} \in \Gamma_{\varphi^{-1}(U)}(\varphi^{!}(E_{2}))$. In particular, if $(\psi_{\mu})_{\mu=1}^{\rk(E_{2})}$ is a local frame for $E_{2}$ over $U \subseteq M_{2}$, then $(\psi_{\mu}^{!})_{\mu=1}^{\rk(E_{2})}$ forms a local frame for $\varphi^{!}(E_{2})$ over $\varphi^{-1}(U)$.  
\begin{theorem} \label{thm_grF}
Let $\F: E_{1} \rightarrow E_{2}$ be a vector bundle map over $\varphi: M_{1} \rightarrow M_{2}$. Suppose $\gr(\F)$ defines an almost involutive structure, that is assume that the conditions on $(\F,\varphi)$ derived in Lemma \ref{lem_grF1} and Lemma \ref{lem_grF2} are met. 

Then the involutivity of $\gr(\F)$ is equivalent to the following condition: 

Let $\psi_{1},\psi'_{1} \in \Gamma(E_{1})$ be any two sections. Let $(\psi_{\mu})_{\mu=1}^{\rk(E_{2})}$ be any local frame for $E_{2}$ over $U$. By construction, there are unique smooth functions $f^{\mu},g^{\nu} \in C^{\infty}(\varphi^{-1}(U))$, such that on $\varphi^{-1}(U)$, one can write $\F^{!}(\psi_{1}) = f^{\mu} \psi_{\mu}^{!}$, $\F^{!}(\psi'_{1}) = g^{\nu} \psi_{\nu}^{!}$. Then on $\varphi^{-1}(U)$, the equation
\begin{equation} \label{eq_Finvol}
\F^{!}([\psi_{1},\psi'_{1}]_{1}) = f^{\mu} g^{\nu} [\psi_{\mu},\psi_{\nu}]_{2}^{!} + \Li{\rho_{1}(\psi_{1})}(g^{\nu}) \psi_{\nu}^{!} - \Li{\rho_{1}(\psi'_{1})}(f^{\mu}) \psi_{\mu}^{!} + \<\psi_{\mu}^{!},\F^{!}(\psi'_{1})\>_{2} \F^{!}( \D_{1}f^{\mu}), 
\end{equation}
must hold true, where on the right-hand side, $\<\cdot,\cdot\>_{2}$ is the pullback fiber-wise metric on $\varphi^{!}(E_{2})$. 
\end{theorem}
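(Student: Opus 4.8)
\emph{Strategy.} The plan is to reduce the involutivity of $\gr(\F)$ to a single explicit computation of the Courant bracket in the product $E_{1}\times\ol{E}_{2}$, carried out on sections that restrict to $\gr(\F)$ along $\gr(\varphi)$. Recall that $\psi_{1}\sim_{\F}\psi_{2}$ means exactly $\F^{!}(\psi_{1})=\psi_{2}^{!}$ in $\Gamma(\varphi^{!}(E_{2}))$, and that $\Psi\in\Gamma(E_{1}\times\ol{E}_{2};\gr(\F))$ is a section whose $E_{2}$-part restricts along $\gr(\varphi)$ to $\F$ of its $E_{1}$-part. Since for an arbitrary base map $\varphi$ a section $\psi_{1}\in\Gamma(E_{1})$ need not admit any globally $\F$-related partner $\psi_{2}\in\Gamma(E_{2})$, I will not argue through $\F$-related sections of $E_{2}$; instead I will build concrete extensions over the product using a local frame of $E_{2}$, which is what makes the pullback bundle $\varphi^{!}(E_{2})$ enter. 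For the direction ``(\ref{eq_Finvol}) $\Rightarrow$ involutivity'' I invoke Proposition \ref{tvrz_invongen} together with Lemma \ref{lem_invlocinv}: it suffices to test a local frame of $\gr(\F)$ lifted from a frame of $E_{1}$ and its extensions; for the converse I feed arbitrary $\psi_{1},\psi_{1}'$ into the same extension recipe.

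\emph{The extensions.} Fix a local frame $(\psi_{\mu})_{\mu=1}^{\rk(E_{2})}$ of $E_{2}$ over $U\subseteq M_{2}$ and write $\F^{!}(\psi_{1})=f^{\mu}\psi_{\mu}^{!}$, $\F^{!}(\psi_{1}')=g^{\nu}\psi_{\nu}^{!}$ on $\varphi^{-1}(U)$. Over $\varphi^{-1}(U)\times U$ I define the section $\Psi$ of $E_{1}\times\ol{E}_{2}$ whose $E_{1}$-part is the pullback of $\psi_{1}$ along the first projection and whose $E_{2}$-part is $(p_{1}^{\ast}f^{\mu})\,p_{2}^{\ast}\psi_{\mu}$, and analogously $\Psi'$ from $\psi_{1}'$ and $g^{\nu}$. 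By construction $\Psi|_{\gr(\varphi)}=(\psi_{1},\F\circ\psi_{1})$ and $\Psi'|_{\gr(\varphi)}=(\psi_{1}',\F\circ\psi_{1}')$ take values in $\gr(\F)$, so both are genuine sections of $\gr(\F)$ along $\gr(\varphi)$; involutivity is precisely the statement that $[\Psi,\Psi']|_{\gr(\varphi)}$ lies again in $\gr(\F)$.

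\emph{The core computation.} The heart of the argument is the evaluation of $[\Psi,\Psi']$. Splitting each section into its $E_{1}$- and $E_{2}$-parts and expanding bilinearly into four brackets, I apply the Leibniz rule C1), the left Leibniz rule obtained by combining C1) and C4), and the product formulas for $\rho$, $\<\cdot,\cdot\>$ and $\D$. Three facts drive the calculation: the product metric pairs only matching factors, with the sign flip on $\ol{E}_{2}$; the coefficients $p_{1}^{\ast}f^{\mu}$, $p_{1}^{\ast}g^{\nu}$ are functions of $M_{1}$ alone, hence are annihilated by anchors pointing in the $M_{2}$-direction, while $\Li{\rho(p_{1}^{\ast}\psi_{1})}(p_{1}^{\ast}g^{\nu})=p_{1}^{\ast}\Li{\rho_{1}(\psi_{1})}(g^{\nu})$; and, crucially, $\rho^{\ast}(\alpha_{1},\alpha_{2})=(\rho_{1}^{\ast}\alpha_{1},-\rho_{2}^{\ast}\alpha_{2})$, so that $\D(p_{1}^{\ast}f^{\mu})=(p_{1}^{\ast}\D_{1}f^{\mu},0)$ sits in the $E_{1}$-factor. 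Collecting terms and restricting to $\gr(\varphi)$, the $E_{2}$-part comes out as $f^{\mu}g^{\nu}[\psi_{\mu},\psi_{\nu}]_{2}^{!}+\Li{\rho_{1}(\psi_{1})}(g^{\nu})\psi_{\nu}^{!}-\Li{\rho_{1}(\psi_{1}')}(f^{\mu})\psi_{\mu}^{!}$, whereas the $E_{1}$-part is $[\psi_{1},\psi_{1}']_{1}-g^{\nu}\<\psi_{\mu}^{!},\psi_{\nu}^{!}\>_{2}\,\D_{1}f^{\mu}$, the extra summand being the migrated $\D$-term.

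\emph{Conclusion and expected obstacle.} Now $[\Psi,\Psi']|_{\gr(\varphi)}\in\gr(\F)$ holds iff the $E_{2}$-part equals $\F^{!}$ of the $E_{1}$-part. Writing this out and using $\<\psi_{\mu}^{!},\F^{!}(\psi_{1}')\>_{2}=g^{\nu}\<\psi_{\mu}^{!},\psi_{\nu}^{!}\>_{2}$ to reassemble the $\F^{!}(\D_{1}f^{\mu})$ contribution coming from the $E_{1}$-part, one obtains exactly equation (\ref{eq_Finvol}). Running this for a frame $\psi_{1}=\epsilon_{a}$, $\psi_{1}'=\epsilon_{b}$ of $E_{1}$ feeds Proposition \ref{tvrz_invongen} and yields involutivity from (\ref{eq_Finvol}); running it for arbitrary $\psi_{1},\psi_{1}'$ yields (\ref{eq_Finvol}) from involutivity, and since the frame-neighborhoods $U$ cover the image of $\varphi$ the identity holds on all of $M_{1}$. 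I expect the genuine obstacle to be precisely this bookkeeping: correctly tracking the sign of $\ol{E}_{2}$ and noticing that $\D(p_{1}^{\ast}f^{\mu})$ lands in the $E_{1}$-factor, so that the last term of (\ref{eq_Finvol}) is invisible in the $E_{2}$-part alone and emerges only after imposing the graph condition.
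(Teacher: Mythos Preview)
Your proposal is correct and follows essentially the same route as the paper: both build the extensions $\Psi,\Psi'$ on $\varphi^{-1}(U)\times U$ by the formula $\Psi(m_{1},m_{2})=(\psi_{1}(m_{1}),f^{\mu}(m_{1})\psi_{\mu}(m_{2}))$, compute $[\Psi,\Psi']|_{\gr(\varphi)}$ explicitly (your identification of the $\D$-term migrating to the $E_{1}$-factor is exactly the point), and read off that the graph condition is equivalent to (\ref{eq_Finvol}). The only cosmetic difference is in the converse direction: the paper argues that \emph{every} section of $\gr(\F)$ over $\hat U\cap\gr(\varphi)$ arises from some $\psi_{1}\in\Gamma_{\varphi^{-1}(U)}(E_{1})$ and then invokes Lemma~\ref{lem_invlocinv} together with the extension-independence implicit in Lemma~\ref{lem_doesnotdepend}, whereas you test only a local frame $(\epsilon_{a})$ of $E_{1}$ and feed it directly into Proposition~\ref{tvrz_invongen}; both are valid and amount to the same mechanism.
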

\begin{proof}
Suppose $\gr(\F)$ is involutive. Let $\psi_{1},\psi'_{1} \in \Gamma(E_{1})$, $(\psi_{\mu})_{\mu=1}^{\rk(E_{2})}$ and $f^{\mu},g^{\nu} \in C^{\infty}(\varphi^{-1}(U))$ be defined as above. Write $\hat{U} := \varphi^{-1}(U) \times U$. Define $\psi,\psi' \in \Gamma_{\hat{U}}(E_{1} \times \ol{E}_{2}; \gr(\F))$ as 
\begin{equation} \label{eq_Finvol2}
\psi(m_{1},m_{2}) := ( \psi_{1}(m_{1}), f^{\mu}(m_{1}) \psi_{\mu}(m_{2})), \; \; \psi'(m_{1},m_{2}) := ( \psi'_{1}(m_{1}), g^{\nu}(m_{1}) \psi_{\nu}(m_{2})),
\end{equation}
for all $(m_{1},m_{2}) \in \hat{U}$. By assumption, one has $[\psi,\psi'] \in \Gamma_{\hat{U}}(E_{1} \times \ol{E}_{2}; \gr(\F))$ as well. Plugging $\psi$ and $\psi'$ into the bracket of $E_{1} \times \ol{E}_{2}$ now gives 
\begin{equation} \label{eq_Finvol3}
\begin{split}
[\psi,\psi'](m_{1},\varphi(m_{1})) = \big( & [\psi_{1},\psi'_{1}]_{1}(m_{1}) - \< \psi_{\mu}^{!}, \F^{!}(\psi'_{1}) \>(m_{1}) (\D_{1}f^{\mu})(m_{1}), \\
& f^{\mu}(m_{1}) g^{\nu}(m_{1}) [\psi_{\mu},\psi_{\nu}]_{2}(\varphi(m_{1})) + (\Li{\rho_{1}(\psi_{1})}(g^{\nu}))(m_{1}) \psi_{\nu}(\varphi(m_{1})) \\
& - (\Li{\rho_{1}(\psi'_{1})}(f^{\mu}))(m_{1}) \psi_{\mu}(\varphi(m_{1})) \big).
\end{split}
\end{equation}
Now, the second component of the right-hand side must be an $\F$-image of the first component. But this gives precisely the condition (\ref{eq_Finvol}) evaluated at $m_{1} \in \varphi^{-1}(U)$. 

Conversely, suppose that the condition in the statement of the theorem holds. To prove the involutivity, by Lemma \ref{lem_invlocinv} it suffices to cover $\gr(\varphi)$ by open sets where $\gr(\F)$ is locally involutive. Let $(m_{1},\varphi(m_{1})) \in \gr(\varphi)$. Pick any local frame $(\psi_{\mu})_{\mu=1}^{\rk(E_{2})}$ for $E_{2}$ on $U \subseteq M_{2}$ containing the point $\varphi(m_{1})$. We will argue that $\gr(\F)$ is locally involutive on $\hat{U} := \varphi^{-1}(U) \times U$. The most general elements of $\Gamma_{\hat{U} \cap \gr(\varphi)}(\gr(\F))$ can be obtained via the restriction of sections $\psi,\psi' \in \Gamma_{\hat{U}}(E_{1} \times \ol{E}_{2})$ defined by the formula analogous to (\ref{eq_Finvol2}), where $\psi_{1},\psi'_{1} \in \Gamma_{\varphi^{-1}(U)}(E_{1})$. Now, it is clear that (\ref{eq_Finvol}) must hold also for local sections $\psi_{1},\psi'_{1} \in \Gamma_{\varphi^{-1}(U)}(E_{1})$. By the calculation in the previous paragraph, it is equivalent to the condition $[\psi,\psi'] \in \Gamma_{\hat{U}}(E_{1} \times \ol{E}_{2}, \gr(\F))$. Hence $\gr(\F)$ is locally involutive on $\hat{U}$. As $(m_{1},\varphi(m_{1}))$ was an arbitrary point of $\gr(\varphi)$, this proves the claim. 
\end{proof}
\begin{definice} \label{def_classicalCA}
Let $\F: E_{1} \rightarrow E_{2}$ be a vector bundle map over $\varphi: E_{1} \rightarrow E_{2}$. We say that $\F$ is a \textbf{classical Courant algebroid morphism over $\varphi$}, if $\gr(\F): E_{1} \rat E_{2}$ is a Courant algebroid morphism over $\varphi$. Equivalently, this means that $(\F,\varphi)$ satisfies the conditions of Lemma \ref{lem_grF1}, Lemma \ref{lem_grF2} and Theorem \ref{thm_grF}. 
\end{definice}
\begin{rem} \label{rem_classicalCA}
The notion of (classical) Courant algebroid morphism seems to be relatively unknown. However, it was developed already twenty years ago as an example in \cite{popescu1999generalized}. 

Note that there is also a ``pedestrian approach'' how to find the condition (\ref{eq_Finvol}). Indeed, one may try to generalize the notion of Lie algebroid morphism in the sense of Mackenzie, see Section 4.3 of \cite{Mackenzie}. Their conditions are the commutativity of (\ref{eq_Fanchcomp}) together with the condition (\ref{eq_Finvol}) without the last term. For Lie algebroids, the right-hand side of this equation does not depend on the choice of the local frame $(\psi_{\mu})_{\mu=1}^{\rk(E_{2})}$. This is no longer true for Courant algebroids due to the axiom C4). However, this can be saved by adding the last term together with the condition (\ref{eq_Fanchcomp2}).
\end{rem}

\begin{rem} \label{rem_graphoverdiff}
It follows immediately from Lemma \ref{lem_relatedcons} that if $\psi_{1} \sim_{\F} \psi_{2}$, $\phi_{1} \sim_{\F} \phi_{2}$ and $\F$ is a classical Courant algebroid morphism, then $[\psi_{1},\phi_{1}]_{1} \sim_{\F} [\psi_{2},\phi_{2}]_{2}$. Moreover, if $\varphi$ is a diffeomorphism, one can define $\F(\psi_{1}) = \F \circ \psi_{1} \circ \varphi^{-1}$ for each $\psi_{1} \in \Gamma(E_{1})$. The complicated involutivity condition (\ref{eq_Finvol}) is then equivalent to the usual equation
\begin{equation} \label{eq_Finvol4}
\F([\psi_{1},\psi'_{1}]_{1}) = [\F(\psi_{1}),\F(\psi'_{1})]_{2},
\end{equation}
which has to hold for all $\psi_{1},\psi'_{1} \in \Gamma(E_{1})$. 
\end{rem}

\begin{tvrz}
Let $\F: E_{1} \rightarrow E_{2}$ and $\F': E_{2} \rightarrow E_{3}$ be a pair of classical Courant algebroid morphisms over $\varphi: M_{1} \rightarrow M_{2}$ and $\varphi': M_{2} \rightarrow M_{3}$, respectively.

Then the Courant algebroid relations $\gr(\F)$ and $\gr(\F')$ compose cleanly and 
\begin{equation} \label{eq_grFcomp} \gr(\F') \circ \gr(\F) = \gr(\F' \circ \F). \end{equation}
In particular, the composition $\F' \circ \F$ is a classical Courant algebroid morphism over $\varphi' \circ \varphi$. Courant algebroids together with classical Courant algebroid morphisms thus form the category \textup{$\cCA$}.
\end{tvrz}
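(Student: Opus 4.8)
The plan is to reduce the entire statement to the machinery already in place, exploiting that both $\gr(\F)$ and $\gr(\F')$ are Courant algebroid morphisms, hence involutive structures, by Definition \ref{def_classicalCA}. First I would compute the diamond product explicitly. Writing $\gr(\F) = \{(e_1,\F(e_1)) \; | \; e_1 \in E_1\}$ and $\gr(\F') = \{(e_2,\F'(e_2)) \; | \; e_2 \in E_2\}$, imposing the diagonal condition $\F(e_1) = e_2$ in $(\gr(\F) \times \gr(\F')) \cap (E_1 \times \Delta(E_2) \times \ol{E}_3)$ yields
\begin{equation}
\gr(\F') \diamond \gr(\F) = \{ (e_1, \F(e_1), \F(e_1), \F'(\F(e_1))) \; | \; e_1 \in E_1 \}.
\end{equation}
This set is manifestly parametrized injectively by $e_1 \in E_1$, hence a subbundle diffeomorphic to $E_1$ over $S' \diamond S$, so its fiber dimension equals the constant $\rk(E_1)$. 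Combined with the clean intersection of $S \times S'$ and $M_1 \times \Delta(M_2) \times M_3$, already granted by Example \ref{ex_CAmorphcomp}, this verifies condition $(ii)$ of Proposition \ref{tvrz_RdiamondR}.

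Next I would check the second clean-composition requirement. The projection $p$ carries the generic element above to $(e_1, \F'(\F(e_1)))$, so $p$ restricted to $\gr(\F') \diamond \gr(\F)$ is a fiber-wise isomorphism onto its image; in particular its fiber-wise rank is the constant $\rk(E_1)$, and $\pi : S' \diamond S \to S' \circ S = \gr(\varphi' \circ \varphi)$ is a diffeomorphism, again by Example \ref{ex_CAmorphcomp}. This is precisely condition $(ii)$ of Proposition \ref{tvrz_RcircR}, so $\gr(\F)$ and $\gr(\F')$ compose cleanly. The set-level identity $\gr(\F') \circ \gr(\F) = \gr(\F' \circ \F)$ then drops out either from $p(\gr(\F') \diamond \gr(\F)) = \{(e_1,(\F'\circ\F)(e_1)) \; | \; e_1 \in E_1\}$, or directly from unwinding Definition \ref{def_composition}: $(e_1,e_3)$ lies in the composition iff $e_3 = \F'(\F(e_1))$.

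With clean composition established, Theorem \ref{thm_composition} immediately gives that $\gr(\F') \circ \gr(\F) = \gr(\F' \circ \F)$ is an involutive structure supported on $S' \circ S = \gr(\varphi' \circ \varphi)$; since its support is a graph, it is a Courant algebroid morphism over $\varphi' \circ \varphi$, which by Definition \ref{def_classicalCA} says exactly that the vector bundle map $\F' \circ \F$ is a classical Courant algebroid morphism. The point worth emphasizing is that this route entirely bypasses any direct verification of the intricate involutivity condition (\ref{eq_Finvol}) for $\F' \circ \F$ --- that is the only place a naive approach would get stuck, and all of its content has already been absorbed into Theorem \ref{thm_composition}.

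Finally, for the category $\cCA$ I would observe that composition of vector bundle maps is associative and that for each Courant algebroid $E$ the identity $1_E$ is a classical Courant algebroid morphism, since $\gr(1_E) = \Delta(E)$ is the morphism of Example \ref{ex_CArel} $(i)$. The unit and associativity laws on the underlying maps are those of ordinary map composition (equivalently, they descend from Proposition \ref{tvrz_CArelCAT} through $\gr$), so the one genuinely new ingredient is closure under composition, proved above. The main obstacle, such as it is, is thus purely organizational: correctly identifying $\gr(\F') \diamond \gr(\F)$ and confirming the two rank conditions. Once every object in sight is recognized as a graph, all fibers and all projection ranks are constant, and the clean-composition hypotheses of Propositions \ref{tvrz_RdiamondR} and \ref{tvrz_RcircR} hold for free.
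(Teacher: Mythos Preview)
Your proof is correct and follows essentially the same route as the paper: reduce clean composition to Example \ref{ex_CAmorphcomp} and the constant-rank observations, then invoke Theorem \ref{thm_composition} and Proposition \ref{tvrz_CArelCAT}. The only cosmetic difference is that you verify condition $(ii)$ of Propositions \ref{tvrz_RdiamondR} and \ref{tvrz_RcircR} explicitly, whereas the paper appeals to the equivalent condition $(i)$; your version is in fact more detailed.
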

\begin{proof}
It is straightforward to show (\ref{eq_grFcomp}). Using the same arguments as in Example \ref{ex_CAmorphcomp}, one can show that $\gr(\F)$ and $\gr(\F')$ satisfy the condition $(i)$ of both Proposition \ref{tvrz_RdiamondR} and Proposition \ref{tvrz_RcircR}, hence they compose cleanly. It then follows immediately from Theorem \ref{thm_composition} that $\gr(\F' \circ \F): E_{1} \rat E_{3}$ is a Courant algebroid morphism over $\varphi' \circ \varphi$ and consequently,  $\F' \circ \F$ is a classical Courant algebroid morphism. The final statement follows from Proposition \ref{tvrz_CArelCAT} and the observation that in fact $\Delta(E) = \gr(1_{E})$. This finishes the proof.  
\end{proof}
\begin{example}
It is not that easy to come up with some non-trivial example of a classical Courant algebroid morphism. Let $(E,\rho,\<\cdot,\cdot\>,[\cdot,\cdot])$ be a transitive Courant algebroid over $M$, that is $\rho$ is fiber-wise surjective. We thus have a sequence (not necessarily exact):
\begin{equation}
\begin{tikzcd}
0 \arrow{r} & T^{\ast}M \arrow{r}{\rho^{\ast}} & E \arrow{r}{\rho} & TM \arrow{r} & 0.
\end{tikzcd}
\end{equation}
One can always construct its splitting $\sigma: TM \rightarrow E$, such that $\sigma(TM) \subseteq E$ is isotropic. Define $H \in \Omega^{3}(M)$ by $H(X,Y,Z) = -\< [\sigma(X),\sigma(Y)], \sigma(Z) \>$, for all $X,Y,Z \in \X(M)$. This $3$-form is not necessarily closed. There is a canonical Lie algebroid bracket $[\cdot,\cdot]_{L}$ on the sections of $L = E / \im(\rho^{\ast})$ and a vector bundle map $\sigma_{L}: TM \rightarrow L$ obtained by composing $\sigma$ with the quotient map $E \rightarrow L$. One can show that $dH = 0$, if and only if $\sigma_{L}([X,Y]) = [\sigma_{L}(X),\sigma_{L}(Y)]_{L}$ for all $X,Y \in X(M)$. See e.g. \cite{Baraglia:2013wua} for details. Hence suppose that this is the case. 

Let $\gTM = TM \oplus T^{\ast}M$ be the standard Courant algebroid equipped with the $H$-twisted Dorfman bracket $[(X,\xi),(Y,\eta)] = ([X,Y],\Li{X}\eta - i_{Y}(d\xi) - H(X,Y,\cdot))$. Define $\F: \gTM \rightarrow E$ as $\F(X,\xi) = \sigma(X) + \rho^{\ast}(\xi)$. We claim that this is a classical Courant algebroid morphism over $1_{M}$. We have $\rho(\F(X,\xi)) = X$, hence (\ref{eq_Fanchcomp}) commutes, and $\F((0,df)) = \rho^{\ast}(df) = \D{f}$, hence (\ref{eq_Fanchcomp2}) stands true. The isotropy of $\sigma(TM)$ implies (\ref{eq_Fisotropic}) and we conclude that $\gr(\F)$ is an almost involutive structure. As $1_{M}$ is a diffeomorphism, it remains to verify the condition (\ref{eq_Finvol4}). But this follows from Proposition 3.2 in \cite{Baraglia:2013wua}, where we have $F = 0$ due to our condition on $\sigma_{L}$ above. 

Note that there is a canonical pairing $\<\cdot,\cdot\>_{\frk}$ on the kernel $\frk \subseteq L$ of the Lie algebroid anchor of $L$. It is easy to see that $\gr(\F)$ is a Dirac structure, iff  it is either positive or negative definite. In the split signature setting of \cite{li2014dirac}, it is a Courant algebroid morphism, iff $\frk = 0$. This happens precisely when $E$ is an exact Courant algebroid, and in this case, $\F$ is just the Ševera isomorphism $\gTM \cong E$. 
\end{example}
To conclude the discussion of this class of Courant algebroid relations, let us discuss the matter of pullback and pushforward involutive structures in the sense of Definition \ref{def_pushpull}. 
\begin{tvrz} \label{tvrz_pull}
Let $L_{2} \subseteq E_{2}$ be an involutive structure supported on $S_{2}$. Let $\F: E_{1} \rightarrow E_{2}$ be a classical Courant algebroid morphism over $\varphi: M_{1} \rightarrow M_{2}$. 

Then $\gr(\F)^{\ast}(L_{2})$ is equal to the inverse image $\F^{-1}(L_{2})$ and it defines a pullback involutive structure, iff either of the following two conditions holds:
\begin{enumerate}[(i)]
\item $\F^{-1}(L_{2})$ is a submanifold and $T_{e_{1}}( \F^{-1}(L_{2})) = (T_{e_{1}}\F)^{-1}( T_{\F(e_{1})} L_{2})$ for all $e_{1} \in \F^{-1}(L_{2})$;
\item $\varphi^{-1}(S_{2})$ is a submanifold, $T_{m_{1}}(\varphi^{-1}(S_{2})) = (T_{m_{1}}\varphi)^{-1}(T_{\varphi(m_{1})}S_{2})$ for all $m_{1} \in \varphi^{-1}(S_{2})$, and the subspace $\F_{m_{1}}^{-1}( (L_{2})_{\varphi(m_{1})})$ has the same dimension for all $m_{1} \in \varphi^{-1}(S_{2})$. 
\end{enumerate}
\end{tvrz}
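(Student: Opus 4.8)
The plan is to unwind Definition \ref{def_pushpull} into an explicit composition of relations, carry out that composition set-theoretically, and then match the clean-composition requirements of Propositions \ref{tvrz_RdiamondR} and \ref{tvrz_RcircR} against the two stated conditions. First I would apply Definition \ref{def_composition} to $R = \gr(\F): E_{1} \dra E_{2}$ and $R' = L_{2} \times \{0\}: E_{2} \dra \{0\}$. An element $(e_{1},0) \in E_{1} \times \ol{\{0\}} \cong E_{1}$ lies in $(L_{2} \times \{0\}) \circ \gr(\F)$ iff there is some $e_{2} \in E_{2}$ with $(e_{1},e_{2}) \in \gr(\F)$ and $e_{2} \in L_{2}$; since $\gr(\F)$ forces $e_{2} = \F(e_{1})$, this happens precisely when $\F(e_{1}) \in L_{2}$. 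Hence $(L_{2} \times \{0\}) \circ \gr(\F) = \F^{-1}(L_{2}) \times \{0\}$, so by the defining formula $\gr(\F)^{\ast}(L_{2}) = \F^{-1}(L_{2})$ as a set, independently of any regularity. What remains is to characterize when the composition is clean, for only then does Theorem \ref{thm_composition} guarantee that $\F^{-1}(L_{2})$ is an involutive subbundle, i.e. a genuine pullback involutive structure.

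Next I would write down the intermediate objects explicitly: $R' \diamond R = \{(e_{1},\F(e_{1}),\F(e_{1}),0) \;|\; e_{1} \in \F^{-1}(L_{2})\}$ and $S' \diamond S = \{(m_{1},\varphi(m_{1}),\varphi(m_{1}),\ast) \;|\; m_{1} \in \varphi^{-1}(S_{2})\}$. The first-factor projection $p$ restricts to a bijection of $R' \diamond R$ onto $R' \circ R = \F^{-1}(L_{2}) \times \{0\}$, because the remaining coordinates are determined smoothly from $e_{1}$ through $\F$; likewise $\pi$ sends $S' \diamond S$ bijectively onto a copy of $\varphi^{-1}(S_{2})$. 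Therefore, exactly as in Example \ref{ex_CAmorphcomp}, once the conditions of Proposition \ref{tvrz_RdiamondR} hold (making $R' \diamond R$ a subbundle), $p$ is automatically a diffeomorphism onto $R' \circ R$, so the hypotheses of Proposition \ref{tvrz_RcircR} are satisfied for free. Thus $\gr(\F)$ and $L_{2} \times \{0\}$ compose cleanly if and only if they satisfy Proposition \ref{tvrz_RdiamondR}, and it suffices to match its two equivalent forms to (i) and (ii).

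The heart of the argument is the standard equivalence between the clean intersection of a graph and a clean preimage. For Proposition \ref{tvrz_RdiamondR}(i) I would compute, at a point $w = (e_{1},\F(e_{1}),\F(e_{1}))$ (suppressing the trivial factor), that $T_{w}(\gr(\F) \times L_{2}) = \gr(T_{e_{1}}\F) \times T_{\F(e_{1})}L_{2}$ while $T_{w}(E_{1} \times \Delta(E_{2})) = \{(u,a,a)\}$; intersecting forces $T_{e_{1}}\F(v) \in T_{\F(e_{1})}L_{2}$, so under the first projection the tangent intersection is carried onto $(T_{e_{1}}\F)^{-1}(T_{\F(e_{1})}L_{2})$, whereas the tangent space of the intersection set $R' \diamond R \cong \F^{-1}(L_{2})$ is carried onto $T_{e_{1}}\F^{-1}(L_{2})$. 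The clean condition \eqref{eq_cleancondition} then reads exactly $T_{e_{1}}\F^{-1}(L_{2}) = (T_{e_{1}}\F)^{-1}(T_{\F(e_{1})}L_{2})$ together with $\F^{-1}(L_{2})$ being a submanifold, which is (i). The identical computation on the base applied to $\varphi$ and $S_{2}$ identifies the clean-intersection clause of Proposition \ref{tvrz_RdiamondR}(ii) with the clean-preimage condition on $\varphi^{-1}(S_{2})$; and since the fiber $(R' \diamond R)_{m}$ over $m = (m_{1},\varphi(m_{1}),\varphi(m_{1}),\ast)$ equals $\F_{m_{1}}^{-1}((L_{2})_{\varphi(m_{1})})$, the constant-dimension clause of (ii) in Proposition \ref{tvrz_RdiamondR} matches the final requirement of the target (ii). Feeding these identifications into the already-established equivalence (i)$\Leftrightarrow$(ii) of Proposition \ref{tvrz_RdiamondR} yields the stated ``either $\dots$ or'' dichotomy without re-deriving it.

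The main obstacle is the careful bookkeeping in this tangent-space computation: handling the doubled middle factor $E_{2}$ and the diagonal $\Delta(E_{2})$ so that the three-factor clean intersection genuinely collapses to the two-factor clean-preimage statement for $\F$, and confirming that the first-factor projection restricts to a diffeomorphism so that Proposition \ref{tvrz_RcircR} is automatic. The set-level computation and the explicit descriptions of the $\diamond$-objects are routine by comparison.
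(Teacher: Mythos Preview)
Your proposal is correct and follows essentially the same approach as the paper: both compute the composition set-theoretically to get $\F^{-1}(L_{2})$, identify $R' \diamond R$ with a graph over $\F^{-1}(L_{2})$ so that the projection $p$ is a diffeomorphism (making Proposition \ref{tvrz_RcircR} automatic), and then match the clean-intersection condition \eqref{eq_cleancondition} with the clean-preimage condition on $\F$ (respectively $\varphi$) via the tangent-space computation you outline. Your write-up is slightly more explicit about the tangent-space bookkeeping, but the argument is the same.
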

\begin{proof}
The observation $\gr(\F)^{\ast}(L_{2}) = \F^{-1}(L_{2})$ follows easily from the definition. It defines a pullback involutive structure, if $\gr(\F)$ and $L_{2} \times \{0\}$ compose cleanly. We will now argue that the condition $(i)$ is equivalent to the conditions $(i)$ in Proposition \ref{tvrz_RdiamondR} and \ref{tvrz_RcircR}, whereas $(ii)$ corresponds to the conditions $(ii)$ of the same propositions. 

As $\F^{-1}(L_{2}) \times \{0\} = (L_{2} \times \{0\}) \circ \gr(F)$, the subset $\F^{-1}(L_{2})$ must be a submanifold of $E_{1}$. Let us construct the subset $(L_{2} \times \{0\}) \diamond \gr(\F)$. By definition, one has
\begin{equation}
\begin{split}
(L_{2} \times \{0\}) \diamond \gr(\F) = & \ (\gr(\F) \times (L_{2} \times \{0\})) \cap (E_{1} \times \Delta(E_{2}) \times \{0\}) \\
= & \ \big( (\gr(\F) \times L_{2}) \cap (E_{1} \times \Delta(E_{2})) \big) \times \{0 \} \\
= & \ \{ (e_{1}, \F(e_{1}), \F(e_{1})) \; | \; e_{1} \in \F^{-1}(L_{1}) \} \times \{ 0 \}. 
\end{split}
\end{equation}
The set in the first factor is the graph of the smooth map $e_{1} \mapsto (\F(e_{1}),\F(e_{1}))$ from $\F^{-1}(L_{2})$ to $E_{2} \times E_{2}$, hence a submanifold of $\F^{-1}(E_{1}) \times \ol{E}_{2} \times E_{2}$, hence of $E_{1} \times \ol{E}_{2} \times E_{2}$. This proves that $(L_{2} \times \{0\}) \diamond \gr(\F)$ is a submanifold. One has to examine the condition (\ref{eq_cleancondition}). For any $e_{1} \in \F^{-1}(L_{2})$ the intersection of the tangent spaces of both submanifolds at $(e_{1},\F(e_{1}),\F(e_{1}),0)$ takes the form 
\begin{equation}
\{ (x_{1}, (T_{e_{1}}\F)(x_{1}), (T_{e_{1}}\F)(x_{1})) \; | \; x_{1} \in (T_{e_{1}}\F)^{-1}( T_{\F(e_{1})}L_{2}) \} \times \{0\}.
\end{equation}
On the other hand, the tangent space to $(L_{2} \times \{0\}) \diamond \gr(\F)$ at the same point reads
\begin{equation}
\{ (x_{1}, (T_{e_{1}}\F)(x_{1}), (T_{e_{1}}\F)(x_{1})) \; | \; x_{1} \in T_{e_{1}}( \F^{-1}(L_{2})) \} \times \{0\}.
\end{equation}
These sets are equal, iff the condition on tangent spaces in $(i)$ of this proposition holds. The projection map $p: (L_{2} \times \{0\}) \diamond \gr(\F) \rightarrow (L_{2} \times \{0\}) \circ \gr(\F)$ is always a diffeomorphism. This shows that $\F^{-1}(L_{2})$ is a pullback involutive structure, iff $(i)$ holds. Similarly, the first part condition $(ii)$ is equivalent to the requirements on the supports in the conditions $(ii)$ of Proposition \ref{tvrz_RdiamondR} and Proposition \ref{tvrz_RcircR}. Finally, the second part ensures the constant dimension requirements.  
\end{proof}
\begin{example}
The condition $(i)$ is satisfied if the map $\F$ is transverse to the submanifold $L_{2}$. This is equivalent to the transversality of $\varphi$ to the submanifold $S_{2}$ together with the linear transversality condition $\im( \F_{m_{1}}) + (L_{2})_{\varphi(m_{1})} = (E_{2})_{\varphi(m_{1})}$. 
\end{example}
Let us turn our attention towards the pushforwards. The proof of the following proposition is very similar to the previous one and we leave it up to the interested reader. Note that the condition $(ii)$ is a lot simpler. This is because every $\F$ has to be fiber-wise injective. 
\begin{tvrz} \label{tvrz_push}
Let $L_{1} \subseteq E_{1}$ be an involutive structure supported on $S_{1}$. Let $\F: E_{1} \rightarrow E_{2}$ be a classical Courant algebroid morphism over $\varphi: M_{1} \rightarrow M_{2}$. 

Then $\gr(\F)_{\ast}(L_{1})$ is equal to the image $\F(L_{1})$ and it defines a pushforward involutive structure, iff either of the following two conditions holds:
\begin{enumerate}[(i)]
\item $\F(L_{1})$ is a submanifold and $T_{\F(e_{1})}( \F(L_{1})) = (T_{e_{1}}\F)( T_{e_{1}}L_{1})$ for all $e_{1} \in L_{1}$;
\item $\varphi(S_{1})$ is a submanifold and $T_{\varphi(m_{1})}(\varphi(S_{1})) = (T_{m_{1}}\varphi)(T_{m_{1}}S_{1})$ for all $m_{1} \in S_{1}$.
\end{enumerate}
\end{tvrz}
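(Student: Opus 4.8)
The plan is to follow the proof of Proposition~\ref{tvrz_pull} almost verbatim, the essential simplification being that the fiber-wise injectivity of $\F$ (Lemma~\ref{lem_grF1}) makes all the conditions of Proposition~\ref{tvrz_RdiamondR} automatic. First I would unwind Definition~\ref{def_pushpull}~$(ii)$ together with the formula for $\circ$ to compute
\[
\gr(\F) \circ (\{0\} \times L_{1}) = \{ (0,\F(e_{1})) \; | \; e_{1} \in L_{1} \} = \{0\} \times \F(L_{1}),
\]
which identifies $\gr(\F)_{\ast}(L_{1})$ with $\F(L_{1})$ as soon as $\{0\} \times L_{1}$ and $\gr(\F)$ compose cleanly; involutivity is then granted by Theorem~\ref{thm_composition}. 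Hence the whole task reduces to checking that clean composition is equivalent to $(i)$ (resp.\ $(ii)$).

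Next I would compute the diamond,
\[
\gr(\F) \diamond (\{0\} \times L_{1}) = \{ (0,e_{1},e_{1},\F(e_{1})) \; | \; e_{1} \in L_{1} \},
\]
recognizing it as the graph of $e_{1} \mapsto (e_{1},\F(e_{1}))$ over $L_{1}$, hence a submanifold, with the support diamond being the graph of $m_{1} \mapsto (m_{1},\varphi(m_{1}))$ over $S_{1}$. A tangent-space computation at $(0,e_{1},e_{1},\F(e_{1}))$ shows that the intersection of the tangent spaces of $(\{0\} \times L_{1}) \times \gr(\F)$ and $\{0\} \times \Delta(E_{1}) \times \ol{E}_{2}$ is exactly $\{ (0,x_{1},x_{1},(T_{e_{1}}\F)(x_{1})) \; | \; x_{1} \in T_{e_{1}}L_{1} \}$, i.e.\ the tangent space to the diamond, so the clean-intersection condition~(\ref{eq_cleancondition}) holds automatically; moreover the diamond has fiber of constant dimension $\rk(L_{1})$ over $S_{1}$. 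Thus both conditions of Proposition~\ref{tvrz_RdiamondR} are satisfied regardless of $(i)$ or $(ii)$.

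It therefore remains to match the present conditions with those of Proposition~\ref{tvrz_RcircR}. Here $p$ restricted to the diamond becomes the map $e_{1} \mapsto \F(e_{1})$ from $L_{1}$ onto $\F(L_{1})$, and $\pi$ becomes $m_{1} \mapsto \varphi(m_{1})$ from $S_{1}$ onto $\varphi(S_{1})$. Condition $(i)$ says precisely that $\F(L_{1})$ is a submanifold with $T_{\F(e_{1})}(\F(L_{1})) = (T_{e_{1}}\F)(T_{e_{1}}L_{1})$, which is the statement that $p$ is a surjective submersion onto a submanifold, i.e.\ condition $(i)$ of Proposition~\ref{tvrz_RcircR}; condition $(ii)$ is the analogous statement for $\pi$. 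The one point where injectivity of $\F$ is decisive---and where the hypothesis is simpler than in Proposition~\ref{tvrz_pull}---is the constant-rank requirement on $p_{m}$: since each $\F_{m_{1}}$ is injective, $p_{m}$ has rank $\rk(L_{1})$ for every $m$, so no extra dimension condition on the fibers is needed. The hard part will be only the careful bookkeeping of these tangent-space identifications and verifying that the submersion-onto-image criterion reproduces exactly the two displayed conditions; once the injectivity of $\F$ is used, everything else is forced.
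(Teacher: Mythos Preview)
Your proposal is correct and follows precisely the approach the paper indicates: it mirrors the proof of Proposition~\ref{tvrz_pull}, with the key simplification that the fiber-wise injectivity of $\F$ forces the rank of $p_{m}$ to be $\rk(L_{1})$ everywhere, so no extra dimension condition appears in $(ii)$. The paper itself omits the details and only remarks that the argument is analogous and that $(ii)$ is simpler for this reason; your write-up fills in exactly those omitted details.
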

\begin{example} \label{ex_compfails}
We have promised to show an example of Courant algebroid relations $R$ and $R'$ which \textit{cannot} be composed. It is hardly useful, but an example it is.

Let $E_{1} = \R^{2} \times \R$ and $E_{2} = \R \times \R^{2}$ be a pair of vector bundles equipped with standard Euclidean fiber-wise metrics $\<\cdot,\cdot\>_{1}$ and $\<\cdot,\cdot\>_{2}$, respectively. Equip those with trivial anchors and brackets to make them into Courant algebroids. Define $\F: E_{1} \rightarrow E_{2}$ as 
\begin{equation}
\F((x,y),t) = (x, ( \cos(y)t, \sin(y)t)),
\end{equation}
for all $((x,y),t) \in E_{1}$. This is a fiber-wise injective vector bundle map over a surjective submersion $\varphi(x,y) = x$.  It is not difficult to see that it defines a classical Courant algebroid morphism from $E_{1}$ to $E_{2}$ over $\varphi$. We thus have a relation $\gr(\F): E_{1} \dra E_{2}$. Now, let us show that the composition $\gr(\F)^{T} \circ \gr(\F)$ is not a subbundle of $E_{1} \times \ol{E}_{1}$. Its base $\gr(\varphi)^{T} \circ \gr(\varphi)$ is just a fibered product 
\begin{equation}
\begin{split}
\gr(\varphi)^{T} \circ \gr(\varphi) = & \ \R^{2} \times_{\R} \R^{2} = \{ ((x,y),(x',y')) \in \R^{2} \times \R^{2} \; | \; \varphi(x,y) = \varphi(x',y') \} \\
= & \ \{ ((x,y),(x',y')) \in \R^{2} \times \R^{2} \; | \; x = x' \}.
\end{split}
\end{equation}
This is a submanifold of $\R^{2} \times \R^{2}$. However, $\gr(\F)^{T} \circ \gr(\F)$ fails to be a subbundle, as its fiber over $((x,y),(x,y'))$ can be identified with the vector subspace 
\begin{equation}
(\gr(\F)^{T} \circ \gr(\F))_{((x,y),(x,y'))} = \{ (t,t') \in \R \times \R \; | \; t (\cos(y),\sin(y)) = t' (\cos(y'), \sin(y')) \}.
\end{equation} 
Its dimension is $1$ for $y' - y \in \mathbb{Z} \{2\pi\}$ and $0$ otherwise. The dimension of the fibers of $\gr(\F)^{T} \circ \gr(\F)$ along $\gr(\varphi)^{T} \circ \gr(\varphi)$ is thus not even locally constant, hence it is not a subbundle. 
\end{example}

\subsection{Dorfman functor} \label{subsec_Dorfman}
This subsection is based on notions introduced in Example \ref{ex_LADorfman}. Let $A_{1}$ and $A_{2}$ be a pair of Lie algebroids over $M_{1}$ and $M_{2}$, respectively. 

By a \textbf{Lie algebroid relation} $K: A_{1} \dra A_{2}$ supported on $S$, one means a Lie subalgebroid $K \subseteq A_{1} \times A_{2}$ over a submanifold $S$. Such relations can be composed under completely the same conditions as Courant algebroid relations. Hence they also form a ``category'' which we shall denote as $\ol{\cLA}$. Similarly to the previous subsection, if one considers $K = \gr(\F)$ for a vector bundle map $\F: A_{1} \rightarrow A_{2}$ over $\varphi: M_{1} \rightarrow M_{2}$, one recovers the notion of a Lie algebroid morphism (see Remark \ref{rem_classicalCA}). Lie algebroids and such morphisms form an actual category of Lie algebroids $\cLA$.  

Now, for a given Lie algebroid $A$, let $\Df(A)$ denote the Courant algebroid defined on the vector bundle $A \oplus A^{\ast}$ using the Dorfman bracket associated to $A$. See Example \ref{ex_LADorfman} for details. We will argue that the map $A \mapsto \Df(A)$ can be interpreted as a functor\footnote{One has to be a little bit careful to call something a functor between ``categories''. See the actual theorem for a more precise statement.} $\Df: \ol{\cLA} \rightarrow \ol{\cCA}$. Note that its restriction to the subcategory $\cLA$ does not take values in the category $\cCA$.

\begin{theorem} \label{thm_Df}
Let $(A_{1},a_{1},[\cdot,\cdot]_{A_{1}})$ and $(A_{2}, a_{2}, [\cdot,\cdot]_{A_{2}})$ be a pair of Lie algebroids over $M_{1}$ and $M_{2}$, respectively. Let $K: A_{1} \dra A_{2}$ be a Lie algebroid relation supported on $S$. 

Then there exists a canonical Courant algebroid relation $R_{K}: \Df(A_{1}) \dra \Df(A_{2})$ supported on $S$. If $K': A_{2} \dra A_{3}$ is another Lie algebroid relation, Then $R_{K' \circ K} = R_{K'} \circ R_{K}$. In particular, if $K' \circ K$ happens to be a Lie algebroid relation, then $R_{K'} \circ R_{K}$ is a Courant algebroid relation. Moreover, one has $R_{\Delta(A_{1})} = \Delta( \Df(A_{1}))$. We thus call $\Df$ a \textbf{Dorfman functor}.

If $K = \gr(\F)$ for a Lie algebroid morphism $\F: A_{1} \rightarrow A_{2}$ over $\varphi$, then $R_{K} = \gr(\hat{\F})$ for a (unique) classical Courant algebroid morphism $\hat{\F}: \Df(A_{1}) \rightarrow \Df(A_{2})$ over $\varphi$, iff $\F$ is a fiber-wise bijective vector bundle map. 
\end{theorem}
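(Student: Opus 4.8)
The plan is to make $R_K$ completely explicit as a subbundle of $\Df(A_1) \times \ol{\Df(A_2)}$ and then reduce the entire statement to a fiber-wise duality between a linear map and its transpose. Recall from Example \ref{ex_LADorfman} that the Dorfman functor assigns to the Lie subalgebroid $K \subseteq A_1 \times A_2$ the Dirac structure $K \oplus \an(K)$ in the double $\Df(A_1 \times A_2) = (A_1 \times A_2) \oplus (A_1 \times A_2)^{\ast}$; regrouping the summands via $(A_1 \times A_2) \oplus (A_1 \times A_2)^{\ast} \cong \Df(A_1) \times \Df(A_2)$ and applying the sign-flipping Courant algebroid isomorphism $(X,\xi) \mapsto (X,-\xi)$ on the second factor (which identifies $\Df(A_2)$ with $\ol{\Df(A_2)}$) produces $R_K$. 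Specializing to $K = \gr(\F)$ and using that $\an(\gr(\F))$ at $(m_1,\varphi(m_1))$ equals $\{ (-\F_{m_1}^{T}\xi_2, \xi_2) \mid \xi_2 \in (A_2^{\ast})_{\varphi(m_1)} \}$, I would carry this through both identifications (absorbing the sign from $\ol{\Df(A_2)}$ by renaming the parameter) to obtain the explicit fiber
\[
(R_K)_{(m_1,\varphi(m_1))} = \{ ((e_1, \F_{m_1}^{T}\eta),\ (\F_{m_1}e_1, \eta)) \mid e_1 \in (A_1)_{m_1},\ \eta \in (A_2^{\ast})_{\varphi(m_1)} \},
\]
where $\F_{m_1}^{T} : (A_2^{\ast})_{\varphi(m_1)} \to (A_1^{\ast})_{m_1}$ is the transpose of $\F_{m_1}$.

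With this in hand, the second step is purely linear-algebraic. The subbundle $R_K$ is the graph of a vector bundle map $\Df(A_1) \to \Df(A_2)$ over $\varphi$ precisely when, for every $m_1$, the projection $p_1 : (R_K)_{(m_1,\varphi(m_1))} \to \Df(A_1)_{m_1} = (A_1)_{m_1} \oplus (A_1^{\ast})_{m_1}$ onto the first factor is a linear isomorphism. From the explicit fiber this projection is $(e_1,\eta) \mapsto (e_1, \F_{m_1}^{T}\eta)$; since the $e_1$-component already sweeps out $(A_1)_{m_1}$ freely, $p_1$ is bijective if and only if $\F_{m_1}^{T}$ is bijective, which by the standard transpose duality holds if and only if $\F_{m_1}$ is bijective. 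A dimension count ($\dim (R_K)_{(m_1,\varphi(m_1))} = \rk(A_1)+\rk(A_2)$ versus $\dim \Df(A_1)_{m_1} = 2\rk(A_1)$) also shows that the graph condition forces $\rk(A_1)=\rk(A_2)$, consistent with fiber-wise bijectivity. This settles both directions of the equivalence at once.

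Finally I would assemble the map and dispose of the remaining claims. When $\F$ is fiber-wise bijective, reading off $\hat\F(e_1, \F_{m_1}^{T}\eta) = (\F_{m_1}e_1,\eta)$ forces $\hat\F = \F \oplus (\F^{-1})^{T}$, which is a genuine (smooth) vector bundle map over $\varphi$ and is unique because a graph determines its defining map. I would then avoid checking the Courant axioms for $\hat\F$ directly: the earlier parts of this very theorem already guarantee that $R_K$ is a Courant algebroid relation, so the equality $R_K = \gr(\hat\F)$ together with Definition \ref{def_classicalCA} immediately yields that $\hat\F$ is a classical Courant algebroid morphism over $\varphi$. I expect the only delicate point to be the careful bookkeeping of the two identifications and the sign flip into $\ol{\Df(A_2)}$, so that the first-factor projection genuinely reads $(e_1, \F_{m_1}^{T}\eta)$; once this explicit form is secured, everything else is the elementary fact that a linear map is bijective iff its transpose is.
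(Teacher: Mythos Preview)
Your construction of $R_K$ via the isomorphism $\Df(A_1\times A_2)\cong \Df(A_1)\times\ol{\Df(A_2)}$ applied to $K\oplus\an(K)$, and your analysis of the graph case via the transpose duality, are essentially identical to the paper's own proof; the explicit fiber formula and the map $\hat\F=\F\oplus(\F^{-1})^T$ match exactly.

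However, your proposal is incomplete: the theorem has four claims and you address only the first (existence of $R_K$) and the last (the $\gr(\F)$ characterization). You never touch the functoriality assertions $R_{K'\circ K}=R_{K'}\circ R_K$ and $R_{\Delta(A_1)}=\Delta(\Df(A_1))$. The identity claim is a trivial unwinding, but the composition claim requires an argument: in the paper one first checks the easy inclusion $R_{K'}\circ R_K\subseteq R_{K'\circ K}$ from the explicit fiber description, and then gets equality by a dimension count using that both sides are maximally isotropic (the split signature of the Dorfman doubles is essential here, since in general compositions of maximally isotropic relations need not remain maximally isotropic). Your phrase ``reduce the entire statement to a fiber-wise duality between a linear map and its transpose'' suggests you may have overlooked that these middle claims are part of what must be proved.
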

\begin{proof}
Write $E_{1} = \Df(A_{1})$ and $E_{2} = \Df(A_{2})$. For every $(s_{1},s_{2}) \in S$, define 
\begin{equation} \label{eq_RKdefinice}
(R_{K})_{(s_{1},s_{2})} = \{ ((x_{1},\xi_{1}), (x_{2},\xi_{2})) \in (E_{1} \times \ol{E}_{2})_{(s_{1},s_{2})} \; | \; (x_{1},x_{2}) \in K_{(s_{1},s_{2})}, \; \xi_{1}(x_{1}) = \xi_{2}(x_{2}) \}.
\end{equation}
Instead of proving directly that $R_{K}$ is an involutive structure, we will construct a classical Courant algebroid isomorphism $\fPsi: \Df(A_{1} \times A_{2}) \rightarrow E_{1} \times \ol{E}_{2}$ over the identity and observe that 
\begin{equation}
R_{K} = \fPsi(K \oplus \an(K)).
\end{equation}
In Example \ref{ex_LADorfman}, we have shown that $K \oplus \an(K)$ is a Dirac structure in $\Df(A_{1} \times A_{2})$. Proposition \ref{tvrz_push} then immediately implies that $R_{K}$ is a Dirac structure in $E_{1} \times \ol{E}_{2}$. 

It suffices to define $\fPsi$ on generating sections. For $X_{i} \in \Gamma(A_{i})$ and $\xi_{i} \in \Gamma(A^{\ast}_{i})$, $i \in \{1,2\}$, set 
\begin{equation}
\fPsi((X_{1},X_{2}),(\xi_{1},\xi_{2})) := ((X_{1},\xi_{1}), (X_{2},-\xi_{2})).
\end{equation}
Note that the minus sign is essential due to the sign flip on $\ol{E}_{2}$. It is straightforward to prove that $\fPsi$ defines a classical Courant algebroid isomorphism over the identity. 

Directly from the definition (\ref{eq_RKdefinice}), one may show the inclusion $(R_{K'} \circ R_{K})_{(s_{1},s_{3})} \subseteq (R_{K' \circ K})_{(s_{1},s_{3})}$ for all $(s_{1},s_{3}) \in S' \circ S$. The equality now follows from the fact that these two subspaces have the same dimension. Indeed, we have shown in the previous paragraph that $(R_{K' \circ K})_{(s_{1},s_{3})}$ is maximally isotropic. But so is the composition $(R_{K'} \circ R_{K})_{(s_{1},s_{3})}$, see Proposition \ref{tvrz_coiso} and Example \ref{ex_coisored}. Note that it is \textit{vital} that the pairings on $E_{1}$ and $E_{2}$ have a split signature and there is thus no contradiction with Remark \ref{rem_compDiracfail}. Two maximally isotropic subspaces have to be of the same dimension and the equality $R_{K'} \circ R_{K} = R_{K' \circ K}$ follows. The identity $R_{\Delta(A_{1})} = \Delta(\Df(A_{1}))$ is obvious. 

Now, if $K = \gr(\F)$ for a Lie algebroid morphism $\F: A_{1} \rightarrow A_{2}$ over $\varphi: M_{1} \rightarrow M_{2}$, one finds
\begin{equation}
(R_{\gr(\F)})_{(m_{1},\varphi(m_{1}))} = \{ ((x_{1}, \F_{m_{1}}^{T}(\xi_{2})), (\F_{m_{1}}(x_{1}), \xi_{2})) \; | \; x_{1} \in (A_{1})_{m_{1}}, \; \xi_{2} \in (A^{\ast}_{2})_{\varphi(m_{1})} \}.
\end{equation}
This can be written as a graph of a vector bundle map, iff there exists an inverse of the linear map $\F_{m_{1}}: (A_{1})_{m_{1}} \rightarrow (A_{2})_{\varphi(m_{1})}$ for each $m_{1} \in M_{1}$. In other words, the bundle map $\F$ is fiber-wise bijective. If this is the case, one can define a smooth vector bundle map $\hat{\F}: \Df(A_{1}) \rightarrow \Df(A_{2})$ over $\varphi$, fiber-wise by formula $\hat{\F}_{m_{1}}( x_{1},\xi_{1}) = ( \F_{m_{1}}(x_{1}), \F_{m_{1}}^{-T}(\xi_{1}))$. Note that if $\F$ is fiber-wise bijective, the induced map $\F^{!}: A_{1} \rightarrow \varphi^{!}(A_{2})$ is a vector bundle isomorphism. The inverse of its transpose $(\F^{!})^{-T}: A_{1}^{\ast} \rightarrow \varphi^{!}(A_{2}^{\ast})$ can be then composed with the canonical vector bundle map $\varphi^{!}: \varphi^{!}(A_{2}^{\ast}) \rightarrow A_{2}^{\ast}$ and fiber-wise, this is exactly the linear map in the second component in $\hat{\F}$.
\end{proof}

\begin{rem}
This subsection is an example valid also for Courant algebroid relations defined in \cite{li2014dirac}. This is because Courant algebroids obtained by Dorfman functor always have a split signature and all relations are maximally isotropic. 
\end{rem}

\begin{example}
The assignment $M \mapsto TM$ may be viewed as a functor $T: \cMan^{\infty} \rightarrow \cLA$ from the category of smooth manifolds to the category of Lie algebroids. Indeed, if $\varphi: M \rightarrow M'$ is a smooth map, then $T(\varphi): TM \rightarrow TM'$ is easily seen to be a morphism of Lie algebroids over $\varphi$. The standard Courant algebroid on the generalized tangent bundle $\mathbb{T}M$ can be now viewed as the composed functor $\mathbb{T} = \Df \circ T$ from $\cMan^{\infty}$ to $\ol{\cCA}$ evaluated at $M \in \cMan^{\infty}$. In particular, for every smooth map $\varphi: M \rightarrow M'$, one obtains a Courant algebroid morphism $R_{\gr(T(\varphi))}: \gTM \rat \gTM'$ over $\varphi$. This is precisely Example 2.7 in \cite{bursztyn2008courant}.
\end{example}
\begin{example} \label{ex_paraHermitian}
Our next example comes from the para-Hermitian geometry \cite{Freidel:2017yuv, Svoboda:2018rci}. Recall that a \textbf{para-Hermitian manifold} is a triple $(P,\eta,K)$, where $P$ is an $2d$-dimensional smooth manifold, $\eta$ is a metric on $P$ of a split signature $(d,d)$, and $K: TP \rightarrow TP$ is a vector bundle map over $1_{P}$, such that $K^{2} = 1$ and it is anti-orthogonal with respect to $\eta$. Moreover, $K$ satisfies a certain integrability condition, which can be equivalently described as the involutivity of the pair of smooth regular distributions $T_{\pm}$ obtained as $\pm 1$ eigenbundles of $K$. 

Any involutive distribution $T_{+}$ can be viewed as a Lie algebroid $(T_{+},\ell_{+},[\cdot,\cdot]_{+})$, where the anchor $\ell_{+}: T_{+} \rightarrow TP$ is the inclusion of the subbundle and $[\cdot,\cdot]_{+}$ restricts from the Lie bracket on $\X(P)$. As $T_{\pm}$ are maximally isotropic with respect to $\eta$, there is a canonical vector bundle isomorphism $\rho_{+}: \Df(T_{+}) \rightarrow T_{+} \oplus T_{-} \equiv TP$ which can be used to induce a Courant algebroid structure $(TP, \fP_{+}, \eta, \dbl \cdot, \cdot \dbr_{+} )$, where $\fP_{+}: TP \rightarrow TP$ is the projector on the eigenbundle $T_{+}$.  

Let $\F_{+}$ be a smooth distribution induced by $T_{+}$. Let $i_{F}: F \rightarrow P$ be the injective immersion of one of its leaves $F \in \F_{+}$. By definition of the foliation corresponding to the involutive distribution, the map $T(i_{F}): TF \rightarrow T_{+}$ defines a fiber-wise bijective Lie algebroid morphism over $i_{F}$. It follows from Theorem \ref{thm_Df} that there is a canonical classical Courant algebroid morphism $\fPsi^{+}_{F}: \mathbb{T}F \rightarrow TP$ over $i_{F}: F \rightarrow P$, which is fiber-wise bijective. 

Naturally, a similar construction can be done for any leaf of the foliation $\F_{-}$ corresponding to the distribution $T_{-}$ and the Courant algebroid $(TP,\fP_{-},\eta, \dbl \cdot, \cdot \dbr_{-})$.

Note that they phrase it a little bit differently in \cite{Freidel:2017yuv, Svoboda:2018rci}. Instead of working with a single leaf, they consider a manifold $\F_{+} = \bigsqcup_{F \in \F_{+}} F$, a disjoint union of all leaves of the foliation. As it has uncountably many connected components, it is not a second-countable topological space. The set $\F_{+}$ is one-to-one with the manifold $P$. The collection of the above maps then defines a single morphism $\fPsi^{+}: \mathbb{T}\F_{+} \rightarrow TP$ which they call an isomorphism of Courant algebroids. However, strictly speaking, the smooth bijection $\F_{+} \rightarrow P$ does not have a smooth inverse and consequently, the fiber-wise inverse of $\fPsi^{+}$ is not a vector bundle map. 
\end{example}
\subsection{Reduction of Courant algebroids} \label{subsec_reduction}
For a comprehensive treatment of the Courant algebroid reductions by group actions, see \cite{Bursztyn2007726}. We consider only its simplified form which found its applications e.g. in the geometrical description of Kaluza--Klein reduction of supergravity \cite{Vysoky:2017epf} and Poisson--Lie T-duality \cite{Jurco:2017gii,Severa:2015hta}.

Let $\varpi: P \rightarrow M$ be a principal $G$-bundle, where $G$ is any connected Lie group. Suppose $q: E \rightarrow P$ is a vector bundle equipped with a structure of \textbf{$G$-equivariant Courant algebroid} $(E,\rho,\<\cdot,\cdot\>,[\cdot,\cdot],\Re)$. The additional structure is a linear map $\Re: \g \rightarrow \Gamma(E)$, where $\g = \Lie(G)$ is the Lie algebra of $G$, satisfying:
\begin{enumerate}[(i)]
\item $\rho \circ \Re = \#$, where $\#: \g \rightarrow \X(P)$ is the infinitesimal generator of the right action of $G$ on $P$;
\item $\Re([x,y]_{\g}) = [\Re(x),\Re(y)]$, where $[\cdot,\cdot]_{\g}$ is the Lie algebra bracket on $\g$;
\item the induced Lie algebra action $x \btr \psi := [\Re(x),\psi]$ of $\g$ on $\Gamma(E)$ integrates to a Lie group action $\mathfrak{R}$ of $G$ on $E$ making it into a $G$-equivariant vector bundle, see Section 3.1 of \cite{Mackenzie}. In particular, there is a unique vector bundle structure $q^{\natural}: E/G \rightarrow M$ on the quotient manifold, making the quotient map $\natural: E \rightarrow E/G$ into the vector bundle map over $\varpi$. 
\end{enumerate}
Now, as the bracket $[\cdot,\cdot]_{\g}$ is skew-symmetric and $[\cdot,\cdot]$ is not, we obtain the equation
\begin{equation}
0 = \Re([x,y]_{\g} + [y,x]_{\g}) = [\Re(x),\Re(y)] + [\Re(y),\Re(x)] = \D \<\Re(x),\Re(y)\>,
\end{equation}
for all $x,y \in \g$. Recall that $\D = \rho^{\ast} \circ \dr$. If $E$ is a transitive Courant algebroid, this implies that the function $\<\Re(x),\Re(y)\>$ is constant on every connected component of the base $P$. For simplicity, suppose that that regardless of the transitivity of $E$, it is constant on $P$. Hence the formula
\begin{equation} \label{eq_Rinducedpair}
(x,y)_{\g} = \<\Re(x),\Re(y)\>
\end{equation}
defines a symmetric bilinear form on $\g$. In fact, the axiom C3) implies that it is $\ad$-invariant. Note that it \textit{does not} have to be non-degenerate, let $(p_{0},q_{0},k_{0})$ denote its inertia.  
\begin{rem}
The devil is in the detail. If $E$ is not transitive, $\Re$ can be still used to induce a fiber-wise symmetric bilinear form on $P \times \g$. However, unlike the signature of a fiber-wise metric, the inertia of a fiber-wise bilinear form \textit{does not have to be locally constant}. In particular, the dimension of its kernel may not be locally constant. But the dimension of its kernel at $p$ is exactly the dimension of the intersection $K_{p} \cap K^{\perp}_{p}$, which would prevent $K \cap K^{\perp}$ from being a subbundle of $E$. See below for possible consequences. 
\end{rem}
Now, viewing $\Re$ as a fiber-wise injective vector bundle map from $P \times \g$ to $E$, one constructs a $G$-invariant subbundle $K = \Re(P \times \g)$ and its orthogonal complement $K^{\perp}$. It follows that the $C^{\infty}(M)$-module $\Gamma_{G}(K^{\perp})$ of its $G$-invariant sections is involutive with respect to $[\cdot,\cdot]$. However, the restriction of $\<\cdot,\cdot\>$ to $\Gamma_{G}(K^{\perp})$ is degenerate, so one has to take out its kernel $\Gamma_{G}(K \cap K^{\perp})$. The \textbf{reduced Courant algebroid} $(E',\rho',\<\cdot,\cdot\>',[\cdot,\cdot]')$ is then defined on the vector bundle 
\begin{equation}
E' = \frac{K^{\perp} / G}{(K \cap K^{\perp}) / G},
\end{equation}
where $\<\cdot,\cdot\>'$ and $[\cdot,\cdot]'$ are naturally induced using the identification $\Gamma(E') = \Gamma_{G}(K^{\perp}) / \Gamma_{G}(K \cap K')$. Let $\chi_{E'}: K^{\perp}/G \rightarrow E'$ denote the quotient map. The anchor map $\rho': E' \rightarrow TM$ is defined by the following commutative diagram:
\begin{equation} \label{eq_redanchor}
\begin{tikzcd}
& E \arrow{r}{\rho} \arrow{d}{\natural} & TP \arrow{d}{\natural_{T}} \arrow{dr}{T(\varpi)} & \\
K^{\perp}/G \arrow{d}{\chi_{E'}} \arrow[hookrightarrow]{r}& E/G \arrow[dashed]{r} & TP/G \arrow[dashed]{r} & TM \\
E' \arrow[dashed, bend right=15]{rrru}{\rho'} &  &  
\end{tikzcd},
\end{equation}
where all dashed arrows are canonically induced on quotients by the arrows above them. 

We will now show that there is a Courant algebroid morphism $Q(\Re): E \rat E'$ over $\varpi$. For $p \in P$, define its fiber at $(p,\varpi(p))$ to have the form
\begin{equation}
Q(\Re)_{(p,\varpi(p))} := \{ (e, \chi_{E'}(\natural(e))) \; | \; e \in K^{\perp}_{p} \} \subseteq (E \times \ol{E}')_{(p,\varpi(p))}. 
\end{equation}
This defines a vector subbundle of $E \times \ol{E}'$ and $\rk(Q(\Re)) = \rk(K^{\perp}) = \rk(E) - \dim(\g)$. Hence for $\g \neq 0$, $Q(\Re)$ is not a graph of a vector bundle map from $E$ to $E'$. The pairing $\<\cdot,\cdot\>'$ is induced from the one of $E$, which immediately implies that $Q(\Re)$ is isotropic. 
\begin{lemma} \label{lem_QRemaxiso}
$Q(\Re)$ is maximally isotropic, iff the induced bilinear form $(\cdot,\cdot)_{\g}$ is either positive semi-definite or negative semi-definite. 
\end{lemma}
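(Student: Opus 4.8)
The plan is to reduce the statement to a fiber-wise computation of signatures. Maximal isotropy is a pointwise condition (Lemma \ref{lem_maxiso}), so I would fix a point $x \in P$ and work inside the quadratic space $(E_{x}, \<\cdot,\cdot\>)$ of signature $(p,q)$, using that the fiber of $E'$ over $\varpi(x)$ is canonically $K^{\perp}_{x}/(K \cap K^{\perp})_{x}$ (via the $G$-bundle structure), with $\chi_{E'} \circ \natural$ restricting to the quotient projection on $K^{\perp}_{x}$ and $\<\cdot,\cdot\>'$ the descended metric. The first step is bookkeeping: since $\Re$ is fiber-wise injective, $\rk(K) = \dim(\g) = p_{0}+q_{0}+k_{0}$, and the text already records that $Q(\Re)$ is isotropic with $\rk(Q(\Re)) = \rk(K^{\perp}) = (p+q) - (p_{0}+q_{0}+k_{0})$. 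By Lemma \ref{lem_maxiso} $(ii)$, $Q(\Re)$ is maximally isotropic iff this rank equals $\min\{P,Q\}$, where $(P,Q)$ is the signature of the product metric on $E \times \ol{E}'$. So everything reduces to computing $(P,Q)$, equivalently the signature of $\<\cdot,\cdot\>'$.

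The main step — and the one I expect to be the real obstacle — is determining the signature of $\<\cdot,\cdot\>'$ on $E'_{\varpi(x)} = K^{\perp}_{x}/(K \cap K^{\perp})_{x}$. Here I would use that $\Re$ is an isometry from $(\g, (\cdot,\cdot)_{\g})$ onto $(K_{x}, \<\cdot,\cdot\>)$, so the radical of $\<\cdot,\cdot\>|_{K_{x}}$ is exactly $K_{x} \cap K^{\perp}_{x}$, of dimension $k_{0}$. Splitting $K_{x} = W \oplus (K_{x} \cap K^{\perp}_{x})$ orthogonally with $W$ nondegenerate of signature $(p_{0},q_{0})$, one gets an orthogonal decomposition $E_{x} = W \oplus W^{\perp}$ with $W^{\perp}$ of signature $(p - p_{0}, q - q_{0})$, and $K^{\perp}_{x}$ becomes the orthogonal complement inside $W^{\perp}$ of the isotropic subspace $R := K_{x} \cap K^{\perp}_{x}$. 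The crux is then the standard fact that reducing a quadratic space of signature $(r,s)$ by an isotropic subspace of dimension $c$ yields signature $(r-c, s-c)$; I would invoke Proposition \ref{tvrz_coiso} and Example \ref{ex_coisored} for this. This yields the signature of $E'$ as $(p - p_{0} - k_{0},\, q - q_{0} - k_{0})$.

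With that in hand the rest is arithmetic. Flipping the sign on $\ol{E}'$ and adding the contribution of $E$, the product metric on $E \times \ol{E}'$ has signature
\begin{equation}
(P,Q) = (p+q - q_{0} - k_{0},\; p+q - p_{0} - k_{0}),
\end{equation}
so $\min\{P,Q\} = (p+q) - k_{0} - \max\{p_{0},q_{0}\}$. Comparing with $\rk(Q(\Re)) = (p+q) - k_{0} - (p_{0}+q_{0})$, maximal isotropy holds iff $p_{0} + q_{0} = \max\{p_{0},q_{0}\}$, i.e. iff $\min\{p_{0},q_{0}\} = 0$. Finally I would translate this back: $p_{0} = 0$ says $(\cdot,\cdot)_{\g}$ has no positive directions (negative semi-definite) and $q_{0} = 0$ says it has no negative directions (positive semi-definite), which is exactly the asserted dichotomy. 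I would also note explicitly that the argument presumes the inertia $(p_{0},q_{0},k_{0})$ to be constant, so that $K$, $K^{\perp}$ and $K \cap K^{\perp}$ are genuine subbundles, as flagged in the preceding Remark.
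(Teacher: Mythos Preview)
Your proof is correct and follows essentially the same signature-counting argument as the paper. The only cosmetic difference is that the paper obtains the signature $(p-p_{0}-k_{0},\,q-q_{0}-k_{0})$ of $\<\cdot,\cdot\>'$ by a single appeal to Proposition \ref{tvrz_inertias} applied to $K$, whereas you reach the same conclusion by first splitting off the nondegenerate part $W$ of $K_{x}$ and then applying the coisotropic reduction of Proposition \ref{tvrz_coiso} (the reference to Example \ref{ex_coisored} is superfluous here).
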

\begin{proof}
Let $(p,q)$ and $(p',q')$ be the signature of the fiber-wise metric $\<\cdot,\cdot\>$ and $\<\cdot,\cdot\>'$, respectively. Let $(p_{0},q_{0},k_{0})$ be the inertia of $(\cdot,\cdot)_{\g}$. By definition, this is precisely the inertia of the restriction of $\<\cdot,\cdot\>|_{K}$. The inertia of $\<\cdot,\cdot\>|_{K^{\perp}}$ is $(p',q',k_{0})$. It follows from Proposition \ref{tvrz_inertias} that $p' = p - p_{0} - k_{0}$ and $q' = q - q_{0} - k_{0}$. $Q(\Re)$ is maximally isotropic, iff $\rk(Q(\Re)) = \min\{ p + q', q + p' \}$. Plugging into both sides then gives the condition
\begin{equation}
p + q - (p_{0} + q_{0} + k_{0}) = \min \{ p + q - q_{0} - k_{0}, p + q - p_{0} - k_{0} \} = p + q - k_{0} - \max \{p_{0},q_{0} \}.
\end{equation}
One can rewrite it as $p_{0} + q_{0} = \max \{p_{0},q_{0}\}$. This happens, iff either $q_{0} = 0$ or $p_{0} = 0$. 
\end{proof}
\begin{lemma}
$Q(\Re)^{\perp}$ is compatible with the anchor, hence $Q(\Re)$ is an almost involutive structure in $E \times \ol{E}'$ supported on $\gr(
\varpi)$. 
\end{lemma}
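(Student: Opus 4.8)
The plan is to establish the compatibility of $Q(\Re)^{\perp}$ with the anchor through the characterization of Proposition \ref{tvrz_rhocompequivalent}, applied to the product Courant algebroid $E \times \ol{E}'$ with support $S = \gr(\varpi)$. Since $(Q(\Re)^{\perp})^{\perp} = Q(\Re)$, this reduces to showing that for every $f \in C^{\infty}(P \times M)$ with $f|_{\gr(\varpi)} = 0$ one has $\D^{\times}f|_{\gr(\varpi)} \in \Gamma^{0}(Q(\Re))$, where $\D^{\times}$ denotes the operator on $E \times \ol{E}'$. This route is advantageous because it avoids an explicit description of $Q(\Re)^{\perp}$; note that we cannot shortcut via Remark \ref{rem_maxisoanchcomp}, since by Lemma \ref{lem_QRemaxiso} the subbundle $Q(\Re)$ need not be maximally isotropic.

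First I would unpack the conormal data. At a point $(p,\varpi(p))$ the differential of a function vanishing on $\gr(\varpi)$ is conormal, hence of the form $(-(T\varpi)^{\ast}\beta, \beta)$ for some $\beta \in T^{\ast}_{\varpi(p)}M$. Choosing $h \in C^{\infty}(M)$ with $(\dr h)_{\varpi(p)} = \beta$, and using that the sign flip on $\ol{E}'$ produces $\ol{\D}' = -\D'$, a short computation gives $\D^{\times}f(p,\varpi(p)) = \big(-\D(\varpi^{\ast}h)(p),\, -\D' h(\varpi(p))\big)$. It then remains to check that this element lies in $Q(\Re)_{(p,\varpi(p))}$, i.e. that $e := -\D(\varpi^{\ast}h)(p) \in K^{\perp}_{p}$ and that $\chi_{E'}(\natural(e)) = -\D' h(\varpi(p))$.

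The first requirement is the easy one: I would compute $\<\D(\varpi^{\ast}h), \Re(x)\> = \Li{\#(x)}(\varpi^{\ast}h)$ using $\rho \circ \Re = \#$, and observe that this vanishes because the fundamental vector fields $\#(x)$ are vertical while $\varpi^{\ast}h$ is constant along the fibres; hence $e \perp K$, that is $e \in K^{\perp}_{p}$. Moreover, as $\varpi^{\ast}h$ is $G$-invariant and the whole structure is $G$-equivariant, $\D(\varpi^{\ast}h) \in \Gamma_{G}(K^{\perp})$ represents a genuine class in $\Gamma(E')$, which makes the second identity meaningful. That identity $\chi_{E'}(\natural(\D(\varpi^{\ast}h))) = \D' h \circ \varpi$ is the heart of the argument, and I would test it against an arbitrary class $[\tilde{\sigma}] \in \Gamma(E')$ represented by $\tilde{\sigma} \in \Gamma_{G}(K^{\perp})$. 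Using that the reduced pairing satisfies $\<\chi_{E'}(\natural(\D(\varpi^{\ast}h))), [\tilde{\sigma}]\>' \circ \varpi = \<\D(\varpi^{\ast}h), \tilde{\sigma}\> = \Li{\rho(\tilde{\sigma})}(\varpi^{\ast}h)$, and then invoking the defining diagram \eqref{eq_redanchor}, which yields $T\varpi(\rho(\tilde{\sigma})) = \rho'([\tilde{\sigma}]) \circ \varpi$, I would rewrite the last expression as $(\Li{\rho'([\tilde{\sigma}])}h) \circ \varpi = \<\D' h, [\tilde{\sigma}]\>' \circ \varpi$. Non-degeneracy of $\<\cdot,\cdot\>'$ then forces the desired equality.

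The main obstacle is precisely this compatibility of $\D$ with the reduction, i.e. the interplay between $\natural$, $\chi_{E'}$ and the reduced anchor $\rho'$; everything else is formal, and once the defining diagram \eqref{eq_redanchor} together with the induced pairing are used to transport the computation between $E$ and $E'$, the verification closes. Finally, since the image of $\D^{\times}$ is automatically isotropic, membership in $Q(\Re)$ is the same as membership in $\Gamma^{0}(Q(\Re))$; combined with the already noted isotropy of $Q(\Re)$, this shows that $Q(\Re)$ is an almost involutive structure in $E \times \ol{E}'$ supported on $\gr(\varpi)$.
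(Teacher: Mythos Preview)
Your argument is correct. You take a different route from the paper, and the two approaches are worth comparing.

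The paper proceeds directly: it first checks that $Q(\Re)$ itself is compatible with the anchor via the diagram \eqref{eq_redanchor}, then writes down the orthogonal complement explicitly as
\[
Q(\Re)^{\perp}_{(p,\varpi(p))} = Q(\Re)_{(p,\varpi(p))} + K_{p} \times \{0_{\varpi(p)}\},
\]
and verifies anchor compatibility on the extra summand $K_{p}\times\{0\}$ by noting that $\rho(\Re(x)(p)) = \#_{p}(x)$ is vertical, hence killed by $T_{p}\varpi$. This is short and geometric, and the explicit description of $Q(\Re)^{\perp}$ is of independent interest.

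Your approach instead invokes Proposition~\ref{tvrz_rhocompequivalent}, trading the computation of $Q(\Re)^{\perp}$ for the verification that $\D^{\times}f|_{\gr(\varpi)}$ lands in $Q(\Re)$ whenever $f$ vanishes on $\gr(\varpi)$. The core of your argument is then the compatibility $\chi_{E'}\big(\natural(\D(\varpi^{\ast}h))\big) = \D'h \circ \varpi$, which you correctly derive from the defining diagram \eqref{eq_redanchor} by pairing against an arbitrary class in $\Gamma(E')$. This is a clean way to see that the reduction intertwines the operators $\D$ and $\D'$, a fact that is conceptually pleasant and arguably closer in spirit to how one thinks of $Q(\Re)$ as a ``reduction relation''. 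The trade-off is that it is slightly longer and does not yield the explicit form of $Q(\Re)^{\perp}$.
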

\begin{proof}
Recall that $Q(\Re)$ is a subbundle supported on $\gr(\varpi) \subseteq P \times M$. It follows immediately from (\ref{eq_redanchor}) that $Q(\Re)$ is compatible with the anchor. Next, it is not difficult to show that for each $p \in P$, the fiber of $Q(\Re)^{\perp}$ at $(p,\varpi(p))$ takes the form
\begin{equation}
Q(\Re)^{\perp}_{(p,\varpi(p))} = Q(\Re)_{(p,\varpi(p))} + K_{p} \times \{ 0_{\varpi(p)} \}. 
\end{equation}
It thus suffices to argue that for all $e \in K_{p}$, one has $\rho'(0_{\varpi(p)}) = (T_{p}\varpi)(\rho(e))$. By definition of the subbundle $K$, one can write $e = (\Re(x))(p)$ for a unique $x \in \g$. But then $\rho(e) = \#_{p}(x)$. This is a vertical tangent vector and the above equation holds. Hence $Q(\Re)^{\perp}$ is compatible with the anchor and the proof is finished. 
\end{proof}

\begin{tvrz}
$Q(\Re)$ is involutive, hence a Courant algebroid morphism $Q(\Re): E \rat E'$. 
\end{tvrz}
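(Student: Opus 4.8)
The plan is to deduce involutivity from Proposition \ref{tvrz_invongen}, since we have already checked that $Q(\Re)$ is an almost involutive structure supported on $\gr(\varpi)$. Thus it suffices to produce, in a neighbourhood of each point of $\gr(\varpi)$, a local frame of $Q(\Re)$ that extends to sections of $E \times \ol{E}'$ whose pairwise brackets again take values in $Q(\Re)$. The natural candidates are the pullback sections of the form $(\sigma, \chi_{E'}(\natural(\sigma)))$, where $\sigma$ runs over $G$-invariant sections of $K^{\perp}$; the whole point is that the reduced bracket $[\cdot,\cdot]'$ was \emph{defined} precisely so that such sections are closed under the bracket of $E \times \ol{E}'$.

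Concretely, I would fix $(p_{0},\varpi(p_{0})) \in \gr(\varpi)$ and choose a local frame of the vector bundle $K^{\perp}/G \to M$ over an open neighbourhood $W$ of $\varpi(p_{0})$. Lifting it through the quotient map $\natural$ yields $G$-invariant sections $\sigma_{a} \in \Gamma_{G}(K^{\perp})$ defined over $\varpi^{-1}(W)$; being a $G$-invariant frame of the equivariant bundle $K^{\perp}$, the family $(\sigma_{a})$ is an ordinary local frame for $K^{\perp}$ over $\varpi^{-1}(W)$. Writing $[\sigma_{a}] := \chi_{E'}(\natural(\sigma_{a})) \in \Gamma_{W}(E')$ for the induced sections of the reduced bundle, define pullback sections $\hat{\sigma}_{a} := (\sigma_{a},[\sigma_{a}])$ of $E \times \ol{E}'$ over $U := \varpi^{-1}(W) \times W$. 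Since the fibre $Q(\Re)_{(p,\varpi(p))}$ is exactly the graph of the linear map $e \mapsto \chi_{E'}(\natural(e))$ on $K^{\perp}_{p}$, and $\rk(Q(\Re)) = \rk(K^{\perp})$, the restrictions $\hat{\sigma}_{a}|_{\gr(\varpi) \cap U}$ constitute a local frame of $Q(\Re)$. These $\hat{\sigma}_{a}$ are the extensions required by Proposition \ref{tvrz_invongen}.

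It then remains to compute the brackets and check they take values in $Q(\Re)$ along $\gr(\varpi)$. Using the product bracket (\ref{eq_prodbracket}) on $E \times \ol{E}'$, one has $[\hat{\sigma}_{a},\hat{\sigma}_{b}] = ([\sigma_{a},\sigma_{b}], [[\sigma_{a}],[\sigma_{b}]]')$. Now $\Gamma_{G}(K^{\perp})$ is involutive, so $[\sigma_{a},\sigma_{b}]$ is again a $G$-invariant section of $K^{\perp}$; and, by the very definition of the reduced structure on $E' = (K^{\perp}/G)/((K \cap K^{\perp})/G)$, the induced bracket satisfies $[[\sigma_{a}],[\sigma_{b}]]' = \chi_{E'}(\natural([\sigma_{a},\sigma_{b}]))$. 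Hence
\[
[\hat{\sigma}_{a},\hat{\sigma}_{b}] = \big( [\sigma_{a},\sigma_{b}],\; \chi_{E'}(\natural([\sigma_{a},\sigma_{b}])) \big),
\]
which is once more of the graph form $(e,\chi_{E'}(\natural(e)))$ with $e \in K^{\perp}$, i.e. it takes values in $Q(\Re)$ along $\gr(\varpi) \cap U$. Proposition \ref{tvrz_invongen} then yields that $Q(\Re)$ is involutive, so $Q(\Re) : E \rat E'$ is a Courant algebroid morphism over $\varpi$.

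The only genuinely delicate point is the bookkeeping at the interface between $\Gamma_{G}(K^{\perp})$ and $\Gamma(E')$: one must be sure that the descended bracket really is $[[\sigma],[\sigma']]' = [[\sigma,\sigma']]$ under the identification $\Gamma(E') = \Gamma_{G}(K^{\perp})/\Gamma_{G}(K \cap K^{\perp})$, and that the lifted frame sections are honestly $G$-invariant so that their bracket stays in $\Gamma_{G}(K^{\perp})$. Both are guaranteed by the construction of the reduced Courant algebroid, and once they are in place the argument above is a routine application of Proposition \ref{tvrz_invongen}.
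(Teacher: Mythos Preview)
Your proof is correct and follows essentially the same approach as the paper: invoke Proposition \ref{tvrz_invongen} using the pullback sections $(\sigma,\chi_{E'}(\natural(\sigma)))$ for $\sigma \in \Gamma_{G}(K^{\perp})$, compute their bracket via the product formula, and use the defining property of the reduced bracket to see the result is again of graph form. Your version is more explicit about the local frame construction required by Proposition \ref{tvrz_invongen}, whereas the paper's proof is terser, but the argument is the same.
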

\begin{proof}
As $Q(\Re)$ is an almost involutive structure, we may employ Proposition \ref{tvrz_invongen}. It thus suffices to prove the involutivity on sections in the form $(\psi, \chi_{E'}(\natural(\psi)))$, where $\psi \in \Gamma_{G}(K^{\perp})$. One has
\begin{equation}
\begin{split}
[(\psi, \chi_{E'}(\natural(\psi))), (\psi', \chi_{E'}(\natural(\psi')))] = & \ ( [\psi,\psi']_{E}, [ \chi_{E'}(\natural(\psi)), \chi_{E'}(\natural(\psi'))]_{E'}) \\
= & \ ([\psi,\psi']_{E}, \chi_{E'}( \natural ([\psi,\psi']_{E}))),
\end{split}
\end{equation}
for all $\psi,\psi' \in \Gamma_{G}(K^{\perp})$. The right-hand side in $\Gamma(E \times \ol{E}', Q(\Re))$ and the conclusion follows. 
\end{proof}
Let us examine the conditions on pullbacks and pushforwards of involutive structures. For pullbacks, the situation is quite straightforward. 
\begin{tvrz}
Let $L' \subseteq E'$ be an involutive structure supported on $S' \subseteq M$. Then the pullback involutive structure $Q(\Re)^{\ast}(L')$ always exists and it can be identified with the inverse image of $L'$ under the vector bundle map $\chi_{E'} \circ \natural|_{K^{\perp}}$. It defines a $G$-invariant subbundle of $E$. 
\end{tvrz}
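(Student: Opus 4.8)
The plan is to unwind the definition of pullback (Definition~\ref{def_pushpull}), identify the composite $(L' \times \{0\}) \circ Q(\Re)$ as an inverse image, and then check that $Q(\Re)$ and $L' \times \{0\}$ compose cleanly so that Theorem~\ref{thm_composition} yields involutivity for free. Writing $\Phi := \chi_{E'} \circ \natural|_{K^{\perp}} \colon K^{\perp} \to E'$, I would first read off from Definition~\ref{def_composition} that $(e,0) \in (L' \times \{0\}) \circ Q(\Re)$ holds precisely when there is $e' \in E'$ with $(e,e') \in Q(\Re)$ and $e' \in L'$; by the explicit description of the fibers of $Q(\Re)$ this says exactly $e \in K^{\perp}$ and $\Phi(e) \in L'$. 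Hence $(L' \times \{0\}) \circ Q(\Re) = \Phi^{-1}(L') \times \{0\}$, which already gives the claimed identification $Q(\Re)^{\ast}(L') = \Phi^{-1}(L')$ at the level of sets.

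The decisive step is to show $\Phi^{-1}(L')$ is a subbundle of $E$ supported on $\varpi^{-1}(S')$. Here I would exploit that $\Phi$ is fiber-wise surjective of constant rank: for each $p \in P$ the restriction $\natural_{p} \colon K^{\perp}_{p} \to (K^{\perp}/G)_{\varpi(p)}$ is a linear isomorphism and $\chi_{E'}$ is fiber-wise surjective with kernel $(K \cap K^{\perp})/G$, so $\Phi_{p}$ is surjective with kernel $(K \cap K^{\perp})_{p}$. Under the standing assumption that $(\cdot,\cdot)_{\g}$ has constant inertia $(p_{0},q_{0},k_{0})$, this kernel has constant rank $k_{0}$, so $K \cap K^{\perp}$ is a subbundle and $\Phi$ has constant rank $\rk(E')$. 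Passing to the induced map $\Phi^{!} \colon K^{\perp} \to \varpi^{!}(E')$ over $1_{P}$, one has $\Phi^{-1}(L') = (\Phi^{!})^{-1}(\varpi^{!}(L'))$, where $\varpi^{!}(L')$ is the pullback subbundle over the submanifold $\varpi^{-1}(S')$ (a submanifold since $\varpi$ is a submersion). As the preimage of a subbundle under a fiber-wise surjective constant-rank bundle map over the identity is again a subbundle (Theorem~10.34 of~\cite{lee2012introduction}), $\Phi^{-1}(L')$ is a subbundle of constant fiber dimension $\rk(L') + k_{0}$.

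With this established, clean composition is then automatic, which is exactly why no transversality hypothesis on $L'$ is required (in contrast to Proposition~\ref{tvrz_pull}). I would observe that $(L' \times \{0\}) \diamond Q(\Re) \cong \Phi^{-1}(L')$ via $e \mapsto ((e,\Phi(e)),(\Phi(e),0))$, a subbundle of constant rank over $\{(p,\varpi(p),\varpi(p)) : p \in \varpi^{-1}(S')\} \cong \varpi^{-1}(S')$; its supports intersect cleanly because the submersion $\varpi$ is transverse to $S'$, giving $(T\varpi)^{-1}(TS') = T(\varpi^{-1}(S'))$ and hence~(\ref{eq_cleancondition}). This is condition~(ii) of Proposition~\ref{tvrz_RdiamondR}. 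Finally the projection sends $((e,\Phi(e)),(\Phi(e),0)) \mapsto (e,0)$, a fiber-wise isomorphism over the identity of $\varpi^{-1}(S')$, so condition~(ii) of Proposition~\ref{tvrz_RcircR} holds as well. Thus the two relations compose cleanly, the pullback exists, and Theorem~\ref{thm_composition} makes $\Phi^{-1}(L')$ an involutive structure.

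For $G$-invariance I would note that $\Phi$ factors through the quotient $\natural$, so $\Phi(g \cdot e) = \Phi(e)$ for all $g \in G$; therefore $\Phi^{-1}(L')$ is stable under the $G$-action, and being contained in the $G$-invariant subbundle $K^{\perp}$ it is a $G$-invariant subbundle of $E$. The hard part will be the constant-rank and clean-composition bookkeeping of the second and third paragraphs, which rests entirely on the fiber-wise surjectivity of $\Phi$ together with the standing constancy of the kernel dimension $k_{0}$ of $(\cdot,\cdot)_{\g}$; the remaining identifications are routine consequences of the definitions.
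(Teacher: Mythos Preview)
Your proposal is correct and follows essentially the same route as the paper: identify $Q(\Re)^{\ast}(L')$ with $\Phi^{-1}(L')$, use fiber-wise surjectivity of $\Phi$ to get a subbundle, verify clean composition in the style of Proposition~\ref{tvrz_pull}, and read off $G$-invariance from the factoring through $\natural$.

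The only difference is in how the subbundle step is packaged. The paper argues directly on total spaces: since $\Phi$ is fiber-wise surjective over the surjective submersion $\varpi$, it is itself a surjective submersion, hence transverse to any submanifold $L' \subseteq E'$; the preimage is then a submanifold with the correct tangent spaces, and Theorem~\ref{thm_GR} upgrades it to a subbundle. You instead pull back to $\varpi^{!}(E')$ and use the constant-rank theorem for bundle maps over the identity. Both are valid; the paper's version is slightly more economical (it avoids introducing $\varpi^{!}(E')$ and $\Phi^{!}$ and does not need to invoke the constancy of $k_{0}$ separately, since surjective submersions are automatically transverse to everything), while your version is more explicitly bundle-theoretic and makes the fiber dimension $\rk(L') + k_{0}$ visible.
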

\begin{proof}
From Definition \ref{def_pushpull}, it follows that $Q(\Re)^{\ast}(L')$ consists of points of $K^{\perp}$ which project to $L'$ under $\natural_{E'} := \chi_{E'} \circ \natural|_{K^{\perp}}$. This is a fiber-wise surjective vector map from $K^{\perp}$ to $E'$ over the surjective submersion $\varpi: P \rightarrow M$, hence a surjective submersion. In particular, it is transverse to any submanifold of $E'$ and the inverse image $L := \natural_{E'}^{-1}(L')$ is a submanifold of $K^{\perp}$. Moreover, its tangent space  at $e \in L$ can be written as $T_{e}L = (T_{e}\natural_{E'})^{-1}( T_{\natural_{E'}(e)} L')$. Hence $L$ is a submanifold of $E$. By Theorem \ref{thm_GR}, it defines a subbundle and by construction, it is $G$-invariant. The proof of the fact that $Q(\Re)$ and $L' \times \{0\}$ compose cleanly is then completely analogous to the one of Proposition \ref{tvrz_pull}. This finishes the proof. 
\end{proof}

For pushforwards, the situation is a lot more complicated. The following proposition can be viewed as a generalization of Section 4 in \cite{Bursztyn2007726}. 
\begin{tvrz} \label{tvrz_invreduction}
Let $L \subseteq E$ be an involutive structure supported on $S \subseteq P$. Then the pushforward involutive structure $Q(\Re)_{\ast}(L)$ exists, iff the following conditions are satisfied:
\begin{enumerate}[(i)]
\item $\varpi(S)$ is a submanifold of $M$ and for each $s \in S$, one has $(T_{s}\varpi)(T_{s}S) = T_{\varpi(s)}(\varpi(S))$;
\item The dimension of $(K^{\perp} \cap L)_{s}$ and $(K^{\perp} \cap K \cap L)_{s}$ does not depend on $s \in S$. 
\end{enumerate}
$Q(\Re)_{\ast}(L)$ is precisely the image of $K^{\perp} \cap L$ under the map $\chi_{E'} \circ \natural|_{K^{\perp}}$.
\end{tvrz}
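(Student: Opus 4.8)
The plan is to read the statement off Definition \ref{def_pushpull}: the pushforward $Q(\Re)_{\ast}(L)$ exists exactly when the relations $R := \{0\} \times L \colon \{0\} \dra E$ and $R' := Q(\Re) \colon E \dra E'$ compose cleanly, and then to match the clean-composition conditions of Proposition \ref{tvrz_RdiamondR} and Proposition \ref{tvrz_RcircR} with (i) and (ii). First I would make the composition completely explicit. Substituting $R$ and $R'$ into (\ref{eq_RdiamondR}), where the relevant overlap subbundle is $\{0\} \times \Delta(E) \times \ol{E}'$, the fibre of $R' \diamond R$ over $(\ast, s, s, \varpi(s))$ is $\{ (0, e, e, \chi_{E'}(\natural(e))) \mid e \in (K^{\perp} \cap L)_{s} \}$, supported on $S' \diamond S = \{ (\ast, s, s, \varpi(s)) \mid s \in S \}$. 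The latter is an embedded submanifold diffeomorphic to $S$, since its projection to the first $P$-factor is the inclusion $S \hookrightarrow P$. Projecting by $p$ onto the first and fourth factors $\{0\} \times \ol{E}'$ gives $R' \circ R = \{0\} \times (\chi_{E'} \circ \natural|_{K^{\perp}})(K^{\perp} \cap L)$, supported on $S' \circ S = \{\ast\} \times \varpi(S)$. This already identifies $Q(\Re)_{\ast}(L)$ with the asserted image.

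Next I would verify Proposition \ref{tvrz_RdiamondR}. The clean intersection of the supports $(\{\ast\} \times S) \times \gr(\varpi)$ and $\{\ast\} \times \Delta(P) \times M$ is automatic: the intersection is the embedded copy of $S$ found above, and a one-line computation shows that at $(\ast, s, s, \varpi(s))$ the intersection of the two tangent spaces consists exactly of the vectors $(0, w, w, (T_{s}\varpi)w)$ with $w \in T_{s}S$, which is $T(S' \diamond S)$. Hence the only remaining content of Proposition \ref{tvrz_RdiamondR}(ii) is that $\dim (R' \diamond R)_{(\ast,s,s,\varpi(s))} = \dim (K^{\perp} \cap L)_{s}$ be independent of $s$, i.e.\ the first half of condition (ii).

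Then I would verify Proposition \ref{tvrz_RcircR}. Under the diffeomorphisms $S' \diamond S \cong S$ and $S' \circ S \cong \varpi(S)$, the base map $\pi$ becomes $\varpi|_{S}$, so requiring $S' \circ S$ to be a submanifold and $\pi$ a surjective submersion is precisely condition (i). It remains to control the rank of $p_{m} \colon (R' \diamond R)_{m} \to (R' \circ R)_{\pi(m)}$. This map is $e \mapsto \chi_{E'}(\natural(e))$ on $(K^{\perp} \cap L)_{s}$; since $\natural$ is a fibre-wise isomorphism of the $G$-equivariant bundle and $\chi_{E'}$ has fibre-wise kernel $(K \cap K^{\perp})/G$, its kernel is $(K \cap K^{\perp} \cap L)_{s}$. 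Rank--nullity then gives $\rk(p_{m}) = \dim (K^{\perp} \cap L)_{s} - \dim (K \cap K^{\perp} \cap L)_{s}$, so, granting the first half of (ii), constancy of the rank is equivalent to constancy of $\dim (K \cap K^{\perp} \cap L)_{s}$, the second half of (ii). Assembling the pieces, clean composition holds iff (i) and (ii) hold, and Theorem \ref{thm_composition} then guarantees that the resulting subbundle is an involutive structure.

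The one genuinely substantive step is the kernel identification in the last paragraph: the fibre-wise kernel of $\chi_{E'} \circ \natural|_{K^{\perp}}$ must be shown to be exactly $K \cap K^{\perp}$, which rests on $\natural$ restricting to an isomorphism on each fibre and on the definition $E' = (K^{\perp}/G)/((K \cap K^{\perp})/G)$. Everything else is bookkeeping with the $\diamond$ and $\circ$ constructions, so I expect no further obstacles.
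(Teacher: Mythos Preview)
Your proposal is correct and follows essentially the same route as the paper's own proof: both unpack Definition \ref{def_pushpull}, compute $S'\diamond S$, $S'\circ S$, and the fibres of $R'\diamond R$ explicitly, and then read off conditions (i) and (ii) from parts (ii) of Propositions \ref{tvrz_RdiamondR} and \ref{tvrz_RcircR}, including the identification of $\ker(p_{m})$ with $(K\cap K^{\perp}\cap L)_{s}$ via rank--nullity. The ordering and level of detail differ slightly, but there is no substantive difference in strategy.
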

\begin{proof}
One has to find out when do the Courant algebroid relations $\{0\} \times L: \{0\} \dra E$ (supported on $\{\ast\} \times S$) and $Q(\Re): E \dra E'$ (supported on $\gr(\varpi)$) compose cleanly. The conditions $(ii)$ in Proposition \ref{tvrz_RdiamondR} and in Proposition \ref{tvrz_RcircR} are used to do so. First, one has 
\begin{equation}
\gr(\varpi) \circ ( \{\ast\} \times S) = \{\ast \} \times \varpi(S). 
\end{equation}
This is a submanifold, iff $\varpi(S)$ is a submanifold of $M$. Next, one finds 
\begin{equation}
\gr(\varpi) \diamond( \{ \ast\} \times S ) = \{ \ast \} \times \{ (s,s,\varpi(s)) \; | \; s \in S \}.
\end{equation}
This is always a submanifold diffeomorphic to $S$ and it is easy to see that $(\{\ast\} \times S) \times \gr(\varpi)$ and $\{\ast\} \times \Delta(P) \times M$ intersect cleanly. The map $\pi: \gr(\varpi) \diamond (\{\ast \} \times S) \rightarrow \gr(\varpi) \circ (\{\ast \} \times S)$ given by $\pi(\ast,s,s,\varpi(s)) = (\ast,\varpi(s))$ must be a surjective submersion. Obviously, it is surjective. It is a submersion, iff $(T_{s}\varpi)(T_{s}S) = T_{\varpi(s)}(\varpi(S))$ for all $s \in S$. We see that the condition $(i)$ of this proposition are equivalent to the conditions on the supports of the composed involutive structures. Now, write $\hat{s} = (\ast,s,s,\varpi(s))$. One has 
\begin{equation}
(Q(\Re) \diamond ( \{0\} \times L))_{\hat{s}} = \{0\} \times \{ (e,e, \natural_{E'}(e)) \; | \; e \in (K^{\perp} \cap L)_{s} \},
\end{equation}
where $\natural_{E'} = \chi_{E'} \circ \natural|_{K^{\perp}}$. This vector space has the same dimension for all $\hat{s} \in \gr(\varpi) \diamond (\{ \ast \} \times S)$, iff $(K^{\perp} \cap L)_{s}$ has the same dimension for all $s \in S$. This sorts out the second part of the condition $(ii)$ in Proposition \ref{tvrz_RdiamondR}. Finally, we have to ensure that the linear map $p_{\hat{s}}(0,e,e,\natural_{E'}(e)) = (0, \natural_{E'}(e))$, where $e \in (K^{\perp} \cap L)_{s}$, has the same rank for all $\hat{s} \in \gr(\varpi) \diamond ( \{\ast\} \times S)$. From the nullity-rank theorem, it suffices to prove that the dimension of the kernel of $\natural_{E'}$ restricted to $(K^{\perp} \cap L)_{s}$ does not depend on $s \in S$. But this kernel is the subspace $(K^{\perp} \cap K \cap L)_{s}$. 
\end{proof}

\begin{rem}
The condition $(i)$ is satisfied automatically whenever $S \subseteq P$ is a $G$-invariant submanifold. Indeed, then $S = \varpi^{-1}(\varpi(S))$ and $\varpi(S)$ is a submanifold of $M$ which can be identified with the quotient $S/G$. The quotient map $S \rightarrow S/G$ then corresponds to the map $\varpi|_{S}: S \rightarrow \varpi(S)$. In particular, it is a surjective submersion and $(T_{s}\varpi)(T_{s}S) = T_{\varpi(s)}( \varpi(S))$ for all $s \in S$. 
\end{rem}
\subsection{Morphism of reduced Courant algebroids} \label{subsec_tworeduced}
Let us now consider a particular example of a Courant algebroid reduction described in the previous subsection. Suppose $E$ is a $G$-equivariant Courant algebroid $(E,\rho,\<\cdot,\cdot\>,[\cdot,\cdot],\Re)$ over a principal $G$-bundle $\varpi: P \rightarrow M$. Furthermore, we assume that $\g = \Lie(G)$ is a quadratic Lie algebra, i.e. equipped with a non-degenerate symmetric bilinear and $\ad$-invariant form $\<\cdot,\cdot\>_{\g}$. Suppose that it coincides with $(\cdot,\cdot)_{\g}$ defined by (\ref{eq_Rinducedpair}). This implies that $K \cap K^{\perp} = 0$ and the reduced Courant algebroid is just $E' = K^{\perp} / G$.

Let $H \subseteq G$ be a closed and connected Lie subgroup of $G$ and let $\h = \Lie(H)$ be the corresponding Lie subalgebra of $\g$. Let $\Re_{0}: \h \rightarrow \Gamma(E)$ be the restriction of $\Re$. If follows that $(E,\rho,\<\cdot,\cdot\>,[\cdot,\cdot],\Re_{0})$ is an $H$-equivariant Courant algebroid. Let $K_{0} = \Re_{0}(P \times \h)$. We thus have another reduced Courant algebroid $E'_{0}$ over the base $N := P/H$ given by
\begin{equation}
E'_{0} = \frac{K_{0}^{\perp} / H}{(K_{0} \cap K_{0}^{\perp}) / H}.
\end{equation}
In this subsection, we will construct a Courant algebroid morphism $R(H): E'_{0} \rat E'$ over a certain smooth map $\varphi: N \rightarrow M$. Let us start with the following important observation:
\begin{lemma} 
Let $K'_{0} = \Re(P \times \h^{\perp})$, where $\h^{\perp} \subseteq \g$ is the orthogonal complement of $\h$ with respect to $\<\cdot,\cdot\>_{\g}$. Then $K'_{0}$ is an $H$-invariant subbundle of $E$ with respect to the action induced by $\Re_{0}$. Moreover, there is a canonical decomposition
\begin{equation} \label{eq_K0perp}
K_{0}^{\perp} = K^{\perp} \oplus K'_{0}.
\end{equation}
\end{lemma}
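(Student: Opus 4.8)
The plan is to reduce the entire statement to two elementary facts about the quadratic Lie algebra $(\g,\<\cdot,\cdot\>_{\g})$: that $\h^{\perp}$ is a complement of $\h$ which is also $\ad_{\h}$-invariant. First I would record the consequences of non-degeneracy. Since $(\cdot,\cdot)_{\g}=\<\cdot,\cdot\>_{\g}$ is non-degenerate --- equivalently, since $K\cap K^{\perp}=0$ as already observed --- the map $\Re$ is fiber-wise injective, so $K=\Re(P\times\g)$, $K_{0}=\Re(P\times\h)$ and $K'_{0}=\Re(P\times\h^{\perp})$ are genuine subbundles of ranks $\dim(\g)$, $\dim(\h)$ and $\dim(\g)-\dim(\h)$, respectively. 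Using the identity $\rk(L^{\perp})=\rk(E)-\rk(L)$ recalled in Section~\ref{sec_involutive}, a rank count gives
\begin{equation}
\rk(K^{\perp})+\rk(K'_{0})=\big(\rk(E)-\dim(\g)\big)+\big(\dim(\g)-\dim(\h)\big)=\rk(E)-\dim(\h)=\rk(K_{0}^{\perp}).
\end{equation}
Hence to prove the decomposition it suffices to establish the fiber-wise inclusion $K^{\perp}\oplus K'_{0}\subseteq K_{0}^{\perp}$ together with $K^{\perp}\cap K'_{0}=0$.

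For the decomposition I would check three pointwise statements. The inclusion $\h\subseteq\g$ yields $K_{0}\subseteq K$, hence $K^{\perp}\subseteq K_{0}^{\perp}$. For $K'_{0}\subseteq K_{0}^{\perp}$, take $x\in\h^{\perp}$ and $y\in\h$; then $\<\Re(x),\Re(y)\>=(x,y)_{\g}=\<x,y\>_{\g}=0$, so $K'_{0}$ is orthogonal to $K_{0}$. Finally $K^{\perp}\cap K'_{0}=0$, because $K'_{0}\subseteq K$ and $K\cap K^{\perp}=0$ by hypothesis. Combining these gives $K^{\perp}\oplus K'_{0}\subseteq K_{0}^{\perp}$ with matching ranks, so equality follows and the decomposition $K_{0}^{\perp}=K^{\perp}\oplus K'_{0}$ holds.

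The genuinely structural step is the $H$-invariance of $K'_{0}$, and this is where I expect the only real care to be needed. The infinitesimal generator of the $H$-action induced by $\Re_{0}$ is $x\btr\psi=[\Re(x),\psi]$ for $x\in\h$, and axiom (ii) of the equivariant structure gives $x\btr\Re(y)=[\Re(x),\Re(y)]=\Re([x,y]_{\g})$ for $y\in\h^{\perp}$. It thus suffices to show $[x,y]_{\g}\in\h^{\perp}$, which is exactly the $\ad_{\h}$-invariance of $\h^{\perp}$: for any $z\in\h$, the $\ad$-invariance of $\<\cdot,\cdot\>_{\g}$ gives $\<[x,y]_{\g},z\>_{\g}=-\<y,[x,z]_{\g}\>_{\g}=0$, since $[x,z]_{\g}\in\h$ ($\h$ being a subalgebra) and $y\in\h^{\perp}$. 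Consequently the sections $\Re(y)$ with $y\in\h^{\perp}$, which fiber-wise span $K'_{0}$, are infinitesimally preserved; since $H$ is connected, this infinitesimal invariance integrates to full invariance of $K'_{0}$ under $\mathfrak{R}|_{H}$. Equivalently, one may invoke the equivariance of $\Re$ together with $\Ad_{h}(\h^{\perp})=\h^{\perp}$ for $h\in H$, the latter following from $\Ad_{h}\h=\h$ and the $\Ad$-invariance of the form. The main obstacle is therefore purely a matter of bookkeeping: making the passage from the infinitesimal to the group-level statement clean, for which connectedness of $H$ is precisely what is required.
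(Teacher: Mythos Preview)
Your proposal is correct and follows essentially the same approach as the paper: both use $\ad$-invariance of $\<\cdot,\cdot\>_{\g}$ together with $\h$ being a subalgebra to get $[\h,\h^{\perp}]_{\g}\subseteq\h^{\perp}$ for the $H$-invariance of $K'_{0}$, and the same triple $K^{\perp}\subseteq K_{0}^{\perp}$, $K'_{0}\subseteq K_{0}^{\perp}$, $K^{\perp}\cap K'_{0}=0$ plus rank count for the decomposition. The only cosmetic difference is that the paper computes $x\btr\Re(\Phi)$ for a general $\Phi\in C^{\infty}(P,\h^{\perp})$ in one formula (picking up the Lie derivative term), whereas you check it on constants $y\in\h^{\perp}$ and implicitly rely on the Leibniz rule C1) to extend to the fiber-wise span; you might make that one-line extension explicit.
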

\begin{proof}
Let $\Phi \in C^{\infty}(P, \h^{\perp})$. To prove that $K'_{0}$ is $H$-invariant, it suffices to prove that $x \btr \Re(\Phi) \in \Gamma(K'_{0})$ for all $x \in \h$. To see this, note that $[\h,\h^{\perp}]_{\g} \subseteq \h^{\perp}$. This follows immediately from the fact that $\h$ is a Lie subalgebra and $\<\cdot,\cdot\>_{\g}$ is $\ad$-invariant. We can then write 
\begin{equation} \label{eq_xbtrRphi}
x \btr \Re(\Phi) = \Re( \Li{\#{x}}\Phi + [x,\Phi]_{\g}) \in \Gamma(K'_{0}), 
\end{equation}
since the above observation implies $\Li{\#{x}}\Phi + [x,\Phi]_{\g} \in C^{\infty}(P, \h^{\perp})$. Hence $K'_{0}$ is $H$-invariant. 

As $\Re_{0} = \Re|_{\h}$, one has $K_{0} \subseteq K$ and consequently $K^{\perp} \subseteq K_{0}^{\perp}$. Clearly $K'_{0} \subseteq K_{0}^{\perp}$. This proves the inclusion $K^{\perp} + K'_{0} \subseteq K^{\perp}_{0}$. The non-degeneracy of $\<\cdot,\cdot\>_{\g}$ implies $K^{\perp} \cap K'_{0} = 0$, hence the sum is in fact direct. Finally, one has 
\begin{equation}
\rk(K^{\perp}_{0}) = \rk(E) - \dim(\h) = \rk(E) - \dim(\g) + \dim(\g) - \dim(\h) = \rk(K^{\perp}) + \rk(K'_{0}).
\end{equation}
The equation (\ref{eq_K0perp}) follows and the proof is finished. 
\end{proof}
Before proceeding further, let us establish some notation. Let $\q = \h^{\perp} / (\h \cap \h^{\perp})$. Note that both $\h$ and $\h^{\perp}$ are invariant subspaces with respect to the adjoint action of subgroup $H$ on Lie algebra $\g$. In particular, there is a canonical action of $H$ on $\q$ which we denote as $\Ad$. Consequently, there is the associated vector bundle $P \times_{\Ad} \q$ over $N$.  

\begin{tvrz}
\begin{enumerate}[(i)]
\item There is a canonical surjective submersion $\varphi: N \rightarrow M$;
\item There is a fiber-wise bijective vector bundle map $\fPsi_{0}: K^{\perp}/H \rightarrow E'$ over $\varphi: N \rightarrow M$; 
\item There is a vector bundle isomorphism $\fPsi_{1}: K^{\perp}/H \oplus (P \times_{\Ad} \q) \rightarrow E'_{0}$ over the identity $1_{N}$.
\end{enumerate}
\end{tvrz}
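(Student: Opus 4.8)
The plan is to handle the three items in order, exploiting the geometry of the two nested quotients $\varpi_{H} \colon P \to P/H = N$ and $\varpi \colon P \to P/G = M$, and to reduce everything to the decomposition $K_{0}^{\perp} = K^{\perp} \oplus K'_{0}$ already established in the preceding lemma.

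For $(i)$, I would define $\varphi$ as the unique map descending $\varpi$ through $\varpi_{H}$. Since $H \subseteq G$, every $H$-orbit is contained in a $G$-orbit, so $\varpi$ is constant on $H$-orbits and descends to $\varphi \colon N \to M$ with $\varphi \circ \varpi_{H} = \varpi$. Both $\varpi$ and $\varpi_{H}$ are surjective submersions (principal bundle projections); from $T(\varpi) = T(\varphi) \circ T(\varpi_{H})$ one reads off that $T(\varphi)$ is surjective and from surjectivity of $\varpi$ that $\varphi$ is onto. (Equivalently, $\varphi$ is the projection of the associated fiber bundle $P \times_{G} (G/H) \cong N$ with typical fiber $G/H$.)

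For $(ii)$, note that $K$ and hence $K^{\perp}$ are $G$-invariant, so the $G$-action restricts to an $H$-action and both $K^{\perp}/H$ (over $N$) and $E' = K^{\perp}/G$ (over $M$) make sense. I would take $\fPsi_{0}$ to be the further-quotient map $[e]_{H} \mapsto [e]_{G}$, which is well defined because $H \subseteq G$, smooth by the universal property of group quotients, and linear on fibers since $\mathfrak{R}$ acts by vector bundle automorphisms; by construction it covers $\varphi$. Fiber-wise bijectivity follows from freeness of the actions: choosing a representative $p$ for a point of $N$, both fibers $(K^{\perp}/H)_{[p]_{H}}$ and $E'_{[p]_{G}}$ are canonically identified with $K^{\perp}_{p}$, under which $\fPsi_{0}$ becomes the identity of $K^{\perp}_{p}$.

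For $(iii)$, I would start from $K_{0}^{\perp} = K^{\perp} \oplus K'_{0}$, equation (\ref{eq_K0perp}), and compute the radical $K_{0} \cap K_{0}^{\perp}$. As the pairing on $K_{0} = \Re(P \times \h)$ satisfies $\<\Re(x),\Re(y)\> = \<x,y\>_{\g}$, its radical is $\Re(P \times (\h \cap \h^{\perp}))$; this lies inside $K'_{0}$ and meets $K^{\perp}$ trivially. Passing to $H$-quotients then yields
\[
E'_{0} = \frac{K_{0}^{\perp}/H}{(K_{0} \cap K_{0}^{\perp})/H} \cong \frac{K^{\perp}}{H} \oplus \frac{K'_{0}/H}{(K_{0} \cap K_{0}^{\perp})/H}.
\]
Using fiber-wise injectivity of $\Re$ to identify $K'_{0} \cong P \times \h^{\perp}$ and $K_{0} \cap K_{0}^{\perp} \cong P \times (\h \cap \h^{\perp})$, the second summand becomes the $H$-quotient of $P \times \q$ with $\q = \h^{\perp}/(\h \cap \h^{\perp})$. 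By (\ref{eq_xbtrRphi}) the induced $H$-action on $\Re(P \times \h^{\perp})$ splits into the Lie-derivative (principal) action on the $P$-factor and the adjoint action $[x,\cdot]_{\g}$ on $\h^{\perp}$, which descends to $\Ad$ on $\q$; hence this quotient is exactly $P \times_{\Ad} \q$, and assembling the two summands gives $\fPsi_{1}$ over $1_{N}$.

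The main obstacle I anticipate is in $(iii)$: making precise that the $H$-action inherited from $\Re_{0}$ on the quotient $K'_{0}/(K_{0} \cap K_{0}^{\perp})$ coincides with the $\Ad$-action defining $P \times_{\Ad} \q$. Equation (\ref{eq_xbtrRphi}) pins down the infinitesimal action, and since $H$ is connected one integrates this to match the group actions. One also uses that $\h \cap \h^{\perp}$ has constant dimension (automatic, as $\<\cdot,\cdot\>_{\g}$ is fixed on $\g$), so that $K_{0} \cap K_{0}^{\perp}$ is a genuine subbundle and all the quotients appearing are smooth vector bundles.
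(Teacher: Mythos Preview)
Your proposal is correct and follows essentially the same route as the paper: in $(i)$ and $(ii)$ both descend $\varpi$ and the quotient map $K^{\perp}\to K^{\perp}/G$ through the intermediate $H$-quotient, and in $(iii)$ both use the decomposition $K_{0}^{\perp}=K^{\perp}\oplus K'_{0}$ together with the observation that $K_{0}\cap K_{0}^{\perp}=\Re(P\times(\h\cap\h^{\perp}))$ lies entirely in $K'_{0}$, then identify the quotient $(K'_{0}/H)/((K_{0}\cap K_{0}^{\perp})/H)$ with $P\times_{\Ad}\q$ via $\Re$ and~(\ref{eq_xbtrRphi}). The paper packages the last step in two commutative squares (defining auxiliary maps $\fPsi_{\q}$ and $\tI_{0}$) and checks fiber-wise injectivity of $\fPsi_{1}$ by a direct element chase before invoking equality of ranks, whereas you phrase the same isomorphism more directly; your anticipated ``main obstacle'' about matching the $H$-action on $K'_{0}$ with $\Ad$ on $\q$ is exactly the content the paper addresses through~(\ref{eq_xbtrRphi}) and the diagram for $\fPsi_{\q}$.
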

\begin{proof}
By construction, $P$ can be viewed as a principal $H$-bundle $\varpi_{0}: P \rightarrow N$. The map $\varphi$ then completes the commutative triangle
\begin{equation}
\begin{tikzcd}
P \arrow{d}{\varpi_{0}} \arrow{rd}{\varpi}& \\
N \arrow[dashed]{r}{\varphi} & M 
\end{tikzcd}.
\end{equation}
Stated differently, each $H$-orbit of $P$ is a subset of the unique $G$-orbit of $P$. This defines a map $\varphi$ of the respective orbit spaces $N = P/H$ and $M = P/G$. It is a smooth surjective submersion as both $\varpi$ and $\varpi_{0}$ are smooth surjective submersions. This proves $(i)$. The vector bundle map $\fPsi_{0}$ in $(ii)$ is defined similarly as the map of quotients:
\begin{equation}
\begin{tikzcd}
K^{\perp} \arrow{d}{\natural_{0}} \arrow{rd}{\natural} & \\
K^{\perp}/H \arrow[dashed]{r}{\fPsi_{0}} & K^{\perp}/G \equiv E'
\end{tikzcd}
\end{equation}
It is easy to see that it is a fiber-wise bijective vector bundle map over $\varphi$. 

Now, note that $K_{0} \cap K_{0}^{\perp} \subseteq K'_{0}$. Indeed, one has $K_{0} \cap K_{0}^{\perp} = \Re_{0}(P \times \mathfrak{n}_{0})$, where $\mathfrak{n}_{0}$ is the kernel of the bilinear form $(\cdot,\cdot)_{\h} = \< \cdot,\cdot\>_{\g}|_{\h \times \h}$. Obviously $\mathfrak{n}_{0} \subseteq \h^{\perp}$ and the observation follows. It thus makes sense to consider a quotient map 
\begin{equation}
q'_{0}: K'_{0} / H \rightarrow \frac{K'_{0}/H}{(K_{0} \cap K_{0}^{\perp}) / H}
\end{equation}
The vector bundle on the right-hand side can be now shown isomorphic to $P \times_{\Ad} \q$. It follows from (\ref{eq_xbtrRphi}) that $\Re$ induces a vector bundle map $\Re: P \times_{\Ad} \h^{\perp} \rightarrow K'_{0} / H$. Let $\chi_{\q}: P \times_{\Ad} \h^{\perp} \rightarrow P \times_{\Ad} \q$ be the a map induced by the quotient map $\h^{\perp} \rightarrow \q$. It follows there is a vector bundle map $\fPsi_{\q}$ completing the commutative square
\begin{equation}
\begin{tikzcd}
P \times_{\Ad} \h^{\perp} \arrow{r}{\Re} \arrow{d}{\chi_{\q}} & K'_{0} / H \arrow{d}{q'_{0}} \\
P \times_{\Ad} \q \arrow[dashed]{r}{\fPsi_{\q}} & \frac{K'_{0} / H}{(K_{0} \cap K_{0}^{\perp}) / H}
\end{tikzcd}
\end{equation}
It is straightforward to see that $\fPsi_{\q}$ is a vector bundle isomorphism. To prove $(iii)$, note that the decomposition (\ref{eq_K0perp}) gives us a vector bundle map $\tI_{0}: K^{\perp}/ H \oplus K'_{0} / H \rightarrow K^{\perp}_{0} / H$. Finally, define $\fPsi_{1}$ to be the completion of the commutative diagram
\begin{equation}
\begin{tikzcd}
K^{\perp} / H \oplus K'_{0}/H \arrow{r}{\tI_{0}} \arrow{d}{1 \times (\fPsi_{\q}^{-1} \circ q'_{0})} & K_{0}^{\perp} / H \arrow{d}{\chi_{E'_{0}}} \\ 
K^{\perp} / H \oplus (P \times_{\Ad} \q) \arrow[dashed]{r}{\fPsi_{1}} & E'_{0}
\end{tikzcd}.
\end{equation}
Clearly, $\fPsi_{1}$ is a well-defined vector bundle map. Let us prove that it is fiber-wise injective. Let $\fPsi_{1}(\psi, \fPsi_{\q}^{-1}(q'_{0}(\psi'))) = 0$ for $\psi \in \Gamma_{H}(K^{\perp})$ and $\psi' \in \Gamma_{H}(K'_{0})$. This is equivalent to $\chi_{E'_{0}}( \psi + \psi') = 0$, that is $\psi + \psi' \in \Gamma_{H}(K_{0} \cap K_{0}^{\perp})$. Hence $\psi \in \Gamma_{H}(K'_{0} \cap K^{\perp}) = 0$ and consequently, $\psi' \in \Gamma_{H}(K_{0} \cap K_{0}^{\perp})$ and $q'_{0}(\psi') = 0$. This proves the claim. Finally, both vector bundles have the same rank, hence $\fPsi_{1}$ is an isomorphism. This finishes the proof.
\end{proof}
We are now ready to define the relation $R(H) \subseteq E'_{0} \times \ol{E}'$. Its support would be $\gr(\varphi)$, where $\varphi: N \rightarrow M$ is the map obtained in $(i)$ of the previous proposition. For each $n \in N$, set
\begin{equation}
R(H)_{(n,\varphi(n))} := \{ ( \fPsi_{1}(\hat{e},0), \fPsi_{0}(\hat{e})) \; | \; \hat{e} \in (K^{\perp} / H)_{n} \}.
\end{equation}
This defines a subbundle of $E'_{0} \times \ol{E}'$ supported on $\gr(\varphi)$. 
\begin{lemma}
$R(H)$ is isotropic. Let $(p_{0},q_{0})$ be the signature of $\<\cdot,\cdot\>_{\g}$ and let $(p_{\h},q_{\h},k_{\h})$ be the inertia of its restriction $(\cdot,\cdot)_{\h}$ to $\h$. Then $R(H)$ is maximally isotropic, iff 
\begin{equation} \label{eq_R(H)maxiso}
k_{\h} = \min\{ p_{0} - p_{\h}, q_{0} - q_{\h} \}.
\end{equation}
Note that $k_{\h} = \dim(\h \cap \h^{\perp})$. 
\end{lemma}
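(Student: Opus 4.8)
The plan is to establish isotropy by a direct fiber-wise computation, and then to settle maximal isotropy by comparing the rank of $R(H)$ with the signature of the ambient metric on $E'_{0} \times \ol{E}'$. For isotropy, I would take a general element $(\fPsi_{1}(\hat{e},0), \fPsi_{0}(\hat{e}))$ with $\hat{e} = \natural_{0}(e)$ for some $e \in K^{\perp}_{p}$. Tracing the defining diagram of $\fPsi_{1}$ through $\tI_{0}$ and $\chi_{E'_{0}}$ shows $\fPsi_{1}(\natural_{0}(e),0) = \chi_{E'_{0}}(\natural_{0}(e))$, whose $\<\cdot,\cdot\>'_{0}$-norm is $\<e,e\>$, since the metric on $E'_{0}$ is induced from $\<\cdot,\cdot\>|_{K_{0}^{\perp}}$ and $K^{\perp} \subseteq K_{0}^{\perp}$. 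Likewise $\fPsi_{0}(\natural_{0}(e)) = \natural(e)$ has $\<\cdot,\cdot\>'$-norm $\<e,e\>$. Because the second factor carries the sign-flipped metric of $\ol{E}'$, the norm of the element in $E'_{0} \times \ol{E}'$ is $\<e,e\> - \<e,e\> = 0$, and the same cancellation applied to a pair of elements (polarization) kills the full bilinear form. Hence $R(H)$ is isotropic.

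For maximal isotropy I would first note that $\hat{e} \mapsto (\fPsi_{1}(\hat{e},0), \fPsi_{0}(\hat{e}))$ is fiber-wise injective (as $\fPsi_{0}$ is fiber-wise bijective), so $\rk(R(H)) = \rk(K^{\perp}) = \rk(E) - \dim(\g) = (p+q) - (p_{0}+q_{0})$, where $(p,q)$ is the signature of $\<\cdot,\cdot\>$ on $E$. Next I would read off the relevant signatures using Proposition~\ref{tvrz_inertias}. Since $K = \Re(P\times\g)$ has inertia $(p_{0},q_{0},0)$, its complement $K^{\perp}$ has inertia $(p-p_{0},q-q_{0},0)$ and $E' = K^{\perp}/G$ has signature $(p-p_{0}, q-q_{0})$. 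Since $K_{0} = \Re(P\times\h)$ has inertia $(p_{\h},q_{\h},k_{\h})$, Proposition~\ref{tvrz_inertias} gives $K_{0}^{\perp}$ the inertia $(p-p_{\h}-k_{\h},\, q-q_{\h}-k_{\h},\, k_{\h})$; quotienting out its radical $K_{0}\cap K_{0}^{\perp}$ then makes $E'_{0}$ have signature $(p-p_{\h}-k_{\h},\, q-q_{\h}-k_{\h})$.

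With these in hand, the sign flip of $\ol{E}'$ exchanges the two entries of the $E'$-signature, so $\<\cdot,\cdot\>'_{0} - \<\cdot,\cdot\>'$ on $E'_{0}\times\ol{E}'$ has signature $(P_{\ast},Q_{\ast})$ with $P_{\ast} = (p-p_{\h}-k_{\h})+(q-q_{0})$ and $Q_{\ast} = (q-q_{\h}-k_{\h})+(p-p_{0})$. Maximal isotropy is the equality $\rk(R(H)) = \min\{P_{\ast},Q_{\ast}\}$. Writing $\min\{P_{\ast},Q_{\ast}\} = (p+q) - k_{\h} - \max\{p_{\h}+q_{0},\, q_{\h}+p_{0}\}$ reduces this to $p_{0}+q_{0} = k_{\h} + \max\{p_{\h}+q_{0},\, q_{\h}+p_{0}\}$; and since $p_{0}+q_{0} - \max\{p_{\h}+q_{0},\, q_{\h}+p_{0}\} = \min\{p_{0}-p_{\h},\, q_{0}-q_{\h}\}$, this is exactly the claimed condition (\ref{eq_R(H)maxiso}). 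The closing remark $k_{\h} = \dim(\h\cap\h^{\perp})$ is immediate, because the radical of $\<\cdot,\cdot\>_{\g}|_{\h}$ is precisely $\h\cap\h^{\perp}$.

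The genuinely delicate steps are the identification $\fPsi_{1}(\natural_{0}(e),0) = \chi_{E'_{0}}(\natural_{0}(e))$, which is what makes isotropy hold exactly rather than only modulo the radical, and the inertia bookkeeping for $E'_{0}$: the null index $k_{\h}$ is subtracted from both the positive and negative indices of $K_{0}^{\perp}$, yet appears only once in the rank of $R(H)$ through $\dim(\g) = p_{0}+q_{0}$, so the arithmetic must be tracked carefully. Once the two signatures are correct, the final $\min$/$\max$ manipulation is routine.
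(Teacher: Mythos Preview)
Your proof is correct and follows essentially the same route as the paper: the isotropy step via the identification $\fPsi_{1}(\natural_{0}(e),0) = \chi_{E'_{0}}(\natural_{0}(e))$ and the cancellation $\<e,e\> - \<e,e\> = 0$, followed by the signature bookkeeping for $E'$ and $E'_{0}$ using Proposition~\ref{tvrz_inertias} and the $\min/\max$ rewriting, match the paper's argument line by line.
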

\begin{proof}
Let $\hat{e}, \hat{e}' \in (K^{\perp} / H)_{n}$. Fix $p \in \varpi_{0}^{-1}(n)$. Then there are unique elements $e,e' \in K^{\perp}_{p}$, such that $\hat{e} = \natural_{0}(e)$ and $\hat{e}' = \natural_{0}(e')$, respectively. Then one can write 
\begin{equation}
(\fPsi_{1}(\hat{e},0), \fPsi_{0}(\hat{e})) = ( \chi_{E'_{0}}( \natural_{0}(e)), \natural(e)), \; \; (\fPsi_{1}(\hat{e}',0), \fPsi_{0}(\hat{e}')) = ( \chi_{E'_{0}}( \natural_{0}(e')), \natural(e')). 
\end{equation}
Consequently, one obtains 
\begin{equation}
\begin{split}
\< (\fPsi_{1}(\hat{e},0), \fPsi_{0}(\hat{e})), (\fPsi_{1}(\hat{e}',0), \fPsi_{0}(\hat{e}')) \> = & \ \< \chi_{E'_{0}}( \natural_{0}(e)), \chi_{E'_{0}}(\natural_{0}(e')) \>_{E'_{0}} - \< \natural(e), \natural(e') \>_{E'} \\
= & \ \<e,e'\>_{E} - \<e,e'\>_{E} = 0. 
\end{split}
\end{equation}
This proves that $R(H)$ is isotropic. Now, let $(p,q)$ be the signature of the fiber-wise metric $\<\cdot,\cdot\>_{E}$, and let $(p',q')$ and $(p'_{0},q'_{0})$ denote the signature of the fiber-wise metric on $E'$ and $E'_{0}$, respectively. It follows from Proposition \ref{tvrz_inertias} that the signatures are related by
\begin{equation}
p' = p - p_{0}, \;\; q' = q - q_{0}, \; \; p'_{0} = p - p_{\h} - k_{\h}, \; \; q'_{0} = q - q_{\h} - k_{\h}. 
\end{equation} 
Note that $\rk(R(H)) = \rk(K^{\perp}) = p + q - p_{0} - q_{0}$. It follows that $R(H)$ is maximally isotropic, iff
\begin{equation}
\begin{split}
p + q - p_{0} - q_{0} = & \ \min\{ p - p_{0} + q - q_{\h} - k_{\h}, q - q_{0} + p - p_{\h} - k_{\h} \} \\
= & \ p + q - k_{\h}  - \max \{ p_{0} + q_{\h}, q_{0} + p_{\h} \}.
\end{split}
\end{equation}
This gives us the condition $p_{0} + q_{0} - k_{\h} = \max \{ p_{0} + q_{\h}, q_{0} + p_{\h} \}$. One can rewrite it as (\ref{eq_R(H)maxiso}). 
\end{proof}
\begin{example} \label{ex_qiszero}
The most interesting case is the one where $\q = 0$. Then $R(H)$ is a graph of a fiber-wise bijective vector bundle map. This happens whenever $\h^{\perp} = \h \cap \h^{\perp}$, that is $\h^{\perp} \subseteq \h$. In other words, the Lie subalgebra $\h$ is coisotropic with respect to $\<\cdot,\cdot\>_{\g}$. It follows from (\ref{eq_R(H)maxiso}) that in this case, $R(H)$ is maximally isotropic (in fact, Lagrangian). This is because one has $k_{\h} = \dim(\h^{\perp})$ and $p_{0} = p_{\h} + \dim(\h^{\perp})$, $q_{0} = q_{\h} + \dim(\h^{\perp})$.  
\end{example}
\begin{lemma}
$R(H)^{\perp}$ is compatible with the anchor. In other words, $R(H)$ is an almost involutive structure on $E'_{0} \times \ol{E}'$ supported on $\gr(\varpi)$. 
\end{lemma}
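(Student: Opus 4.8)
The plan is to identify the fibre of $R(H)^{\perp}$ explicitly and then feed it into the two reduction anchor diagrams. Compatibility with the anchor means $\rho(R(H)^{\perp}) \subseteq T(\gr(\varphi)) = \gr(T(\varphi))$; since the anchor on $E'_{0} \times \ol{E}'$ acts componentwise and the bar on $\ol{E}'$ only flips the metric, this amounts to showing that every $(\alpha,\beta) \in R(H)^{\perp}$ satisfies $\rho'(\beta) = (T(\varphi))(\rho'_{0}(\alpha))$, where $\rho'_{0}$ and $\rho'$ are the anchors of $E'_{0}$ and $E'$.

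First I would compute $R(H)^{\perp}$ fibre-wise by lifting to $P$. Fix $p \in \varpi_{0}^{-1}(n)$, so $\varpi(p) = \varphi(n)$. Using $\fPsi_{0}\circ\natural_{0} = \natural$ and $\fPsi_{1}(\hat{e},0) = \natural_{E'_{0}}(e)$ for $e \in K^{\perp}_{p}$ (with $\hat{e} = \natural_{0}(e)$ and $\natural_{E'_{0}} = \chi_{E'_{0}}\circ\natural_{0}|_{K_{0}^{\perp}}$), the defining description reads $R(H)_{(n,\varphi(n))} = \{(\natural_{E'_{0}}(e),\natural(e)) \mid e \in K^{\perp}_{p}\}$. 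Writing an arbitrary $\alpha$ as $\alpha = \natural_{E'_{0}}(g)$ with $g \in K_{0,p}^{\perp}$ and $\beta = \natural(f)$ with $f \in K^{\perp}_{p}$ (using that $\natural|_{K^{\perp}}$ is fibre-wise bijective), the orthogonality condition collapses — because both reduced metrics are induced from $\langle\cdot,\cdot\rangle_{E}$ — to $\langle g-f, e\rangle_{E} = 0$ for all $e \in K^{\perp}_{p}$. Hence $g - f \in (K^{\perp}_{p})^{\perp} \cap K_{0,p}^{\perp} = K_{p} \cap K_{0,p}^{\perp} = (K'_{0})_{p}$, the last identity following from $K'_{0} \subseteq K$, $K^{\perp}\cap K = 0$ and the decomposition $K_{0}^{\perp} = K^{\perp}\oplus K'_{0}$. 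This gives
\[
R(H)^{\perp}_{(n,\varphi(n))} = \{(\natural_{E'_{0}}(f+k),\,\natural(f)) \mid f \in K^{\perp}_{p},\ k \in (K'_{0})_{p}\}.
\]
A quick rank count (the kernel of $(f,k)\mapsto(\natural_{E'_{0}}(f+k),\natural(f))$ is $\{0\}\times(K_{0}\cap K_{0}^{\perp})_{p}$) shows this has constant rank $\rk(E'_{0})$, confirming it is a genuine subbundle.

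Finally I would apply the reduction anchor diagram (\ref{eq_redanchor}) and its analogue for the $H$-reduction: $\rho'(\natural(f)) = (T(\varpi))(\rho(f))$ for $f\in K^{\perp}$ and $\rho'_{0}(\natural_{E'_{0}}(f+k)) = (T(\varpi_{0}))(\rho(f)+\rho(k))$ for $f+k \in K_{0}^{\perp}$. Since $\varphi\circ\varpi_{0} = \varpi$, composing the second with $T(\varphi)$ yields $(T(\varphi))(\rho'_{0}(\natural_{E'_{0}}(f+k))) = (T(\varpi))(\rho(f)+\rho(k))$, so the required identity reduces to $(T(\varpi))(\rho(k)) = 0$. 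But $k \in (K'_{0})_{p} = \Re(P\times\h^{\perp})_{p}$, whence $\rho(k) = \#(\Phi)$ for some $\Phi\in\h^{\perp}$ by axiom (i) of an equivariant Courant algebroid; this is a $\varpi$-vertical vector, so $(T(\varpi))(\rho(k)) = 0$ and compatibility follows.

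The bulk of the work — and the only genuinely delicate point — is the second step: pinning down $R(H)^{\perp}$, in particular the linear-algebra identity $K\cap K_{0}^{\perp} = K'_{0}$ and the constant-rank check, together with the careful bookkeeping across the three quotient maps $\natural$, $\natural_{0}$ and $\natural_{E'_{0}}$. Once $R(H)^{\perp}$ is in hand the anchor computation is immediate, the whole nontrivial input being the verticality $\rho(K'_{0}) = \#(\h^{\perp}) \subseteq \ker T(\varpi)$. I would also note the slicker repackaging $R(H)^{\perp} = R(H) + \fPsi_{1}(\{0\}\oplus(P\times_{\Ad}\q))\times\{0_{E'}\}$, mirroring the earlier formula for $Q(\Re)^{\perp}$, which isolates exactly this vertical $\q$-part and makes the final step transparent.
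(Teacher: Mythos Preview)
Your proof is correct and follows essentially the same route as the paper. The paper states the decomposition $R(H)^{\perp}_{(n,\varphi(n))} = R(H)_{(n,\varphi(n))} + \{(\chi_{E'_{0}}(\natural_{0}(e')),0_{\varphi(n)}) \mid e' \in (K'_{0})_{p}\}$ without derivation and then checks the two summands separately, whereas you derive this formula explicitly via the orthogonality computation (including the identity $K \cap K_{0}^{\perp} = K'_{0}$) and handle the anchor check in one pass; in both cases the only nontrivial point is that $\rho(K'_{0}) = \#(\h^{\perp})$ is $\varpi$-vertical.
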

\begin{proof}
For each $n \in N$, fix $p \in \varpi_{0}^{-1}(n)$ and write a general element of $R(H)_{(n,\varphi(n))}$ as a pair $(\chi_{E'_{0}}(\natural_{0}(e)), \natural(e))$ for $e \in K^{\perp}_{p}$, see the proof of the previous lemma. Applying the anchor now gives 
\begin{equation}
(\rho'_{0} \times \rho')(\chi_{E'_{0}}(\natural_{0}(e)), \natural(e)) = ( (T_{p}\varpi_{0})(\rho(e)), (T_{p} \varpi)(\rho(e))).
\end{equation}
But by definition, we have $\varpi = \varphi \circ \varpi_{0}$, whence $T_{p}\varpi = T_{n}\varphi \circ T_{p} \varpi_{0}$. This shows that the right-hand side is in $T_{(n,\varphi(n))}(\gr(\varphi)) = \gr( T_{n} \varphi)$. Hence $R(H)$ is compatible with the anchor. 

Next, it is not difficult to show that for each $n \in N$, the fiber of the orthogonal complement at $(n,\varphi(n))$ reads
\begin{equation}
R(H)^{\perp}_{(n,\varphi(n))} = R(H)_{(n,\varphi(n))} + \{ ( \chi_{E'_{0}}( \natural_{0}(e')), 0_{\varphi(n)}) \; | \; e' \in (K'_{0})_{p} \},
\end{equation}
where $p \in \varpi_{0}^{-1}(n)$ is fixed. Applying the anchor on the elements of the second summand gives 
\begin{equation} \label{eq_thoR(H)perp}
(\rho'_{0} \times \rho')(\chi_{E'_{0}}(\natural_{0}(e')), 0_{\varphi(n)}) = ( (T_{p}\varpi_{0})(\rho(e')), 0_{\varphi(n)})
\end{equation}
As $e' \in (K'_{0})_{p}$, we may write $e' = \Re(x)(p)$ for some $x \in \h^{\perp}$, hence $\rho(e') = \#_{p}(x)$. But then 
\begin{equation}
(T_{n}\varphi)( (T_{p}\varpi_{0})(\rho(e'))) = (T_{p} \varpi)( \#_{p}(x)) = 0_{\varpi(p)} = 0_{\varphi(n)}. 
\end{equation}
This shows that the right-hand side of (\ref{eq_thoR(H)perp}) is in $\gr(T_{n}\varphi)$ and the compatibility of $R(H)^{\perp}$ with the anchor follows. We have already proved that $R(H)$ is isotropic, hence we can conclude that $R(H)$ is an almost involutive structure in $E'_{0} \times \ol{E}'$. 
\end{proof}
\begin{tvrz}
The subbundle $R(H)$ is an involutive structure in $E'_{0} \times \ol{E}'$, hence it defines a Courant algebroid morphism $R(H): E'_{0} \rat E'$ over $\varphi$. 
\end{tvrz}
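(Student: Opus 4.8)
The preceding lemma shows that $R(H)$ is an almost involutive structure, so by Proposition \ref{tvrz_invongen} the plan is to reduce the claim to checking involutivity on a conveniently chosen local frame of $R(H)$ together with ambient extensions whose mutual brackets remain in $R(H)$ along $\gr(\varphi)$. The guiding idea is to manufacture both the frame and its extensions from a single supply of sections: the $G$-invariant sections of $K^{\perp}$. These are the right objects because they descend at once to sections of $E'$ (via $\natural$) and to sections of $E'_{0}$ (via $\chi_{E'_{0}} \circ \natural_{0}$, using the inclusion $K^{\perp} \subseteq K^{\perp}_{0}$ coming from (\ref{eq_K0perp})), and because $\Gamma_{G}(K^{\perp})$ is an involutive module.

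Concretely, I would fix a point $(n_{0},\varphi(n_{0}))$ of $\gr(\varphi)$ and some $p_{0} \in \varpi_{0}^{-1}(n_{0})$, pick a local frame $(\hat{e}_{\mu})_{\mu=1}^{\rk(K^{\perp})}$ of $E' = K^{\perp}/G$ near $\varphi(n_{0})$, and lift it to the unique $G$-invariant frame $\Psi_{\mu} \in \Gamma_{G}(K^{\perp})$ over $\varpi^{-1}(W)$ with $\natural(\Psi_{\mu}) = \hat{e}_{\mu}$. Since the $G$-action is free, $\natural$ restricts to a fibre-wise isomorphism $K^{\perp}_{p} \cong E'_{\varpi(p)}$, so $(\Psi_{\mu}(p))_{\mu}$ is a basis of $K^{\perp}_{p}$ at each relevant $p$. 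I would then set $\eta_{\mu} := (\chi_{E'_{0}}(\natural_{0}(\Psi_{\mu})),\, \natural(\Psi_{\mu}))$, a section of $E'_{0} \times \ol{E}'$ on a neighbourhood of the form $\varphi^{-1}(W) \times W$, and check that its restriction to $\gr(\varphi)$ lands in $R(H)$; using that $e \mapsto (\chi_{E'_{0}}(\natural_{0}(e)),\natural(e))$ is a linear isomorphism of $K^{\perp}_{p}$ onto $R(H)_{(n,\varphi(n))}$, this shows the $\eta_{\mu}$ restrict to a local frame of $R(H)$.

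The bracket verification is then short. The product bracket (\ref{eq_prodbracket}) together with the defining formulas for the reduced brackets on $E'$ and on $E'_{0}$ (the latter being legitimate precisely because $\Gamma_{G}(K^{\perp}) \subseteq \Gamma_{H}(K^{\perp}_{0})$) would give $[\eta_{\mu},\eta_{\nu}] = (\chi_{E'_{0}}(\natural_{0}([\Psi_{\mu},\Psi_{\nu}])),\, \natural([\Psi_{\mu},\Psi_{\nu}]))$, and since $\Gamma_{G}(K^{\perp})$ is involutive the bracket $[\Psi_{\mu},\Psi_{\nu}]$ again lies in $\Gamma_{G}(K^{\perp})$. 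Hence the right-hand side is again a generator of $R(H)$ along $\gr(\varphi)$, i.e. $[\eta_{\mu},\eta_{\nu}] \in \Gamma_{U}(E'_{0} \times \ol{E}'; R(H))$. Proposition \ref{tvrz_invongen} then delivers involutivity, and since the support is $\gr(\varphi)$ this upgrades $R(H)$ to a Courant algebroid morphism over $\varphi$.

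The genuinely subtle point—and what I expect to be the main obstacle—is the choice of extending sections, not the bracket computation. A merely $H$-invariant section of $K^{\perp}$ descends to a bona fide section of $E'_{0}$, but its $\natural$-image need not be constant along the $\varphi$-fibres and so need not extend to a global section of $E'$; this is exactly why one is forced to work with $\Gamma_{G}(K^{\perp})$, which repairs the $E'$-component while remaining involutive and still spanning every fibre of $K^{\perp}$. As an alternative route one could instead identify $R(H)$ with the clean composition $Q(\Re) \circ Q(\Re_{0})^{T}$ of the two reduction morphisms and invoke Theorem \ref{thm_composition}; there the work shifts to verifying clean composition, which reduces to the relation $\varpi = \varphi \circ \varpi_{0}$ and the same fibre-wise injectivity of $\natural$ on $K^{\perp}$.
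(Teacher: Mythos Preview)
Your proposal is correct and follows essentially the same route as the paper: both invoke Proposition \ref{tvrz_invongen} on the already-established almost involutive structure and verify involutivity on sections manufactured from $\Gamma_{G}(K^{\perp})$---what you write as $(\chi_{E'_{0}}(\natural_{0}(\Psi)),\,\natural(\Psi))$ is precisely the paper's $(\fPsi_{1}(\varphi^{\ast}(\psi'),0),\,\psi')$ with $\psi' = \natural(\Psi)$, and the bracket computation is the same. The paper is terser, phrasing everything from the $E'$ side and skipping the explicit local-frame bookkeeping, but the content and the key insight (that $G$-invariance, not mere $H$-invariance, is what makes the extension work) are identical.
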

\begin{proof}
Thanks to the previous lemma, we know that $R(H)$ is an almost involutive structure. We may thus employ Proposition \ref{tvrz_invongen} and verify its involutivity on the sections which restrict to local generators. Now, to any section $\psi' \in \Gamma(E')$, there is a unique section $\varphi^{\ast}(\psi') \in \Gamma(K^{\perp}/H)$, such that $\varphi^{\ast}(\psi') \sim_{\fPsi_{0}} \psi'$. Less formally, $\psi'$ can be interpreted as a $G$-invariant section of $K^{\perp}$. By definition, it is also $H$-invariant, hence defines a section $\varphi^{\ast}(\psi')$ of $K^{\perp}/H$. 

It suffices to consider the sections of the form $(\fPsi_{1}(\varphi^{\ast}(\psi'),0), \psi') \in \Gamma(E'_{0} \times \ol{E}', R(H))$. It then follows from the definitions of the involved brackets and maps that
\begin{equation}
[(\fPsi_{1}(\varphi^{\ast}(\psi'),0), \psi'), (\fPsi_{1}(\varphi^{\ast}(\phi'),0), \phi') ] = ( \fPsi_{1}( \varphi^{\ast}([\psi',\phi']_{E'}),0), [\psi',\phi']_{E'}),
\end{equation}
for all $\psi',\phi' \in \Gamma(E')$. The right-hand side in $\Gamma(E'_{0} \times \ol{E}', R(H))$ and the statement follows. 
\end{proof}
\begin{example}[\textbf{Poisson--Lie T-duality}] \label{ex_PLT}
The Courant algebroid morphism of this subsection is central for the geometrical interpretation of Poisson--Lie T-duality \cite{Severa:2015hta, Severa:2018pag} and also the related work \cite{Jurco:2017gii, Jurco:2019tgt}. One adds the following requirements:
\begin{enumerate}[(i)]
\item $E$ is an exact Courant algebroid admitting a $G$-equivariant isotropic splitting, that is there exists a vector bundle map $\sigma: TP \rightarrow E$, such that $\rho \circ \sigma = 1$, $\sigma(TP)$ is isotropic in $E$ and the map $\sigma$ is $G$-equivariant;
\item $\<\cdot,\cdot\>_{\g}$ has a split signature and the subalgebra $\h$ is Lagrangian, that is $\h = \h^{\perp}$. 
\end{enumerate}
As noted in Example \ref{ex_qiszero}, in such a situation, we have $R(H) = \gr(\fPsi_{H})$ for a classical Courant algebroid morphism $\fPsi_{H}: E'_{0} \rightarrow E'$ over $\varphi$. Note that in this case, $E'_{0}$ is exact.

Now, one can consider any other closed Lie subgroup $H' \subseteq G$, such that the corresponding Lie subalgebra $\h' = \Lie(H')$ is Lagrangian. There is thus another reduced Courant algebroid $E'_{1}$ over $N' = P / H'$ together with a classical Courant algebroid morphism $\fPsi_{H'}: E'_{1} \rightarrow E'$ over $\varphi'$.  

We then construct a Courant algebroid relation $R_{H,H'}: E'_{0} \dra E'_{1}$ of the two exact Courant algebroids. Define it as the composition $R_{H,H'} := \gr(\fPsi_{H'})^{T} \circ \gr(\fPsi_{H})$. 

Let us show that the involved relations compose cleanly. As a subset of $E'_{0} \times \ol{E}'_{1}$, one has 
\begin{equation}
R_{H,H'} = \{ (e'_{0}, e'_{1}) \in E'_{0} \times \ol{E}'_{1} \; | \; \fPsi_{H}(e'_{0}) = \fPsi_{H}(e'_{1}) \} \equiv E'_{0} \times_{E'} \ol{E}'_{1}. 
\end{equation}
Both vector bundle maps are fiber-wise bijective and over surjective submersions, hence they are surjective submersions. The corresponding fibered product is thus a closed submanifold of $E'_{0} \times \ol{E}'_{1}$. Moreover, one has 
\begin{equation}
\gr(\fPsi_{H'})^{T} \diamond \gr(\fPsi_{H}) = \{ (e'_{0}, \fPsi_{H}(e'_{0}), \fPsi_{H'}(e'_{1}), e'_{1} ) \; | \; (e'_{0},e'_{1}) \in R_{H,H'} \}
\end{equation}
This is a submanifold diffeomorphic to $R_{H,H'}$ and it is not difficult to see that the $\gr(\fPsi_{H}) \times \gr(\fPsi_{H'})^{T}$ and $E'_{0} \times \Delta(E') \times \ol{E}'_{1}$ intersect cleanly. Moreover, the map $p: \gr(\fPsi_{H'})^{T} \diamond \gr(\fPsi_{H}) \rightarrow R_{H,H'}$ is a diffeomorphism. The claim now follows from Proposition \ref{tvrz_RdiamondR} and Proposition \ref{tvrz_RcircR}. Theorem \ref{thm_composition} says that $R_{H,H'}: E'_{0} \dra E'_{1}$ is a Courant algebroid relation supported on $\gr(\varphi')^{T} \circ \gr(\varphi) = N \times_{M} M'$. This relation \textbf{is the Poisson--Lie T-duality}.  

Now, let $L \subseteq E'_{0}$ be an involutive structure supported on $S \subseteq N'$. One can find the conditions on the existence of the pushforward involutive structure $L' := (R_{H,H'})_{\ast}(L) \subseteq E'_{1}$ in the sense of Definition \ref{def_pushpull}. The only requirements are on the base manifold, namely 
\begin{enumerate}[(i)]
\item the base $S' = \varphi'^{-1}(\varphi(S))$ of $L'$ must be a submanifold of $N'$;
\item for every $(s,s') \in S \times_{M} S'$, one has $T_{s'}S' = (T_{s'}\varphi')^{-1}( (T_{s}\varphi)(T_{s}S))$. 
\end{enumerate}
Note that one has $L' = \fPsi_{H'}^{-1}( \fPsi_{H}(L))$. Pullbacks can be discussed easily as $(R_{H,H'})^{\ast}(L') = (R_{H',H})_{\ast}(L')$ for any involutive structure of $L' \subseteq E'_{1}$. 

Note that as $\varphi'$ is a surjective submersion, it suffices to assume that $\varphi(S)$ is a submanifold of $M$, such that for all $s \in S$, one has $T_{\varphi(s)}(\varphi(S)) = (T_{s}\varphi)(T_{s}S)$. This is equivalent to the assumption that the pushforward $\gr(\fPsi_{H})_{\ast}(L)$ exists. In particular, one can achieve this by considering $S = \varphi^{-1}(S_{0})$ for a submanifold $S_{0} \subseteq M$. Then $(R_{H,H'})_{\ast}$ maps the space of involutive structures in $E'_{0}$ supported on $\varphi^{-1}(S_{0})$ bijectively to the space of involutive structures in $E'_{1}$ supported on $\varphi'^{-1}(S_{0})$. 
\end{example}
\begin{rem}
Previous paragraphs together with Example \ref{ex_qiszero} suggest that the generalization of Poisson--Lie T-duality can be worked out for the case where $\h \subseteq \g$ and $\h' \subseteq \g$ are assumed to be merely coisotropic with respect to the bilinear form $\<\cdot,\cdot\>_{\g}$ (of any signature). In this case $E'_{0}$ and $E'_{1}$ do not have to be exact Courant algebroids. This is certainly an idea for the future investigation. 
\end{rem}
\begin{example} \label{ex_RHcomQRe0}
Let us now give an explicit example of two Courant algebroid relations which are maximally isotropic (Dirac structures), whereas their composition is not. We can consider the relations $Q(\Re_{0}): E \dra E'_{0}$ and $R(H): E'_{0} \rightarrow E'$. It is not difficult to see that they compose cleanly, and for each $p \in P$, one finds
\begin{equation} \label{eq_RHcomQRe0}
(R(H) \circ Q(\Re_{0}))_{(p,\varpi(p))} = (Q(\Re))_{(p,\varpi(p))} \oplus (K_{0} \cap K_{0}^{\perp})_{p} \times 0_{\varpi(p)}.
\end{equation}
In particular, see that $R(H) \circ Q(\Re_{0}) = Q(\Re)$, iff the restricted fiber-wise bilinear form $(\cdot,\cdot)_{\h}$ is non-degenerate. Now, let $G = H \times K$ be a direct product of compact Lie groups $H$ and $K$. Then $\g = \h \oplus \frk$, where $\g = \Lie(G)$, $\h = \Lie(H)$ and $\frk = \Lie(K)$ are the respective Lie algebras. Let $\<\cdot,\cdot\>_{\g}$ be a product bilinear form that restricts to the Killing form of $\h$ and to the opposite of the Killing form of $\frk$. In particular, the bilinear form $(\cdot,\cdot)_{\h}$ is negative-definite, hence non-degenerate. The signature of $\<\cdot,\cdot\>_{\g}$ is $(\dim(\frk),\dim(\h))$. It follows from Lemma \ref{lem_QRemaxiso} that $Q(\Re_{0})$ is maximally isotropic. Moreover, the equation (\ref{eq_R(H)maxiso}) holds as $k_{\h} = 0$ and $q_{0} = q_{\h}$. Hence also $R(H)$ is maximally isotropic. Finally, it follows from (\ref{eq_RHcomQRe0}) that 
\begin{equation}
R(H) \circ Q(\Re_{0}) = Q(\Re).
\end{equation}
But Lemma \ref{lem_QRemaxiso} shows that for $\dim(\frk) > 0$, $Q(\Re)$ \textit{is not} maximally isotropic.
\end{example}
\section{Generalized isometries} \label{sec_geniso}
The following notion is based on unpublished notes kindly provided to us by Pavol Ševera. Note that the compatibility of generalized metrics with Courant algebroid automorphisms was also considered in \cite{rubio2016lie}. Recall that a \textbf{generalized metric} on a quadratic vector bundle\footnote{A vector bundle $E$ together with a fiber-wise metric $\<\cdot,\cdot\>$.} $(E,\<\cdot,\cdot\>)$ over $M$ is a vector bundle endomorphism $\tau: E \rightarrow E$ over $1_{M}$ satisfying $\tau^{2} = 1$, such that the formula $\gm(\psi,\psi') = \<\psi,\tau(\psi')\>$, where $\psi,\psi' \in \Gamma(E)$, defines a positive-definite fiber-wise metric on $E$. Equivalently, this corresponds to the choice of a maximal positive subbundle $V_{+} \subseteq E$ with respect to $\<\cdot,\cdot\>$. $V_{+}$ plays the role of a $+1$ eigenbundle of $\tau$. For details, see Section 3 of \cite{Jurco:2016emw}. Note that on every quadratic vector bundle, there exists a generalized metric, see Corollary \ref{cor_genmetric}. 

Now, let $(E_{1},\rho_{1},\<\cdot,\cdot\>_{1},[\cdot,\cdot]_{1})$ and $(E_{2}, \rho_{2}, \<\cdot,\cdot\>_{2}, [\cdot,\cdot]_{2})$ be a pair of Courant algebroids over $M_{1}$ and $M_{2}$, respectively. Let $\tau_{1}$ and $\tau_{2}$ be a generalized metric on $E_{1}$ and $E_{2}$, respectively. One can then define a single vector bundle endomorphism $\tau := \tau_{1} \times \tau_{2}$ on $E_{1} \times E_{2}$. It is not difficult to see that $\tau$ defines a generalized metric on the product Courant algebroid $E_{1} \times E_{2}$. 

\begin{definice}
Let $R: E_{1} \dra E_{2}$ be a Courant algebroid relation. Suppose $\tau$ is the vector bundle endomorphism defined above. 

We say that \textbf{$R$ is a generalized isometry with respect to $\tau_{1}$ and $\tau_{2}$}, if $\tau(R) = R$. 
\end{definice}
\begin{rem}
One can ask whether it would not be more natural to consider the map $\tau' = \tau_{1} \times (-\tau_{2})$ which forms a generalized metric on the product Courant algebroid $E_{1} \times \ol{E}_{2}$. However, it turns out that this is not a particularly good idea. 

Indeed, consider any quadratic vector bundle $(E,\<\cdot,\cdot\>)$ together with a generalized metric $\tau$ preserving some isotropic subbundle $R \subseteq E$. $\tau$ provides a decomposition $E = V_{+} \oplus V_{-}$ onto its $\pm 1$ eigenbundles $V_{\pm}$. Let $R_{\pm} = p_{\pm}(R)$, where $p_{\pm}: E \rightarrow V_{\pm}$ are the projections. Note that $R_{\pm}$ are subbundles. One has $\ker(p_{+}|_{R}) = R \cap V_{-} = 0$, as $R$ is isotropic and $\<\cdot,\cdot\>$ is negative definite on $V_{-}$. Hence $R_{+}$ is a subbundle of $V_{+}$ isomorphic to $R$. The same argument works for $R_{-}$. We may thus view $R$ as a subbundle of $R_{+} \oplus R_{-}$ which has a trivial intersection with both $R_{+} \oplus 0$ and $0 \oplus R_{-}$. As $\rk(R) = \rk(R_{+}) = \rk(R_{-})$, there is a unique vector bundle isomorphism $\F: R_{+} \rightarrow R_{-}$, such that $R = \gr(\F)$. Every element of $R$ can be then written as $(e,\F(e))$ for $e \in R_{+}$. By assumption, we have $\tau(e,\F(e)) \in R$. But $\tau(e,\F(e)) = (e,-\F(e))$, which forces $\F(e) = 0$ for all $e \in R_{+}$. Hence $\F = 0$ and consequently $R = 0$. 

This observation shows that if we would consider $\tau'$ instead of $\tau$, the only generalized isometry would be a trivial relation\footnote{Which, to make things worse, is  usually not a Courant algebroid relation.} $0_{S}: E_{1} \dra E_{2}$. 
\end{rem}
\begin{example}
Let $\F: E_{1} \rightarrow E_{2}$ be a classical Courant algebroid morphism over $\varphi: M_{1} \rightarrow M_{2}$, see Subsection \ref{subsec_graph}. Let us examine when $\gr(\F)$ defines a generalized isometry of $\tau_{1}$ and $\tau_{2}$.  Let $(e_{1},\F(e_{1})) \in \gr(\F)$. Then $\tau(e_{1},\F(e_{1})) = (\tau_{1}(e_{1}), \tau_{2}(\F(e_{1})))$ is in $\gr(\F)$ again, iff $\tau_{2}(\F(e_{1})) = \F(\tau_{1}(e_{1}))$. This has to hold for all $e_{1} \in E_{1}$ and we obtain the condition
\begin{equation} \label{eq_Fintertwines}
\tau_{2} \circ \F = \F \circ \tau_{1}. 
\end{equation}
In terms of the induced positive-definite fiber-wise metrics $\gm_{1}$ and $\gm_{2}$, this can be equivalently restated as the condition
\begin{equation}
\gm_{2}( \F(e_{1}), \F(e'_{1})) = \gm_{1}(e_{1},e_{1}),
\end{equation}
for all $e_{1},e'_{1} \in (E_{1})_{m_{1}}$ and all $m_{1} \in M_{1}$. This justifies the name generalized isometry. 
\end{example}
Now, let us show that if $R$ is a Courant algebroid morphism, there are some serious restrictions on $R$ for it to be a generalized isometry. 
\begin{tvrz} \label{tvrz_genisoCAmorph}
Let $R: E_{1} \rat E_{2}$ be a Courant algebroid morphism over $\varphi: M_{1} \rightarrow M_{2}$, such that $R$ is a generalized isometry of $\tau_{1}$ and $\tau_{2}$. Let $p_{i}: E_{1} \times \ol{E}_{2} \rightarrow E_{i}$ be the projections, $i \in \{1,2\}$. 

Then $K_{1} = p_{1}(R)$ is a subbundle of $E_{1}$. There exists a fiber-wise injective vector bundle map $\F: K_{1} \rightarrow E_{2}$ over $\varphi$, such that for each $m_{1} \in M_{1}$, the fiber of $R$ over $(m_{1},\varphi(m_{1}))$ has the form
\begin{equation} \label{eq_Rformisometry}
R_{(m_{1},\varphi(m_{1}))} = \{ (e_{1}, \F(e_{1})) \; | \; e_{1} \in (K_{1})_{m_{1}} \}.
\end{equation}
The subbundle $K_{1}$ is invariant with respect to $\tau_{1}$ and 
\begin{equation} \label{eq_genisointertw}
\F \circ \tau_{1}|_{K_{1}} = \tau_{2} \circ \F. 
\end{equation}
\end{tvrz}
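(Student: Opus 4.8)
The plan is to extract everything from a single pointwise identity that couples the two positive-definite metrics $\gm_{1}$ and $\gm_{2}$ associated to $\tau_{1}$ and $\tau_{2}$. First I would fix an arbitrary element $v = (e_{1},e_{2}) \in R$. Since $R$ is a generalized isometry, $\tau(v) = (\tau_{1}(e_{1}),\tau_{2}(e_{2}))$ lies in $R$ as well. As $R$ is an isotropic subbundle of $E_{1} \times \ol{E}_{2}$, the pairing of the two elements $v$ and $\tau(v)$ of $R$ must vanish, which upon expanding gives
\begin{equation}
0 = \< v, \tau(v) \>_{E_{1} \times \ol{E}_{2}} = \< e_{1}, \tau_{1}(e_{1}) \>_{1} - \< e_{2}, \tau_{2}(e_{2}) \>_{2} = \gm_{1}(e_{1},e_{1}) - \gm_{2}(e_{2},e_{2}).
\end{equation}
Hence the key identity $\gm_{1}(e_{1},e_{1}) = \gm_{2}(e_{2},e_{2})$ holds for every $(e_{1},e_{2}) \in R$.

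Next I would use this identity to show that the projection $p_{1}|_{R}$ is fiber-wise injective. Indeed, if $(0,e_{2}) \in R$, the identity forces $\gm_{2}(e_{2},e_{2}) = 0$, so $e_{2} = 0$ by positive-definiteness of $\gm_{2}$. Since $R$ is a subbundle supported on $\gr(\varphi)$ and $p_{1}$ covers the projection $\gr(\varphi) \to M_{1}$, which is a diffeomorphism on supports, a fiber-wise injective bundle map has constant rank $\rk(R)$; therefore its image $K_{1} = p_{1}(R)$ is a subbundle of $E_{1}$ over $M_{1}$. I then set $\F := p_{2} \circ (p_{1}|_{R})^{-1} : K_{1} \rightarrow E_{2}$, a vector bundle map over $\varphi$, and by construction the fiber of $R$ is exactly the graph of $\F$, which is (\ref{eq_Rformisometry}). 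Fiber-wise injectivity of $\F$ follows by the symmetric argument: if $\F(e_{1}) = 0$ then $(e_{1},0) \in R$, so the identity yields $\gm_{1}(e_{1},e_{1}) = 0$ and hence $e_{1} = 0$.

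Finally, for the $\tau_{1}$-invariance of $K_{1}$ and the intertwining relation, I would feed a graph element back into the hypothesis. For $e_{1} \in K_{1}$ one has $(e_{1},\F(e_{1})) \in R$, so by $\tau(R) = R$ the element $(\tau_{1}(e_{1}), \tau_{2}(\F(e_{1})))$ lies in $R$ as well. Applying $p_{1}$ shows $\tau_{1}(e_{1}) \in K_{1}$, establishing the $\tau_{1}$-invariance; and because $R$ is the graph of $\F$, the second component of this element must coincide with $\F$ of its first component, giving $\tau_{2}(\F(e_{1})) = \F(\tau_{1}(e_{1}))$, which is precisely (\ref{eq_genisointertw}).

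The genuinely delicate point is the constant-rank argument promoting $K_{1} = p_{1}(R)$ to a smooth subbundle; once fiber-wise injectivity of $p_{1}|_{R}$ is in hand this is standard, and the remaining three assertions reduce to pointwise linear algebra governed entirely by the single identity derived in the first step.
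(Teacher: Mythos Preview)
Your proof is correct and follows essentially the same route as the paper. The only cosmetic difference is that you first extract the polarized identity $\gm_{1}(e_{1},e_{1}) = \gm_{2}(e_{2},e_{2})$ for all $(e_{1},e_{2}) \in R$ and then specialize, whereas the paper runs the isotropy-plus-$\tau$-invariance computation directly on elements of the form $(0,e_{2})$; the paper also passes explicitly through the pullback bundle $\varphi^{!}(E_{2})$ to build $\F$, while your definition $\F = p_{2} \circ (p_{1}|_{R})^{-1}$ is the same map written more compactly.
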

\begin{proof}
First, let us argue that $K_{1}$ is a subbundle. The restriction $p_{1}|_{R}: R \rightarrow E_{1}$ is a vector bundle map over a diffeomorphism $\pi_{1}|_{\gr(\varphi)}: \gr(\varphi) \rightarrow M_{1}$, where $\pi_{1}: M_{1} \times M_{2} \rightarrow M_{1}$ is the projection. It thus suffices to show that $p_{1}|_{R}$ is fiber-wise injective. One has
\begin{equation} 
\ker(p_{1}|_{R}) = R \cap (0_{M_{1}} \times E_{2}).
\end{equation}
Let us argue that this intersection must be trivial. Suppose $(0,e_{2}) \in R$. By assumption then also $(0, \tau_{2}(e_{2})) \in R$. But by definition, $R$ is isotropic. Hence  
\begin{equation}
0 = \< (0,e_{2}), (0,\tau_{2}(e_{2})) \> = - \<e_{2}, \tau_{2}(e_{2}) \>_{2} = - \gm_{2}(e_{2},e_{2}). 
\end{equation}
But $\gm_{2}$ is positive-definite, whence $e_{2} = 0$. Hence $K_{1}$ is a subbundle and $\rk(K_{1}) = \rk(R)$. Now, one can view $R$ as the subbundle of the direct sum $E_{1} \oplus \varphi^{!}(E_{2})$, where $\varphi^{!}(E_{2})$ is the pullback vector bundle of $E_{2}$ by $\varphi: M_{1} \rightarrow M_{2}$. Let $p'_{2}: E_{1} \oplus \varphi^{!}(E_{2}) \rightarrow \varphi^{!}(E_{2})$ be the projection. Using the same arguments as above, it follows that $K_{2} := p'_{2}(R) \subseteq \varphi^{!}(E_{2})$ is a subbundle and $\rk(K_{2}) = \rk(R)$. We have $R \subseteq K_{1} \oplus K_{2}$, $R \cap (K_{1} \oplus 0) = R \cap (0 \oplus K_{2}) = 0$ and $\rk(R) = \rk(K_{1}) = \rk(K_{2})$. There is thus a vector bundle isomorphism $\F^{!}: K_{1} \rightarrow K_{2}$, such that $R = \gr(\F^{!})$ as a subbundle of $K_{1} \oplus K_{2}$. Let $\F = \varphi^{!} \circ \F^{!}$ where  we view $\F^{!}$ as a map from $K_{1}$ to $\varphi^{!}(E_{2})$ and $\varphi^{!}: \varphi^{!}(E_{2}) \rightarrow E_{2}$ is the canonical fiber-wise bijective vector bundle map. Then $\F$ is a fiber-wise injective vector bundle map over $\varphi$ and $R$ has the form (\ref{eq_Rformisometry}). 

Next, let $e_{1} \in K_{1}$. There is thus $e_{2} \in E_{2}$, such that $(e_{1},e_{2}) \in R$. Hence also $\tau(e_{1},e_{2}) = (\tau_{1}(e_{1}),\tau_{2}(e_{2})) \in R$ and consequently $\tau_{1}(e_{1}) \in K_{1}$. This proves that $K_{1}$ is invariant with respect to $\tau_{1}$. The equation (\ref{eq_genisointertw}) then follows in the same way as (\ref{eq_Fintertwines}). 
\end{proof}
\begin{example} \label{ex_genisored}
Let us now examine the Courant algebroid morphism $Q(\Re): E \rat E'$ over $\varpi: P \rightarrow M$ obtained in Subsection \ref{subsec_reduction}. Suppose $\tau$ and $\tau'$ is a generalized metric on $E$ and $E'$, respectively. Proposition \ref{tvrz_genisoCAmorph} immediately tells us necessary conditions for $Q(\Re)$ to define a generalized isometry of $\tau$ and $\tau'$. First, one has $K_{1} = p_{1}(Q(\Re)) = K^{\perp}$. This is a subbundle of $E$, which is good. 

On the other hand, we see that the map $\F: K_{1} \rightarrow E'$ over $\varphi$ has the form $\F = \chi_{E'} \circ \natural|_{K^{\perp}}$. This map is fiber-wise injective, iff $\chi_{E'}: K^{\perp}/ G \rightarrow E'$ is the identity. In other words, $Q(\Re)$ can be a generalized isometry, only if $K \cap K^{\perp} = 0$. Equivalently, this means that the fiber-wise bilinear form $(\cdot,\cdot)_{\g}$ defined by (\ref{eq_Rinducedpair}) must be non-degenerate. 

Suppose this is the case. We can thus write $E = K^{\perp} \oplus K$. Write $\tau$ in the formal block form with respect to this decomposition as
\begin{equation}
\tau = \begin{pmatrix}
\tau_{0} & \tau_{2} \\
0 & \tau_{1}
\end{pmatrix},
\end{equation}
where the zero in the bottom-left corner makes the subbundle $K^{\perp}$ invariant with respect to $\tau$. It is an easy exercise to show that $\tau$ is a generalized metric, iff $\tau_{2} = 0$ and $\tau_{0}$ and $\tau_{1}$ is a generalized metric on $K^{\perp}$ and $K$, respectively. Now, note that $\F = \natural|_{K^{\perp}}$ and the equation (\ref{eq_genisointertw}) becomes $\natural|_{K^{\perp}} \circ \tau_{0} = \tau' \circ \natural|_{K^{\perp}}$. This is equivalent for $\tau_{0}$ to be $G$-equivariant with respect to the action $\frR$ of $G$ on $K^{\perp}$. It also shows that $\tau'$ is uniquely determined by $\tau_{0}$ and it is automatically a smooth generalized metric on $E'$. There is no restriction on the generalized metric $\tau_{1}$.  

Compare this to the assumptions we have made in Section 6.3 of our geometrical description of Kaluza--Klein reduction in \cite{Vysoky:2017epf}. The only difference is that we have required $\tau$ to be a $G$-equivariant map on $E$, which restricts $\tau_{1}$. We can thus view Kaluza--Klein reduction of supergravity as an example of a generalized isometry. 
\end{example}
\begin{example}
Consider the other extreme case of the Courant algebroid reduction, namely the isotropic $K$. In this case $K \cap K^{\perp} = K$. Let $\tau$ be any $G$-equivariant generalized metric on $E$. In particular, its eigenbundle $V_{+}$ is $G$-invariant. One may thus define $V'_{+} = \chi_{E'}( (V_{+} \cap K^{\perp}) / G)$. It follows from the proof of Proposition \ref{tvrz_coiso} that $V'_{+}$ is a maximal positive subbundle with respect to $\<\cdot,\cdot\>'$, hence defines a generalized metric $\tau'$ on $E'$. 

However, the Courant algebroid morphism $Q(\Re): E \rightarrow E'$ cannot be a generalized isometry. This is due to $\F = \chi_{E'} \circ \natural|_{K^{\perp}}$ having a non-trivial kernel, namely the subbundle $K$. This example shows that not every natural construction with generalized metrics is a generalized isometry. 
\end{example}
\begin{tvrz} \label{tvrz_genisocomp}
Let $(E_{1},\tau_{1})$, $(E_{2},\tau_{2})$ and $(E_{3},\tau_{3})$ be a triple of Courant algebroids equipped with generalized metrics. Suppose $R: E_{1} \dra E_{2}$ and $R': E_{2} \dra E_{3}$ is a pair of cleanly composing generalized isometries of the respective generalized metrics. 

Then $R' \circ R: E_{1} \dra E_{3}$ is a generalized isometry of $\tau_{1}$ and $\tau_{3}$. The transpose relation $R^{T}: E_{2} \dra E_{1}$ is a generalized isometry of $\tau_{2}$ and $\tau_{1}$. 

For any Courant algebroid $(E,\tau)$ equipped with a generalized metric, the relation $\Delta(E) = \gr(1_{E}): E \dra E$ is a generalized isometry of $\tau$ and $\tau$. 
\end{tvrz}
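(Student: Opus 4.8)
The plan is to reduce all three assertions to short element chases on the total spaces, exploiting that a generalized metric endomorphism is a fibre-wise involution. First I would record the elementary but essential observation that for any indices the endomorphism $\tau_i \times \tau_j$ acts on the total space of $E_i \times \ol{E}_j$ in exactly the same way as on $E_i \times E_j$; passing to $\ol{E}_j$ only reverses the sign of the pairing and changes neither the underlying bundle nor the map. Hence the condition $\tau(R) = R$ from the definition of a generalized isometry is meaningful for $R \subseteq E_i \times \ol{E}_j$, and $\tau = \tau_i \times \tau_j$ is a bijection with $\tau^2 = 1$. The point of this involutivity is purely set-theoretic: if a subset $A$ satisfies $\tau(A) \subseteq A$, then applying $\tau$ yields $A = \tau(\tau(A)) \subseteq \tau(A)$, so in fact $\tau(A) = A$. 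Consequently in every part it suffices to verify a single inclusion.

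For the composition I would use the set-level description of $R' \circ R$ from Definition \ref{def_composition}. Given $(e_1,e_3) \in R' \circ R$, there is some $e_2 \in E_2$ with $(e_1,e_2) \in R$ and $(e_2,e_3) \in R'$. The two isometry hypotheses give $(\tau_1(e_1), \tau_2(e_2)) \in R$ and $(\tau_2(e_2), \tau_3(e_3)) \in R'$; reading these with $\tau_2(e_2)$ as the intermediate element shows $(\tau_1(e_1), \tau_3(e_3)) \in R' \circ R$, that is $(\tau_1 \times \tau_3)(R' \circ R) \subseteq R' \circ R$. Since $\tau_1 \times \tau_3$ is an involution, the reverse inclusion is automatic, so $R' \circ R$ is a generalized isometry of $\tau_1$ and $\tau_3$. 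Note that the clean-composition hypothesis is used only to ensure, via Theorem \ref{thm_composition}, that $R' \circ R$ is genuinely a Courant algebroid relation; the isometry property itself is linear-algebraic and needs no cleanliness.

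The transpose and identity statements are even quicker. For $R^T$ one rewrites $(e_1,e_2) \in R$ as $(e_2,e_1) \in R^T$; the isometry condition for $R$ gives $(\tau_1(e_1),\tau_2(e_2)) \in R$, hence $(\tau_2(e_2),\tau_1(e_1)) \in R^T$, which is precisely $(\tau_2 \times \tau_1)(e_2,e_1) \in R^T$, and involutivity upgrades this inclusion to $\tau_2 \times \tau_1$-invariance. For $\Delta(E) = \gr(1_E)$ every point has the form $(e,e)$, and $(\tau \times \tau)(e,e) = (\tau(e),\tau(e)) \in \Delta(E)$, so invariance is immediate. I do not expect a real obstacle: the only step demanding genuine care is the consistent bookkeeping of how $\tau$ acts across the sign-flipped factor $\ol{E}_2$, and this is exactly what the opening observation pins down once and for all.
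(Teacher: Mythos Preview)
Your proposal is correct and follows essentially the same element-chase as the paper's proof. You are in fact slightly more careful: the paper tacitly concludes $\tau(R'\circ R)=R'\circ R$ after showing only one inclusion, whereas you explicitly invoke the involutivity of $\tau_1\times\tau_3$ to close the argument.
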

\begin{proof}
Let $(e_{1},e_{3}) \in R' \circ R$. There is thus $e_{2} \in E_{2}$, such that $(e_{1},e_{2}) \in R$ and $(e_{2},e_{3}) \in R'$. By assumption, we have $(\tau_{1}(e_{1}),\tau_{2}(e_{2})) \in R$ and $(\tau_{2}(e_{2}), \tau_{3}(e_{3})) \in R'$. But this proves that $(\tau_{1}(e_{2}), \tau_{3}(e_{3})) \in R' \circ R$. Hence $R' \circ R$ is a generalized isometry of $\tau_{1}$ and $\tau_{3}$. The claim about the transpose relation is obvious. Finally, if $(e,e) \in \Delta(E)$, then $(\tau(e),\tau(e)) \in \Delta(E)$, and the last claim of the proposition follows.
\end{proof}
\begin{example}
Let $R(H): E'_{0} \rat E'$ be the Courant algebroid morphism over $\varphi: N \rightarrow M$ described in Subsection \ref{subsec_tworeduced}. Let $\tau'_{0}$ and $\tau'$ be a generalized metric on $E'_{0}$ and $E'$, respectively. Using the notation of Proposition \ref{tvrz_genisoCAmorph}, we have $K_{1} = \fPsi_{1}( K^{\perp}/H \oplus 0)$ and the map $\F: K_{1} \rightarrow E'$ reads $\F( \fPsi_{1}(\hat{e},0)) = \fPsi_{0}(\hat{e})$ for all $\hat{e} \in K^{\perp}/H$. It is fiber-wise bijective and no issues in the likes of Example \ref{ex_genisored} occur. Now, the isomorphism $\fPsi_{1}$ and $\tau'_{0}$ induce a generalized metric $\hat{\tau}$ on the direct sum $K^{\perp} / H \oplus (P \times_{\Ad} \q)$ which must have the block form 
\begin{equation}
\hat{\tau} = \begin{pmatrix}
\hat{\tau}_{0} & 0 \\
0 & \hat{\tau}_{1} 
\end{pmatrix},
\end{equation}
where $\hat{\tau}_{0}$ and $\hat{\tau}_{1}$ is a generalized metric\footnote{One has to specify the pairings on these quadratic vector bundles. They are assumed to be the ones induced $E'_{0}$ using the isomorphism $\fPsi_{1}$.} on $K^{\perp} / H$ and $P \times_{\Ad} \q$, respectively. The intertwining property (\ref{eq_genisointertw}) then becomes $\tau' \circ \fPsi_{0} = \fPsi_{0} \circ \hat{\tau}_{0}$. This shows that $\hat{\tau}_{0}$ is uniquely determined by $\tau'$. In other words, note that $K^{\perp}/H$ may be identified with the pullback bundle $\varphi^{!}(E')$. The above condition then says that $\hat{\tau}_{0}$ has to be the canonical vector bundle map $(\tau')^{!}: \varphi^{!}(E) \rightarrow \varphi^{!}(E)$ induced from $\tau'$ via the universal property of the pullback. There is no restriction on $\hat{\tau}_{1}$. 

Note that one can find a simple criterion on $\hat{\tau}_{0}$ to be of this form. The original vector bundle $K^{\perp}$ can be canonically identified with the pullback $\varpi_{0}^{!}(K^{\perp} / H)$. $\hat{\tau}_{0}$ thus induces the vector bundle map $\hat{\tau}_{0}^{!}: K^{\perp} \rightarrow K^{\perp}$. This map must be $G$-equivariant. 
\end{example}

\begin{example}[\textbf{Poisson--Lie T-duality II}]
Recall the Courant algebroid relation $R_{H,H'}: E'_{0} \dra E'_{1}$ obtained in Example \ref{ex_PLT}. Let $\tau'_{0}$ and $\tau'_{1}$ be a generalized metric on $E'_{0}$ and $E'_{1}$, respectively. 

Recall that on any exact Courant algebroid $E'_{0}$ over $N$, the choice of a generalized metric $\tau'_{0}$ corresponds to a Riemannian metric $g'_{0}$ on $N$ and a closed $3$-form $H'_{0} \in \Omega^{3}(N)$ representing the Ševera class of $E'_{0}$. These may serve as target backgrounds for a two-dimensional $\sigma$-model. Let $g'_{1}$ and $H'_{1}$ be a Riemannian metric and a $3$-form on $N'$ constructed from $\tau'_{1}$.

One says that the backgrounds $(g'_{0},H'_{0})$ and $(g'_{1},H'_{1})$ are \textbf{Poisson--Lie T-dual}, iff the Courant algebroid relation $R_{H,H'}: E'_{0} \dra E'_{1}$ defines a generalized isometry of $\tau'_{0}$ and $\tau'_{1}$. There is an obvious way how to construct such pairs. Fix a generalized metric $\tau'$ on $E'$. By previous example, there is a unique generalized metric $\tau'_{0}$ on $E'_{0}$ making $R(H): E'_{0} \rat E'$ into a generalized isometry of $\tau'_{0}$ and $\tau'$. Repeat this for $H'$ to obtain $\tau'_{1}$. Finally, it follows immediately from Proposition \ref{tvrz_genisocomp} that $R_{H,H'} = R(H')^{T} \circ R(H)$ is a generalized isometry of $\tau'_{0}$ and $\tau'_{1}$. This is precisely the construction in described in \cite{Severa:2015hta,Severa:2017kcs}. 
\end{example}
\section{Relations and connections} \label{sec_connections}
Let $(E,\rho,\<\cdot,\cdot\>,[\cdot,\cdot])$ be a Courant algebroid over $M$. Recall that a \textbf{Courant algebroid connection} $\cD$ is an $\R$-bilinear map $\cD: \Gamma(E) \times \Gamma(E) \rightarrow \Gamma(E)$ satisfying the Leibniz rules
\begin{equation} \label{eq_connLeibniz}
\cD(f\psi,\psi') = f \cD(\psi,\psi'), \; \; \cD(\psi,f\psi') = f \cD(\psi,\psi') + \Li{\rho(\psi)}(f)\psi',
\end{equation}
for all $\psi,\psi' \in \Gamma(E)$ and $f \in C^{\infty}(M)$, together with the metric compatibility condition
\begin{equation}
\Li{\rho(\psi)} \<\psi',\psi''\> = \< \cD(\psi,\psi'), \psi''\> + \< \psi', \cD(\psi,\psi'') \>,
\end{equation}
for all $\psi,\psi',\psi'' \in \Gamma(E)$. We usually write $\cD_{\psi} := \cD(\psi,\cdot)$ for the corresponding covariant derivative along the section $\psi$. See \cite{alekseevxu} and \cite{2007arXiv0710.2719G} or our review in \cite{Jurco:2016emw}.

Now, let $(E_{1},\rho_{1},\<\cdot,\cdot\>_{1},[\cdot,\cdot]_{1})$ and $(E_{2},\rho_{2},\<\cdot,\cdot\>_{2},[\cdot,\cdot]_{2})$ be a pair of Courant algebroids over $M_{1}$ and $M_{2}$, respectively. Let $\cD^{1}$ and $\cD^{2}$ be a Courant algebroid connection on $E_{1}$ and $E_{2}$, respectively. Let $R: E_{1} \dra E_{2}$ be a Courant algebroid relation from $E_{1}$ to $E_{2}$. We would like to establish some kind of compatibility condition of $\cD^{1}$ and $\cD^{2}$ with $R$. The idea is very similar to the previous section. First, one combines $\cD^{1}$ and $\cD^{2}$ into a single connection $\cD$ on $E_{1} \times \ol{E}_{2}$. Namely, for all $\psi_{1},\phi_{1} \in \Gamma(E_{1})$ and all $\psi_{2},\phi_{2} \in \Gamma(E_{2})$, one defines 
\begin{equation}
\cD_{(\psi_{1},\phi_{1})}(\psi_{2},\phi_{2}) := ( \cD^{1}_{\psi_{1}} \phi_{1}, \cD^{2}_{\psi_{2}} \phi_{2}).
\end{equation}
This sets $\cD$ on generators and one extends it to all sections using the Leibniz rules (\ref{eq_connLeibniz}). It follows easily that $\cD$ defines a Courant algebroid connection on both $E_{1} \times \ol{E}_{2}$ and $E_{1} \times E_{2}$.
\begin{definice}
We say that the connections \textbf{$\cD^{1}$ and $\cD^{2}$ are $R$-related} and write $\cD^{1} \sim_{R} \cD^{2}$, if for all $\psi,\psi' \in \Gamma(E_{1} \times \ol{E}_{2};R)$, one has $\cD_{\psi}\psi' \in \Gamma(E_{1} \times \ol{E}_{2};R)$. $\cD$ is the connection on $E_{1} \times \ol{E}_{2}$ constructed in the previous paragraph. 
\end{definice}

\begin{rem} \label{rem_conncomp}
Note that in principle, this condition is very similar to the involutivity of the subbundle $R$, except that the $\R$-bilinear operation $[\cdot,\cdot]$ is now replaced by $\cD$. The situation is now a lot simpler due to the $C^{\infty}$-linearity of $\cD$ in the first argument. In particular, there holds an analogue of Proposition \ref{tvrz_invongen} allowing one to prove everything on local generators. 
\end{rem}
\begin{lemma}
Suppose $\cD^{1} \sim_{R} \cD^{2}$. Let $\psi_{1} \sim_{R} \psi_{2}$ and $\phi_{1} \sim_{R} \phi_{2}$. Then $\cD^{1}_{\psi_{1}} \phi_{1} \sim_{R} \cD^{2}_{\psi_{2}} \phi_{2}$. 
\end{lemma}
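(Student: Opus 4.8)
The plan is to observe that this statement is the direct analogue of Lemma \ref{lem_relatedcons} with the bracket replaced by the combined connection $\cD$, and that it follows immediately from unwinding the definitions, just as in that lemma. First I would recall that the hypotheses $\psi_{1} \sim_{R} \psi_{2}$ and $\phi_{1} \sim_{R} \phi_{2}$ mean precisely that the pullback sections $(\psi_{1},\psi_{2})$ and $(\phi_{1},\phi_{2})$ lie in $\Gamma(E_{1} \times \ol{E}_{2};R)$.

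Next, since $\cD^{1} \sim_{R} \cD^{2}$ by assumption, the defining property of $R$-related connections applied to the sections $\psi = (\psi_{1},\psi_{2})$ and $\psi' = (\phi_{1},\phi_{2})$ yields $\cD_{(\psi_{1},\psi_{2})}(\phi_{1},\phi_{2}) \in \Gamma(E_{1} \times \ol{E}_{2};R)$, where $\cD$ is the combined connection on $E_{1} \times \ol{E}_{2}$.

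The key point is that $(\psi_{1},\psi_{2})$ and $(\phi_{1},\phi_{2})$ are themselves pullback (generating) sections, so the generator formula $\cD_{(\psi_{1},\phi_{1})}(\psi_{2},\phi_{2}) := (\cD^{1}_{\psi_{1}}\phi_{1}, \cD^{2}_{\psi_{2}}\phi_{2})$ applies to them verbatim, giving $\cD_{(\psi_{1},\psi_{2})}(\phi_{1},\phi_{2}) = (\cD^{1}_{\psi_{1}}\phi_{1}, \cD^{2}_{\psi_{2}}\phi_{2})$ without any need to invoke the Leibniz extension. Combining this equality with the previous step shows that $(\cD^{1}_{\psi_{1}}\phi_{1}, \cD^{2}_{\psi_{2}}\phi_{2}) \in \Gamma(E_{1} \times \ol{E}_{2};R)$, which is exactly the assertion $\cD^{1}_{\psi_{1}}\phi_{1} \sim_{R} \cD^{2}_{\psi_{2}}\phi_{2}$. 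I expect essentially no obstacle here; the only point requiring a moment of care is that the combined connection is a priori specified only on generators and then extended by $C^{\infty}$-linearity, but since the sections in question are themselves generators, no extension argument is needed and the result is immediate.
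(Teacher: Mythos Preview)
Your proposal is correct and is precisely what the paper intends: it simply says ``This follows easily from the definitions,'' and your argument spells out exactly that unwinding. There is nothing more to add.
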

This follows easily from the definitions. There is an analogue of Proposition \ref{tvrz_genisocomp}. Note that its proof is significantly more involved as we no longer deal with vector bundle maps. However, we can make its proof very brief thanks to Remark \ref{rem_conncomp}.  
\begin{tvrz} \label{tvrz_relconncomp}
Let $(E_{1},\cD^{1})$, $(E_{2},\cD^{2})$ and $(E_{3},\cD^{3})$ be a triple of Courant algebroids equipped with Courant algebroid connections. Suppose $R: E_{1} \dra E_{2}$ and $R': E_{2} \dra E_{3}$ is a pair of cleanly composing Courant algebroid connection, such that $\cD^{1} \sim_{R} \cD^{2}$ and $\cD^{2} \sim_{R'} \cD^{3}$. 

Then $\cD^{1} \sim_{R' \circ R} \cD^{3}$, one has $\cD^{2} \sim_{R^{T}} \cD^{1}$, and for any Courant algebroid $(E,\cD)$ equipped with a Courant algebroid connection $\cD$, one has $\cD \sim_{\Delta(E)} \cD$.
\end{tvrz}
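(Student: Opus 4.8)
The plan is to run the argument of Theorem~\ref{thm_composition} almost verbatim, with the bracket $[\cdot,\cdot]$ replaced throughout by the relevant product connection. The key simplification, already advertised in Remark~\ref{rem_conncomp}, is that a connection is $C^{\infty}$-linear in its first slot, so the connection analogue of Proposition~\ref{tvrz_invongen} holds and lets me reduce every verification to local generators. Concretely, I write $\cD_{E}$ for the product connection on $E = E_{1} \times \ol{E}_{2} \times E_{2} \times \ol{E}_{3}$ assembled from $\cD^{1},\cD^{2},\cD^{2},\cD^{3}$, and $\cD'$ for the product connection on $E' = E_{1} \times \ol{E}_{3}$ assembled from $\cD^{1},\cD^{3}$.

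For the first claim I would first establish the connection analogue of Proposition~\ref{tvrz_product}: the subbundle $R \times R'$ is preserved by $\cD_{E}$, i.e. $\cD_{E,\Psi}\Phi \in \Gamma(E; R \times R')$ whenever $\Psi,\Phi \in \Gamma(E; R \times R')$. By the local-generator reduction it suffices to check this on generators $(\psi,\psi')$ with $\psi \in \Gamma(E_{1}\times\ol{E}_{2};R)$ and $\psi' \in \Gamma(E_{2}\times\ol{E}_{3};R')$, where the two components of $\cD_{E,(\psi,\psi')}(\phi,\phi')$ are exactly the values of the product connections on $E_{1}\times\ol{E}_{2}$ and $E_{2}\times\ol{E}_{3}$, which lie in $R$ and $R'$ respectively by the hypotheses $\cD^{1}\sim_{R}\cD^{2}$ and $\cD^{2}\sim_{R'}\cD^{3}$. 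Next I would record that $\cD_{E}\sim_{Q}\cD'$, where $Q:E\dra E'$ is the relation of Lemma~\ref{lem_Qrelation}: since $Q$ is $\Delta(E_{1}\times E_{2}\times E_{3})$ in disguise and $\cD_{E},\cD'$ share the outer blocks $\cD^{1},\cD^{3}$, this follows by checking on diagonal-type generators exactly as the involutivity of $\Delta$ was checked in Example~\ref{ex_CArel}~(i). With these two facts in hand, the proof copies that of Theorem~\ref{thm_composition}: given $\psi',\phi'\in\Gamma(E';R'\circ R)$, Lemma~\ref{lem_lift} (applicable since $R,R'$ compose cleanly) produces local lifts $\psi,\phi\in\Gamma_{U}(E;R\times R')$ with $\psi\sim_{Q}\psi'$ and $\phi\sim_{Q}\phi'$; the first fact gives $\cD_{E,\psi}\phi\in\Gamma_{U}(E;R\times R')$; the second fact, combined with the $R$-relatedness lemma immediately preceding this proposition applied to $Q$, gives $\cD_{E,\psi}\phi\sim_{Q}\cD'_{\psi'}\phi'$, which forces $\cD_{E,\psi}\phi\in\Gamma_{U}(E;C)$ and hence $\cD_{E,\psi}\phi\in\Gamma_{U}(E;R'\diamond R)$; applying $p$ yields $\cD'_{\psi'}\phi'\in\Gamma(E';R'\circ R)$, i.e. $\cD^{1}\sim_{R'\circ R}\cD^{3}$.

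The remaining two claims are short. For the transpose, I would use the identification of $E_{2}\times\ol{E}_{1}$ with $\ol{E_{1}\times\ol{E}_{2}}$ already employed for the transpose relation; under it the product connection built from $\cD^{2},\cD^{1}$ is the very same $\R$-bilinear map as the one built from $\cD^{1},\cD^{2}$ (the sign flip of the metric does not alter a metric-compatible connection, since it multiplies both sides of the compatibility identity by $-1$), and $R^{T}$ corresponds to $R$, so $\cD^{2}\sim_{R^{T}}\cD^{1}$ is literally the statement $\cD^{1}\sim_{R}\cD^{2}$. For the diagonal, given a Courant algebroid $(E,\cD)$, the local-generator reduction reduces $\cD\sim_{\Delta(E)}\cD$ to testing on generators of the form $(\psi,\psi)$ of $\Gamma(E\times\ol{E};\Delta(E))$, and the product connection on $E\times\ol{E}$ sends $((\psi,\psi),(\phi,\phi))$ to $(\cD_{\psi}\phi,\cD_{\psi}\phi)$, which visibly lands in $\Delta(E)$, exactly as in Example~\ref{ex_CArel}~(i).

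I do not expect a genuine obstacle here: because $\cD$ is tensorial in its first argument, the terms involving $\D f$ that complicated the proof of Proposition~\ref{tvrz_invongen} simply do not arise, and the entire content is the bookkeeping needed to reuse the $Q$-machinery and Lemma~\ref{lem_lift} from the clean-composition theorem. The only point requiring a little care is to confirm that the connection analogue of the local-generator reduction (Remark~\ref{rem_conncomp}) is legitimate, since it is what licenses checking the product, $Q$-relatedness, and diagonal statements on generators rather than on arbitrary sections.
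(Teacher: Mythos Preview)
Your proposal is correct and follows essentially the same approach as the paper: the paper's proof literally says to rerun Theorem~\ref{thm_composition} (and its preceding lemmas) with brackets replaced by connections, to handle $\Delta(E)$ as in Example~\ref{ex_CArel}~(i), and that the transpose case is obvious. You have simply spelled out this sketch in detail, including the connection analogues of Proposition~\ref{tvrz_product} and of the $Q$-relation step from Lemma~\ref{lem_Qrelation}, together with the use of Lemma~\ref{lem_lift}; the only additional care you (rightly) flag is the local-generator reduction, which the paper has already granted in Remark~\ref{rem_conncomp}.
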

\begin{proof}
The proof of $\cD \sim_{\Delta(E)} \cD$ is the same as the discussion in Example \ref{ex_CArel} (i). If $\cD^{1} \sim_{R} \cD^{2}$ and $\cD^{2} \sim_{R'} \cD^{3}$, the proof of $\cD^{1} \sim_{R' \circ R} \cD^{3}$ is completely analogous to the one of Theorem \ref{thm_composition} (and all the preceding lemmas) where one replaces all Courant brackets by connections. The fact that $\cD^{2}$ and $\cD^{1}$ are $R^{T}$-related is obvious. 
\end{proof}
\begin{example}
Let $\F: E_{1} \rightarrow E_{2}$ be a classical Courant algebroid morphism over $\varphi$, see Subsection \ref{subsec_graph}. Similarly to one of the statements of Theorem \ref{thm_grF}, the condition $\cD^{1} \sim_{\gr(\F)} \cD^{2}$ is equivalent to the following condition:

Let $\psi_{1},\psi'_{1} \in \Gamma(E_{1})$ be any two sections. Let $(\psi_{\mu})_{\mu=1}^{\rk(E_{2})}$ be any local frame for $E_{2}$ over $U$. By construction, there are unique smooth functions $f^{\mu},g^{\nu} \in C^{\infty}(\varphi^{-1}(U))$, such that on $\varphi^{-1}(U)$, one can write $\F^{!}(\psi_{1}) = f^{\mu} \psi_{\mu}^{!}$, $\F^{!}(\psi'_{1}) = g^{\nu} \psi_{\nu}^{!}$. Then on $\varphi^{-1}(U)$, the equation
\begin{equation} \label{eq_Frelatedconn}
\F^{!}(\cD^{1}_{\psi_{1}} \psi'_{1}) = f^{\mu}g^{\nu} ( \cD^{2}_{\psi_{\mu}} \psi_{\nu})^{!} + \Li{\rho_{1}(\psi_{1})}(g^{\nu}) \psi^{!}_{\nu}
\end{equation}
must hold. Similarly to Remark \ref{rem_graphoverdiff}, if $\psi_{1} \sim_{\F} \psi_{2}$, $\phi_{1} \sim_{\F} \phi_{2}$, then $\cD_{\psi_{1}}\phi_{1} \sim_{\F} \cD_{\psi_{2}}\phi_{2}$. Moreover, if $\varphi$ is a diffeomorphism and one defines $\F(\psi_{1}) = \F \circ \psi_{1} \circ \varphi^{-1}$ for every $\psi_{1} \in \Gamma(E_{1})$, then (\ref{eq_Frelatedconn}) is equivalent to the expected condition
\begin{equation}
\F( \cD^{1}_{\psi_{1}} \psi'_{1}) = \cD^{2}_{\F(\psi_{1})} \F(\psi'_{1}),
\end{equation}
which has to be valid for all $\psi_{1},\psi'_{1} \in \Gamma(E_{1})$.
\end{example}
Now, recall that for every Courant algebroid connection $\cD$ on $(E,\rho,\<\cdot,\cdot\>,[\cdot,\cdot])$, one can define its torsion $3$-form $T_{\cD} \in \Omega^{3}(E)$. See e.g. \cite{2007arXiv0710.2719G}. For all $\psi,\psi',\psi'' \in \Gamma(E)$, set 
\begin{equation} \label{eq_torsion}
T_{\cD}(\psi,\psi',\psi'') := \< \cD_{\psi}\psi' - \cD_{\psi'}\psi - [\psi,\psi'], \psi''\> + \< \cD_{\psi''}\psi, \psi'\>.
\end{equation}
It follows from axioms C1), C3) and C4) that it is is $C^{\infty}$-linear in every input and completely skew-symmetric. Recall that for every Courant algebroid relation $R: E_{1} \dra E_{2}$, we have introduced the concept of $R$-related covariant tensors, see Definition \ref{def_tensRrelated} and subsequent Lemma \ref{lem_tensRrelated}. 
\begin{tvrz} \label{tvrz_torzRrelated}
Let $(E_{1},\cD^{1})$ and $(E_{2},\cD^{2})$ be pair of Courant algebroids equipped with Courant algebroid connections. Let $R: E_{1} \dra E_{2}$ be a Courant algebroid relation over $S$. 

Then if $\cD^{1} \sim_{R} \cD^{2}$, then also $T_{\cD^{1}} \sim_{R} T_{\cD^{2}}$. 
\end{tvrz}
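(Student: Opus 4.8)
The plan is to reduce the claim, via Lemma \ref{lem_tensRrelated}, to a pointwise vanishing statement and then to dispatch it using only the isotropy of $R$. Concretely, form the difference $3$-tensor $T := p_{1}^{\ast}(T_{\cD^{1}}) - p_{2}^{\ast}(T_{\cD^{2}}) \in \T_{3}(E_{1} \times \ol{E}_{2})$, so that by Lemma \ref{lem_tensRrelated} the relation $T_{\cD^{1}} \sim_{R} T_{\cD^{2}}$ is equivalent to $T(\psi,\psi',\psi'')|_{S} = 0$ for all $\psi,\psi',\psi'' \in \Gamma(E_{1} \times \ol{E}_{2};R)$. The first observation I would record is that $T$ is nothing but the torsion $T_{\cD}$ of the combined connection $\cD$ on $E_{1} \times \ol{E}_{2}$. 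Indeed, evaluating the defining formula (\ref{eq_torsion}) for $\cD$ on product sections $(\psi_{1},\psi_{2})$, $(\psi_{1}',\psi_{2}')$, $(\psi_{1}'',\psi_{2}'')$ and using that $\cD$ acts componentwise, that the bracket is the product bracket (\ref{eq_prodbracket}), and that the fiber-wise metric of $E_{1} \times \ol{E}_{2}$ carries the sign flip on the $E_{2}$ factor, the right-hand side splits as $T_{\cD^{1}}(\psi_{1},\psi_{1}',\psi_{1}'') - T_{\cD^{2}}(\psi_{2},\psi_{2}',\psi_{2}'')$; since both $T_{\cD}$ and $T$ are $C^{\infty}$-multilinear and skew and agree on generating product sections, they coincide.

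It therefore suffices to show $T_{\cD}(\psi,\psi',\psi'')|_{S} = 0$ for $R$-valued sections. Here I would simply unwind (\ref{eq_torsion}), which expresses $T_{\cD}(\psi,\psi',\psi'')$ as a sum of four pairings: $\<\cD_{\psi}\psi',\psi''\>$, $-\<\cD_{\psi'}\psi,\psi''\>$, $-\<[\psi,\psi'],\psi''\>$ and $\<\cD_{\psi''}\psi,\psi'\>$. The hypothesis $\cD^{1} \sim_{R} \cD^{2}$ gives that $\cD_{\psi}\psi'$, $\cD_{\psi'}\psi$ and $\cD_{\psi''}\psi$ all lie in $\Gamma(E_{1} \times \ol{E}_{2};R)$, while the involutivity of $R$ (it is a Courant algebroid relation, hence an involutive structure by Definition \ref{def_rel}) gives $[\psi,\psi'] \in \Gamma(E_{1} \times \ol{E}_{2};R)$. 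Thus each of the four terms is a pairing of one section whose restriction to $S$ takes values in $R$ against another such section ($\psi''$ in the first three, $\psi'$ in the last).

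The key and essentially only remaining point is isotropy: since $R$ is isotropic in $E_{1} \times \ol{E}_{2}$, the restriction to $S$ of the pairing of any two sections in $\Gamma(E_{1} \times \ol{E}_{2};R)$ vanishes, because their restrictions to $S$ take values in the isotropic subbundle $R$. Applying this to all four pairings yields $T_{\cD}(\psi,\psi',\psi'')|_{S} = 0$, which is what we needed. I do not anticipate a genuine obstacle in this argument; the only things requiring care are the bookkeeping of the sign flip in the $\ol{E}_{2}$ metric when identifying $T_{\cD}$ with the difference tensor $T$, and the verification that one truly needs only isotropy of $R$ (not maximal isotropy) in combination with $\cD^{1} \sim_{R} \cD^{2}$ and the involutivity of $R$.
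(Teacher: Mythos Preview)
Your proposal is correct and follows essentially the same approach as the paper's own proof: identify the difference tensor with $T_{\cD}$ for the product connection, invoke Lemma \ref{lem_tensRrelated}, and then observe that each term of (\ref{eq_torsion}) restricts to a pairing of two $R$-valued sections, which vanishes on $S$ by isotropy, using $\cD^{1} \sim_{R} \cD^{2}$ and the involutivity of $R$ along the way. The paper simply states this more tersely; your additional remarks on the sign flip and on why only isotropy is needed are accurate and in line with the paper's reasoning.
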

\begin{proof}
First, note that the torsion $3$-form of the induced connection $\cD$ on $E_{1} \times \ol{E}_{2}$ can be written as $T_{\cD} = p_{1}^{\ast}(T_{\cD^{1}}) - p_{2}^{\ast}(T_{\cD^{2}})$. This is easy to see from definitions, the minus sign coming from the sign flip of the pairing on $\ol{E}_{2}$. In view of Lemma \ref{lem_tensRrelated}, it then suffices to prove that $T_{\cD}(\psi,\psi',\psi'')|_{S} = 0$ for all $\psi,\psi',\psi'' \in \Gamma(E_{1} \times \ol{E}_{2}; R)$. But that follows immediately from (\ref{eq_torsion}) using the assumption on $\cD$ and the fact that $R$ is isotropic and involutive. 
\end{proof}
\begin{example}
Let us consider a Courant algebroid morphism $R(H): E'_{0} \rat E'$ over $\varphi$ we have defined in Subsection \ref{subsec_tworeduced}. Let $\cD^{0}$ and $\cD'$ be Courant algebroid connections on $E'_{0}$ and $E'$, respectively. Let us examine when $\cD^{0} \sim_{R(H)} \cD'$. 

First, one can induce an $\R$-bilinear map $\hat{\cD}^{0}$ on the direct sum $K^{\perp}/H \oplus (P \times_{\Ad} \q)$ using the isomorphism $\fPsi_{1}$. Now, let $\psi',\phi' \in \Gamma(E')$ and consider the sections $\psi := (\fPsi_{1}( \varphi^{\ast}(\psi'),0), \psi')$ and $\phi := (\fPsi_{1}(\varphi^{\ast}(\phi'),0),\phi')$ in $\Gamma(E'_{0} \times \ol{E}', R(H))$. Then $\cD_{\psi}\phi \in \Gamma(E'_{0} \times \ol{E}', R(H))$, iff 
\begin{equation}
\hat{\cD}^{0}_{(\varphi^{\ast}(\psi'),0)} (\varphi^{\ast}(\phi'),0) = ( \varphi^{\ast}( \cD'_{\psi}\phi'), 0 ). 
\end{equation}
This uniquely determines $\cD^{0}$ in the $\fPsi_{1}(K^{\perp}/H \oplus 0)$ corner. In particular, if $\q = 0$, to a given Courant algebroid connection $\cD'$ on $E'$, there is a unique $\cD^{0}$ on $E'_{0}$ such that $\cD^{0} \sim_{R(H)} \cD'$. This is crucial for the compatibility of supergravity with Poisson--Lie T-duality, see \cite{Jurco:2019tgt,Severa:2018pag}. 
\end{example}
To conclude this section, recall that to every Courant algebroid connection $\cD$, there is a corresponding generalized Riemann tensor $R_{\cD} \in \T_{4}(E)$. We have defined it for a general Courant algebroid in \cite{Jurco:2016emw}, inspired by the double field theory paper \cite{Hohm:2012mf}. First, define $R_{\cD}^{(0)}$ by
\begin{equation}
R_{\cD}^{(0)}(\phi',\phi,\psi,\psi') = \< \cD_{\psi}(\cD_{\psi'}\phi) - \cD_{\psi'}( \cD_{\psi}\phi) - \cD_{[\psi,\psi']} \phi, \phi' \>,
\end{equation}
for all $\psi,\psi',\phi,\phi' \in \Gamma(E)$. Note that $R^{(0)}_{\cD}$ is not a tensor. Next, define the $\R$-bilinear map $\fK$ by formula $\< \fK(\psi,\psi'), \phi\> = \< \cD_{\phi}\psi, \psi'\>$, for all $\psi,\psi',\phi \in \Gamma(E)$. The tensor $R_{\cD}$ is then defined as
\begin{equation}
R_{\cD}(\phi',\phi,\psi,\psi') = \frac{1}{2}( R^{(0)}_{\cD}(\phi',\phi,\psi,\psi') + R^{(0)}(\psi',\psi,\phi,\phi') + \< \fK(\phi,\phi'), \fK(\psi,\psi')\>),
\end{equation} 
for all $\psi,\psi',\phi,\phi' \in \Gamma(E)$. Albeit it may seem quite strange, the resulting tensor has very nice symmetries, See Proposition 4.10 and Theorem 4.13 of \cite{Jurco:2016emw}. 
\begin{tvrz} \label{tvrz_Riemannirelated}
Let $(E_{1},\cD^{1})$ and $(E_{2},\cD^{2})$ be pair of Courant algebroids equipped with torsion-free Courant algebroid connections. Let $R: E_{1} \dra E_{2}$ be a Courant algebroid relation over $S$. 

Then if $\cD^{1} \sim_{R} \cD^{2}$, then also $R_{\cD^{1}} \sim_{R} R_{\cD^{2}}$. 
\end{tvrz}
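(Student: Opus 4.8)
The plan is to mirror the strategy of Proposition \ref{tvrz_torzRrelated}. First I would form the induced Courant algebroid connection $\cD$ on $E_{1} \times \ol{E}_{2}$ and show that the generalized Riemann tensor decomposes as $R_{\cD} = p_{1}^{\ast}(R_{\cD^{1}}) - p_{2}^{\ast}(R_{\cD^{2}})$, exactly as the torsion $3$-form did in Proposition \ref{tvrz_torzRrelated}. Since $R_{\cD} \in \T_{4}(E_{1} \times \ol{E}_{2})$ is a genuine tensor (unlike the auxiliary object $R^{(0)}_{\cD}$), it suffices to verify this identity on pullback sections $(\psi_{1},\psi_{2})$ and to extend by $C^{\infty}$-multilinearity. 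On such sections both $\cD$ and $[\cdot,\cdot]$ act block-wise, so $R^{(0)}_{\cD}$, the map $\fK$ (which splits as $\fK((\psi_{1},\psi_{2}),(\psi'_{1},\psi'_{2})) = (\fK_{1}(\psi_{1},\psi'_{1}),\fK_{2}(\psi_{2},\psi'_{2}))$, as one checks directly from $\<\fK(\psi,\psi'),\chi\> = \<\cD_{\chi}\psi,\psi'\>$), and the term $\<\fK(\phi,\phi'),\fK(\psi,\psi')\>$ each separate into a first-factor part minus a second-factor part, the minus sign arising from the sign flip of the pairing on $\ol{E}_{2}$. This gives the claimed decomposition.

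By Lemma \ref{lem_tensRrelated}, the conclusion $R_{\cD^{1}} \sim_{R} R_{\cD^{2}}$ is then equivalent to $R_{\cD}(\psi,\psi',\psi'',\psi''')|_{S} = 0$ for all $\psi,\psi',\psi'',\psi''' \in \Gamma(E_{1} \times \ol{E}_{2}; R)$, so the whole problem reduces to showing that $R_{\cD}$ annihilates quadruples of $R$-valued sections along $S$. I would treat the three summands of $R_{\cD}$ separately. For the two curvature-type terms $R^{(0)}_{\cD}(\phi',\phi,\psi,\psi')$ and $R^{(0)}_{\cD}(\psi',\psi,\phi,\phi')$, the first entry of the pairing is built from $\cD_{\psi}(\cD_{\psi'}\phi)$, $\cD_{\psi'}(\cD_{\psi}\phi)$ and $\cD_{[\psi,\psi']}\phi$. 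Using that $\cD^{1} \sim_{R} \cD^{2}$ sends $R$-valued sections to $R$-valued sections, together with the involutivity of $R$ (so $[\psi,\psi'] \in \Gamma(E_{1}\times\ol{E}_{2};R)$), each of these is $R$-valued on $S$; pairing with the $R$-valued section $\phi'$ and invoking the isotropy of $R$ kills both terms along $S$.

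The remaining, and genuinely delicate, term is $\<\fK(\phi,\phi'),\fK(\psi,\psi')\>$, and this is the single place where torsion-freeness is used. Writing the torsion (\ref{eq_torsion}) with $\<\fK(\psi,\psi'),\psi''\> = \<\cD_{\psi''}\psi,\psi'\>$, the vanishing $T_{\cD} = 0$ (which follows from $T_{\cD^{1}} = T_{\cD^{2}} = 0$ via the decomposition $T_{\cD} = p_{1}^{\ast}(T_{\cD^{1}}) - p_{2}^{\ast}(T_{\cD^{2}})$ established in Proposition \ref{tvrz_torzRrelated}) gives, by non-degeneracy of the pairing, the identity $\fK(\psi,\psi') = [\psi,\psi'] + \cD_{\psi'}\psi - \cD_{\psi}\psi'$. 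When $\psi,\psi'$ are $R$-valued, every term on the right is $R$-valued on $S$ by the same involutivity and $\cD$-compatibility used above, so $\fK(\psi,\psi')$ and likewise $\fK(\phi,\phi')$ are $R$-valued along $S$; isotropy of $R$ then forces $\<\fK(\phi,\phi'),\fK(\psi,\psi')\>|_{S} = 0$. Combining the three vanishings shows that $R_{\cD}$ vanishes on $R$-valued quadruples along $S$, which is the claim. The main obstacle is precisely this last step: without torsion-freeness the map $\fK$ carries no reason to preserve $R$, the $\fK$-$\fK$ term survives, and indeed the counter-example promised in the introduction shows that the statement then genuinely fails.
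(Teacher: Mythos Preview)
Your proposal is correct and follows essentially the same approach as the paper's proof: decompose $R_{\cD}$ as the difference of pullbacks, reduce via Lemma \ref{lem_tensRrelated} to the vanishing of $R_{\cD}$ on $R$-valued quadruples, dispose of the $R^{(0)}_{\cD}$ terms by involutivity, $\cD$-compatibility and isotropy, and handle the $\fK$-term by using torsion-freeness to rewrite $\fK(\psi,\psi') = [\psi,\psi'] + \cD_{\psi'}\psi - \cD_{\psi}\psi'$ as an $R$-valued section. The only cosmetic difference is that the paper verifies $\fK(\psi,\psi') \in \Gamma(E_{1}\times\ol{E}_{2};R)$ by pairing against sections of $R^{\perp}$, whereas you invoke the global identity for $\fK$ directly; the content is the same.
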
 
\begin{proof}
Let $\cD$ be the induced connection on $E_{1} \times \ol{E}_{2}$. Directly from the definitions, one finds $R_{\cD} = p_{1}^{\ast}(R_{\cD^{1}}) - p_{2}^{\ast}( R_{\cD^{2}})$. In view of Lemma \ref{lem_tensRrelated}, it suffices to prove that $R_{\cD}(\phi',\phi,\psi,\psi')|_{S} = 0$ for all $\psi,\psi',\phi,\phi' \in \Gamma(E_{1} \times \ol{E}_{2};R)$. It is easy to see that $R^{(0)}(\phi',\phi,\psi,\psi')|_{S} = 0$ for any $\cD^{1}$ and $\cD^{2}$. We claim that for torsion-free connections, one has $\fK(\psi,\psi') \in \Gamma(E_{1} \times \ol{E}_{2};R)$ for all $\psi,\psi' \in \Gamma(E_{1} \times \ol{E}_{2};R)$. It suffices to show that $\< \fK(\psi,\psi'), \phi \>|_{S} = 0$ for all $\phi \in \Gamma(E_{1} \times \ol{E}_{2};R^{\perp})$. Note that as $T_{\cD} = p_{1}^{\ast}(T_{\cD^{1}}) - p_{2}^{\ast}( T_{\cD^{2}})$, the connection $\cD$ is torsion-free, iff both $\cD^{1}$ and $\cD^{2}$ are. Hence by assumption, we have $T_{\cD} = 0$. It follows that one can write 
\begin{equation}
\< \fK(\psi,\psi'), \phi \>|_{S} = \< [\psi,\psi'] - \cD_{\psi}\psi' + \cD_{\psi'}\psi, \phi \>|_{S} = 0, 
\end{equation}
where we have used the involutivity and isotropy of $R$ together with the assumption $\cD^{1} \sim_{R} \cD^{2}$. This implies that the restriction of the last term in $R_{\cD}(\phi',\phi,\psi,\psi')$ to $S$ gives zero as $R$ is isotropic. Note that if $\cD$ is not torsion-free, one can only prove that $\fK(\psi,\psi') \in \Gamma(E_{1} \times \ol{E}_{2}; R^{\perp})$. 
\end{proof}
\begin{example} \label{ex_splitLagRiemanni}
If the fiber-wise metric on $E_{1} \times \ol{E}_{2}$ has a split signature and $R = R^{\perp}$, the above proposition holds for arbitrary Courant algebroid connections. This happens e.g. when $R = \gr(\F)$ for a fiber-wise bijective vector bundle map $\F: E_{1} \rightarrow E_{2}$. The statement $R_{\cD^{1}} \sim_{\gr(\F)} R_{\cD^{2}}$ then turns into the expected property of generalized Riemann tensors:
\begin{equation}
R_{\cD^{2}}( \F(f'),\F(f),\F(e),\F(e')) = R_{\cD^{1}}(f',f,e,e'),
\end{equation}
for all $e,e',f,f' \in (E_{1})_{m_{1}}$ and all $m_{1} \in M_{1}$. 
\end{example} 
Let us show that the vanishing torsion of $\cD^{1}$ and $\cD^{2}$ is a necessary assumption. 
\begin{example}
Let $E_{2} = (\g, [\cdot,\cdot]_{\g}, \<\cdot,\cdot\>_{\g})$ be a quadratic Lie algebra viewed as a Courant algebroid over a point. Suppose there exists its Lie subalgebra $ \h \subseteq \g$, such that $\h \cap \h^{\perp} = 0$. In other words, the restriction $\<\cdot,\cdot\>_{\h}$ of $\<\cdot,\cdot\>_{\g}$ to $\h$ is non-degenerate. Let $E_{1} = (\h,[\cdot,\cdot]_{\h}, \<\cdot,\cdot\>_{\h})$. It is easy to see that the inclusion $\mathrm{i}: \h \rightarrow \g$ is a classical Courant algebroid morphism. 

Now, define the Courant algebroid connections $\cD^{1}_{x}y = [x,y]_{\h}$ and $\cD^{2}_{u}v = [u,v]_{\g}$ for all $x,y \in \h$ and $u,v \in \g$. Obviously, $\cD^{1} \sim_{\gr(\mathrm{i})} \cD^{2}$. In general, these connections are not torsion-free. For all $x,y,z \in \h$, one has 
\begin{equation}
T_{\cD^{1}}(x,y,z) = 2 \< [x,y]_{\h}, z \>_{\h} =: 2 \chi_{\h}(x,y,z),
\end{equation}
where $\chi_{\h} \in \Lambda^{3} \h^{\ast}$ is the well-known Cartan $3$-form corresponding to $(\h, [\cdot,\cdot]_{\h}, \<\cdot,\cdot\>_{\h})$. The same holds for $T_{\cD^{2}}$. Let us evaluate $\fK$ on $\Gamma(\h \times \ol{\g} ; \gr(\mathrm{i}))$, that is on elements of $\h \times \ol{\g}$ of the form $(x,\mathrm{i}(x))$ for $x \in \h$. For every $x,y \in \h$, one finds
$\fK((x,\mathrm{i}(x)), (y,\mathrm{i}(y))) = ( [x,y]_{\h}, \mathrm{i}( [x,y]_{\h})) \in \Gamma(\h \times \ol{\g}, \gr(\mathrm{i}))$. 

This shows that the proposition can work even for connections with a non-zero torsion. Not always, though. We can still break things. Consider a modified connection $\ol{\cD}^{2}$ defined by 
\begin{equation}
\ol{\cD}^{2}_{u}v = [u,v]_{\g} + \fk(u,v),
\end{equation}
where $\fk: \g \times \g \rightarrow \g$ is some bilinear map satisfying $\<\fk(u,v),v\>_{\g} = 0$ for all $u,v \in \g$ and $\fk(\mathrm{i}(x),\mathrm{i}(y)) = 0$ for all $x,y \in \h$. This ensures that $\ol{\cD}^{2}$ is a Courant algebroid connection satisfying $\cD^{1} \sim_{\gr(\mathrm{i})} \ol{\cD}^{2}$. Let $\mathrm{j}: \h^{\perp} \rightarrow \g$ be the inclusion and let $\pi_{\h}: \g \rightarrow \h$ be the projection. Let $\fk_{0}: \h \times \h \rightarrow \h^{\perp}$ be a skew-symmetric bilinear map. For all $u,v,w \in \g$, define 
\begin{equation}
\< \fk(u,v), w \>_{\g} := \<u, \mathrm{j}( \fk_{0}( \pi_{\h}(v), \pi_{\h}(w))) \>_{\g}. 
\end{equation}
This $\fk$ satisfies the restrictions imposed above. $\ol{\fK}$ corresponding to $\ol{\cD}^{2}$ then reads 
\begin{equation}
\ol{\fK}((x,\mathrm{i}(x)), (y, \mathrm{i}(y))) = ([x,y]_{\h}, \mathrm{i}([x,y]_{\h}) + \mathrm{j}( \fk_{0}(x,y))),
\end{equation}
for all $x,y \in \h$. For $\fk_{0} \neq 0$, the right hand side is not an element of $\gr(\mathrm{i})$. Moreover, one finds 
\begin{equation}
\< \ol{\fK}((x,\mathrm{i}(x)), (y, \mathrm{i}(y))), \ol{\fK}((x,\mathrm{i}(x)), (y, \mathrm{i}(y))) \> = - \< \fk_{0}(x,y), \fk_{0}(x,y) \>_{\h^{\perp}}.
\end{equation}
If we can choose $\fk_{0}$ and $x,y \in \h$ so that $\fk_{0}(x,y)$ is not an isotropic vector with respect to $\<\cdot,\cdot\>_{\h^{\perp}}$, we obtain $R_{\cD}((x,\mathrm{i}(x)),(y,\mathrm{i}(y)),(x,\mathrm{i}(y)),(x,\mathrm{i}(y))) \neq 0$, that is our counterexample. 

For illustration, let $\g = \mathfrak{o}(4)$ and $\<\cdot,\cdot\>_{\g}$ be its Killing form. It is negative-definite as $\g$ is compact. Let $\h \cong \mathfrak{o}(3)$ be its Lie subalgebra induced by one of the obvious inclusions $\gSO(3) \rightarrow \gSO(4)$. Both restrictions $\<\cdot,\cdot\>_{\h}$ and $\<\cdot,\cdot\>_{\h^{\perp}}$ have to be negative-definite too. In particular, one may choose an arbitrary non-zero $\fk_{0}$. Note that the signatures of $\<\cdot,\cdot\>_{\g}$ and $\<\cdot,\cdot\>_{\h}$ are $(0,6)$ and $(0,3)$, respectively. It follows that the signature of $\<\cdot,\cdot\>$ on $\h \times \ol{\g}$ is $(6,3)$ and $\dim(\gr(\mathrm{i})) = 3$, hence in fact, it is maximally isotropic.  This shows that the assumption on the split signature in Example \ref{ex_splitLagRiemanni} cannot be relaxed. 
\end{example}
Note that in general, there is no analogue of Proposition \ref{tvrz_Riemannirelated} for the generalized Ricci tensor $\Ric_{\cD}$, or for the generalized divergence $\Div_{\cD}: \Gamma(E) \rightarrow C^{\infty}(M)$. See \cite{Jurco:2016emw} for the definitions. This is because they are defined using traces of vector bundle maps which (in general) do not interplay well with Courant algebroid relations.
\appendix
\section{Linear algebra supplement}\label{sec_supplement}
In this appendix, we will throw in some linear algebra we have used in this paper. Naturally, there is nothing really new among these lines. 

We will work exclusively with finite-dimensional real vector spaces. By a quadratic vector space $(V,\<\cdot,\cdot\>)$ we mean $V$ endowed with a non-degenerate symmetric bilinear form $\<\cdot,\cdot\>$. For any its subspace $W \subseteq V$, by $W^{\perp}$ we mean its orthogonal complement with respect to $\<\cdot,\cdot\>$. 

\begin{definice}
Let $(V,\<\cdot,\cdot\>)$ be a quadratic vector space. We say that $\tau \in \End(V)$ is a \textbf{compatible involution}, if $\tau^{2} = 1$ and $\fg(x,y) = \<x,\tau(y)\>$ defines a scalar product $\fg$ on $V$.
\end{definice}
One can show that the choice of a compatible involution $\tau$ is completely equivalent to the choice of a maximal positive subspace $V_{+} \subseteq V$ with respect to $\<\cdot,\cdot\>$. One can find $V_{+}$ as $+1$ eigenspace of $\tau$. Conversely, starting from $V_{+}$, one can argue that $V_{-} := (V_{+})^{\perp}$ is maximal negative subspace with respect to $\<\cdot,\cdot\>$. Hence $V = V_{+} \oplus V_{-}$ and $\tau$ can be defined so that $V_{\pm}$ are its $\pm 1$ eigenspaces.
\begin{tvrz}
On every quadratic vector space $(V,\<\cdot,\cdot\>)$, there exists a compatible involution. 
\end{tvrz}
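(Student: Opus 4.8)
The plan is to reduce the claim to the existence of an adapted orthonormal basis for a non-degenerate symmetric bilinear form, i.e. to Sylvester's law of inertia. Since $\langle\cdot,\cdot\rangle$ is non-degenerate of signature $(p,q)$ with $p+q=\dim V$, I would first invoke the standard diagonalization theorem to produce a basis $(e_{1},\dots,e_{p},f_{1},\dots,f_{q})$ of $V$ satisfying $\langle e_{i},e_{j}\rangle=\delta_{ij}$, $\langle f_{i},f_{j}\rangle=-\delta_{ij}$ and $\langle e_{i},f_{j}\rangle=0$ for all admissible indices.

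Next, I would define $\tau\in\End(V)$ on this basis by $\tau(e_{i})=e_{i}$ and $\tau(f_{j})=-f_{j}$, extended by linearity. Setting $V_{+}=\mathrm{span}(e_{1},\dots,e_{p})$ and $V_{-}=\mathrm{span}(f_{1},\dots,f_{q})$, the map $\tau$ acts as $+1$ on $V_{+}$ and as $-1$ on $V_{-}$, so $\tau^{2}=1$ is immediate. I would then check that $\fg(x,y):=\langle x,\tau(y)\rangle$ is a scalar product: writing $x=\sum a_{i}e_{i}+\sum b_{j}f_{j}$ and $y=\sum c_{i}e_{i}+\sum d_{j}f_{j}$, a direct computation using $\tau(y)=\sum c_{i}e_{i}-\sum d_{j}f_{j}$ and the relations above gives $\fg(x,y)=\sum_{i}a_{i}c_{i}+\sum_{j}b_{j}d_{j}$, where the sign flip built into $\tau$ on the $f$-part cancels the negative signs $\langle f_{j},f_{j}\rangle=-1$. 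This is exactly the standard Euclidean inner product in the chosen coordinates, hence symmetric and positive-definite, which is precisely the assertion that $\tau$ is a compatible involution.

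There is essentially no serious obstacle here: the only nontrivial input is the existence of the adapted basis, which is the classical normal-form theorem for non-degenerate symmetric bilinear forms (Sylvester). Once that basis is in hand, the remainder is the elementary bookkeeping indicated above. To close, I would remark that this construction realizes concretely the equivalence described just before the statement between compatible involutions and maximal positive subspaces: the subspace $V_{+}=\mathrm{span}(e_{1},\dots,e_{p})$ is a maximal positive subspace, its orthogonal complement $V_{-}$ is maximal negative, and $\tau$ is recovered as the reflection with eigenspaces $V_{\pm}$.
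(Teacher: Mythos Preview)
Your argument is correct: Sylvester's law furnishes an orthonormal basis adapted to the signature, and the sign-flip on the negative part is the sought involution. This is a perfectly valid proof of the stated proposition.

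The paper, however, deliberately takes a different route. It fixes an arbitrary auxiliary scalar product $\fg_{0}$, writes $\<x,y\>=\fg_{0}(x,\sigma(y))$ for a $\fg_{0}$-symmetric operator $\sigma$, and then sets $\tau=(\sigma^{2})^{-1/2}\sigma$ via the (unique positive) square root of the positive operator $\sigma^{2}$. The point of this construction is not elegance but \emph{smoothness}: the square root depends smoothly on the positive-definite operator, so when the whole construction is carried out fiber-wise on a quadratic vector bundle $(E,\<\cdot,\cdot\>)$ starting from a smooth fiber-wise metric $\fg_{0}$, the resulting $\tau$ is automatically a smooth bundle endomorphism. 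This is exactly what is needed for the subsequent Corollary asserting the existence of a generalized metric on every quadratic vector bundle. Your basis argument does not obviously globalize: choosing a Sylvester basis fiber-by-fiber need not produce a smooth (or even continuous) frame, so one would have to supply an additional local-triviality-and-partition-of-unity argument to recover the bundle version.
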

\begin{proof}
The existence of a maximal positive subspace is a standard statement, see \cite{lam2005introduction}. However, we provide the proof which can be easily generalized to vector bundles. Fix any scalar product $\fg_{0}$ on $V$. Define a linear map $\sigma \in \End(V)$ by the formula $\<x,y\> = \fg_{0}( x, \sigma(y))$, for all $x,y \in V$. 

The map $\sigma$ is $\fg_{0}$-symmetric, that is $\fg_{0}(x, \sigma(y)) = \fg_{0}(\sigma(x), y)$ for all $x,y \in V$. It follows that its square $\sigma^{2}$ is $\fg_{0}$-symmetric and positive definite with respect to $\fg_{0}$, that is $\fg_{0}(x, \sigma^{2}(x)) > 0$ for all non-zero $x \in V$. There is thus its unique square root $\eta = (\sigma^{2})^{\frac{1}{2}}$ which is also $\fg_{0}$-symmetric and positive definite with respect to $\fg_{0}$. Define $\tau := \eta^{-1} \sigma$. We claim that $\tau$ is the compatible involution. Note that all involved maps commute, being constructed from the $\fg_{0}$-symmetric map $\sigma$. Hence 
\begin{equation}
\tau^{2} = \eta^{-1} \sigma \eta^{-1} \sigma = (\eta^{2})^{-1} \sigma^{2} = (\sigma^{2})^{-1} \sigma^{2} = 1. 
\end{equation}
Finally, one has $\fg(x,y) \equiv \<x,\tau(y)\> = \<x, \sigma( \eta^{-1}(y)) \> = \fg_{0}(x, \eta^{-1}(y))$. As $\eta^{-1}$ is $\fg_{0}$-symmetric and positive definite with respect to $\fg_{0}$, this proves that $\fg$ is a scalar product. 
\end{proof}
We obtain an immediate corollary of this statement. See Section \ref{sec_geniso} for definitions.
\begin{cor} \label{cor_genmetric}
On every quadratic vector bundle $(E,\<\cdot,\cdot\>)$, there exists a generalized metric.
\end{cor}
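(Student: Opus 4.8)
The plan is to promote the proof of the preceding Proposition from a single quadratic vector space to the quadratic vector bundle $(E,\<\cdot,\cdot\>)$, carrying out every algebraic step fiber-wise while verifying that the resulting endomorphism is smooth. First I would fix a smooth positive-definite fiber-wise metric $\fg_{0}$ on $E$; such a metric always exists by the usual partition of unity argument. Using the two fiber-wise metrics, I would then define the vector bundle endomorphism $\sigma: E \rightarrow E$ over $1_{M}$ by the defining equation $\<\psi,\psi'\> = \fg_{0}(\psi,\sigma(\psi'))$ for all $\psi,\psi' \in \Gamma(E)$. Equivalently, $\sigma = \fg_{0}^{-1} \circ g_{E}$, where $g_{E},\fg_{0}: E \rightarrow E^{\ast}$ denote the bundle isomorphisms induced by $\<\cdot,\cdot\>$ and by $\fg_{0}$; written this way, the smoothness of $\sigma$ is manifest.

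Next, exactly as in the vector space proof, on each fiber the map $\sigma$ is $\fg_{0}$-symmetric, so $\sigma^{2}$ is $\fg_{0}$-symmetric and positive definite. I would then take the fiber-wise square root $\eta = (\sigma^{2})^{1/2}$ and set $\tau := \eta^{-1}\sigma$. Fiber-wise, the identities $\tau^{2} = 1$ and $\gm(\psi,\psi') := \<\psi,\tau(\psi')\> = \fg_{0}(\psi,\eta^{-1}(\psi'))$ reproduce verbatim the computation of the Proposition, showing that $\tau$ is an involution and, since $\eta^{-1}$ is $\fg_{0}$-symmetric and positive definite, that $\gm$ is a positive-definite fiber-wise metric. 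Thus $\tau$ is a generalized metric on $E$ in the sense of Section \ref{sec_geniso}.

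The one genuinely new point, and the main obstacle, is to argue that $\eta$, and hence $\tau$, is a smooth bundle endomorphism rather than merely a fiber-wise defined one. I would settle this by smooth functional calculus: since $\sigma_{m}^{2}$ is positive definite, its spectrum lies, locally in $m$, in a fixed compact subinterval of $(0,\infty)$, and its square root admits the contour representation $\eta_{m} = \frac{1}{2\pi i}\oint_{\gamma} \sqrt{z}\,(z - \sigma_{m}^{2})^{-1}\,dz$, where $\gamma$ encircles the spectrum and $\sqrt{\cdot}$ is a holomorphic branch on a neighborhood of $\gamma$. The integrand depends smoothly on $m$, the resolvent being a smooth function of the smooth endomorphism $\sigma_{m}^{2}$, hence so does $\eta_{m}$; equivalently, one may simply invoke that the square root is a smooth (in fact real-analytic) map on the open cone of positive-definite symmetric endomorphisms. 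Smoothness of $\eta^{-1}$ and of $\tau = \eta^{-1}\sigma$ then follows at once, completing the argument.
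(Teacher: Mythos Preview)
Your proposal is correct and follows essentially the same approach as the paper: fix an auxiliary positive-definite fiber-wise metric $\fg_{0}$, construct $\tau = (\sigma^{2})^{-1/2}\sigma$ fiber-wise exactly as in the preceding Proposition, and observe that the only new issue is smoothness of the square root, which follows from the smoothness of $\fA \mapsto \fA^{1/2}$ on the open cone of positive-definite symmetric matrices. The paper's own proof is terser but identical in substance; your contour-integral justification of smoothness is a perfectly valid elaboration of the step the paper leaves implicit.
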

\begin{proof}
On every vector bundle, there exists a positive definite fiber-wise metric $\fg_{0}$. The vector bundle map $\tau$ can be then fiber-wise defined using the same formulas as its vector space cousin in the previous proposition. The only non-trivial statement is then its smoothness. Without going into too much detail, this at its core follows from the fact that for each $\alpha \in \R$, the map $\fA \mapsto \fA^{\alpha}$ is smooth on the open cone of symmetric positive definite matrices. 
\end{proof}

\begin{tvrz}[\textbf{Coisotropic reduction}] \label{tvrz_coiso}
Let $(V,\<\cdot,\cdot\>)$ be a quadratic space. Let $C \subseteq V$ be a coisotropic subspace, that is $C^{\perp} \subseteq C$. Then there is a canonical quadratic space structure $(V', \<\cdot,\cdot\>')$ on the quotient vector space $V' = C / C^{\perp}$. 

If $(p,q)$ is the signature of $\<\cdot,\cdot\>$ and $k = \dim(C^{\perp})$, then the signature of $\<\cdot,\cdot\>'$ is $(p-k,q-k)$. 

Let $\natural: C \rightarrow V'$ denote the quotient map. If $L \subseteq V$ is (maximally) isotropic with respect to $\<\cdot,\cdot\>$, then $L' = \natural(L \cap C)$ is (maximally) isotropic with respect to $\<\cdot,\cdot\>'$. One has $L'^{\perp}  = \natural(L^{\perp} \cap C)$. 
\end{tvrz}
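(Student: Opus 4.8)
The plan is to establish the four assertions in order, building the reduced form first and then relying on standard orthogonal-complement bookkeeping in a non-degenerate quadratic space. First I would define the candidate form by $\langle \natural c, \natural c'\rangle' := \langle c,c'\rangle$ for $c,c'\in C$ and check it is independent of representatives: if $n,n'\in C^{\perp}$, then $\langle c+n,c'+n'\rangle=\langle c,c'\rangle$, because $C^{\perp}\perp C$ annihilates the two cross terms while $C^{\perp}$ is isotropic (note $C^{\perp}\subseteq C$ forces $C^{\perp}\perp C^{\perp}$) annihilating the last. Non-degeneracy is immediate: if $\natural c$ pairs trivially with all of $V'$, then $\langle c,C\rangle=0$, i.e. $c\in C^{\perp}$, hence $\natural c=0$.

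For the signature, since $C^{\perp}$ is isotropic of dimension $k$, I would produce a hyperbolic complement, i.e. a $k$-dimensional isotropic $W$ with $H:=C^{\perp}\oplus W$ non-degenerate of signature $(k,k)$ (the standard construction of hyperbolic pairs). Then $V=H\oplus H^{\perp}$ orthogonally, so $H^{\perp}$ has signature $(p-k,q-k)$. The restriction $\natural|_{H^{\perp}}\colon H^{\perp}\to V'$ is an isometry by the very definition of $\langle\cdot,\cdot\rangle'$; it is injective because $H^{\perp}\cap C^{\perp}\subseteq W^{\perp}\cap C^{\perp}=0$ by non-degeneracy of the hyperbolic pairing; and a dimension count ($\dim H^{\perp}=\dim V-2k=\dim C-k=\dim V'$, using $\dim C+\dim C^{\perp}=\dim V$) upgrades it to an isomorphism. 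Hence $V'$ has signature $(p-k,q-k)$.

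The isotropic statement is then routine: if $L$ is isotropic so is $L\cap C$, and therefore $L'=\natural(L\cap C)$ is isotropic by the defining formula. For the complement identity $(L')^{\perp}=\natural(L^{\perp}\cap C)$, which in fact holds for \emph{every} subspace $L$, the inclusion $\supseteq$ is direct: for $y\in L^{\perp}\cap C$ and $x\in L\cap C$ one has $\langle\natural y,\natural x\rangle'=\langle y,x\rangle=0$. Equality follows from a dimension count using the standard identities $(A\cap B)^{\perp}=A^{\perp}+B^{\perp}$, $(A+B)^{\perp}=A^{\perp}\cap B^{\perp}$ and $\dim A+\dim A^{\perp}=\dim V$; both sides evaluate to $\dim C-\dim C^{\perp}-\dim(L\cap C)+\dim(L\cap C^{\perp})$, where I use $C^{\perp}\subseteq C$ to simplify the various intersections with $C^{\perp}$.

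The hard part will be the maximal isotropy, and there I would compute $\dim L'$ exactly. The pairing $L\times C^{\perp}\to\mathbb{R}$, $x\mapsto\langle x,-\rangle|_{C^{\perp}}$, has kernel $L\cap C$ and image whose annihilator in $C^{\perp}$ is $L^{\perp}\cap C^{\perp}$, giving $\dim(L\cap C)=\dim L-k+\dim(L^{\perp}\cap C^{\perp})$, hence $\dim L'=\dim(L\cap C)-\dim(L\cap C^{\perp})=\dim L-k+\dim(L^{\perp}\cap C^{\perp})-\dim(L\cap C^{\perp})$. The crux is the identity $L^{\perp}\cap C^{\perp}=L\cap C^{\perp}$ for maximally isotropic $L$: any $n\in L^{\perp}\cap C^{\perp}$ is isotropic (as $C^{\perp}$ is isotropic) and lies in $L^{\perp}$, so its class in $L^{\perp}/L$ is isotropic; but for $\dim L=\min\{p,q\}$ the quotient $L^{\perp}/L$ has signature $(p-\dim L,q-\dim L)$ with one entry zero, hence is definite, forcing $n\in L$ and so $n\in L\cap C^{\perp}$. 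Substituting yields $\dim L'=\dim L-k=\min\{p,q\}-k=\min\{p-k,q-k\}$, and then the linear-algebra statement underlying Lemma \ref{lem_maxiso} gives that the isotropic subspace $L'$ is maximally isotropic. I expect this definite-quotient step, together with the accompanying dimension bookkeeping, to be the only genuinely delicate point; every other step is formal.
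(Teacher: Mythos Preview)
Your proof is correct. The overall structure matches the paper's: define the quotient form, prove the signature statement, then handle isotropy, maximal isotropy, and the orthogonal complement identity via dimension counts, with the key step being $L^{\perp}\cap C^{\perp}=L\cap C^{\perp}$ for maximally isotropic $L$.

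The one genuinely different step is the signature computation. The paper fixes a compatible involution $\tau$ on $V$, computes $\dim(V_{+}\cap C)=p-k$ directly via orthogonal-complement arithmetic, obtains the inequalities $p'\geq p-k$ and $q'\geq q-k$, and then uses $p'+q'=n-2k$ to force equality. Your approach instead builds a hyperbolic partner $W$ to $C^{\perp}$ and identifies $V'$ isometrically with $H^{\perp}$, which reads off the signature at once. Your argument is cleaner and more structural; the paper's has the minor advantage that it avoids invoking the existence of hyperbolic complements (Witt-type input) and is self-contained given the compatible involution already constructed in the appendix. For the crux $L^{\perp}\cap C^{\perp}=L\cap C^{\perp}$, the paper phrases it as ``the isotropic vectors of $L^{\perp}$ are exactly $L$'' (the set-theoretic decomposition $L^{\perp}=L\sqcup L_{0}$, essentially Lemma~\ref{lem_maxiso}(iii)), while you reach the same conclusion by noting that $L^{\perp}/L$ is definite via the signature formula just proved; these are the same fact in different clothing.
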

\begin{proof}
Set $\< \natural(x), \natural(y)\>' := \<x,y\>$ for all $x,y \in C$. It is an easy exercise to see that $\<\cdot,\cdot\>'$ is a well-defined non-degenerate symmetric bilinear form. Let $\tau$ be a compatible involution on $V$ and let $V_{\pm} \subseteq V$ be the corresponding $\pm 1$ eigenbundles. Next, note that the inertia of the restricted bilinear form $\<\cdot,\cdot\>|_{C}$ is $(p',q',k)$, where $(p',q')$ is the signature of $\<\cdot,\cdot\>'$. We have $p' + q' + k = \dim(C) = n - k$, where $n = \dim(V)$. Consider the subspace $V'_{+} = V_{+} \cap C$ positive with respect to $\<\cdot,\cdot\>|_{C}$. For its dimension, we get
\begin{equation}
\begin{split}
\dim(V'_{+}) = & \ \dim(V_{+} \cap C) = \dim(V_{-}^{\perp} \cap C) = \dim((V_{-} + C^{\perp})^{\perp}) = n - \dim(V_{-} + C^{\perp}) \\
= & \ n - (\dim(V_{-}) + \dim(C^{\perp}) - \dim(V_{-} \cap C^{\perp})) = n - q - k + \dim(V_{-} \cap C^{\perp}) \\
= & \ p - k + \dim(V_{-} \cap C^{\perp}). 
\end{split}
\end{equation}
But $V_{-} \cap C^{\perp} = 0$ as $C^{\perp}$ is isotropic and $V_{-}$ is negative with respect to $\<\cdot,\cdot\>$. Hence $\dim(V'_{+}) = p - k$. Defining the negative subspace $V'_{-} = V_{-} \cap C$, one can similarly show $\dim(V'_{-}) = q - k$. We thus obtain the estimates $p' \geq p - k$ and $q' \geq q - k$. On the other hand, one has $\dim(V') = n - 2k$. Hence $p' + q' = n - 2k$. This can only happen if $p' = p - k$ and $q' = q - k$. 

Let $L \subseteq V$ be isotropic. It is easy to see that $L' = \natural(L \cap C)$ is isotropic. If $L$ is maximally isotropic, then $L^{\perp} = L \sqcup L_{0}$, where $L_{0} = \{ v \in L^{\perp} \; | \; \<v,v\> \neq 0 \}$. As $C^{\perp}$ is isotropic, it follows that $L^{\perp} \cap C^{\perp} = L \cap C^{\perp}$. Using the nullity-rank theorem, one finds 
\begin{equation}
\begin{split}
\dim(L') = & \ \dim(L \cap C) - \dim(\ker(\natural|_{L \cap C})) = \dim(L \cap C) - \dim(L \cap C^{\perp}) \\
= & \ \dim( (L^{\perp} + C^{\perp})^{\perp}) - \dim(L \cap C^{\perp}) = n - \dim(L^{\perp} + C^{\perp}) - \dim(L \cap C^{\perp}) \\
= & \ n - ( \dim(L^{\perp}) + \dim(C^{\perp}) - \dim(L^{\perp} \cap C^{\perp})) - \dim(L \cap C^{\perp}) \\
= & \ \dim(L) - k + \dim(L^{\perp} \cap C^{\perp}) - \dim(L \cap C^{\perp}) = \dim(L) - k \\
= & \ \min \{p,q\} - k = \min\{ p',q'\}. 
\end{split}
\end{equation}
This shows that $L'$ is maximally isotropic. 

Finally, let $L \subseteq V$ be any isotropic subspace. Obviously, $\natural(L^{\perp} \cap C) \subseteq L'^{\perp}$. One finds
\begin{equation}
\begin{split}
\dim(\natural(L^{\perp} \cap C)) = & \ \dim(L^{\perp} \cap C) - \dim(L^{\perp} \cap C^{\perp}) = \dim((L + C^{\perp})^{\perp}) - \dim((L + C)^{\perp}) \\
= & \ \dim(L + C) - \dim(L + C^{\perp}) \\
= & \ \dim(C) - \dim(C^{\perp}) - \dim(L \cap C) + \dim(L \cap C^{\perp}) \\
= & \ \dim(V') - \dim(L') = \dim(L'^{\perp}). 
\end{split}
\end{equation}
This proves the last claim of the proposition. 
\end{proof}
The following example is a vector space version of Theorem \ref{thm_composition}. 
\begin{example} \label{ex_coisored}
Let $(V_{1},\<\cdot,\cdot\>_{1})$, $(V_{2},\<\cdot,\cdot\>_{2})$ and $(V_{3},\<\cdot,\cdot\>_{3})$ be quadratic vector spaces. 

Let $R \subseteq V_{1} \times \ol{V}_{2}$ and $R' \subseteq V_{2} \times \ol{V}_{3}$ be isotropic subspaces, where Cartesian products are assumed to be equipped with product bilinear forms and overlines indicate the flipped sign. 

Let $V = V_{1} \times \ol{V}_{2} \times V_{2} \times \ol{V}_{3}$ and $C = V_{1} \times \Delta(V_{2}) \times V_{3}$, where $\Delta(V_{2})$ denotes the diagonal embedding of $V_{2}$ into $V_{2} \times \ol{V}_{2}$. It follows that $C$ is a coisotropic subspace as $C^{\perp} = 0 \times \Delta(V_{2}) \times 0$. It is easy to see that the quotient quadratic space $C / C^{\perp}$ can be identified with $V_{1} \times \ol{V}_{3}$ and the quotient map $\natural$ coincides with the obvious projection $p: V \rightarrow V_{1} \times \ol{V}_{3}$. 

Now, let $L = R \times R'$. This is an isotropic subspace of $V$. It immediately follows that $L' = p(L \cap C)$ is isotropic in $V_{1} \times \ol{V}_{3}$. But $L'$ is the well-known composition
\begin{equation}
R' \circ R = \{ (x_{1},x_{3}) \in V_{1} \times \ol{V}_{3} \; | \; (x_{1},x_{2}) \in R \text{ and } (x_{2},x_{3}) \in R' \text{ for some } x_{2} \in V_{2} \}. 
\end{equation}
This shows that quadratic vector spaces together with their relations (isotropic subbundles of the products) form a nice category. Now, note that Proposition \ref{tvrz_coiso} also claims that if $L = R \times R'$ is maximally isotropic, then so is $R' \circ R$. Beware that even if $R$ and $R'$ are maximally isotropic, their product $R \times R'$, in general, is \textit{not}.
\end{example} 
\begin{tvrz} \label{tvrz_inertias}
Let $(V,\<\cdot,\cdot\>)$ be a quadratic vector space. For any subspace $P \subseteq V$, let $\In(P) \in \Z_{\geq 0}^{3}$ denote the inertia of the restricted bilinear form $\<\cdot,\cdot\>|_{P}$. Then
\begin{equation} \label{eq_inertias}
\In(V) = \In(P) + \In(P^{\perp}) + (k_{0},k_{0},-2k_{0}),
\end{equation}
where $k_{0} = \dim(P \cap P^{\perp})$. 
\end{tvrz}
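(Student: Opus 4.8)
The plan is to reduce the claimed identity to an additivity of signatures and then prove the latter by splitting off the common radical as a hyperbolic plane. First I would record the bookkeeping. Since $\<\cdot,\cdot\>$ is non-degenerate, $\In(V) = (p,q,0)$. The radical of $\<\cdot,\cdot\>|_{P}$ is exactly $P \cap P^{\perp}$, and since $(P^{\perp})^{\perp} = P$ the radical of $\<\cdot,\cdot\>|_{P^{\perp}}$ equals $P^{\perp} \cap P$ as well; both therefore have dimension $k_{0}$. Writing $\In(P) = (p_{1},q_{1},k_{0})$ and $\In(P^{\perp}) = (p_{2},q_{2},k_{0})$, the last component of (\ref{eq_inertias}) reads $0 = k_{0} + k_{0} - 2k_{0}$ and holds automatically. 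Hence everything comes down to proving $p = p_{1}+p_{2}+k_{0}$ and $q = q_{1}+q_{2}+k_{0}$, and by the dimension count $\dim P + \dim P^{\perp} = n$ (with $n = \dim V$) it suffices to establish the additivity of positive and negative indices.

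Next I would isolate the common radical $R := P \cap P^{\perp}$, which is totally isotropic of dimension $k_{0}$. Using the standard hyperbolic extension of a totally isotropic subspace inside a non-degenerate quadratic space, I would choose a totally isotropic $R'$ with $\dim R' = k_{0}$ pairing perfectly with $R$, so that $H := R \oplus R'$ is non-degenerate of signature $(k_{0},k_{0})$. Setting $V_{0} := H^{\perp}$ then yields an orthogonal decomposition $V = H \oplus V_{0}$ with $V_{0}$ non-degenerate of signature $(p-k_{0}, q-k_{0})$.

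The key step is to show that $P$ and $P^{\perp}$ each split off $R$ cleanly inside $V_{0}$, namely $P = R \oplus P_{0}$ and $P^{\perp} = R \oplus Q_{0}$ with $P_{0} := P \cap V_{0}$ and $Q_{0} := P^{\perp} \cap V_{0}$. For this I would use that $R$ is the radical of $\<\cdot,\cdot\>|_{P}$, hence $P \subseteq R^{\perp}$, so that $P \cap V_{0} = P \cap (R')^{\perp}$; the perfect pairing of $R \subseteq P$ with $R'$ makes the map $P \to (R')^{\ast}$ surjective, forcing $\dim(P \cap (R')^{\perp}) = \dim P - k_{0}$, while $R \cap V_{0} \subseteq H \cap H^{\perp} = 0$ gives the directness. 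Thus $P_{0}$ is a complement of the radical in $P$ and is therefore non-degenerate of signature $(p_{1},q_{1})$; symmetrically $Q_{0}$ is non-degenerate of signature $(p_{2},q_{2})$.

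To conclude I would observe that $P_{0} \subseteq P$ and $Q_{0} \subseteq P^{\perp}$ are mutually orthogonal, that $P_{0} \cap Q_{0} \subseteq (P \cap P^{\perp}) \cap V_{0} = R \cap V_{0} = 0$, and that $\dim P_{0} + \dim Q_{0} = (\dim P - k_{0}) + (\dim P^{\perp} - k_{0}) = n - 2k_{0} = \dim V_{0}$. Hence $V_{0} = P_{0} \oplus Q_{0}$ is an orthogonal direct sum of non-degenerate subspaces, and comparing signatures gives $(p-k_{0}, q-k_{0}) = (p_{1}+p_{2}, q_{1}+q_{2})$, which is precisely what remained. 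The main obstacle is the clean splitting in the key step: one must guarantee that a complement of the radical can be chosen inside $V_{0} = H^{\perp}$, and this is exactly what the perfect pairing between $R$ and the hyperbolic partner $R'$ arranges.
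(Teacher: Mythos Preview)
Your proof is correct and complete; the hyperbolic-extension step and the dimension count for $P_{0} = P \cap V_{0}$ via the surjection $P \to (R')^{\ast}$ are exactly what is needed to guarantee the clean splitting $P = R \oplus P_{0}$, and from there the orthogonal decomposition $V_{0} = P_{0} \oplus Q_{0}$ into non-degenerate pieces finishes the signature count.

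Your route is genuinely different from the paper's. The paper argues by induction on $\dim V$: the base case $n_{\pm}(P^{\perp}) = 0$ (i.e.\ $P$ coisotropic) is handled by invoking the signature computation from Proposition~\ref{tvrz_coiso}, and the inductive step chooses a maximal positive (or negative) subspace $Q \subseteq P^{\perp}$ and applies the hypothesis inside the smaller quadratic space $Q^{\perp}$. You instead split off the common radical $R = P \cap P^{\perp}$ as half of a hyperbolic subspace $H$ and work entirely in $V_{0} = H^{\perp}$, where $P$ and $P^{\perp}$ lose their radical and become a non-degenerate orthogonal pair filling $V_{0}$. Your argument is self-contained and avoids both the induction and the dependence on Proposition~\ref{tvrz_coiso}; the paper's argument has the virtue of recycling a calculation already on hand. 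Either way the essential content is the same: once the common isotropic part is accounted for (either by induction or by explicitly peeling off a hyperbolic block), what remains is an honest orthogonal direct sum whose signatures add.
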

\begin{proof}
Write $\In(P) = (n_{+}(P),n_{-}(P),n_{0}(P))$. It is easy to see that one has $n_{0}(P) = n_{0}(P^{\perp}) = k_{0}$. Moreover, we have $n_{0}(V) = 0$. This shows that the third component of the equation (\ref{eq_inertias}) is obvious. We will prove the equation by induction on $n = \dim(V)$. For $n = 0$, the statement is trivial. 

Next, fix $n \geq 1$ and assume that the statement holds for all quadratic vector spaces of dimension strictly lower than $n$. Moreover, note that it suffices to prove just one of the two non-trivial components of the equation. Indeed, suppose that it holds for the $(+)$ case. Then 
\begin{equation}
\begin{split}
n_{-}(V) = & \ n - n_{+}(V) = n - (n_{+}(P) + n_{+}(P^{\perp}) + k_{0}) \\
 = & \ n  - (\dim(P) - n_{-}(P) + \dim(P^{\perp}) - n_{-}(P^{\perp}) - k_{0}) \\
 = & \ n_{-}(P) + n_{-}(P^{\perp}) + k_{0}. 
\end{split}
\end{equation}
Hence it automatically holds also for the $(-)$ component. 

First, assume that $n_{\pm}(P^{\perp}) = 0$. This happens precisely when $P$ is coisotropic and $k_{0} = \dim(P^{\perp})$. The equation (\ref{eq_inertias}) then turns into $n_{\pm}(V) = n_{\pm}(P) + \dim(P^{\perp})$. But we have already shown this in the proof of Proposition \ref{tvrz_coiso}. 

Hence we can assume that one of the two numbers $n_{\pm}(P^{\perp})$ is non-zero. Without the loss of generality, suppose that $n_{+}(P^{\perp}) > 0$. There thus exists an $n_{+}(P^{\perp})$-dimensional positive subspace $Q \subseteq P^{\perp}$. One has $Q \cap Q^{\perp} = 0$ and $\dim(Q^{\perp}) < n$. It follows that $(Q^{\perp}, \<\cdot,\cdot\>|_{Q^{\perp}})$ is a quadratic vector space of dimension strictly lower than $n$. We have $P \subseteq Q^{\perp}$ and the induction hypothesis thus implies the equation
\begin{equation}
n_{+}(Q^{\perp}) = n_{+}(P) + n_{+}(P^{\perp} \cap Q^{\perp}) + k_{0}
\end{equation}
Now, note that $n_{+}(P^{\perp} \cap Q^{\perp}) = 0$. If there would be a positive vector $v \in P^{\perp} \cap Q^{\perp}$, the subspace $Q \oplus \R \{v \}$ would be  a positive subspace with respect to $\<\cdot,\cdot\>|_{P^{\perp}}$ properly containing $Q$, which would contradict $\dim(Q) = n_{+}(P^{\perp})$. We thus obtain the equation $n_{+}(Q^{\perp}) = n_{+}(P) + k_{0}$. On the other hand, as $Q$ is positive, we get $n_{+}(Q) = n_{+}(P^{\perp})$. As $Q \cap Q^{\perp} = 0$, we find
\begin{equation}
\dim(V) = \dim(Q) + \dim(Q^{\perp}) = n_{+}(P) + n_{+}(P^{\perp}) + k_{0}.
\end{equation}
This is the $(+)$ component of (\ref{eq_inertias}) and by above comments, the remaining two follow automatically. Note that the other possibility $n_{-}(P^{\perp}) > 0$ would just lead us to proving the $(-)$ component instead. This finishes the proof. 
\end{proof}
\section*{Acknowledgments}
I would like to thank Branislav Jurčo, Josef Svoboda, David Li-Bland, Eckhard Meinrenken, Pavol Ševera and Vít Tuček for helpful discussions. The author is grateful for a financial support from MŠMT under grant no. RVO 14000. 

\bibliography{bib}	

\providecommand{\href}[2]{#2}\begingroup\raggedright\begin{thebibliography}{10}

\bibitem{Hitchin:2004ut}
N.~Hitchin, \emph{{Generalized Calabi-Yau manifolds}},
  \href{https://doi.org/10.1093/qjmath/54.3.281}{\emph{Quart.J.Math.Oxford
  Ser.} {\bfseries 54} (2003) 281--308},
  [\href{https://arxiv.org/abs/math/0209099}{{\ttfamily math/0209099}}].

\bibitem{Severa:2017oew}
P.~\v{S}evera, \emph{{Letters to Alan Weinstein about Courant algebroids}},
  [\href{https://arxiv.org/abs/1707.00265}{{\ttfamily 1707.00265}}].

\bibitem{Alekseev:2004np}
A.~Alekseev and T.~Strobl, \emph{{Current algebras and differential geometry}},
  \href{https://doi.org/10.1088/1126-6708/2005/03/035}{\emph{JHEP} {\bfseries
  0503} (2005) 035}, [\href{https://arxiv.org/abs/hep-th/0410183}{{\ttfamily
  hep-th/0410183}}].

\bibitem{Baraglia:2013wua}
D.~Baraglia and P.~Hekmati, \emph{{Transitive Courant Algebroids, String
  Structures and T-duality}},
  \href{https://doi.org/10.4310/ATMP.2015.v19.n3.a3}{\emph{Adv. Theor. Math.
  Phys.} {\bfseries 19} (2015) 613--672},
  [\href{https://arxiv.org/abs/1308.5159}{{\ttfamily 1308.5159}}].

\bibitem{Cavalcanti:2011wu}
G.~R. Cavalcanti and M.~Gualtieri, \emph{{Generalized complex geometry and
  T-duality}},  in \emph{{A Celebration of the Mathematical Legacy of Raoul
  Bott (CRM Proceedings \& Lecture Notes) American Mathematical Society}},
  pp.~341--366, 2011, [\href{https://arxiv.org/abs/1106.1747}{{\ttfamily
  1106.1747}}].

\bibitem{Garcia-Fernandez:2016ofz}
M.~Garcia-Fernandez, \emph{{Ricci flow, Killing spinors, and T-duality in
  generalized geometry}},
  \href{https://doi.org/10.1016/j.aim.2019.04.038}{\emph{Adv. Math.} {\bfseries
  350} (2019) 1059--1108}, [\href{https://arxiv.org/abs/1611.08926}{{\ttfamily
  1611.08926}}].

\bibitem{Grana:2008yw}
M.~Grana, R.~Minasian, M.~Petrini and D.~Waldram, \emph{{T-duality, Generalized
  Geometry and Non-Geometric Backgrounds}},
  \href{https://doi.org/10.1088/1126-6708/2009/04/075}{\emph{JHEP} {\bfseries
  0904} (2009) 075}, [\href{https://arxiv.org/abs/0807.4527}{{\ttfamily
  0807.4527}}].

\bibitem{Coimbra:2011nw}
A.~Coimbra, C.~Strickland-Constable and D.~Waldram, \emph{{Supergravity as
  Generalised Geometry I: Type II Theories}},
  \href{https://doi.org/10.1007/JHEP11(2011)091}{\emph{JHEP} {\bfseries 1111}
  (2011) 091}, [\href{https://arxiv.org/abs/1107.1733}{{\ttfamily 1107.1733}}].

\bibitem{Coimbra:2012af}
A.~Coimbra, C.~Strickland-Constable and D.~Waldram, \emph{{Supergravity as
  Generalised Geometry II: $E_{d(d)} \times \mathbb{R}^+$ and M theory}},
  \href{https://doi.org/10.1007/JHEP03(2014)019}{\emph{JHEP} {\bfseries 03}
  (2014) 019}, [\href{https://arxiv.org/abs/1212.1586}{{\ttfamily 1212.1586}}].

\bibitem{2013arXiv1304.4294G}
M.~Garcia-Fernandez, \emph{{Torsion-Free Generalized Connections and Heterotic
  Supergravity}},
  \href{https://doi.org/{10.1007/s00220-014-2143-5}}{\emph{{Commun. Math.
  Phys.}} {\bfseries {332}} ({Nov}, {2014}) {89--115}},
  [\href{https://arxiv.org/abs/1304.4294}{{\ttfamily 1304.4294}}].

\bibitem{Strickland-Constable:2019qti}
C.~Strickland-Constable, \emph{{Supergravity Fluxes and Generalised Geometry}},
  \href{https://doi.org/10.1002/prop.201910021}{\emph{Fortsch. Phys.}
  {\bfseries 67} (2019) 1910021},
  [\href{https://arxiv.org/abs/1903.02842}{{\ttfamily 1903.02842}}].

\bibitem{Severa:2015hta}
P.~\v{S}evera, \emph{{Poisson-Lie T-Duality and Courant Algebroids}},
  \href{https://doi.org/10.1007/s11005-015-0796-4}{\emph{Lett. Math. Phys.}
  {\bfseries 105} (2015) 1689--1701},
  [\href{https://arxiv.org/abs/1502.04517}{{\ttfamily 1502.04517}}].

\bibitem{Severa:2016prq}
P.~\v{S}evera, \emph{{Poisson-Lie T-duality as a boundary phenomenon of
  Chern-Simons theory}},
  \href{https://doi.org/10.1007/JHEP05(2016)044}{\emph{JHEP} {\bfseries 05}
  (2016) 044}, [\href{https://arxiv.org/abs/1602.05126}{{\ttfamily
  1602.05126}}].

\bibitem{Severa:2017kcs}
P.~\v{S}evera, \emph{{On integrability of 2-dimensional $\sigma$-models of
  Poisson-Lie type}},
  \href{https://doi.org/10.1007/JHEP11(2017)015}{\emph{JHEP} {\bfseries 11}
  (2017) 015}, [\href{https://arxiv.org/abs/1709.02213}{{\ttfamily
  1709.02213}}].

\bibitem{Severa:2018pag}
P.~\v{S}evera and F.~Valach, \emph{{Courant algebroids, Poisson-Lie T-duality,
  and type II supergravities}},
  [\href{https://arxiv.org/abs/1810.07763}{{\ttfamily 1810.07763}}].

\bibitem{Freidel:2017yuv}
L.~Freidel, F.~J. Rudolph and D.~Svoboda, \emph{{Generalised Kinematics for
  Double Field Theory}},
  \href{https://doi.org/10.1007/JHEP11(2017)175}{\emph{JHEP} {\bfseries 11}
  (2017) 175}, [\href{https://arxiv.org/abs/1706.07089}{{\ttfamily
  1706.07089}}].

\bibitem{Freidel:2018tkj}
L.~Freidel, F.~J. Rudolph and D.~Svoboda, \emph{{A Unique Connection for Born
  Geometry}},
  \href{https://doi.org/10.1007/s00220-019-03379-7}{\emph{Communications in
  Mathematical Physics} (2018) 1--32},
  [\href{https://arxiv.org/abs/1806.05992}{{\ttfamily 1806.05992}}].

\bibitem{Svoboda:2018rci}
D.~Svoboda, \emph{{Algebroid Structures on Para-Hermitian Manifolds}},
  \href{https://doi.org/10.1063/1.5040263}{\emph{J. Math. Phys.} {\bfseries 59}
  (2018) 122302}, [\href{https://arxiv.org/abs/1802.08180}{{\ttfamily
  1802.08180}}].

\bibitem{Vaisman:2012px}
I.~Vaisman, \emph{{Towards a double field theory on para-Hermitian manifolds}},
  \href{https://doi.org/10.1063/1.4848777}{\emph{J. Math. Phys.} {\bfseries 54}
  (2013) 123507}, [\href{https://arxiv.org/abs/1209.0152}{{\ttfamily
  1209.0152}}].

\bibitem{liu1997manin}
Z.-J. Liu, A.~Weinstein, P.~Xu et~al., \emph{{Manin triples for Lie
  bialgebroids}}, {\emph{J. Differential Geom} {\bfseries 45} (1997) 547--574}.

\bibitem{courant}
T.~Courant, \emph{{Dirac manifolds}}, {\emph{Trans. Amer. Math. Soc.}
  {\bfseries 319} (1990) 631--661}.

\bibitem{1999math.....10078R}
D.~{Roytenberg}, \emph{{Courant algebroids, derived brackets and even
  symplectic supermanifolds}}, {\emph{ArXiv Mathematics e-prints} (Oct., 1999)
  }, [\href{https://arxiv.org/abs/math/9910078}{{\ttfamily math/9910078}}].

\bibitem{Kosmann_Schwarzbach_2013}
Y.~Kosmann-Schwarzbach, \emph{Courant algebroids. a short history},
  \href{https://doi.org/10.3842/sigma.2013.014}{\emph{Symmetry, Integrability
  and Geometry: Methods and Applications} (Feb, 2013) }.

\bibitem{popescu1999generalized}
P.~Popescu, \emph{On generalized algebroids},  in \emph{New Developments in
  Differential Geometry, Budapest 1996}, pp.~329--342.
\newblock Springer, 1999.

\bibitem{weinstein1982symplectic}
A.~Weinstein, \emph{The symplectic ``category''},  in \emph{Differential
  geometric methods in mathematical physics}, pp.~45--51.
\newblock Springer, 1982.

\bibitem{bates1997lectures}
S.~Bates and A.~Weinstein, \emph{Lectures on the Geometry of Quantization},
  vol.~8.
\newblock American Mathematical Soc., 1997.

\bibitem{roytenberg2002structure}
D.~Roytenberg, \emph{{On the structure of graded symplectic supermanifolds and
  Courant algebroids}}, {\emph{Contemporary Mathematics} {\bfseries 315} (2002)
  169--186}, [\href{https://arxiv.org/abs/math/0203110}{{\ttfamily
  math/0203110}}].

\bibitem{alekseevxu}
A.~Alekseev and P.~Xu, ``{Derived Brackets and Courant Algebroids}.''
  \url{http://www.math.psu.edu/ping/anton-final.pdf}.

\bibitem{bursztyn2008courant}
H.~Bursztyn, D.~I. Ponte and P.~\v{S}evera, \emph{Courant morphisms and moment
  maps}, {\emph{Mathematical Research Letters} (2009) 215--232},
  [\href{https://arxiv.org/abs/0801.1663}{{\ttfamily 0801.1663}}].

\bibitem{li2014dirac}
D.~Li-Bland and E.~Meinrenken, \emph{{Dirac Lie groups}}, {\emph{Asian Journal
  of Mathematics} {\bfseries 18} (2014) 779--816},
  [\href{https://arxiv.org/abs/1110.1525}{{\ttfamily 1110.1525}}].

\bibitem{li2009courant}
D.~Li-Bland and E.~Meinrenken, \emph{Courant algebroids and {P}oisson
  geometry}, {\emph{International Mathematics Research Notices} {\bfseries
  2009} (2009) 2106--2145}, [\href{https://arxiv.org/abs/0811.4554}{{\ttfamily
  0811.4554}}].

\bibitem{meinrenken2017dirac}
E.~Meinrenken, \emph{{Dirac actions and Lu's Lie algebroid}},
  {\emph{Transformation Groups} {\bfseries 22} (2017) 1081--1124},
  [\href{https://arxiv.org/abs/1412.3154}{{\ttfamily 1412.3154}}].

\bibitem{Bursztyn2007726}
H.~Bursztyn, G.~R. Cavalcanti and M.~Gualtieri, \emph{{Reduction of Courant
  algebroids and generalized complex structures}}, {\emph{{Advances in
  Mathematics}} {\bfseries {211}} ({2007}) {726--765}},
  [\href{https://arxiv.org/abs/math/0509640}{{\ttfamily math/0509640}}].

\bibitem{Severa:2001qm}
P.~\v{S}evera and A.~Weinstein, \emph{{Poisson geometry with a 3 form
  background}}, \href{https://doi.org/10.1143/PTPS.144.145}{\emph{Prog. Theor.
  Phys. Suppl.} {\bfseries 144} (2001) 145--154},
  [\href{https://arxiv.org/abs/math/0107133}{{\ttfamily math/0107133}}].

\bibitem{2013arXiv1307.2171D}
G.~{Dossena}, \emph{{Sylvester's law of inertia for quadratic forms on vector
  bundles}}, {\emph{ArXiv e-prints} (July, 2013) },
  [\href{https://arxiv.org/abs/1307.2171}{{\ttfamily 1307.2171}}].

\bibitem{lee2012introduction}
J.~Lee, \emph{{Introduction to Smooth Manifolds}}.
\newblock Graduate Texts in Mathematics. Springer, 2012.

\bibitem{lam2005introduction}
T.-Y. Lam, \emph{Introduction to quadratic forms over fields}, vol.~67.
\newblock American Mathematical Soc., 2005.

\bibitem{Mackenzie}
K.~C. Mackenzie, \emph{{General theory of Lie groupoids and Lie algebroids}},
  vol.~213 of \emph{London Mathematical Society Lecture Note Series}.
\newblock Cambridge University Press, Cambridge, 2005.

\bibitem{Ikeda:2018rwe}
N.~Ikeda and T.~Strobl, \emph{{On the relation of Lie algebroids to constrained
  systems and their BV/BFV formulation}},
  \href{https://doi.org/10.1007/s00023-018-00754-3}{\emph{Annales Henri
  Poincare} {\bfseries 20} (2019) 527--541},
  [\href{https://arxiv.org/abs/1803.00080}{{\ttfamily 1803.00080}}].

\bibitem{Severa:2019ddq}
P.~\v{S}evera and T.~Strobl, \emph{{Transverse generalized metrics and 2d sigma
  models}},  [\href{https://arxiv.org/abs/1901.08904}{{\ttfamily 1901.08904}}].

\bibitem{mackenzie1994}
K.~C.~H. Mackenzie and P.~Xu, \emph{{Lie bialgebroids and Poisson groupoids}},
  \href{https://doi.org/10.1215/S0012-7094-94-07318-3}{\emph{Duke Math. J.}
  {\bfseries 73} (02, 1994) 415--452}.

\bibitem{guillemin2013semi}
V.~Guillemin and S.~Sternberg, \emph{Semi-classical analysis}.
\newblock International Press Boston, MA, 2013.

\bibitem{grabowski2009higher}
J.~Grabowski and M.~Rotkiewicz, \emph{Higher vector bundles and multi-graded
  symplectic manifolds}, {\emph{Journal of Geometry and Physics} {\bfseries 59}
  (2009) 1285--1305}, [\href{https://arxiv.org/abs/math/0702772}{{\ttfamily
  math/0702772}}].

\bibitem{hormander2007analysis}
L.~H{\"o}rmander, \emph{The Analysis of Linear Partial Differential Operators
  III: Pseudo-Differential Operators}.
\newblock Classics in Mathematics. Springer Berlin Heidelberg, 2007.

\bibitem{Vysoky:2017epf}
J.~Vysok\'{y}, \emph{{Kaluza-Klein Reduction of Low-Energy Effective Actions:
  Geometrical Approach}},
  \href{https://doi.org/10.1007/JHEP08(2017)143}{\emph{JHEP} {\bfseries 08}
  (2017) 143}, [\href{https://arxiv.org/abs/1704.01123}{{\ttfamily
  1704.01123}}].

\bibitem{Jurco:2017gii}
B.~Jur\v{c}o and J.~Vysok\'y, \emph{{Poisson-Lie T-duality of string effective
  actions: A new approach to the dilaton puzzle}},
  \href{https://doi.org/10.1016/j.geomphys.2018.03.019}{\emph{J. Geom. Phys.}
  {\bfseries 130} (2018) 1--26},
  [\href{https://arxiv.org/abs/1708.04079}{{\ttfamily 1708.04079}}].

\bibitem{Jurco:2019tgt}
B.~Jur\v{c}o and J.~Vysok\'{y}, \emph{{Effective Actions for $\sigma$-Models of
  Poisson-Lie Type}},
  \href{https://doi.org/10.1002/prop.201910024}{\emph{Fortsch. Phys.}
  {\bfseries 67} (2019) 1910024},
  [\href{https://arxiv.org/abs/1903.02848}{{\ttfamily 1903.02848}}].

\bibitem{rubio2016lie}
R.~Rubio and C.~Tipler, \emph{{The Lie group of automorphisms of a Courant
  algebroid and the moduli space of generalized metrics}},
  [\href{https://arxiv.org/abs/1612.03755}{{\ttfamily 1612.03755}}].


\bibitem{Jurco:2016emw}
B.~Jur\v{c}o and J.~Vysok\'{y}, \emph{{Courant Algebroid Connections and String
  Effective Actions}},  in \emph{{Proceedings, Workshop on Strings, Membranes
  and Topological Field Theory}}, pp.~211--265, 2017,
  [\href{https://arxiv.org/abs/1612.01540}{{\ttfamily 1612.01540}}].

\bibitem{2007arXiv0710.2719G}
M.~{Gualtieri}, \emph{{Branes on Poisson varieties}}, {\emph{ArXiv e-prints}
  (Oct., 2007) }, [\href{https://arxiv.org/abs/0710.2719}{{\ttfamily
  0710.2719}}].

\bibitem{Hohm:2012mf}
O.~Hohm and B.~Zwiebach, \emph{{Towards an invariant geometry of double field
  theory}}, \href{https://doi.org/10.1063/1.4795513}{\emph{J. Math. Phys.}
  {\bfseries 54} (2013) 032303},
  [\href{https://arxiv.org/abs/1212.1736}{{\ttfamily 1212.1736}}].

\end{thebibliography}\endgroup
\end{document}